\definecolor{blue2}{rgb}{0.67, 0.9, 0.93}
\numberwithin{equation}{section}
\newtheorem{thm}{Theorem}[section]
\newtheorem{lemma}[thm]{Lemma}
\newtheorem{prop}[thm]{Proposition}
\newtheorem{cor}[thm]{Corollary}
\theoremstyle{definition}
\newcommand{\N}{{\mathbb N}}
\newcommand{\R}{{\mathbb R}}
\newcommand{\Z}{\mathbb{Z}}
\newcommand\norm[1]{\left|\left|{#1}\right|\right|}
\begin{document}

\begin{frontmatter}
\title{Dynamics of curved travelling fronts for the discrete Allen-Cahn equation on a two-dimensional lattice}
\journal{...}
\author[LD1]{M. Juki\'c\corauthref{coraut}},
\author[LD2]{H. J. Hupkes},
\corauth[coraut]{Corresponding author. }
\address[LD1]{
  Mathematisch Instituut - Universiteit Leiden \\
  P.O. Box 9512; 2300 RA Leiden; The Netherlands \\ Email:  {\normalfont{\texttt{m.jukic@math.leidenuniv.nl}}}
}
\address[LD2]{
  Mathematisch Instituut - Universiteit Leiden \\
  P.O. Box 9512; 2300 RA Leiden; The Netherlands \\ Email:  {\normalfont{\texttt{hhupkes@math.leidenuniv.nl}}}
}

\date{\today}

\begin{abstract}
\singlespacing
In this paper we 
consider the discrete Allen-Cahn equation posed on a two-dimensional rectangular lattice. We analyze the large-time behaviour of solutions that start
as bounded perturbations
to the well-known planar front solution that travels in the horizontal direction.
In particular,
we construct an asymptotic phase function $\gamma_j(t)$ and show that for each vertical coordinate $j$ the 
corresponding horizontal slice of the solution converges to the planar front
shifted by $\gamma_j(t)$. We exploit the comparison principle to show that the evolution of these phase variables
can be approximated by an appropriate discretization of the mean
curvature flow with a direction-dependent drift term. This generalizes the results obtained in \cite{Matano} for the spatially continuous setting. 
Finally, we prove that the horizontal planar wave is nonlinearly stable
with respect to perturbations that are asymptotically periodic in the vertical direction.

\end{abstract}

\begin{subjclass}
\singlespacing
34K31 \sep 37L15.
\end{subjclass}

\begin{keyword}
\singlespacing
Travelling waves, 
bistable reaction-diffusion systems,
spatial discretizations,
discrete curvature flow,
nonlinear stability,
modified Bessel functions of the first kind.
\end{keyword}

\end{frontmatter}

\section{Introduction}
\label{sec:int}

Our main aim in this paper is to explore the large time behaviour of the  Allen-Cahn lattice differential equation
(LDE)
\begin{equation}\label{eqn:intro:Allen_Cahn_LDE}
\dot{u}_{i,j}= u_{i+1,j} + u_{i-1,j} + u_{i, j+1} + u_{i,j-1} - 4u_{i,j} + g(u_{i,j};a)
\end{equation}
posed on the planar lattice $(i,j) \in \Z^2$. The nonlinearity $g(\cdot;a)\in C^2(\R)$ is of bistable type, in the sense that it has two stable equilibria at $u=0$ and $u=1$ and one unstable equilibrium at $u=a\in (0,1)$. The prototypical  example is the cubic 
\begin{equation}
\label{eq:int:def:cubic}
    g_{\mathrm{cub}}(u;a) = u(1-u)(u-a).
\end{equation}

We are interested in the stability properties of curved versions of the horizontal travelling front
\begin{equation}
\label{eq:int:trv:planar:wave:lde}
    u_{i,j}(t) = \Phi( i  - ct),
    \qquad \Phi(-\infty) = 0,
    \qquad \Phi(+\infty) = 1
\end{equation}
in the case where $c \neq 0$. In particular, for initial conditions that are $j$-uniformly `front-like' in the sense
\begin{equation}
\label{eq:int:limits:init:cond:lde}
    \limsup_{i\to-\infty}\sup_{j\in \R} u_{i,j}(0) < a, \qquad \qquad \liminf_{i\to\infty}\inf_{j\in \R} u_{i,j}(0) > a,
\end{equation}
we establish the uniform convergence
\begin{equation}
\label{eq:int:unif:cnv:to:phase:front}
    u_{i,j}(t) \to 
    \Phi\big( i - \gamma_j(t) \big),
    \qquad t \to \infty,
\end{equation}
for some appropriately constructed
transverse phase variables 
$\gamma_j(t)$.
In addition, we show that the evolution of these phases can be approximated by a discrete version of the mean curvature flow. 

After adding further restrictions
to \eqref{eq:int:limits:init:cond:lde}, a detailed analysis of this 
curvature flow allows us to establish the convergence $\gamma_j(t) \to ct + \mu$. In fact, it turns out that 
the set of initial conditions covered
by this result is significantly broader than the sets considered in earlier work \cite{hoffman2017entire,hoffman2015multi}. As a consequence, we widen the known basin of attraction for the planar horizontal wave \eqref{eq:int:trv:planar:wave:lde}. \\ 

\noindent \textbf{Continuous setting} The LDE~\eqref{eqn:intro:Allen_Cahn_LDE} can be seen as a discrete analogue of the two-dimensional Allen-Cahn PDE
\begin{equation}\label{eqn:intro:PDE}
u_t = u_{xx} + u_{yy} + g(u;a).
\end{equation}
Our primary interest here is in planar travelling travelling front solutions 
\begin{equation}\label{eqn:intro:PDE:wave:sol}
u(x,y,t) = \Phi(x\cos\theta + y\sin \theta - ct)
\end{equation}
that connect the two stable equilibria, in the sense that the waveprofile $\Phi$ satisfies
\begin{equation}
    \lim_{\xi \to -\infty} \Phi(\xi) = 0,
    \qquad 
    \qquad
    \lim_{\xi \to \infty} \Phi(\xi) = 1.
\end{equation}
Direct substitution shows that the wave $(\Phi,c)$ must satisfy the
$\theta$-independent ODE
 \begin{equation}\label{eqn:intro:tw_ODE_cont}
     - c\Phi'(\xi) = \Phi''(\xi) + g\big(\Phi(\xi);a\big),
 \end{equation}
 reflecting the rotational symmetry of \eqref{eqn:intro:PDE}. Indeed,
 \eqref{eqn:intro:tw_ODE_cont} also arises as the wave ODE for the
 one-dimensional counterpart
 \begin{equation}\label{eqn:intro:PDE:1d}
u_t = u_{xx} + g(u;a).
\end{equation}
of \eqref{eqn:intro:PDE}.
The existence of solutions to \eqref{eqn:intro:tw_ODE_cont}
can be obtained via phase-plane analysis \cite{fife2013mathematical} for any parameter $a\in (0,1)$. Moreover, the pair $(\Phi,c)$ is unique up to translations, depends smoothly on the parameter $a$, 
and admits the strict monotonicity $\Phi' > 0$.
\paragraph{Modelling background} 
Reaction-diffusion equations 
have been used as  modelling tools in many different fields. For example,
the classical papers \cite{aronson1975nonlinear,aronson1978multidimensional} use both one- and
multi-dimensional versions of such equations to describe the 
expression of genes throughout a population. Bistable nonlinearities
such as \eqref{eq:int:def:cubic}
are typically used to model the 
strong Allee effect - a biological phenomenon which arises in the field of the population dynamics \cite{taylor2005allee}.
Indeed, the parameter $a$
can be seen as a type of minimum viability threshold that a population needs to reach in order to grow,
in contrast to the standard logistic
dynamics. Adding the ability for the population to diffuse throughout its spatial habitat results in systems such as \eqref{eqn:intro:PDE} \cite{Sun2016}.
In this setting, travelling waves provide a mechanism by which species
can invade (or withdraw from) the spatial domain.

In many applications 
this spatial domain has a discrete structure, in which case it is more natural to consider the LDE \eqref{eqn:intro:Allen_Cahn_LDE}.
For example, in \cite{levin1976population,keitt2001allee}
the authors 
use this LDE to 
study populations in patchy landscapes.
This allows them to 
describe and analyze a so-called 
`invasion pinning' scenario,
wherein a species fails to propagate
as a direct consequence of the spatial discreteness.

By now, models involving LDEs have appeared in many other scientific and technological fields. For example, they have been used to describe
phase transitions in Ising models
\cite{bates1999discrete},
nerve pulse propagation in myelinated axons \cite{bell1981some,bell1984threshold,keener1987propagation,keener1991effects}, calcium channels dynamics \cite{bar2000discrete}, 
crystal growth in materials \cite{cahn1960theory}
and wave propagation through semiconductors \cite{carpio2001motion}.
For a more extensive list we recommend the surveys \cite{chow1996dynamics,chow1995pattern,hupkestraveling}.

\paragraph{Stability of PDE waves}
The first stability result for 
the wave \eqref{eqn:intro:PDE:wave:sol} in the one-dimensional
setting of \eqref{eqn:intro:PDE:1d} 
was established by Fife and McLeod in~\cite{fife1977approach}. In particular,
they showed that this wave (and its translates) attracts all solutions
$u$ with initial conditions that satisfy
\begin{equation}
\label{eq:int:bnds:init:cond:1d}
    \limsup_{x\to-\infty} u(x, 0) < a,
    \qquad \qquad
    \liminf_{x\to+\infty} u(x, 0) > a,
\end{equation}
together with $u(\cdot, 0) \in [0,1]$. This latter restriction was
later weakened to $u(\cdot, 0) \in L^\infty(\R)$  in \cite{fife1979long}.
Both these proofs rely  
on the construction of super- and sub-solutions for~\eqref{eqn:intro:PDE:1d} in order to exploit the comparison principle for parabolic equations.  More recently, similar
large-basin stability results have been obtained using variational methods
that do not appeal to the comparison principle \cite{gallay2007variational,risler2008global}.


In \cite{kapitula1997multidimensional}, Kapitula established the  multidimensional stability of traveling waves in $H^k(\R^n)$, for $n \ge 2$ and $ k\geq \lfloor \frac{n+1}{2} \rfloor $.
These results were recently extended
by Zeng \cite{Zeng2014stability},
who considered perturbations in 
$L^\infty(\R^n)$. 
An alternate stability proof exploiting the comparison principle can be found in
the seminal paper \cite{berestycki2009bistable},
where the authors study the interaction of travelling fronts with compact obstacles.
Let us also mention
the pioneering works \cite{XIN1992,LEVXIN1992} 
which contain the first stability results
for $n \ge 4$  together with partial results for $n=2,3$.

Based on the techniques developed by Kapitula, Roussier \cite{ROUSSIER2003}
was able to consider `asymptotically spherical' waves and establish their
stability under spherically symmetric perturbations. Such solutions behave as
\begin{equation}
    u(x,y,t) \to \Phi\Big( \sqrt{x^2 + y^2} - c t - c^{-1} \ln t \Big),
    \qquad 
    \qquad
    t \to \infty
\end{equation}
and were first studied by Uchiyama and Jones \cite{jones1983spherically, uchiyama1985asymptotic}. Note that the extra time dependence highlights
the important role 
that
curvature-driven effects have to play.

\paragraph{Curved PDE fronts}
Our work in the present paper is inspired heavily by the results
for \eqref{eqn:intro:PDE}
obtained by Matano and Nara in \cite{Matano}. They considered
bounded initial conditions satisfying the limits
\begin{equation}
\label{eq:int:front:like:pde:init:cond}
    \limsup_{x\to-\infty}\sup_{y\in \R} u_0(x,y) < a, \quad  \liminf_{x\to\infty}\inf_{y\in \R} u_0(x,y) > a,
\end{equation}
which form the natural two-dimensional generalization
of \eqref{eq:int:bnds:init:cond:1d}. They show that
eventually horizontal cross-sections of $u$ become sufficiently monotonic
to allow a phase $\gamma = \gamma(y,t)$ 
to be uniquely defined by the requirement
\begin{equation}
u\big( \gamma(y, t), y , t \big) = \Phi(0).
\end{equation}

These phase variables can be used to characterize the asymptotic behaviour of $u$. In particular, the authors establish the limit
\begin{equation}\label{eqn:intro:conv_gamma_PDE}
    \lim_{t\to\infty}\sup_{(x,y)\in \R^2}|u(x, y, t) - \Phi\big(x-\gamma(y,t)\big)|  = 0
\end{equation}
and show that - asymptotically - the phase $\gamma$ closely tracks
solutions $\Gamma$ to the PDE
\begin{equation}\label{eqn:intro:mcf}
 \dfrac{\Gamma_t}{\sqrt{1+\Gamma_y^2}} 
 = \dfrac{\Gamma_{yy}}
 {(1+\Gamma_y^2)^{3/2}}
 + c.
 %
\end{equation}
Upon supplementing \eqref{eq:int:front:like:pde:init:cond}
with the requirement that the initial condition
$u(\cdot, \cdot, 0)$ is uniquely ergodic in the $x$-direction, 
a careful analysis of \eqref{eqn:intro:mcf}
can be used to show that $\gamma(y,t) \to ct + \mu$
for some $\mu \in \R$. This can hence be interpreted
as a stability result for the planar waves \eqref{eqn:intro:PDE:wave:sol}
under a large class of non-localized perturbations. Note however
that no information is provided on the rate at which the convergence
takes place. Very recently - and simultaneously with our analysis here - Matano,
Mori and Nara generalized this approach to consider radially expanding surfaces in anistropic continuous media \cite{matano2019asymptotic}.

\paragraph{Mean curvature flow} 
In order to interpret the PDE \eqref{eqn:intro:mcf}, we consider
the interfacial graph
$G(t):=\left\{(\Gamma(y, t), y):y\in \R \right\}$. Writing $\nu(y,t)$ for the rightward-pointing normal vector,
$V(y,t)$ for the horizontal velocity vector
and $H(y,t)$ for the curvature
at the point $(\Gamma(y,t), y)$,
we obtain
\begin{equation}
    \nu = \big[ 1 + \Gamma_y^2 \big]^{-1/2} (1 , - \Gamma_y),
    \qquad \qquad
    V = (\Gamma_t, 0 \big),
    \qquad \qquad
    H = \big[ 1 + \Gamma_y^2 \big]^{-3/2} \Gamma_{y y}.
\end{equation}
In particular, \eqref{eqn:intro:mcf}
can be written in the form
\begin{equation}
  \label{eq:int:simpl:mean:curv:PDE}
    V\cdot \nu = H + c,
\end{equation}
which can be interpreted as a mean curvature flow with an additional normal drift of size $c$. It is no coincidence that this drift does not depend on $\nu$: it reflects the fact that the speed of
the planar waves \eqref{eqn:intro:PDE:wave:sol} does not depend on the angle $\theta$.

In a sense, it is not too surprising
that the mean curvature flow plays a role in the asymptotic dynamics of wave interfaces. Indeed, one of the main historical reasons for considering the Allen-Cahn PDE is that it actually desingularizes this flow by smoothing out the transition region \cite{ALLEN19791085,deckelnick2005computation}. However, from a technical point of view, its role in \cite{Matano} is actually rather minor.

Instead, the main PDE used to capture the behaviour of the phase $\gamma$ is 
the nonlinear heat equation
\begin{equation}\label{eqn:intro:nonlinear_eqn_for_V}
    V_t = V_{yy} + \dfrac{c}{2}V_y^2 + c. 
\end{equation}
This PDE can be reformulated as a standard linear heat equation by a Cole-Hopf transformation and hence explicitly solved.
These solutions can subsequently be used
to construct super- and sub-solutions 
to~\eqref{eqn:intro:PDE} of
the form
\begin{equation}\label{eqn:intro:supersols}
    u^\pm(x,y,t) = \Phi\left(\dfrac{x- V(y,t)}{\sqrt{1+V_y^2}} \pm q(t)\right) \pm p(t),
\end{equation}
in which $q$ and $p$ are small correction terms that allow spatially homogeneous perturbations at $t =0 $ to be traded
off for phase-shifts as $t \to \infty$.

Using the comparison principle, one can use the functions \eqref{eqn:intro:supersols} to show that the phase $\gamma$ can be approximated asymptotically by $V$.
A second comparison principle argument
subsequently shows that $V$ can be used to track the solution $\Gamma$ of~\eqref{eqn:intro:mcf}. It therefore
plays a crucial role as an intermediary to obtain the desired relation between $\gamma$ and $\Gamma$.

\paragraph{Spatially discrete travelling waves} Plugging the  travelling wave ansatz
\begin{equation}\label{eqn:intro:LDE:wave:sol}
u_{ij}(t) = \Phi(i\cos\theta + j\sin \theta - ct),
\qquad \qquad
\Phi(-\infty) = 0,
\qquad
\Phi(+\infty) = 1
\end{equation}
into the Allen-Cahn LDE~\eqref{eqn:intro:Allen_Cahn_LDE},
we obtain the functional differential equation of mixed type (MFDE)
\begin{equation}\label{eqn:intro:MFDE}
    -c\Phi'(\xi) = \Phi(\xi + \cos \theta) + \Phi(\xi - \cos \theta) + \Phi(\xi + \sin \theta) + \Phi(\xi - \sin \theta) - 4\Phi(\xi) + g\big(\Phi(\xi);a\big).
\end{equation}
The existence of such waves
$(\Phi_\theta, c_\theta)$
was first obtained
for the horizontal direction $\theta = 0$
\cite{Hankerson1993,zinner1992existence}
and subsequently generalized to arbitrary directions \cite{Mallet-Paret1999}.
This $\theta$-dependence is a direct consequence of the anistropy of the lattice,
which breaks the rotational symmetry of the PDE \eqref{eqn:intro:PDE}. 

A second important difference between
\eqref{eqn:intro:tw_ODE_cont} and \eqref{eqn:intro:MFDE} is 
that the character of the latter 
system depends crucially on the speed $c$, which depends uniquely but intricately on the parameters $(\theta, a)$.
When $c \neq 0$ the associated waveprofile
is unique up to translation and satisfies $\Phi' > 0$. When $c = 0$ however,
one loses the uniqueness and smoothness of waveprofiles. In addition, monotonic and non-monotonic profiles typically coexist. This behaviour is a direct consequence
of the fact that \eqref{eqn:intro:MFDE}
reduces to a difference equation, posed
on a discrete ($\tan \theta \in \mathbb{Q}$)
or dense ($\tan \theta \notin \mathbb{Q}$) subset of $\R$. The transition between these two regimes is a highly interesting and widely studied topic, focusing on themes such as propagation failure \cite{HJH2011, HMP10, keener1987propagation},  crystallographic pinning  \cite{MPCP, HMP10} and frictionless kink propagation \cite{barashenkov2005translationally, DMIKEVYOS2005}; see \cite{hupkestraveling} for an overview. 

For the remainder of the present paper we only consider
the case $c \neq 0$ and shift our attention to the stability properties of the associated waves. In one spatial dimension
Zinner obtained the first stability 
result \cite{zinner1991stability}, 
which was followed by the development
of a diverse set of tools exploiting
either the comparison principle \cite{ChenGuoWu2008},
monodromy operators \cite{CMPS98} or spatial-temporal Green's functions \cite{Beck2010nonlinear, HJHFHNINFRANGE}.
The first stability result in two spatial dimensions
was obtained in \cite{hoffman2015multi}
for waves travelling in arbitrary rational
($\tan \theta \in \mathbb{Q}$) directions. Taking $\theta = 0$ here for presentation purposes,
the authors consider initial
conditions of the form
\begin{equation}
\label{eq:int:init:cond:2d}
u_{i,j}(0) = \Phi(i) + v^0_{i,j}
\end{equation}
and show that $u$ converges algebraically
to the horizontal wave $\Phi( i -ct)$.
%
Here the initial perturbation $v^0$
is taken to be sufficiently small in $\ell^\infty\big(\Z ; \ell^1(\Z;\R)\big)$.
In particular, the perturbation $v^0$ is only required to be localized in the direction perpendicular to the wave propagation.  

The restriction $\tan \theta \in \mathbb{Q}$ was removed in the sequel
paper \cite{hoffman2017entire},
where the initial perturbation
$v^0$ in \eqref{eq:int:init:cond:2d}
can be of arbitrary size as long as it is localized in the sense that
\begin{equation}
\label{eq:int:localization:v:zero}
    \lim_{|i| + |j| \to \infty} |v^0_{i,j}| = 0.
\end{equation}
The proof relies on the construction of explicit sub- and super-solutions to the LDE \eqref{eqn:intro:Allen_Cahn_LDE}, 
generalizing the PDE constructions from \cite{BHM}. This construction is especially delicate for the cases $\theta \notin  \frac{\pi}{4} \mathbb{Z}$, where the disalignment with the lattice directions causes slowly decaying modes that need to be carefully controlled.

\paragraph{Curved LDE fronts} 
In order to avoid the problematic slowly decaying terms discussed above,
we restrict ourselves to 
the horizontal 
waves \eqref{eq:int:trv:planar:wave:lde}
throughout the remainder of the paper.
The novelty is that we allow general bounded
initial conditions that satisfy the
limits
\eqref{eq:int:limits:init:cond:lde}. 
To compare this with the discussion above, we note that this class includes initial conditions
of the form
\begin{equation}
\label{eq:int:init:cond:kappa:v0:decomp}
    u_{i,j}(0) = \Phi\big(i - \kappa_j \big) + v^0_{i,j},
\end{equation}
in which $\kappa$ is an arbitrary bounded
sequence and $v^0$ is allowed to be
small in $\ell^\infty\big(\Z; \ell^1(\Z;\R)\big)$ or to satisfy the localization condition \eqref{eq:int:localization:v:zero}. In particular, we significantly expand the set of initial conditions that were
considered in \cite{hoffman2015multi,hoffman2017entire}.

Our main aim is to follow the program of \cite{Matano} that we outlined above as closely as possible. However, the first obstacle already arises when one attempts to define appropriate phase coordinates $\gamma_j(t)$ for $t \gg 1$. Indeed,
it no longer makes sense to define the interface of $u(t)$ as the set of points where $u_{i,j}(t) = \Phi(0)$, since this solution set can behave highly erratically due to the discreteness of the spatial variables. To resolve this, we establish an asymptotic monotonicity result in the interfacial region where $u_{i,j}(t) \approx \Phi(0)$. This allows us to `fill' the troublesome gaps between lattice points by performing a spatial interpolation based on the shape of $\Phi$; see Fig.~\ref{fig:intro:phase_gamma}.
\begin{figure}[t]
\centering
  \includegraphics[width=\linewidth]{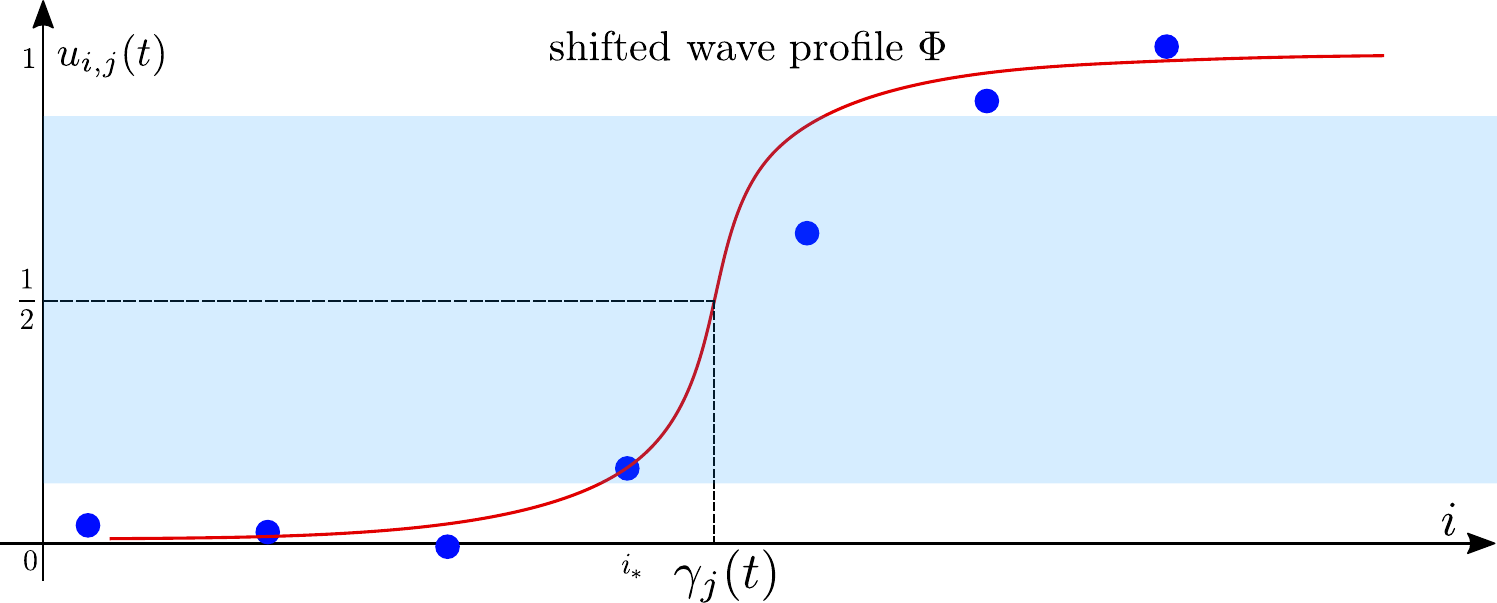}
  \caption{
  In~{\S \ref{sec:large_time_behaviour}} we show that for each $j\in \Z$ and $t\gg 0$, the function $i\mapsto u_{i,j}(t)$ is monotonic inside an interfacial region $I$ that is depicted in light blue. The dark blue dots represent the horizontal solution slice $i\mapsto u_{i,j}(t)$. Since $u$ is monotonic inside $I$, we can find an unique value $i_*$  for which $ u_{i_*, j}(t) \leq 1/2 < u_{i_*+1, j}(t)$.  We subsequently shift the travelling wave profile $\Phi$ in such a way that it matches the solution slice at $i_*$. The  phase $\gamma_j(t)$ is then defined as the argument where this shifted profile equals one half. }
  \label{fig:intro:phase_gamma}
\end{figure}

This fundamental problem
of not being able to move continuously
between lattice points occurs in many other parts of our analysis. For example,
we need to construct so-called $\omega$-limit points of solution sequences in order to establish the uniform
convergence \eqref{eq:int:unif:cnv:to:phase:front}.
In \cite{Matano} this is achieved by
passing to a new coordinate $x' = x - ct$ that `freezes' the wave at the cost of an extra convective term in the PDE \eqref{eqn:intro:PDE}. Such a coordinate transformation does not exist in the discrete case, forcing us to use a more involved discontinuous version of this freezing process.
\paragraph{Discrete curvature flow}
We remark that it is by no means a-priori clear how the mean curvature PDE
\eqref{eqn:intro:mcf} should be discretized in order to track the discrete phase coordinates $\gamma_j(t)$. For example, there is more than one reasonable way 
to define geometric notions such as normal vectors and curvature in discrete settings \cite{crane2018discrete}. On the other hand, the discussion above shows that 
there may be range of `suitable' choices,
as we only desire the tracking to be approximate.

Introducing the convenient notation
\begin{equation}
    [\beta_{\Gamma}]_j = \sqrt{1+\dfrac{(\Gamma_{j+1} - \Gamma_j)^2 + (\Gamma_{j-1} - \Gamma_j)^2 }{2} },
    \qquad\qquad
    [\partial^{(2)} \Gamma]_j
    = \Gamma_{j+1} + \Gamma_{j-1} - 2 \Gamma_j,
\end{equation}
we will use the standard symmetric discretizations
\begin{equation}
    V \cdot \nu \mapsto \beta_{\Gamma}^{-1} \dot{\Gamma},
    \qquad \qquad 
    H \mapsto   \beta_{\Gamma}^{-3} \partial^{(2)} \Gamma
\end{equation}
for the normal velocity and curvature terms in \eqref{eq:int:simpl:mean:curv:PDE}. However, the remaining normal drift term requires more care to account for the direction dependence of the planar front speeds. In particular, it seems natural make the replacement
\begin{equation}\label{eqn:intro:cpm}
    c \mapsto \frac{1}{2} \big( c_{\theta^+} + c_{\theta^-} \big),
\end{equation}
in which 
the angles 
\begin{equation}\label{eqn:intro:theta_pm}
    \theta^- = \arctan{(\Gamma_j - \Gamma_{j-1})},
    \qquad \qquad
    \theta^+ = \arctan{(\Gamma_{j+1} - \Gamma_{j})}
\end{equation}
measure the orientation of the normal vectors
for the lower and upper segments of the interface at $(\Gamma_j, j)$;
see Fig. \ref{fig:intro:drift_term}.
\begin{figure}[t]
\centering
  \includegraphics[width=\linewidth]{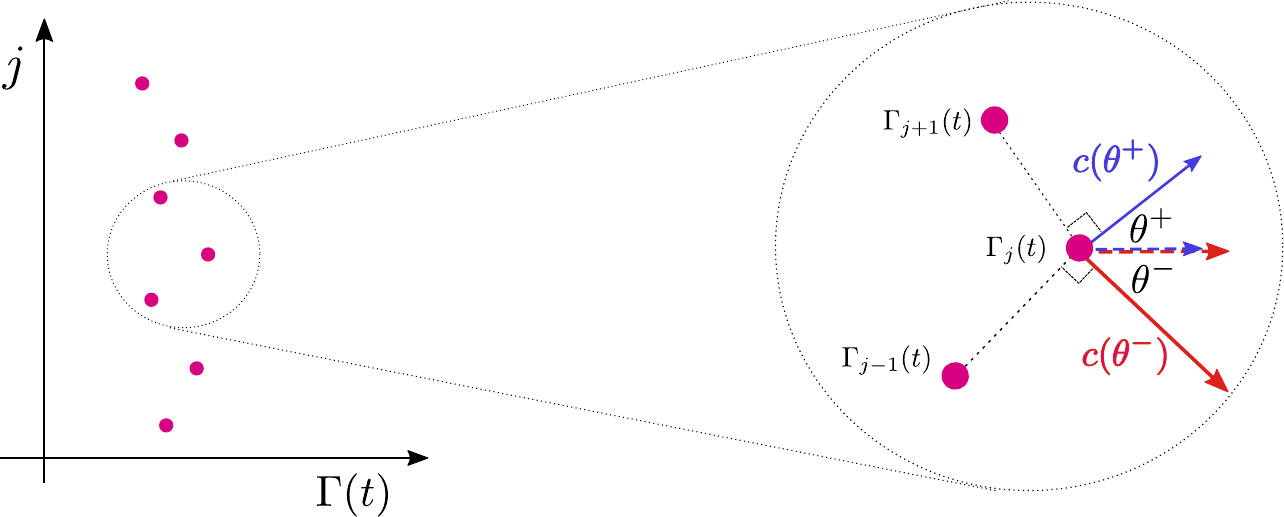}
  \caption{The panel on the left represents a graph $j\mapsto \Gamma_j(t)$ at a fixed time $t$. The right panel zooms in on three nodes
  of this graph to illustrate the identities~\eqref{eqn:intro:cpm} and~\eqref{eqn:intro:theta_pm} that underpin the drift term in our discrete curvature flow. }
  \label{fig:intro:drift_term}
\end{figure}

In order to make this more explicit, we use the
identity $[\partial_\theta c_{\theta}]_{\theta=0} = 0$
derived in \cite[Lem. 2.2]{hupkes2019travelling}
to obtain the expansions
\begin{equation}
    c_{\theta_-} \sim c+ \dfrac{1}{2}[\partial_\theta^2 c_\theta]_{\theta=0} (\Gamma_j - \Gamma_{j-1})^2,
    \qquad \qquad 
    c_{\theta_+} \sim 
    c+ \dfrac{1}{2}[\partial_\theta^2 c_\theta]_{\theta=0} (\Gamma_{j+1} - \Gamma_{j})^2 ,
\end{equation}
which suggests the replacement
\begin{equation}
\label{eq:int:drift:ajd:ii}
    c \mapsto c + \dfrac{1}{2}[\partial_\theta^2 c_\theta]_{\theta=0}
    \big( \beta_{\Gamma}^2 - 1 \big).
\end{equation}
In order to prevent the quadratic growth in this term, we make the final adjustment
\begin{equation}
\label{eq:int:drift:ajd:iii}
    c \mapsto c + [\partial_\theta^2 c_\theta]_{\theta=0} \big( 1 - \beta_{\Gamma}^{-1}  \big),
\end{equation}
which agrees with \eqref{eq:int:drift:ajd:ii} up to second order
in the differences $\Gamma_{j \pm 1} - \Gamma_j$. 

All in all,
the discrete mean curvature flow that we use in this paper
to approximate the phases $\gamma_j$ can be written as
\begin{equation}
\label{eq:int:curv:flow:discrete}
    \beta_{\Gamma}^{-1} \dot{\Gamma}
    =   \beta_{\Gamma}^{-3} \partial^{(2)} \Gamma
    + c + [\partial_\theta^2 c_\theta]_{\theta=0} \big( 1 - \beta_{\Gamma}^{-1}  \big).
\end{equation}
While this justification appears to be rather ad-hoc, it turns out that
our approximation procedure is not sensitive to 
$O\big( ( \Gamma_{j \pm 1} - \Gamma_j )^3 \big)$-correction terms.
In addition, we explain below how the crucial lower order terms can be recovered by independent technical considerations.

\paragraph{Super- and sub-solutions}
The technical heart of this paper is formed by our construction of 
suitable spatially discrete versions of the sub- and super-solutions
\eqref{eqn:intro:supersols}. The correct generalization of
\eqref{eqn:intro:nonlinear_eqn_for_V} that preserves the Cole-Hopf structure
turns out to be
\begin{equation}\label{eqn:intro:nonlinear_V_exp}
    \dot V_j = \dfrac{1}{d}\left(e^{d(V_{j+1}- V_j)} - 2 + e^{d(V_{j-1} - V_j)}\right) + c,
\end{equation}
in which we are still free to pick the coefficient $d$. Indeed,
this LDE reduces to the discrete heat equation upon picking
$h(t) = e^{d(V-ct)}$.

However, the discrete Laplacian
spawns terms proportional to $\Phi'' (\beta_V^2 - 1)$
if one simply 
substitutes a direct discretization of the PDE super-solution
\eqref{eqn:intro:supersols} with \eqref{eqn:intro:nonlinear_V_exp}
into \eqref{eqn:intro:Allen_Cahn_LDE}.
These terms decay as $O(t^{-1})$ and hence cannot be integrated 
and absorbed into the phaseshift $q(t)$.

Similar difficulties were also encountered in \cite{hoffman2017entire}. The novelty here is that this troublesome behaviour occurs even for the horizontal direction $\theta = 0$, which is completely aligned with the lattice.
Inspired by the normal form approach developed in \cite{hoffman2017entire},
we therefore set out to construct sub- and super-solutions of the form
\begin{equation}
    u^{\pm}_{i,j}(t) = \Phi\big(i-V_j(t) \pm q(t)\big)  +  r\big(i-V_j(t) \pm q(t)\big)([\beta_V]_j^2 - 1) \pm p(t),
\end{equation}
using the extra residual function $r$ to neutralize the slowly decaying terms. Working through the computations, it turns out the relevant condition on 
the pair $(r,d)$ can be formulated as
\begin{equation}
    \mathcal{L}_{\mathrm{tw}} r + d\Phi' = -\Phi'',
\end{equation}
in which the Fredholm operator $\mathcal{L}_{\mathrm{tw}}$ encodes the linearization of the wave MFDE \eqref{eqn:intro:MFDE} around $\Phi$;
see {\S}\ref{sec:sub:sup}. Using the Fredholm theory for MFDEs developed in \cite{Mallet-Paret1999_Fredholm,Mallet-Paret1999} together with the computations in {\S}\ref{sec:asymp} and \cite[{\S}2]{hupkes2019travelling}, it turns out that $d$ must be given by
\begin{equation}
    d = \frac{1}{2}c + \frac{1}{2}[\partial_\theta^2 c_\theta]_{\theta=0} = \frac{1}{2}\big[\partial^2_\theta \mathcal{D}(\theta) \big]_{\theta = 0},
\end{equation}
in which the quantity
\begin{equation}
    \mathcal{D}(\theta) = \frac{c_{\theta}}{\cos \theta}
\end{equation}
is referred to as the directional dispersion. This quantity measures the horizontal speed of waves travelling in the direction $\theta$,
which also plays an important role in the construction of travelling corner solutions to \eqref{eqn:intro:Allen_Cahn_LDE}.

\begin{figure}[t]
    \centering
        \begin{subfigure}{0.5\textwidth}
\includegraphics[width=1\columnwidth]{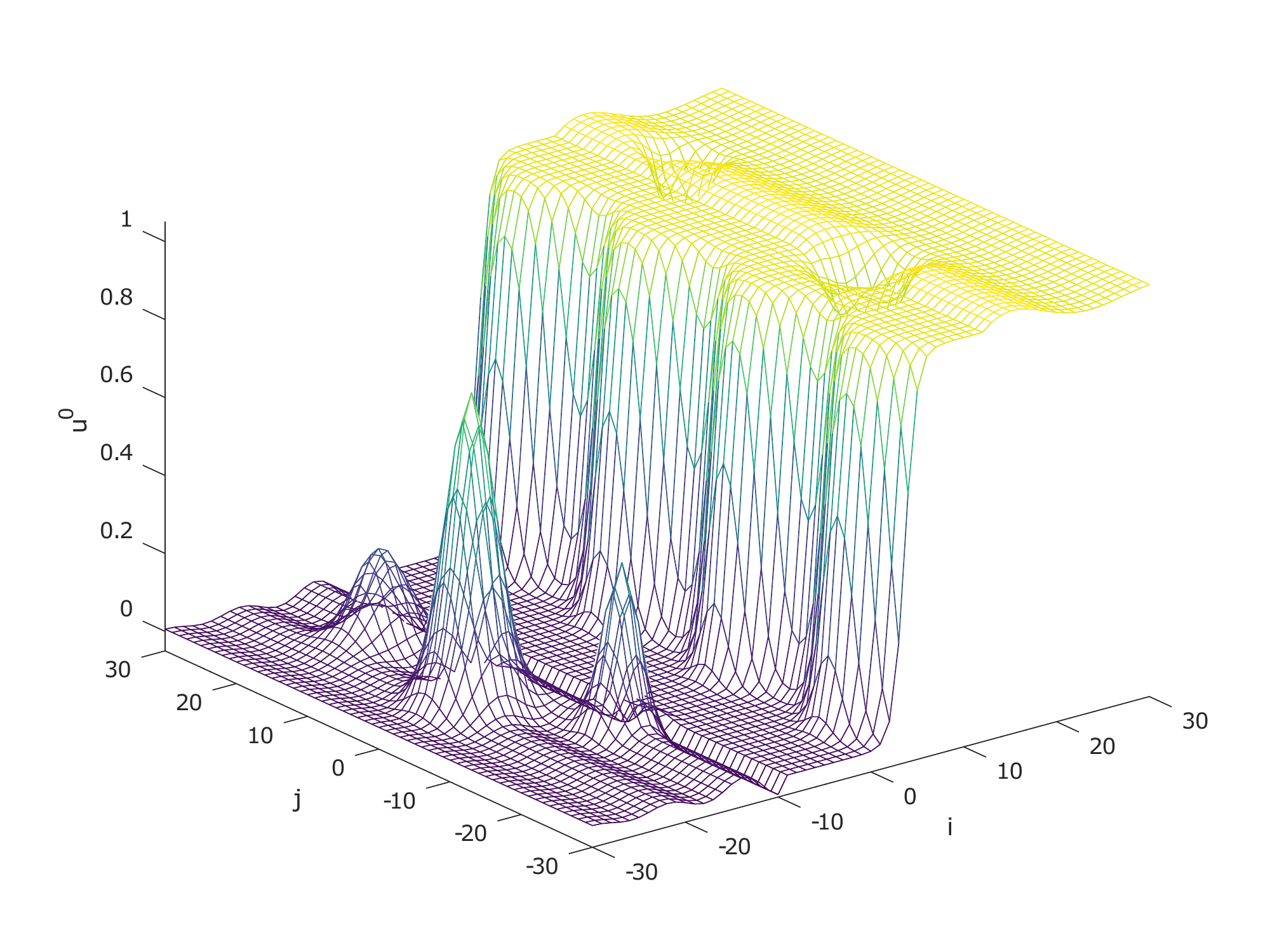}
        \caption{}
            \label{fig:intro:initial_conditions:a}
    \end{subfigure}%
    \begin{subfigure}{0.5\textwidth}
\includegraphics[width=1\columnwidth]{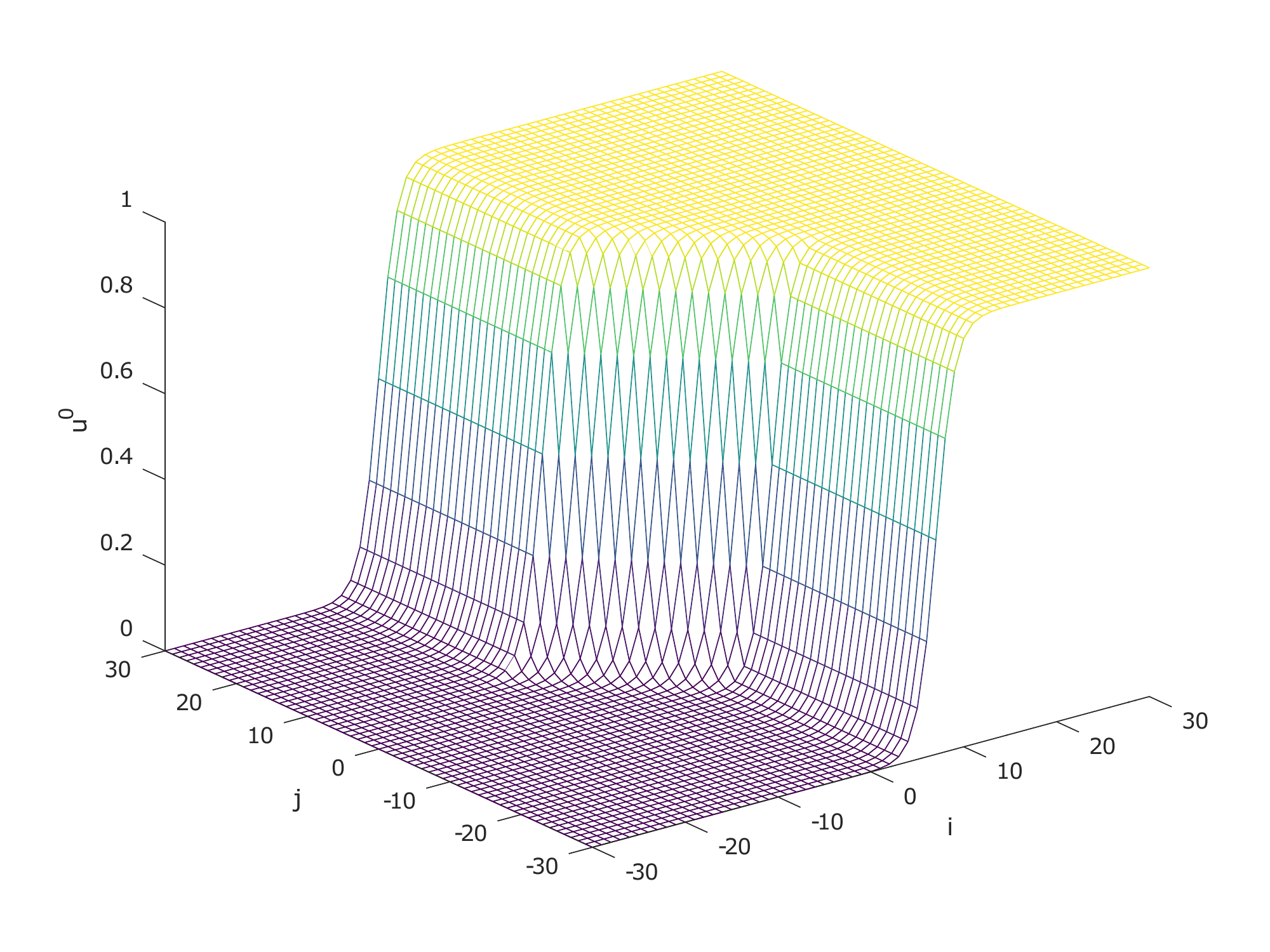}
        \caption{}
            \label{fig:intro:initial_conditions:b}    
    \end{subfigure}
    \caption{
    Both panels illustrate front-like initial conditions
  that satisfy 
  \eqref{eq:int:limits:init:cond:lde} and hence fall within the framework of this paper.  Panel a) provides an example of an initial perturbation that  converges uniformly to a traveling front. On the contrary, the initial perturbation in b) does not uniformly converge to a traveling planar front, but the evolution of the interface is described asymptotically by \eqref{eq:int:curv:flow:discrete}.
    }
    \label{fig:intro:initial_conditions}
\end{figure}


\paragraph{Stability results}
As a by-product of our analysis, we are able to extend
the stability results obtained previously in \cite{hoffman2015multi,hoffman2017entire}. For example, if the phase
sequence $\kappa$ appearing in the initial condition \eqref{eq:int:init:cond:kappa:v0:decomp}
is periodic (see e.g. 
Fig.~\ref{fig:intro:initial_conditions:a}),
we show that there exists an asymptotic phase $\mu \in \R$ for which we have the convergence $\gamma(t) \to ct+\mu$ as $t \to \infty$. In particular, the horizontal planar wave retains its stability under such perturbations, provided we allow for a phase-shift. 

In order to prove this result,
we first analyze the behaviour of 
\eqref{eqn:intro:Allen_Cahn_LDE}
and
\eqref{eqn:intro:nonlinear_V_exp}
when applied to $j$-periodic sequences. We subsequently add a localized initial perturbation and show that the effects remain localized in some sense. Since the heat-equation eventually eliminates such localized perturbations, the desired asymptotic convergence persists.
We remark that our stability result is slightly less general than its continuous counterpart from \cite{Matano}, since it is not yet clear to us how ergodicity properties can be transferred to our discrete setting.

We emphasize that this stability result does not hold for arbitrary bounded $\kappa$ in \eqref{eq:int:init:cond:kappa:v0:decomp}. For example, if there exist $\kappa^-$ and $\kappa^+$ for which we have the limits
\begin{equation}
    \lim_{j \to - \infty} \kappa_j = \kappa^-, \qquad \qquad
    \lim_{j \to + \infty} \kappa_j = \kappa^+
\end{equation}
(see e.g. Fig.~\ref{fig:intro:initial_conditions:b}), 
then the results in {\S}\ref{sec:per}
imply that for every $t >0$ we have the convergence
\begin{equation}
\label{eq:int:kappa:pm:limits}
    u_{i,j}(t) \to  \Phi(i-ct - \kappa^{\pm}) \quad \  \text{as}\ j\to\pm\infty,
\end{equation}
uniformly in $i$.
In particular, the interface $\gamma(t)$ describes the phase transition between $\kappa^-$ and $\kappa^+$, which is asymptotically captured by
\eqref{eq:int:curv:flow:discrete}.

\paragraph{Organization}
After formulating our assumptions and main results in {\S}\ref{sec:mr}, we transfer the standard $\omega$-limit point constructions for the PDE \eqref{eqn:intro:PDE} to our discrete setting in {\S}\ref{sec:omega}. In {\S}\ref{sec:Trapped entire solutions}
we (partially) generalize the results from \cite{berestycki2007generalized} concerning trapped
entire solutions to the setting of \eqref{eqn:intro:Allen_Cahn_LDE}. In particular, we prove that every entire solution of the Allen-Cahn LDE trapped between two traveling waves is a traveling wave itself. 
In {\S}\ref{sec:large_time_behaviour} we focus on the large-time behaviour of the solution $u$ and establish
the discrete counterpart of \eqref{eqn:intro:conv_gamma_PDE}.
We move on in {\S}\ref{sec:dht} to obtain decay estimates for 
discrete gradients of solutions to the discrete heat equation. We exploit these in {\S}\ref{sec:sub:sup} to construct super- and sub-solutions, which we use in~{\S}\ref{sec:asymp} to approximate the phase $\gamma$ with the solution of 
the  discrete mean curvature flow
\eqref{eq:int:curv:flow:discrete}.
Finally, in {\S}\ref{sec:per} we establish the stability results discussed above for the horizontal planar travelling wave. 

\paragraph{Acknowledgments}
Both authors acknowledge support from the Netherlands Organization for Scientific Research (NWO) (grant 639.032.612).

\section{Main results}
\label{sec:mr}

Our principal interest in this paper is the 
discrete  Allen-Cahn equation 
\begin{align}
\label{eqn:main_results:discrete AC}
&\dot{u}_{i,j} =  (\Delta^+u)_{i,j} +  g(u_{i,j}) 
\end{align}
posed on the planar lattice
$(i,j) \in \Z^2$.
The discrete Laplacian $\Delta^+:\ell^\infty(\Z^2)\to \ell^\infty(\Z^2)$ is defined as
\begin{equation}
 (\Delta^+u)_{i,j}=   u_{i+1,j} + u_{i,j+1} + u_{i-1, j} + u_{i,j-1} - 4u_{i,j},
\end{equation}
while the nonlinearity is 
assumed to satisfy the following
bistability condition.

\begin{itemize}
    \item[(Hg)]
 The nonlinear function $g:\R\to\R$ is $C^2$-smooth and there exists $a\in (0,1)$ for which we have
\begin{equation}\label{assumption:main_results:conditions on g:a}
g(0)=g(a) = g(1)=0, \qquad g'(0) < 0, \qquad g'(1) <0. 
\end{equation}
In addition, we have the inequalities
\begin{equation}
\label{assumption:main_results:conditions on g:b}
g> 0 \text{ on } (-\infty, 0)\cup (a, 1),
\qquad \qquad
g< 0 \text{ on } (0,a) \cup (1, \infty).
\end{equation}
\end{itemize}

Existence results for planar traveling wave solutions of \eqref{eqn:main_results:discrete AC}
were established in \cite{Mallet-Paret1999}. More precisely, if we pick an arbitrary angle $\theta \in [0, 2\pi)$,
then \eqref{eqn:main_results:discrete AC}
admits a solution
of the form
\begin{equation}
\label{eq:mr:wave:ansatz}
    u_{ij}(t) = \Phi_\theta(i \cos \theta + j \sin \theta - c_\theta t),
\end{equation}
for some wave speed $c_\theta \in \R$
and wave profile $\Phi_\theta : \R \to \R$
that satisfies 
the boundary conditions
\begin{equation}
\label{eqn:main_results:boundary:cond:for:phi}
\lim_{\xi\to-\infty} \Phi_\theta(\xi) = 0, \quad \lim_{\xi\to+\infty} \Phi_\theta(\xi) = 1.
\end{equation}
Substituting the Ansatz
\eqref{eq:mr:wave:ansatz}
into \eqref{eqn:main_results:discrete AC},
we see that the 
the pair $(\Phi_\theta,c_\theta)$ must satisfy
the MFDE
\begin{equation}\label{eqn:main_results:MFDE}
    - c_\theta \Phi_\theta'(\xi) = 
    \Phi_\theta(\xi + \cos \theta ) + \Phi_\theta(\xi + \sin \theta ) +\Phi_\theta(\xi - \cos \theta ) + \Phi_\theta(\xi - \sin \theta )- 4 \Phi_\theta(\xi) + g\big(\Phi_\theta(\xi) 
    \big).
\end{equation}

The results in \cite{Mallet-Paret1999}
state that $c_\theta$ is unique.
In addition, when $c_\theta \neq 0$,
the wave profile $\Phi_\theta$ is unique up to translation and satisfies $\Phi'_\theta > 0$.
In this paper, we are interested
in planar waves that travel in the horizontal direction $\theta =0$.
Since we rely on smoothness properties of the wave profile, we demand 
that $c_0 \neq 0$.

\begin{itemize}
\item[(H$\Phi$)] There exists 
a non-zero speed $c\neq 0$ and a wave profile $\Phi\in C^1(\R, \R)$ 
so that the  pair $(\Phi,c)$
satisfies the
boundary conditions
\eqref{eqn:main_results:boundary:cond:for:phi} and the
MFDE \eqref{eqn:main_results:MFDE} 
for the horizontal direction $\theta = 0$. 
In addition, we have the normalization
$\Phi(0) = \frac{1}{2}$.
\end{itemize}

Our main results concern the
Cauchy problem for the Allen-Cahn LDE. In particular, we look for 
functions
\begin{equation}
\label{eq:mr:def:sol:u}
    u \in C^1\big([0, \infty); \ell^\infty(\Z^2) \big)
\end{equation}
that satisfy
the LDE \eqref{eqn:main_results:discrete AC} 
for $t > 0$ together with the initial
condition
\begin{align}
&u_{i,j}(0) = u^0_{i,j} \label{eqn:main_results:initial condition}
\end{align}
for some $u^0 \in \ell^\infty(\Z^2)$.
Observe that the comparison principle together with
the bistable structure of $g$ imply
that such solutions are unique
and exist globally. We impose
the following structural condition
on $u^0$.

\begin{itemize}
\item[(H0)] The initial condition 
$u^0 \in \ell^\infty(\Z^2)$ satisfies
the inequalities
\begin{equation}\label{assumption:main_reusults:condition on u0}
    \limsup_{i\to-\infty} \sup_{j\in\Z} u^0_{i,j} < a, 
    \quad \qquad
    \liminf_{i\to\infty} \inf_{j\in\Z} u^0_{i,j} > a .
\end{equation}
\end{itemize}
Notice that we do not impose the usual assumption $0\leq u^0\leq 1$
or any kind of decay
in the spatial limits. As explained in detail in {\S}\ref{sec:int},
this condition is less restrictive than its counterparts
from \cite{hoffman2015multi,hoffman2017entire} and includes the
general class \eqref{eq:int:init:cond:kappa:v0:decomp}.


\subsection{Interface formation}

Our first goal is to 
find a link between the solution \eqref{eq:mr:def:sol:u}
of the general Cauchy problem
for \eqref{eqn:main_results:discrete AC} 
and the planar travelling
wave $(\Phi, c)$. The result below
provides a key tool for this purpose
when $t \gg 1$.
In particular, it establishes
that for each fixed $j \in \Z$,
the horizontal slice $i \mapsto u_{ij}(t)$ `crosses through' the value $u =\frac{1}{2}$ in a monotonic fashion. 

\begin{prop}[see {\S}\ref{sec:large_time_behaviour}]
\label{prp:main_results_existence of gamma}
There exists a time $T>0$ such that for every $j\in\Z$ and $t\geq T$ there exists a unique $i_*=i_*(j,t)$ with the property 
\begin{equation} \label{eqn:main_results:i(j,t))}
0 < u_{i_*,j}(t) \leq \dfrac{1}{2}, \quad u_{i_*+1, j}(t) >    \dfrac{1}{2}. \end{equation}
\end{prop}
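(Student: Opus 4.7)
The approach is to first sandwich $u$ between two shifted planar traveling fronts, and then use a compactness-plus-rigidity argument based on Section~\ref{sec:Trapped entire solutions} to establish the asymptotic monotonicity needed for uniqueness. The sandwich step is standard: a Fife-McLeod style super/sub-solution construction, carried out in one dimension in \cite{ChenGuoWu2008}, extends to (H0) on $\Z^2$ via the comparison principle and yields constants $\zeta_- < \zeta_+$, $q_0 > 0$, $\beta > 0$ such that for every $(i,j) \in \Z^2$ and $t \geq 0$
\begin{equation*}
\Phi(i - ct + \zeta_-) - q_0 e^{-\beta t} \leq u_{i,j}(t) \leq \Phi(i - ct + \zeta_+) + q_0 e^{-\beta t}.
\end{equation*}

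\emph{Existence.} I would fix $T$ large enough that $q_0 e^{-\beta T}$ is small compared with $\min\{1/2 - \Phi(-1),\,\Phi(1) - 1/2\}$, and take any $t \geq T$. Using $\Phi(\pm \infty) \in \{0,1\}$, the upper bound forces $u_{i,j}(t) < 1/2$ for $i$ sufficiently far below $ct$, while the lower bound forces $u_{i,j}(t) > 1/2$ for $i$ sufficiently far above $ct$. Hence $A := \{i \in \Z : u_{i,j}(t) \leq 1/2\}$ is non-empty and bounded above, so $i_* := \max A$ exists and automatically satisfies $u_{i_*,j}(t) \leq 1/2 < u_{i_*+1,j}(t)$. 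Strict positivity $u_{i_*,j}(t) > 0$ follows from the lower bound once one observes that $u_{i_*+1,j}(t) > 1/2$ forces $i_*+1-ct+\zeta_+$, and hence $i_*-ct+\zeta_-$, to be bounded below uniformly in $(j,t)$; strict positivity of $\Phi$ then yields a uniform $\delta > 0$ with $\Phi(i_*-ct+\zeta_-) \geq \delta$, dominating $q_0 e^{-\beta t}$ for $T$ large.

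\emph{Uniqueness and main obstacle.} Uniqueness of $i_*$ is equivalent to the implication ``$u_{i,j}(t) > 1/2 \Rightarrow u_{i+1,j}(t) > 1/2$'' for $t \geq T$. Suppose this implication fails for arbitrarily large $t$: then there exist sequences $t_n \to \infty$ and $(k_n, j_n) \in \Z^2$ with $u_{k_n,j_n}(t_n) > 1/2 \geq u_{k_n+1,j_n}(t_n)$. The sandwich forces $k_n - ct_n$ to stay bounded, so along a subsequence $k_n - ct_n \to \xi^* \in \R$. Setting $u^{(n)}_{i,j}(s) := u_{i+k_n, j+j_n}(s+t_n)$ on $[-t_n, \infty)$, the $\omega$-limit machinery of Section~\ref{sec:omega} applied to the uniformly bounded family $\{u^{(n)}\}$ produces a subsequential entire limit $u^* : \Z^2 \times \R \to \R$ solving \eqref{eqn:main_results:discrete AC} and inheriting the sandwich
\begin{equation*}
\Phi(i - cs + \xi^* + \zeta_-) \leq u^*_{i,j}(s) \leq \Phi(i - cs + \xi^* + \zeta_+)
\end{equation*}
throughout $\Z^2 \times \R$. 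The trapped-entire-solutions theorem from Section~\ref{sec:Trapped entire solutions} then forces $u^*_{i,j}(s) = \Phi(i - cs - \mu)$ for some $\mu \in \R$, and since $\Phi' > 0$ we obtain $u^*_{1,0}(0) > u^*_{0,0}(0)$, contradicting the limiting inequalities $u^*_{0,0}(0) \geq 1/2 \geq u^*_{1,0}(0)$. The crux of the argument, and the main obstacle, is ensuring that the passage to the limit preserves \emph{both} sides of the sandwich uniformly in $s \in \R$ (not merely at $s=0$)---this is precisely what unlocks the rigidity result. Granted Sections~\ref{sec:omega} and~\ref{sec:Trapped entire solutions}, the remaining work is bookkeeping.
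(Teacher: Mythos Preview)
Your proposal is correct and follows essentially the same route as the paper: sandwich $u$ between shifted fronts (the paper's Lemma~\ref{lemma:super_and_sub_solutions_for_the_discrete_AC:upper and lower bounds}), then argue by contradiction using the $\omega$-limit construction of {\S}\ref{sec:omega} together with the rigidity result of {\S}\ref{sec:Trapped entire solutions}. The paper organizes things slightly differently---it first proves the stronger Proposition~\ref{lemma:phase gamma:monotonicity_in_the_bounded_region} (monotonicity throughout the interfacial region plus no-re-entry properties, needed later for defining and analyzing $\gamma$) and then reads off Proposition~\ref{prp:main_results_existence of gamma} as an immediate consequence, whereas you go straight for the crossing property; also, the paper's $\omega$-limit machinery shifts horizontally by $\lceil ct_n\rceil$ rather than by your $k_n$, but since $k_n-\lceil ct_n\rceil$ is a bounded integer sequence this is only a fixed translate.

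Two small points worth tightening: (i) under (H0) the initial datum need not satisfy $0\le u^0\le 1$, so the Fife--McLeod sandwich only kicks in after an initial waiting time (cf.\ Lemmas~\ref{lemma:analysis_of_u:liminfu_leq_q0}--\ref{lemma:analysis_of_u:u_leq_1}); this is harmless but should be acknowledged. (ii) Your final contradiction $u^*_{0,0}(0)\ge 1/2\ge u^*_{1,0}(0)$ versus $u^*_{1,0}(0)>u^*_{0,0}(0)$ is clean; just note that the first inequality is non-strict after passing to the limit, so strict monotonicity of $\Phi$ is genuinely needed.
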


These functions $i(j,t)$
can be used to define
a set of phases $\big(\gamma_j(t) \big)_{j \in \Z}$ that measure
in some sense where the value
$u=\frac{1}{2}$ is `crossed'.
More precisely,
we define a function $ \gamma:[T, \infty)\to \ell^\infty(\Z) $ 
that acts as
\begin{equation}\label{eqn:main_results:def_of_gamma} 
\gamma_j(t) = i_*(j,t) - \Phi^{-1}\big(u_{i_*(j,t),j}(t)\big);
\end{equation}
see Fig.~\ref{fig:intro:phase_gamma}.
The motivation behind the second
term on the right is our desire to recover
the traditional phase when $u$ is itself a travelling wave. Indeed, in the special
case that
\begin{equation}
u_{i,j}(t) = \Phi(i-ct-\mu)
\end{equation}
for some $\mu \in \R$, the
phase condition $\Phi(0) = \frac{1}{2}$
implies that 
\begin{equation}
    i_*(j,t) = \lfloor ct + \mu \rfloor.
\end{equation}
In particular, we obtain
\begin{equation}
    \gamma_j(t) =  ct+\mu,
\end{equation}
which allows us to write
\begin{equation}
\label{eq:mr:char:trv:wave:ex}
    u_{i,j}(t) = \Phi\big( i - \gamma_j(t) \big).
\end{equation}
The drawback of this relatively straightforward construction is that the phases $\gamma_j(t)$ will in general admit discontinuities. However, the size of these jumps will tend to zero as $t \to \infty$, which suffices for our asymptotic purposes.

Our main result here is that this 
phase description \eqref{eq:mr:char:trv:wave:ex}
holds asymptotically for any initial condition $u^0$ that satisfies (H0).
In particular, for large time, the dynamics of the full solution $u$ can be 
approximated by the behaviour
of the phase coordinates $\gamma(t)$.

 \begin{thm}[{see \S\ref{sec:large_time_behaviour}}]\label{thm:main results:gamma_approximates_u}
    Suppose that (Hg), $(H\Phi)$ and (H0)
    are satisfied and consider the
    solution $u$ of the discrete Allen-Cahn equation~\eqref{eqn:main_results:discrete AC} with the initial condition~\eqref{eqn:main_results:initial condition}. Then we have the limit
	\begin{equation}\label{eqn:main_results:front}
	\lim_{t\to\infty} \sup_{(i,j) \in \Z^2} \big|u_{i,j}(t) - \Phi\big(i-\gamma_j(t)\big)\big|=0. 
	\end{equation}
\end{thm}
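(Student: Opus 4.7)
My plan is to adapt the $\omega$-limit strategy of Matano and Nara to the lattice, using Proposition~\ref{prp:main_results_existence of gamma} to anchor the spatial shifts and the trapped-entire-solution result of~{\S}\ref{sec:Trapped entire solutions} to identify the limit. I argue by contradiction: if \eqref{eqn:main_results:front} fails, there exist $\varepsilon_0>0$ together with sequences $t_n\to\infty$ and $(i_n,j_n)\in\Z^2$ such that
\begin{equation*}
\big| u_{i_n,j_n}(t_n) - \Phi\big(i_n - \gamma_{j_n}(t_n)\big) \big| \;\geq\; \varepsilon_0
\qquad\text{for every }n\geq 1.
\end{equation*}
The goal is to extract an $\omega$-limit of the solution, shifted appropriately along $(i_n,j_n,t_n)$, which is a horizontal planar traveling wave, and then show that this forces the left-hand side above to tend to zero.

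The first step is to center the solution on the interface detected by Proposition~\ref{prp:main_results_existence of gamma}. I set $i_n^{*}:=i_*(j_n,t_n)$ and consider
\begin{equation*}
\tilde u^n_{i,j}(t) \;:=\; u_{i+i_n^{*},\,j+j_n}\big(t+t_n\big),
\qquad t\geq -t_n.
\end{equation*}
The family $\{\tilde u^n\}$ is uniformly bounded in $\ell^\infty(\Z^2)$, its time derivative is bounded through the LDE~\eqref{eqn:main_results:discrete AC}, and by the very construction of $i_n^{*}$ we have $\tilde u^n_{0,0}(0)\in(0,\tfrac12]$ and $\tilde u^n_{1,0}(0)>\tfrac12$ for all $n$. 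Combined with the sub- and super-solution envelopes in {\S}\ref{sec:omega}, which squeeze $u(t)$ between two shifts of the horizontal traveling wave uniformly in $j$, a diagonal extraction across finite windows in $(i,j,t)$ produces a subsequence along which $\tilde u^n\to u^{*}$ in the $C^1_{\mathrm{loc}}(\R)$-topology, where $u^{*}$ is an entire solution of~\eqref{eqn:main_results:discrete AC} trapped between the same two horizontal travelling fronts.

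At this point I invoke the Liouville-type theorem of~{\S}\ref{sec:Trapped entire solutions}: every entire solution trapped between two horizontal traveling fronts is itself a traveling wave, so $u^{*}_{i,j}(t)=\Phi(i-ct-\mu)$ for some $\mu\in\R$, which is $j$-independent. The centering at $i_n^{*}$ pins $\mu$ to the window $[0,1)$ via the crossing conditions at $(i,t)=(0,0)$. Passing to a further subsequence I may assume $i_n-i_n^{*}\to k_\infty\in \Z\cup\{\pm\infty\}$. When $k_\infty$ is finite,
\begin{equation*}
u_{i_n,j_n}(t_n)\;=\;\tilde u^n_{i_n-i_n^{*},\,0}(0)\;\longrightarrow\;\Phi(k_\infty-\mu),
\end{equation*}
while the defining identity~\eqref{eqn:main_results:def_of_gamma} and the convergence $\tilde u^n_{0,0}(0)\to\Phi(-\mu)$ force $\gamma_{j_n}(t_n)-i_n^{*}\to\mu$, so $\Phi(i_n-\gamma_{j_n}(t_n))\to\Phi(k_\infty-\mu)$ as well. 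The two limits coincide, contradicting the $\varepsilon_0$-gap. When $k_\infty=\pm\infty$, both $u_{i_n,j_n}(t_n)$ and $\Phi(i_n-\gamma_{j_n}(t_n))$ tend to the same equilibrium $0$ or $1$, and the contradiction persists.

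The main obstacle I expect lies in verifying that the shifted family $\{\tilde u^n\}$ actually satisfies the hypotheses of the trapping theorem of~{\S}\ref{sec:Trapped entire solutions}, i.e.\ that its $\omega$-limit is sandwiched between two translates of $\Phi$ with \emph{finite} gap, uniformly in $n$ and in the vertical coordinate. This requires the $j$-uniform interfacial estimates provided by (H0) combined with the spreading results of~{\S}\ref{sec:omega}. A secondary technical point is the ``discrete freezing'' alluded to in {\S}\ref{sec:int}: since the comoving frame exists only up to an integer shift, one must carefully track the fractional offset between the real number $\gamma_{j_n}(t_n)$ and the integer anchor $i_n^{*}$. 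The explicit interpolation in~\eqref{eqn:main_results:def_of_gamma}, which expresses $\gamma_j(t)-i_*(j,t)$ as a continuous function of the single slice value $u_{i_*(j,t),j}(t)$ near the interface, is precisely what makes this bookkeeping compatible with the pointwise convergence of $\tilde u^n$ and delivers the convergence $\gamma_{j_n}(t_n)-i_n^{*}\to\mu$ needed above.
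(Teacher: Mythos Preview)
Your proposal is correct and follows essentially the same approach as the paper's proof: argue by contradiction, extract an $\omega$-limit point via the compactness machinery of {\S}\ref{sec:omega}, identify it as a horizontal travelling wave through Proposition~\ref{thm:trapped_entire_sol:every_trapped_entire_solution_is_a_traveling_wave}, and then compute both terms of the difference along the subsequence to obtain a contradiction. The only organisational differences are that the paper centres its shifts on $\lceil ct_k\rceil$ rather than on your $i_n^{*}=i_*(j_n,t_n)$, and that the paper first proves $i_k-ct_k$ is bounded (using Lemma~\ref{lemma:phase_gamma:gamma-ct_bounded} together with the envelopes of Lemma~\ref{lemma:super_and_sub_solutions_for_the_discrete_AC:upper and lower bounds}) before extracting the limit, whereas you handle the case $|i_n-i_n^{*}|\to\infty$ separately at the end; these are equivalent since $i_n^{*}-ct_n$ is bounded by Lemma~\ref{lemma:phase_gamma:gamma-ct_bounded}, which is also the fact you need (and should cite explicitly) to get the uniform trapping of your $\tilde u^n$ between two fixed translates of $\Phi$.
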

 
\subsection{Interface evolution}

Our second main goal is to
uncover the long-term dynamics
of the phase $\gamma$
defined in \eqref{eqn:main_results:def_of_gamma}. In particular, we show
that this evolution can be approximated
by a discrete version of the 
mean curvature flow
with an appropriate drift term.

In order to formulate this equation,
we pick a sequence
$\Gamma \in \ell^\infty(\Z)$
and introduce the discrete derivatives
\begin{equation}
\begin{array}{lcl}
 [\partial^+ \Gamma]_j &=& \Gamma_{j+1} - \Gamma_j, \\[0.2cm]
[\partial^- \Gamma]_j &=& \Gamma_{j} - \Gamma_{j-1}, \\[0.2cm]
[\partial^{(2)}\Gamma]_j & =& \Gamma_{j+1} - 2\Gamma_j + \Gamma_{j-1}, 
\end{array}
\end{equation}
together with the sequence
\begin{equation}
    [\beta_\Gamma]_j = \sqrt{1+ \dfrac{1}{2} (\partial^+ \Gamma)^2_j + \dfrac{1}{2} (\partial^- \Gamma)^2_j}.
\end{equation}
As explained in {\S}\ref{sec:int}, the driving
force 
in \eqref{eq:mr:discrete:MCF} below
is not a constant as in the PDE case. Instead, it features
additional terms that arise due to the underlying anisotropy of the lattice.

\begin{thm}[{see {\S}\ref{sec:asymp}}]
\label{thm:main_results:approx_of_gamma} Suppose that (Hg), $(H\Phi)$
and (H0) are all satisfied,
consider the solution
$u$ of the LDE
\eqref{eqn:main_results:discrete AC} with the initial condition~\eqref{eqn:main_results:initial condition}
and recall the phase $\gamma$
defined in  \eqref{eqn:main_results:def_of_gamma}.
Then for every 
$\epsilon >0$, there exists $\tau_\epsilon \geq T $ so that for any $\tau \ge \tau_{\epsilon}$,
the solution
\begin{equation}
\Gamma: [\tau, \infty)
\to \ell^\infty(\Z)
\end{equation}
to the initial value problem
\begin{equation}
\label{eq:mr:discrete:MCF}
\left\{
\begin{array}{lcl}
     \beta_{\Gamma}^{-1} \dot{\Gamma} 
     &=& \beta_{\Gamma}^{-3} \partial^{(2)} \Gamma   +  \left(c +[\partial_\theta^2 c_\theta]_{\theta=0}\right) -  \beta_{\Gamma}^{-1}  [\partial_\theta^2 c_\theta]_{\theta =0} 
     \\[0.2cm]
     
      \Gamma(\tau)  &=& \gamma(\tau)
\end{array}
\right.
\end{equation}
 satisfies the estimate
 \begin{equation}
     \sup_{t\geq \tau} \norm{\Gamma(t) - \gamma(t) }_{\ell^\infty} < \epsilon.  
 \end{equation}
 \end{thm}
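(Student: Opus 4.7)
The strategy is to introduce an intermediate sequence $V(t)\in\ell^\infty(\Z)$ governed by the Cole--Hopf-friendly LDE
\begin{equation*}
    \dot V_j = \frac{1}{d}\bigl(e^{d(V_{j+1}-V_j)} - 2 + e^{d(V_{j-1}-V_j)}\bigr) + c,
    \qquad d = \tfrac{1}{2}[\partial_\theta^2 \mathcal{D}(\theta)]_{\theta=0},
\end{equation*}
which is precisely the flow against which the super- and sub-solutions of {\S}\ref{sec:sub:sup} are calibrated. We bridge $\gamma$ and $\Gamma$ through $V$: first sandwich $u$ between $u^\pm$ to show $\gamma \approx V$, then a second comparison argument shows $V \approx \Gamma$, and a triangle inequality closes the argument.

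\emph{Stage 1: $\gamma$ tracks $V$.} Fix $\epsilon>0$. By Theorem~\ref{thm:main results:gamma_approximates_u} and Proposition~\ref{prp:main_results_existence of gamma}, for any small $\delta>0$ we can choose $\tau$ so large that $\sup_{i,j}|u_{i,j}(\tau) - \Phi(i-\gamma_j(\tau))|<\delta$. Launch $V$ from the initial condition $V(\tau)=\gamma(\tau)$ and build the super- and sub-solutions
\begin{equation*}
    u^\pm_{i,j}(t) = \Phi\bigl(i-V_j(t)\pm q(t)\bigr) + r\bigl(i-V_j(t)\pm q(t)\bigr)\bigl([\beta_V]_j^2 - 1\bigr) \pm p(t)
\end{equation*}
of {\S}\ref{sec:sub:sup}, initialised with $p(\tau),q(\tau)=O(\delta)$ so that $u^-(\tau)\le u(\tau)\le u^+(\tau)$. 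The comparison principle then enforces $u^-\le u\le u^+$ on $[\tau,\infty)$. Combining this with the decay $\sup_j([\beta_V]_j^2-1)=O(t^{-1})$ that is proved in {\S}\ref{sec:dht}, the strict monotonicity $\Phi'>0$, and the way $\Phi^{-1}$ enters the definition \eqref{eqn:main_results:def_of_gamma}, one obtains
\begin{equation*}
    \sup_{j\in\Z}\,\bigl|\gamma_j(t) - V_j(t)\bigr| \;\le\; q(t) + O\bigl(p(t)\bigr) + O\bigl(t^{-1}\bigr), \qquad t\ge \tau,
\end{equation*}
which can be made uniformly smaller than $\epsilon/2$ by enlarging $\tau$.

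\emph{Stage 2: $V$ tracks $\Gamma$.} Taylor-expanding the exponentials in the $V$-equation and rearranging the curvature flow for $\Gamma$, both equations take the common form
\begin{equation*}
    \dot W_j = [\partial^{(2)} W]_j + d\bigl([\beta_W]_j^2 - 1\bigr) + c + \mathcal{R}_j(W),
\end{equation*}
where the choice of $d$ is exactly what makes the quadratic drift terms agree, and the residuals $\mathcal{R}(V),\mathcal{R}(\Gamma)$ are $O(|\partial^\pm W|^3)$. Since $V(\tau)=\Gamma(\tau)=\gamma(\tau)$, a Duhamel/comparison estimate for $W:=V-\Gamma$ in $\ell^\infty(\Z)$, using the discrete-heat gradient decay $|\partial^\pm V|,|\partial^\pm\Gamma|=O(t^{-1/2})$ from {\S}\ref{sec:dht} and the resulting integrability of $\int_\tau^\infty t^{-3/2}\,dt$, yields $\sup_{t\ge\tau}\norm{V(t)-\Gamma(t)}_{\ell^\infty}<\epsilon/2$ once $\tau$ is large enough. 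The triangle inequality then delivers the claim.

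\emph{Main obstacle.} The delicate step is Stage 2: the bound on $V-\Gamma$ must be propagated over the entire half-line $[\tau,\infty)$ rather than a finite window, so the cubic residuals $\mathcal{R}(V)-\mathcal{R}(\Gamma)$ must remain integrable in time. This places the burden squarely on the sharp discrete-gradient decay rates developed in {\S}\ref{sec:dht}. A secondary subtlety is that the map $t\mapsto \gamma(t)$ inherits small jump discontinuities from the integer-valued $i_*(j,t)$; the approximation is therefore meaningful as a pointwise-in-$t$ statement in $\ell^\infty(\Z)$, and the construction of $p$, $q$ must tolerate these jumps when initialising the sandwich at $\tau$.
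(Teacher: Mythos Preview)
Your two-stage strategy matches the paper's architecture (Propositions~\ref{prop:interface_evolution:approx_of_gamma_by_V} and~\ref{thm:analysis of mean curvature eqn:Approximation of a mean curvature flow}), and Stage~1 is essentially what the paper does. Stage~2, however, contains a real gap.

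You assert that both $|\partial^\pm V|$ and $|\partial^\pm\Gamma|$ are $O(t^{-1/2})$ ``from {\S}\ref{sec:dht}''. For $V$ this is correct via the Cole--Hopf transformation (Corollary~\ref{cor:interface_evolution:cole hopf}). For $\Gamma$ it is not: the curvature flow~\eqref{eq:mr:discrete:MCF} has no Cole--Hopf structure, so nothing in {\S}\ref{sec:dht} applies to it. Without a uniform-in-time bound on $\partial^+\Gamma$, your Duhamel/comparison estimate for $W=V-\Gamma$ cannot be closed on the half-line: the coefficients in the equation for $W$ depend on $\partial^\pm\Gamma$, and a naive Gr\"onwall estimate on $\partial^+\Gamma$ gives only exponential growth in time. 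You correctly identified propagating the estimate over $[\tau,\infty)$ as the main obstacle, but your resolution of it appeals to a decay rate that has not been established.

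The paper closes this gap by a separate bootstrap (Lemmas~\ref{lemma:approximation_of_MCF:comp_principle_for_gradient_equation}--\ref{lemma:approximation_of_MCF:uniform_bound_on_gradient_of_MCF_T} and Corollary~\ref{lemma:approximation_of_MCF:ex_and_uq_of_MCF}): one differentiates~\eqref{eqn:approximation_of_MCF:discrete_mean_curvature_eqn} to obtain an LDE for $\Upsilon=\partial^+\Gamma$, proves a comparison principle for that LDE valid while $\norm{\Upsilon}_{\ell^\infty}$ is small, and then combines Gr\"onwall on a finite window with this comparison principle to show $\norm{\partial^+\Gamma(t)}_{\ell^\infty}\le\norm{\partial^+\Gamma(0)}_{\ell^\infty}$ for all $t\ge 0$. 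With this in hand, the paper does not run a Duhamel on $V-\Gamma$; instead it computes the curvature-flow residual $\mathcal{J}_{\mathrm{dc}}[V]$ directly, bounds it by $O(\min\{\delta,t^{-3/2}\})$ using only the $V$-decay from {\S}\ref{sec:dht}, and builds $V\pm p$ as explicit super- and sub-solutions of~\eqref{eqn:approximation_of_MCF:discrete_mean_curvature_eqn}. A comparison principle for the curvature flow (Lemma~\ref{lemma:analysis of mean curvature eqn: comparison principle}) --- itself only valid under the gradient-smallness just established for both $V$ and $\Gamma$ --- is then applied against $\Gamma$.
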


 Our final result provides more detailed information
 on the asymptotics of $\gamma$ in the special case
 that the initial condition $u^0$ is a localized perturbation
 from a front-like background state that is periodic in $j$. Indeed,
 this provides sufficient control on \eqref{eq:mr:discrete:MCF}
 to show
 that the corresponding solution converges to a planar travelling front. We emphasize that the case $P =1$ encompasses the stability results
 from \cite{hoffman2015multi,hoffman2017entire}, 
 albeit only for horizontal waves.

 \begin{thm}[{see {\S}\ref{sec:per}}]
\label{thm:mr:periodicity+decay:stability}
Suppose that (Hg), (H$\Phi$) and (H0) are
satisfied and consider the solution $u$ of the discrete
Allen-Cahn equation \eqref{eqn:main_results:discrete AC}
with the initial condition \eqref{eqn:main_results:initial condition}. Suppose furthermore that there exists a 
sequence $u^{0;\mathrm{per}} \in \ell^\infty(\Z^2)$
so that the following two properties hold.
\begin{itemize}
    \item[(a)]{
      We have the limit
      \begin{equation}
      \label{eq:mr:lim:per:j:pm:infty}
    u^0_{i,j} - u^{0;\mathrm{per}}_{i,j} \to 0, \quad \text{as}\quad  |i|+|j|\to\infty .
\end{equation}
    }
    \item[(b)]{
      There exists an integer $P \ge 1$ so that
      \begin{equation}
          u^{0;\mathrm{per}}_{i, j+P}
          =  u^{0;\mathrm{per}}_{i, j},
          \qquad \qquad
          \hbox{ for all }  (i,j) \in \Z^2.
      \end{equation}
     }
\end{itemize}
Then there exists a constant $\mu\in \R$ for which
we have the limit
\begin{equation}
  \label{eqn:periodicity+decay:stability}
    \lim_{t\to\infty} \sup_{(i,j)\in \Z^2} |u_{i,j}(t) - \Phi(i-ct-\mu)| = 0. 
\end{equation}
\end{thm}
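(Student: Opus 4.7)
The plan is to split the argument into two parts: first establish the theorem for the purely periodic datum $u^{0;\mathrm{per}}$, and then absorb the localized perturbation via the super/sub-solution machinery of \S\ref{sec:sub:sup}.

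\emph{Reduction to the periodic case.} Let $u^{\mathrm{per}}$ denote the solution of \eqref{eqn:main_results:discrete AC} starting at $u^{0;\mathrm{per}}$. Since (H0) holds for $u^0$ and (a) combined with $P$-periodicity in $j$ controls $u^{0;\mathrm{per}}$ on the finite slice $j\in\{0,\ldots,P-1\}$, the hypothesis (H0) transfers to $u^{0;\mathrm{per}}$. Uniqueness preserves $P$-periodicity of $u^{\mathrm{per}}(t)$ in $j$ for all $t\ge 0$. Applying Proposition~\ref{prp:main_results_existence of gamma} and Theorem~\ref{thm:main results:gamma_approximates_u} yields a $P$-periodic phase $\gamma^{\mathrm{per}}$ with $u^{\mathrm{per}}_{i,j}(t) - \Phi(i - \gamma^{\mathrm{per}}_j(t)) \to 0$ uniformly. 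By Theorem~\ref{thm:main_results:approx_of_gamma}, $\gamma^{\mathrm{per}}$ is approximated on $[\tau_\epsilon,\infty)$ by the solution $\Gamma^{\mathrm{per}}$ of \eqref{eq:mr:discrete:MCF}, and $P$-periodicity of the right-hand side forces $\Gamma^{\mathrm{per}}(t)$ to remain $P$-periodic, collapsing \eqref{eq:mr:discrete:MCF} to a finite-dimensional ODE on $\R^P$.

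\emph{Convergence of the periodic curvature flow.} On this reduced system I aim to prove that the oscillation $\mathrm{osc}(\Gamma) := \max_j \Gamma_j - \min_j \Gamma_j$ is a strict Lyapunov function. At a maximizer $j_*$ one has $[\partial^{(2)}\Gamma]_{j_*}\le 0$ and $[\beta_\Gamma]_{j_*}\ge 1$, while constant sequences (with $\beta_\Gamma \equiv 1$) produce exactly the drift $c$ on the right-hand side of \eqref{eq:mr:discrete:MCF}. Rearranging gives $\dot\Gamma_{j_*}\le c$, and symmetrically $\dot\Gamma_{j_{**}}\ge c$ at a minimum; a LaSalle argument on the compact quotient $\R^P/\R\cdot\mathbf{1}$ forces $\mathrm{osc}(\Gamma^{\mathrm{per}}(t))\to 0$. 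Monitoring the spatial mean, which satisfies a scalar ODE converging to $\dot{\bar\Gamma}=c$ as the oscillation vanishes, one extracts $\Gamma^{\mathrm{per}}_j(t)-ct\to\mu^{\mathrm{per}}$ for a single constant $\mu^{\mathrm{per}}\in\R$. Sending $\epsilon\to 0$ in Theorem~\ref{thm:main_results:approx_of_gamma} transfers this to $\gamma^{\mathrm{per}}$, and Theorem~\ref{thm:main results:gamma_approximates_u} then delivers
\begin{equation*}
    \lim_{t\to\infty}\sup_{(i,j)\in\Z^2}\big|u^{\mathrm{per}}_{i,j}(t)-\Phi(i-ct-\mu)\big|=0
\end{equation*}
for a single $\mu\in\R$.

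\emph{Absorbing the localized perturbation.} It remains to show $\|u(t)-u^{\mathrm{per}}(t)\|_{\ell^\infty(\Z^2)}\to 0$. The strategy is to redeploy the super/sub-solution construction of \S\ref{sec:sub:sup}, with $u^{\mathrm{per}}$ playing the role of the background profile. Given $\epsilon>0$, choose $R,t_0$ so that $|u^0_{i,j}-u^{0;\mathrm{per}}_{i,j}|<\epsilon$ for $|i|+|j|>R$ and, by Step~2, so that $u^{\mathrm{per}}(t_0)$ is uniformly $\epsilon$-close to $\Phi(\cdot-ct_0-\mu)$. Trap $u$ between $u^{\mathrm{per}}\pm(p(t)+\text{phase shift})$-type envelopes built on this nearly-planar background: the residual pair $(r,d)$ of \S\ref{sec:sub:sup} absorbs the anisotropic discrete-Laplacian artefacts, while the discrete heat-semigroup estimates of \S\ref{sec:dht} drive the accompanying corrections $p(t),q(t)\to 0$. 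A triangle inequality then combines this with Step~2 to yield \eqref{eqn:periodicity+decay:stability}.

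\emph{Main obstacle.} The hardest step is the third one: the super/sub-solutions in \S\ref{sec:sub:sup} are tuned to the exact travelling wave $\Phi$, so grafting them onto the time-varying background $u^{\mathrm{per}}$ introduces an error term that can be controlled only because Step~2 forces $u^{\mathrm{per}}$ to be asymptotically close to a translate of $\Phi$. Step~2 itself is delicate only in that one must verify the sign of the $\beta_\Gamma$-weighted drift corrections at interior extrema; this hinges on the cancellation $c+[\partial_\theta^2 c_\theta]_{\theta=0}-[\partial_\theta^2 c_\theta]_{\theta=0}=c$ at $\beta_\Gamma=1$, which is exactly the consistency condition under which constant sequences advance at the planar wave speed.
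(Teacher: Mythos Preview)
Your overall structure---treat the periodic background first, then absorb the localised piece---matches the paper in spirit, but both of your key technical steps have real gaps, and the paper actually takes a different route at each.

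\textbf{Step 2 is incomplete.} Your oscillation argument for the periodic curvature flow \eqref{eq:mr:discrete:MCF} relies on $\dot\Gamma_{j_*}\le c$ at a maximiser. Writing the flow as $\dot\Gamma=\beta_\Gamma^{-2}\partial^{(2)}\Gamma+2d(\beta_\Gamma-1)+c$ with $2d=c+[\partial_\theta^2 c_\theta]_{\theta=0}$, the curvature term is indeed $\le 0$ at a maximum, but the drift correction $2d(\beta_\Gamma-1)$ has no reason to be $\le 0$: nothing in (Hg) or (H$\Phi$) forces $d\le 0$, and $\beta_\Gamma>1$ at a generic extremum. So the oscillation is not obviously a Lyapunov function, and the LaSalle step does not go through. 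The paper sidesteps this entirely: it never invokes Theorem~\ref{thm:main_results:approx_of_gamma} in the stability proof, but instead approximates $\gamma$ directly by the solution $V$ of the Cole--Hopf LDE \eqref{eq:sub:sup:lde:V} (Proposition~\ref{prop:interface_evolution:approx_of_gamma_by_V}), which linearises \emph{exactly} to the discrete heat equation. For $P$-periodic data the linear heat flow converges to its spatial average with no sign condition on $d$ (Lemma~\ref{lem:per:heat:eqn}).

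\textbf{Step 3 is the wrong target.} You aim for $\|u(t)-u^{\mathrm{per}}(t)\|_{\ell^\infty(\Z^2)}\to 0$ and propose to graft the \S\ref{sec:sub:sup} sub/super-solutions onto the time-dependent background $u^{\mathrm{per}}$. Those constructions are tuned to the exact profile $\Phi$, and you give no concrete mechanism for controlling the extra residual terms this grafting produces; ``Step~2 forces $u^{\mathrm{per}}$ close to $\Phi$'' is not enough to close the estimate. The paper never attempts this temporal comparison. Instead it establishes only the \emph{spatial} limit $u_{i,j}(\tau)-u^{\mathrm{per}}_{i,j}(\tau)\to 0$ as $|i|+|j|\to\infty$ at each fixed $\tau$ (Lemma~\ref{lemma:periodicity+decay:u_convg_to_uper}, via an explicit exponential barrier), and transfers this to $|\gamma_j(\tau)-\gamma^{\mathrm{per}}_j(\tau)|<\epsilon$ for $|j|\ge J_\epsilon(\tau)$ (Proposition~\ref{prp:per:phase:cmp}). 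This makes $\gamma(\tau)$ an asymptotically almost-periodic initial datum for the Cole--Hopf LDE, and the $\ell^1$/$\ell^\infty$ heat-kernel bounds of \S\ref{sec:dht} handle the periodic averaging and the finite localised remainder simultaneously (Proposition~\ref{prp:per:beh:alm:per:nl:heat}): the localised part contributes only an $O(J\,\|G(t)\|_{\ell^\infty})=O(t^{-1/2})$ error after Cole--Hopf.
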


\section{Omega limit points}
\label{sec:omega}

The techniques used in \cite{Matano}
relied heavily upon the ability
to construct so-called omega limit points.
More specifically, consider a
solution $u: \R^2 \times [0, \infty) \to \R$ to the PDE \eqref{eqn:intro:PDE}
together with an unbounded
sequence $0 < t_1 < t_2 < \ldots $
and a set of vertical shifts
$(y_k) \subset \R$.
One can then establish \cite{Matano}
the existence of an
entire solution $\omega$
to \eqref{eqn:intro:PDE}
for which the convergence
\begin{equation}\label{eqn:omega limit points:def_in_cont_case}
u(x + c  t_k, y+y_k, t+ t_k) \to \omega(x ,y,t) \quad \text{ in } C_\text{loc}^{2,1}(\R^2\times\R)
\end{equation}
holds as $k \to \infty$,
possibly after passing to a subsequence. This 
can be achieved efficiently by
replacing $x$ with 
the travelling wave coordinate $x - ct$.

Any direct attempt to generalize this
procedure to the LDE setting
will fail on account of the
fact that $i-ct$ is not necessarily an integer. Indeed, this prevents
us from introducing a well-defined
co-moving frame. Our approach
here to handle this is rather crude:
we simply round the horizontal
shifts upward towards the nearest
integer. 

To illustrate this,
let us consider the planar wave
solution
\begin{equation}
    u_{ij}(t) = \Phi(i-ct)
\end{equation}
together with an unbounded sequence
$0 < t_1 < t_2 < \ldots$ and
a set of vertical
shifts $(j_k) \subset \Z$.
Possibly taking a subsequence,
we obtain the convergence
\begin{equation}
    [0,1] \ni \lceil ct_k \rceil - ct_k \to \theta_{\omega}
\end{equation}
as $k \to \infty$, which means that
\begin{equation}
    u_{i+\lceil ct_k \rceil, j+j_k}(t+t_k)
    = 
    \Phi(i + \lceil ct_k \rceil -ct -ct_k   )
    \to \Phi(i-ct + \theta_{\omega})  
\end{equation}
as $k\to \infty$.  
In particular, we do still
recover an entire solution,
at the price of a small phase-shift
that would not occur in the continuous
framework.
As we will see throughout the following sections, this phase-shift does not cause any qualitative difficulties. 

Our main result confirms that our
procedure indeed generates
$\omega$-limit points. In addition,
it states
that such limits are trapped
between two travelling waves,
which turns out to be a crucial
point in our analysis. The consequences
of this fact 
will be discussed in greater depth in 
{\S}\ref{sec:Trapped entire solutions}.
\begin{prop}
\label{lemma:omega_limit_points:construction of limit points}
Suppose that (Hg), $(H\Phi)$ and (H0) are satisfied. Let $ u \in C^1\big([0,\infty); \ell^\infty(\Z^2)\big)$ be a solution of the LDE \eqref{eqn:main_results:discrete AC}. Then for any sequence $ (j_k, t_k) $ in $ \Z\times [0, \infty ) $  with $ 0<t_1<t_2  < \dots \to \infty $, there exists a subsequence $ (j_{n_k}, t_{n_k}) $ and a function $ \omega \in C^1\big(\R; \ell^\infty(\Z^2)\big)$ with the following properties. \begin{enumerate}[(i)]
    \item 
    We have the convergence \begin{equation}\label{eqn:omega_limit_points:existence}
	    u_{i+ \lceil c t_{n_k}\rceil,j+ j_{n_k}} (t+ t_{n_k}) \to \omega_{i,j}(t) \quad  \text{ in } C_{\mathrm{loc}}(\Z^2\times \R)
	\end{equation}
	as $k \to \infty$.
    \item The limit $\omega$ 
    satisfies
    the discrete Allen-Cahn equation  \eqref{eqn:main_results:discrete AC} on $\Z^2\times\R$.
    \item There exists a constant $\theta \in \R$ such that
	\begin{equation}\label{eqn:omega_limit_points:omega_limit_between_two_waves}
    \Phi(i-ct-\theta) \leq \omega_{i,j} (t)\leq \Phi(i-ct + \theta),  \quad  \text{for all } i  \in \Z \text{ and } t \in \R. 
    \end{equation}
\end{enumerate}
\end{prop}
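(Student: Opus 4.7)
I would set $u^{(k)}_{i,j}(t) := u_{i + \lceil c t_k\rceil,\, j + j_k}(t + t_k)$, defined for $t \ge -t_k$. By (Hg), the constants $\max(\sup u^0, 1)$ and $\min(\inf u^0, 0)$ are a super- and a sub-solution of \eqref{eqn:main_results:discrete AC}, so the comparison principle yields a uniform bound $\|u(t)\|_{\ell^\infty(\Z^2)} \le R$ for all $t \ge 0$; feeding this back into the LDE gives $|\dot u_{i,j}(t)| \le 8R + \sup_{|y|\le R}|g(y)|$. For each fixed $(i,j) \in \Z^2$, the family $\{t \mapsto u^{(k)}_{i,j}(t)\}_k$ is therefore uniformly bounded and equicontinuous on any interval $[-T,T]$ once $t_k \ge T$. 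A diagonal extraction over the countable set $\Z^2$ produces a subsequence $(j_{n_k}, t_{n_k})$ and a limit $\omega \in C(\R; \ell^\infty(\Z^2))$ for which \eqref{eqn:omega_limit_points:existence} holds; after a further subsequence I may assume $\lceil c t_{n_k}\rceil - c t_{n_k} \to \theta_\omega \in [0,1]$. Since the shifts $\lceil c t_{n_k}\rceil$ and $j_{n_k}$ are integers, each $u^{(n_k)}$ continues to satisfy \eqref{eqn:main_results:discrete AC}, so $C_{\mathrm{loc}}$ convergence and continuity of $g$ allow me to pass to the pointwise limit in the five-point stencil and obtain (ii); the $C^1$ regularity in $t$ then follows a posteriori from the equation itself.

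\paragraph{Trapping.}
For part (iii) I would reduce to the one-dimensional theory. Define
\begin{equation*}
\bar u^0_i := \sup_{j \in \Z} u^0_{i,j}, \qquad
\underline u^0_i := \inf_{j \in \Z} u^0_{i,j};
\end{equation*}
both sequences lie in $\ell^\infty(\Z)$ and inherit the one-dimensional bistable limits from (H0). Let $\bar u, \underline u : [0,\infty) \to \ell^\infty(\Z)$ be the solutions of the 1D discrete Allen-Cahn LDE with these initial data. The $j$-independent extensions $(i,j,t) \mapsto \bar u_i(t)$ and $(i,j,t) \mapsto \underline u_i(t)$ both solve \eqref{eqn:main_results:discrete AC}, since the vertical part of $\Delta^+$ vanishes on them, so the 2D comparison principle yields
\begin{equation*}
\underline u_i(t) \;\le\; u_{i,j}(t) \;\le\; \bar u_i(t), \qquad (i,j) \in \Z^2,\; t \ge 0.
\end{equation*}
The 1D stability theory for bistable LDEs \cite{zinner1991stability, ChenGuoWu2008} then produces phases $\mu^\pm \in \R$ with
\begin{equation*}
\sup_{i \in \Z}\bigl|\bar u_i(t) - \Phi(i - ct - \mu^+)\bigr|
+ \sup_{i \in \Z}\bigl|\underline u_i(t) - \Phi(i - ct - \mu^-)\bigr|
\longrightarrow 0
\end{equation*}
as $t \to \infty$. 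Evaluating these bounds at $(i + \lceil c t_{n_k}\rceil,\, t + t_{n_k})$ and letting $k \to \infty$ collapses both sides to the pure translates $\Phi(i - ct + \theta_\omega - \mu^\pm)$, so (iii) holds with any $\theta \ge |\theta_\omega| + \max(|\mu^+|, |\mu^-|)$.

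\paragraph{Main difficulty.}
Compactness and passage to the limit are routine once the integer-valued shifts $\lceil c t_{n_k}\rceil$ are recognised as the correct substitute for the continuous co-moving frame used in \cite{Matano}; the only new phenomenon here is the unavoidable discrete phase $\theta_\omega \in [0,1]$ generated by the rounding, which is harmless but must be tracked. The genuine content of the lemma lies in the trapping (iii), and the key structural fact that makes the argument transparent is the separability of $\Delta^+$ into horizontal and vertical parts, which permits the reduction to $j$-independent comparison functions and hence to the established 1D stability theory. Should this decoupling fail --- for waves travelling in a non-lattice direction, say --- one would instead have to construct genuinely two-dimensional Fife-McLeod-type super- and sub-solutions matching $u^0$ from above and below under only the asymptotic control (H0), which is a considerably more delicate task.
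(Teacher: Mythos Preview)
Your argument for (i) and (ii) matches the paper's: uniform bounds from the comparison principle, equicontinuity from the LDE, Arzel\`a--Ascoli and diagonalisation. For (iii) you take a slightly different path. The paper builds the trapping directly: Lemma~\ref{lemma:analysis_of_u:super_and_sub_solution} records the standard Fife--McLeod super/sub-solutions $\Phi\bigl(i - ct \pm Cq(1-e^{-\mu t})\bigr) \pm q e^{-\mu t}$, Lemmas~\ref{lemma:analysis_of_u:liminfu_leq_q0}--\ref{lemma:analysis_of_u:u_leq_1} show that $u(\cdot, T)$ can be fitted beneath and above such profiles for some large $T$, and Lemma~\ref{lemma:super_and_sub_solutions_for_the_discrete_AC:upper and lower bounds} combines these into the explicit bounds \eqref{eqn:super_and_sub_solutions_for_the_discrete_AC:upper_bound}--\eqref{eqn:super_and_sub_solutions_for_the_discrete_AC:lower_bound}, which survive the limit $t_{n_k}\to\infty$ and give \eqref{eqn:omega_limit_points:omega_limit_between_two_waves}. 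Your reduction to the one-dimensional problem via $\bar u^0_i = \sup_j u^0_{i,j}$ and $\underline u^0_i = \inf_j u^0_{i,j}$ is correct and exploits the same structural fact---the separability of $\Delta^+$ makes the paper's super/sub-solutions $j$-independent as well---but it outsources the work to the one-dimensional global stability theorem. That theorem is true, and its proof is precisely the Fife--McLeod construction the paper reproduces; note however that \cite{zinner1991stability} only establishes local stability, so the reference you actually need is \cite{ChenGuoWu2008}, or else an explicit reconstruction along the lines of Lemmas~\ref{lemma:analysis_of_u:super_and_sub_solution}--\ref{lemma:super_and_sub_solutions_for_the_discrete_AC:upper and lower bounds}. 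Your route is more economical; the paper's is self-contained.
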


We refer to such a function $ \omega $ as an $ \omega $-limit point of the solution $ u $. 
The proof of the bounds
\eqref{eqn:omega_limit_points:omega_limit_between_two_waves} relies on the 
fact that the LDE \eqref{eqn:main_results:discrete AC}
admits a comparison principle;
see \cite[Prop. 3.1]{hoffman2017entire}.
In order
to exploit this,
we introduce the
residual
\begin{equation}\label{eqn:omega_limit_points:residual}
    \mathcal{J}[u]
     = \dot{u} - \Delta^+ u - f(u)
\end{equation}
and recall that a
function
\begin{equation}
    u \in C^1\big( [0,\infty); \ell^\infty(\mathbb{Z}^2)
    \big)
\end{equation}
is referred
to as a sub-
or super-solution to the discrete Allen-Cahn equation
\eqref{eqn:main_results:discrete AC} if $\mathcal{J}[u]_{i,j}(t) \le 0$
respectively
$\mathcal{J}[u]_{i,j}(t) \ge 0$
holds for all $t \ge 0$
and $(i,j) \in \mathbb{Z}^2$.
Our first result
describes a standard pair of such solutions, using the well-known principle that uniform perturbations
to the travelling wave $\Phi$ at $t = 0$ can be traded off
for phase-shifts at $t = \infty$. 
\begin{lemma}
\label{lemma:analysis_of_u:super_and_sub_solution}
Assume that (H$g$) and (H$\Phi$) are satisfied. Then 
for any  $q_0\in(0,a)$ and $q_1\in(0, 1-a)$, 
there exist constants $\mu > 0$ and  $C\geq 1$ 
so that
the functions
\begin{align}
    u^+_{i,j}(t) &= \Phi\left(i-ct + C q_0(1-e^{-\mu t})\right) + q_0 e^{-\mu t}, \label{eqn:analysis_of_u:supersolution} \\
    u^-_{i,j}(t) &=  \Phi\left(i-ct - C q_1(1-e^{-\mu t})\right) - q_1 e^{-\mu t} \label{eqn:analysis_of_u:subsolution}
\end{align}
are a super- 
respectively sub-solution of the discrete Allen-Cahn equation \eqref{eqn:main_results:discrete AC}. 
\end{lemma}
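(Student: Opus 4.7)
The plan is to substitute $u^\pm$ into the residual $\mathcal{J}$ of \eqref{eqn:omega_limit_points:residual}, exploit the MFDE~\eqref{eqn:main_results:MFDE} at $\theta = 0$ satisfied by $(\Phi, c)$ to cancel the leading terms, and verify the required sign through a classical three-region argument in the travelling wave coordinate. A convenient feature is that $u^\pm$ carries no $j$-dependence, so the discrete Laplacian collapses to its one-dimensional version in $i$, which is precisely the linear part of the wave MFDE at $\theta = 0$.

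Setting $\xi_+(i,t) = i - ct + Cq_0(1 - e^{-\mu t})$ and $p_+(t) = q_0 e^{-\mu t}$, direct differentiation together with the MFDE yields after cancellation
\[
\mathcal{J}[u^+]_{i,j}(t) = p_+(t)\bigl(C\mu \Phi'(\xi_+) - \mu\bigr) + \bigl[g(\Phi(\xi_+)) - g(\Phi(\xi_+) + p_+(t))\bigr].
\]
The analogous identity for $u^-$ reduces to verifying the same quantitative inequality, so it suffices to show $\mathcal{J}[u^+] \geq 0$ for suitable $(\mu, C)$. To fix the constants I would first extract from (H$g$) a $\beta > 0$ for which $g(s) \leq -\beta s$ on $(0, q_0]$; this follows because $s \mapsto g(s)/s$ is continuous and strictly negative on the compact interval $(0, q_0] \subset (0, a)$ with limit $g'(0) < 0$ as $s \to 0^+$. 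A symmetric bound near $s = 1$ uses $g(1) = 0$, $g'(1) < 0$, $g < 0$ on $(1, \infty)$ and the restriction $q_1 < 1 - a$. By continuity in the base point these estimates produce $\delta_0 > 0$ such that $G(s, \alpha) := g(s) - g(s + \alpha) \geq (\beta/2)\alpha$ uniformly on $\bigl([0, \delta_0] \cup [1 - \delta_0, 1]\bigr) \times [0, q_0]$.

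Picking $\mu := \beta/4$, in the tail regions where $\Phi(\xi_+) \in [0, \delta_0] \cup [1 - \delta_0, 1]$ (that is, $\xi_+$ outside some compact interval $[-\xi_\star, \xi_\star]$), the $G$-contribution already dominates $\mu p_+(t)$, so the desired inequality holds irrespective of $C$. In the bulk region $|\xi_+| \leq \xi_\star$, the strict positivity $\Phi' > 0$ supplied by (H$\Phi$) yields $\nu := \inf_{|\xi| \leq \xi_\star} \Phi'(\xi) > 0$, while the $G$-bracket is bounded below by $-Mp_+(t)$ with $M = \sup_{[0, 1 + q_0]}|g'|$; choosing $C \geq (1 + M/\mu)/\nu$ closes this case. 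The sub-solution $u^-$ is obtained from the symmetric version of the same argument. I expect the main technical nuisance to lie in the continuity-in-base-point step that propagates the one-point estimate $g(s) \leq -\beta s$ into the uniform lower bound on $G$ near the endpoints, and it is precisely here that the restrictions $q_0 < a$ and $q_1 < 1 - a$ enter: they keep the perturbed intervals $[s, s + p_\pm(t)]$ away from the middle zero of $g$ at $a$, where $g(s)/s$ would otherwise degenerate.
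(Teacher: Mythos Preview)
Your proposal is correct and follows precisely the classical Fife--McLeod three-region argument that the paper invokes by citing \cite{fife1977approach} and \cite{ChenGuoWu2008}; the residual identity you derive and the tail/bulk splitting are exactly what those references do, so there is nothing to add. The only place worth tightening is the ``continuity in the base point'' step: the clean way to justify $G(s,\alpha) \ge (\beta/2)\alpha$ uniformly is to note that $H(s,\alpha) := -\int_0^1 g'(s + t\alpha)\,dt$ extends continuously to $\alpha = 0$, verify $H(0,\cdot) > 0$ on $[0,q_0]$ and $H(1,\cdot) > 0$ on $[0,q_0]$ directly from (Hg), and then use uniform continuity on the compact rectangle.
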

\begin{proof}
The arguments from Lemma 4.1 in \cite{fife1977approach} can be copied almost verbatim; see for example~\cite{ChenGuoWu2008}.
\end{proof}
We now turn to
the solution $u$
of the LDE~\eqref{eqn:main_results:discrete AC} with the initial condition
\eqref{eqn:main_results:initial condition}. Using two a-priori estimates
we will show that $u$ can eventually  be controlled
by time translates of $u^+$ and $u^-$. By exploiting
the divergence $t_k \to \infty$ of the time-shifts
for the $\omega$-limit point,
we can subsequently eliminate the 
uniform additive
terms in \eqref{eqn:analysis_of_u:supersolution}-\eqref{eqn:analysis_of_u:subsolution}
and recover the
phase-shifts in 
\eqref{eqn:omega_limit_points:omega_limit_between_two_waves}.

\begin{lemma}\label{lemma:analysis_of_u:liminfu_leq_q0}
Assume that (H$g$) and (H0) are satisfied. Pick $q_0\in(0,a) $ in such a way that the initial condition $u^0$ satisfies $$\limsup_{i\to-\infty} \sup_{j\in \Z} u^0_{i,j} < q_0.$$ Then for every $t>0$ we have the bound
\begin{equation}\label{eqn:analysis_of_u:liminf_u}
    \limsup_{i\to-\infty} \sup_{j\in \Z} u_{i,j}(t) < q_0. 
\end{equation}
\end{lemma}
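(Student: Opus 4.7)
My plan is to argue by contradiction via a compactness argument on translates of $u$, in the spirit of the $\omega$-limit point construction of Proposition~\ref{lemma:omega_limit_points:construction of limit points}. First, I would pick an intermediate value
\[
q_0' \in \bigl(\limsup_{i\to-\infty}\sup_{j\in\Z} u^0_{i,j},\, q_0\bigr) \cap (0, a),
\]
which is nonempty since $q_0 \in (0, a)$. Let $w : [0, \infty) \to \R$ denote the solution of the scalar ODE $\dot w = g(w)$ with $w(0) = q_0'$. Since $g < 0$ on $(0, a)$ by (Hg), $w$ is strictly decreasing and $w(t) < q_0' < q_0$ for every $t > 0$. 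Crucially, the spatially constant function $(i,j,t) \mapsto w(t)$ solves the LDE \eqref{eqn:main_results:discrete AC}, since the discrete Laplacian annihilates constants.

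Suppose for contradiction that $\limsup_{i\to-\infty}\sup_{j\in\Z} u_{i,j}(t_*) \ge q_0$ for some $t_* > 0$. Then I can extract a sequence $(i_k, j_k) \in \Z^2$ with $i_k \to -\infty$ and $u_{i_k, j_k}(t_*) \ge q_0 - 1/k$. The translated solutions
\[
\tilde u^{(k)}_{i,j}(t) = u_{i+i_k,\, j+j_k}(t)
\]
each solve \eqref{eqn:main_results:discrete AC} by translation invariance. Applying the comparison principle~\cite[Prop.~3.1]{hoffman2017entire} to $u$ and the spatially constant super- and sub-solutions $\max\{1, \|u^0\|_\infty\}$ and $\min\{0, \inf u^0\}$ yields a uniform $\ell^\infty$-bound on $\tilde u^{(k)}$; the right-hand side of \eqref{eqn:main_results:discrete AC} then supplies a uniform bound on $\dot{\tilde u}^{(k)}$. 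A standard diagonal Arzel\`a-Ascoli extraction produces a subsequence converging locally uniformly in $t$ and pointwise in $(i,j) \in \Z^2$ to a limit $\tilde u^\infty$ that again satisfies \eqref{eqn:main_results:discrete AC}.

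For the initial data, the choice of $q_0'$ furnishes an index $N$ with $u^0_{i', j'} \le q_0'$ whenever $i' \le -N$ and $j' \in \Z$. Thus for any fixed $(i,j) \in \Z^2$ and every $k$ large enough that $i + i_k \le -N$, we have $\tilde u^{(k)}_{i,j}(0) \le q_0'$, and passing to the limit yields $\tilde u^\infty(0) \le q_0'$ on all of $\Z^2$. Applying the comparison principle once more to $\tilde u^\infty$ and the spatially constant solution $w$ gives $\tilde u^\infty(t) \le w(t)$ for every $t \in [0, t_*]$, and in particular $\tilde u^\infty_{0,0}(t_*) \le w(t_*) < q_0$. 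This contradicts the pointwise limit $\tilde u^\infty_{0,0}(t_*) = \lim_k u_{i_k, j_k}(t_*) \ge q_0$. The main technical point is the Arzel\`a-Ascoli extraction, which becomes automatic once the uniform $\ell^\infty$-bounds on $\tilde u^{(k)}$ and $\dot{\tilde u}^{(k)}$ are in hand; everything else reduces to a double application of the comparison principle.
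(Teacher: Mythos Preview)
Your argument is correct and takes a genuinely different route from the paper's. The paper proceeds constructively: it fixes $d \in \bigl(\limsup_{i\to-\infty}\sup_j u^0_{i,j},\, q_0\bigr)$, builds an explicit $j$-independent super-solution of the form
\[
w_{i,j}(t) = d + M e^{|c|(i + |c|t + \alpha t)}
\]
with $M$ chosen so that $u^0 \le w(\cdot,\cdot, 0)$ and $\alpha$ large enough that $\mathcal{J}[w] \ge 0$, and then applies the comparison principle once to obtain $u_{i,j}(t) \le d + M e^{|c|(i + |c| t + \alpha t)}$, which tends to $d < q_0$ as $i \to -\infty$.

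Your compactness argument instead translates, extracts a limit, and compares with the spatially constant ODE solution $t \mapsto w(t)$. This gives a softer proof that avoids tailoring an exponential super-solution to the initial data and uses only (Hg) and (H0) as stated (the paper's use of $|c|$ in the exponent tacitly borrows a constant from (H$\Phi$), though any positive number would do). The price is the Arzel\`a--Ascoli machinery and the need to check that the limit $\tilde u^\infty$ is regular enough---namely in $C^1\bigl([0,t_*];\ell^\infty(\Z^2)\bigr)$---for the comparison principle of \cite[Prop.~3.1]{hoffman2017entire} to apply; this follows from the uniform Lipschitz bound in $t$ together with the continuity of $v \mapsto \Delta^+ v + g(v)$ on bounded sets of $\ell^\infty$, exactly as in the proof of Proposition~\ref{lemma:omega_limit_points:construction of limit points}. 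The paper's approach, by contrast, is entirely direct and yields an explicit quantitative upper bound on $u_{i,j}(t)$ valid for every $(i,j,t)$.
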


\begin{proof}
First, we find a constant $d\in(0, q_0)$ for which
\begin{equation}
    \limsup_{i\to-\infty} \sup_{j\in \Z} u^0_{i,j} < d. 
\end{equation}
Next, we pick a constant $M$ in such a way that 
\begin{equation}
    u^0_{ i,j} \leq  d+Me^{i|c|}, \quad \text{for every }(i,j)\in \Z^2.   
\end{equation}
Writing $K>0$ for the maximum value of the function $g$ on the interval $[a,1]$, we choose $\alpha > 0$ sufficiently large to have
\begin{equation}\label{eqn:analysis_of_u:alpha}
    \alpha |c| - \dfrac{c^4}{12} \cosh |c| \geq \dfrac{2K}{a-d}.
\end{equation}
 
 We now claim that the 
$j$-independent function
\begin{equation}
    w_{i,j}(t) =  d + Me^{|c|(i+|c|t + \alpha t)}
\end{equation}
is a super-solution to \eqref{eqn:main_results:discrete AC}. To see this, we compute
\begin{align*}
\mathcal{J}[w]_{i,j}(t) 
	&=  Me^{|c|(i+|c|t+\alpha t)} \left(c^2+\alpha |c| - e^{-|c|} - e^{|c|} +2\right) - g\big(w_{i,j}(t) \big) \\ 
	&=  Me^{|c|(i+|c|t+\alpha t)} \left(\alpha |c|-\dfrac{c^4}{12} \cosh \tilde{c}\right) - g\big(w_{i,j}(t) \big)\\
	&\geq \big(w_{i,j}(t) - d\big) \frac{2K}{a -d} - g\big(w_{i,j}(t) \big), 
\end{align*}
where $\tilde{c} $ is a number between $0$ and $|c|$.
For $  w_{i,j}(t) \in [0, a] \cup [1, \infty)$, we have $ g\big(w_{i,j}(t) \big) \leq 0 $, which immediately gives $\mathcal{J}[w]_{i,j}(t) \geq 0 $.
On the other hand, for $ w_{i,j}(t) \in [a, 1] $ our choice for $K$ yields
$$
\mathcal{J}[w]_{i,j}(t) \geq (a-d)\dfrac{2K}{a-d} - K \geq K > 0.  
$$

Applying the comparison principle we conclude 
\begin{equation}
    u_{i,j}(t) \leq w_{i,j}(t) = d+ Me^{|c|(i+|c|t + \alpha t)}, \quad 
\end{equation}
$\text{for every } t\geq 0 \ \text{and } (i,j)\in \Z^2$. Taking the supremum over $j\in \Z$ and sending $i$ to $-\infty$ we obtain the desired inequality \eqref{eqn:analysis_of_u:liminf_u}.
\end{proof}

\begin{lemma}\label{lemma:analysis_of_u:u_leq_1}
Suppose that (Hg), $(H\Phi)$ and (H0) are satisfied. Let $u$ be the solution of the discrete Allen-Cahn equation \eqref{eqn:main_results:discrete AC} with the initial condition \eqref{eqn:main_results:initial condition}. Then for every $q_0>0$ there exists $T>0$ so that
\begin{equation}\label{eqn:analysis_of_u:u_leq_1}
u_{i,j}(t) \leq 1+\dfrac{q_0}{2}
\end{equation}
holds for every $t \geq T$
and $(i,j) \in \mathbb{Z}^2$.
\end{lemma}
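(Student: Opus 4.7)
The plan is to dominate $u$ by a spatially homogeneous super-solution that is independent of $(i,j)$ and relaxes monotonically to the stable equilibrium $u = 1$. Since the initial condition $u^0$ lies in $\ell^\infty(\Z^2)$, we can fix a constant $M \ge 1$ with $u^0_{i,j} \le M$ for all $(i,j) \in \Z^2$, and define $W : [0,\infty) \to \R$ as the (global) solution of the scalar ODE
\begin{equation}
    \dot{W}(t) = g\bigl(W(t)\bigr), \qquad W(0) = M.
\end{equation}

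Because $W$ is spatially constant, the discrete Laplacian contribution vanishes and we have $\mathcal{J}[W]_{i,j}(t) = \dot{W}(t) - g(W(t)) = 0$ for all $t \ge 0$ and $(i,j) \in \Z^2$. In particular, the constant-in-space function $W$ is a super-solution of \eqref{eqn:main_results:discrete AC} with $W(0) \ge u^0_{i,j}$ everywhere, so the comparison principle for the LDE (cf.~\cite[Prop. 3.1]{hoffman2017entire}) yields
\begin{equation}
    u_{i,j}(t) \le W(t), \qquad t \ge 0, \ (i,j) \in \Z^2.
\end{equation}

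The remaining task is to show $W(t) \to 1$ as $t \to \infty$, so that we can choose $T$ depending on $q_0$. If $M \le 1$, then (Hg) yields $g(W) \ge 0$ for $W \in [0,1]$, and the equilibrium $1$ combined with $g'(1) < 0$ forces $W$ to be non-decreasing and bounded above by $1$, so $W(t) \le 1 \le 1 + q_0/2$ for all $t \ge 0$. If $M > 1$, then (Hg) gives $g(W) < 0$ on $(1, M]$, so $W$ is strictly decreasing and bounded below by the equilibrium $1$. Standard ODE considerations (monotone bounded convergence plus the fact that $1$ is the only equilibrium in $[1, M]$) then imply $W(t) \downarrow 1$. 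Combined with $g'(1) < 0$ — which actually makes this convergence exponential — we conclude that for every $q_0 > 0$ there exists $T > 0$ with $W(t) \le 1 + q_0/2$ whenever $t \ge T$, yielding the desired estimate \eqref{eqn:analysis_of_u:u_leq_1}.

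I do not foresee a genuine obstacle here: the argument amounts to choosing the correct trivial super-solution and invoking the comparison principle already quoted in the paper. The only mildly delicate point is that the initial perturbation is allowed to be arbitrarily large (no assumption $u^0 \le 1$ is made), but this is absorbed simply by taking $M = \|u^0\|_{\ell^\infty(\Z^2)}$; the bistable structure of $g$ on $[1, \infty)$ via (Hg) then does all the work.
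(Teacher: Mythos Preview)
Your proposal is correct and follows essentially the same approach as the paper: both compare $u$ with the spatially homogeneous solution of the scalar ODE $\dot W = g(W)$ started at $\|u^0\|_{\ell^\infty(\Z^2)}$, invoke the comparison principle, and use the bistable structure (Hg) to conclude that $W(t)$ eventually drops below $1 + q_0/2$. The only cosmetic difference is that the paper does not split into the cases $M\le 1$ and $M>1$.
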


\begin{proof}
 Let $ \tilde{u} $ be the solution to 
 the scalar initial value problem
	\begin{equation}
	\begin{cases}
	\tilde{u}_t &= g(\tilde{u}) , \qquad t>0 \\
	\tilde{u}(0) &= \norm{u^0}_{\ell^\infty(\Z^2)}. \\
	\end{cases}
	\end{equation}
	Since $g(u) < 0$ for all $u > 1$, 
	there exists $ T>0 $ such that $ \tilde{u}(t) \leq 1+ \frac{q_0}{2} $ 
	for all $ t\geq T $.
	Exploiting
	the fact that
	$ \tilde{u} $ is also a
	spatially homogeneous
	solution to
	\eqref{eqn:main_results:discrete AC},
	the comparison principle
	yields $ u_{i,j}(t) \leq \tilde{u}(t)$ for all $ t\geq 0 $ and $(i,j) \in \mathbb{Z}^2$. Combining
	these observations
	leads directly to
	\eqref{eqn:analysis_of_u:u_leq_1}.
	%
%
\end{proof}

\begin{lemma}\label{lemma:super_and_sub_solutions_for_the_discrete_AC:upper and lower bounds}
Assume that (Hg), $(H\Phi)$ and (H0) are satisfied. 
Then there exists a time $T>0$ together with constants 
\begin{equation}
    q_0\in(0,a), \qquad q_1\in(0,1-a), 
    \qquad  \theta_0\in \R , \qquad \theta_1\in \R,
    \qquad \mu > 0, \qquad C > 0
\end{equation} 
so that the solution $u$ to \eqref{eqn:main_results:discrete AC} with the initial condition \eqref{eqn:main_results:initial condition} satisfies the estimates
\begin{align}
u_{i,j}(t) \leq \Phi\left(i+ \theta_0 - c(t-T) + Cq_0\big(1-e^{-\mu (t-T)}\big)\right) + q_0 e^{-\mu (t-T)}, \quad \forall t\geq T, \label{eqn:super_and_sub_solutions_for_the_discrete_AC:upper_bound} \\
u_{i,j}(t) \geq \Phi\left(i-\theta_1 - c(t-T) - Cq_1\big(1-e^{-\mu (t-T)}\big)\right) - q_1 e^{-\mu (t-T)}, \quad \forall t\geq T. \label{eqn:super_and_sub_solutions_for_the_discrete_AC:lower_bound}
\end{align}
\end{lemma}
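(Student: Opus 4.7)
The plan is to apply the comparison principle, using the super- and sub-solutions already constructed in Lemma~\ref{lemma:analysis_of_u:super_and_sub_solution} together with their translation invariants. First, I would choose the parameters: by (H0) pick $q_0 \in (0,a)$ and $q_1 \in (0, 1-a)$ so that
\begin{equation}
\limsup_{i\to-\infty}\sup_{j\in\Z} u^0_{i,j} < q_0, \qquad \liminf_{i\to\infty}\inf_{j\in\Z} u^0_{i,j} > 1 - q_1,
\end{equation}
and let $C, \mu$ be the constants supplied by Lemma~\ref{lemma:analysis_of_u:super_and_sub_solution} for this $(q_0, q_1)$.

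For the upper bound, the task reduces to choosing $T > 0$ and $\theta_0 \in \R$ so that $u_{i,j}(T) \leq \Phi(i + \theta_0) + q_0$ holds pointwise on $\Z^2$. By Lemma~\ref{lemma:analysis_of_u:u_leq_1} there is a time $T > 0$ after which $u_{i,j}(t) \leq 1 + q_0/2$ for all $(i,j)$, and by Lemma~\ref{lemma:analysis_of_u:liminfu_leq_q0} applied at $t = T$ there exists $I_0 \in \Z$ with $\sup_{j} u_{i,j}(T) < q_0$ for every $i < I_0$. Since $\Phi(\xi) \to 1$ as $\xi \to \infty$, I can pick $\theta_0$ so large that $\Phi(I_0 + \theta_0) \geq 1 - q_0/2$; by monotonicity of $\Phi$ this yields $\Phi(i + \theta_0) + q_0 \geq 1 + q_0/2 \geq u_{i,j}(T)$ for $i \geq I_0$, while for $i < I_0$ one has $\Phi(i + \theta_0) + q_0 \geq q_0 > u_{i,j}(T)$. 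The time-shifted, space-shifted function
\begin{equation}
\tilde u^+_{i,j}(t) := \Phi\bigl(i + \theta_0 - c(t-T) + Cq_0(1 - e^{-\mu(t-T)})\bigr) + q_0 e^{-\mu(t-T)}
\end{equation}
is a super-solution on $t \geq T$ by Lemma~\ref{lemma:analysis_of_u:super_and_sub_solution} and the translation invariance of \eqref{eqn:main_results:discrete AC}, and by the construction above it dominates $u$ at $t = T$. The comparison principle then delivers \eqref{eqn:super_and_sub_solutions_for_the_discrete_AC:upper_bound}.

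The lower bound \eqref{eqn:super_and_sub_solutions_for_the_discrete_AC:lower_bound} follows by the mirror argument, which is the place that requires the most attention. One needs the two lower analogues of Lemmas~\ref{lemma:analysis_of_u:liminfu_leq_q0} and~\ref{lemma:analysis_of_u:u_leq_1}: that $\liminf_{i\to\infty}\inf_j u_{i,j}(t) > 1 - q_1$ for every $t > 0$, and that $u_{i,j}(t) \geq -q_1/2$ for $t$ large. Because $g$ is bistable but not symmetric under $u \mapsto 1-u$, these are proved by running the same sub-/super-solution arguments with the roles of the two stable equilibria swapped: for the lower $\liminf$ one compares $u$ against $1 - d - Me^{-|c|(i - |c|t - \alpha t)}$ with $d > 1 - q_1$, and for the global lower bound one compares against the spatially homogeneous solution of $\dot v = g(v)$ starting from $-\|u^0\|_{\ell^\infty}$, which approaches $0$ as $t \to \infty$ since $g(v) > 0$ for $v < 0$. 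Once these lower analogues are in hand, a choice of $I_1$ and $\theta_1$ as above yields $u_{i,j}(T) \geq \Phi(i - \theta_1) - q_1$, and the sub-solution from Lemma~\ref{lemma:analysis_of_u:super_and_sub_solution} (suitably shifted) completes the argument. The only subtlety is ensuring that the same $T$ works for both bounds, which is handled by taking the maximum of the two times produced.
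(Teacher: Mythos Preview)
Your proposal is correct and follows essentially the same approach as the paper: choose $q_0$ via (H0), use Lemma~\ref{lemma:analysis_of_u:u_leq_1} to get the global bound $u \le 1 + q_0/2$ at time $T$, use Lemma~\ref{lemma:analysis_of_u:liminfu_leq_q0} to handle the left tail, shift $\Phi$ accordingly, and invoke the comparison principle with the super-solution from Lemma~\ref{lemma:analysis_of_u:super_and_sub_solution}. Your discussion of the lower bound is in fact more explicit than the paper's, which simply states that it is ``obtained in a similar fashion''; the mirror lemmas you sketch are exactly what is needed.
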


\begin{proof}
We first choose $q_0\in (0,a)$ 
in such a way that
\begin{equation}\label{eqn:analysis_of_u:liminf_u0_leq_q0}
    \limsup_{i\to -\infty} \sup_{j\in \Z} u^0_{i,j} < q_0.
\end{equation}
Using Lemma \ref{lemma:analysis_of_u:u_leq_1}, we obtain  $T>0$ for which 
\begin{equation}\label{eqn:analysis_of_u:u_leq_1_in_proof}
    u_{i,j} (T) \leq 1+\dfrac{q_0}{2} \quad \text{for every }(i,j)\in \Z^2.
\end{equation}
On the other hand,
Lemma~\ref{lemma:analysis_of_u:liminfu_leq_q0} 
allows us to find
$\vartheta_a\in \Z$ so  that
\begin{equation}\label{eqn:analysis_of_u:u(T)_leq_q0_in_proof}
    u_{i,j}(T) \leq q_0, \quad \text{for } i\leq \vartheta_a \text{ and}\ j\in \Z. 
\end{equation}
Finally, in view of the limits~ \eqref{eqn:main_results:boundary:cond:for:phi} there exists $\vartheta_b \in \Z$ for which
$$ \Phi(i)\geq1-\frac{q_0}{2}, \quad \text{for every } i\geq \vartheta_b. $$
Combining these inequalities
and recalling
the definition
\eqref{eqn:analysis_of_u:supersolution},
we obtain
\begin{equation}
	u_{i,j}(T)  \leq \Phi(i-\vartheta_a + \vartheta_b) + q_0 = u_{i-\vartheta_a +\vartheta_b }^+(0)  
	\end{equation} 
for all $i \in \Z$.
The desired upper bound
\eqref{eqn:super_and_sub_solutions_for_the_discrete_AC:upper_bound} with $\theta_0 = \vartheta_b - \vartheta_a$
now follows from Lemma
\ref{lemma:analysis_of_u:super_and_sub_solution} and the comparison principle. The lower bound can be obtained
in a similar fashion.

\end{proof}

\begin{proof}[Proof of Proposition~\ref{lemma:omega_limit_points:construction of limit points}]
Fix an integer $L\in \N$ and consider the functions $$u^k\in C\big([-L,L]; \R^{(2L+1)\times (2L+1)}\big)$$ that are defined by 
$$ u^k_{i,j}(t) = u_{i+\lceil c t_{n_k}\rceil, j+j_{n_k}}(t+ t_{n_k}), \quad \quad  (i,j,t) \in \{-L, \ldots, L\}^2\times[-L,L]$$
for all sufficiently large $k$.  Lemma~\ref{lemma:analysis_of_u:u_leq_1} implies that the solution $u$ and hence  the functions $u^k $ are globally bounded. Since the derivative $\dot{u}$ satisfies \eqref{eqn:main_results:discrete AC}, it follows that  $\dot{u}^k$ is also a globally bounded sequence.  Hence, Ascoli-Arzela implies that the sequence $ u^k $ is relatively compact. By using 
 a standard diagonalization argument together with \eqref{eqn:main_results:discrete AC}, we obtain a subsequence $ u^{n_k} $ and a function $ \omega:\R\to \ell^\infty(\Z^2) $ so that
	$$ \sup_{(i,j,t)\in K}|u^{n_k}_{i,j}(t) - \omega_{i,j}(t)|
	+ |\dot{u}^{n_k}_{i,j}(t) - \dot{\omega}_{i,j}(t)| \to 0,$$
	%
    for every compact $K\subset \Z^2 \times \R$. This immediately implies \textit{(i)} and \textit{(ii)}. The bounds  \eqref{eqn:omega_limit_points:omega_limit_between_two_waves} follow directly from Lemma~\ref{lemma:super_and_sub_solutions_for_the_discrete_AC:upper and lower bounds}.
    \end{proof}

\section{Trapped entire solutions}
\label{sec:Trapped entire solutions}

The main point of this section is to prove that every entire solution that is trapped between two traveling waves is a traveling wave itself. This is a very 
useful result 
when combined 
with Proposition ~\ref{lemma:omega_limit_points:construction of limit points}, 
since it implies that every $\omega$-limit point of the solution $u$ is a traveling wave. This will turn
out to be a crucial tool 
during our analysis of the large time behaviour of $u$.

\begin{prop}\label{thm:trapped_entire_sol:every_trapped_entire_solution_is_a_traveling_wave}
	Assume that (H$g$) and (H$\Phi$) are satisfied
	and consider a
	function $\omega \in C^1\big(\R; \ell^\infty(\Z^2)
	\big)$
	that satisfies
	the Allen-Cahn LDE~\eqref{eqn:main_results:discrete AC}
	 for all $t \in \R$.
	%
	Assume furthermore that there exists a constant $\theta$ for which the bounds
	\begin{equation}\label{eqn:trapped_entire_sol:function_trapped_between_two_tw}
	 \Phi(i-ct-\theta) \leq \omega_{i,j} (t) \leq \Phi(i-ct+\theta) 
	\end{equation}
	hold for 
	all $(i,j) \in \Z^2$
	and $t\in \R$.
	Then there exists a constant $ \theta_0\in [-\theta, \theta] $  so that
	$$\omega_{i,j}(t) = \Phi(i-ct-\theta_0), \quad \hbox{for all } (i,j)\in \Z^2, \, t\in \R.$$
\end{prop}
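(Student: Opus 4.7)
My strategy is a sliding argument reminiscent of the one employed by Berestycki--Hamel \cite{berestycki2007generalized} in the continuous setting, suitably adapted to accommodate the lattice structure. The plan is to introduce
\begin{equation*}
\theta^+ = \inf\{\sigma \in [-\theta,\theta] : \omega_{i,j}(t) \leq \Phi(i - ct + \sigma) \text{ for every } (i,j,t) \in \Z^2 \times \R\},
\end{equation*}
which is finite and well-defined by the hypothesis \eqref{eqn:trapped_entire_sol:function_trapped_between_two_tw}. By continuity, the inequality with $\sigma = \theta^+$ persists throughout $\Z^2 \times \R$. I would then introduce the analogous lower critical shift $\theta^- = \sup\{\sigma\in[-\theta,\theta]: \omega_{i,j}(t)\geq \Phi(i-ct+\sigma)\}$ and aim to prove $\theta^+ = \theta^-$, which forces $\omega_{i,j}(t) = \Phi(i - ct + \theta^+)$ and yields the conclusion with $\theta_0 = -\theta^+$.

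Writing $v_{i,j}(t) = \Phi(i - ct + \theta^+) - \omega_{i,j}(t) \geq 0$ and subtracting the travelling wave MFDE \eqref{eqn:main_results:MFDE} at $\theta = 0$ from the LDE \eqref{eqn:main_results:discrete AC}, a mean-value application to $g \in C^2$ produces a linear LDE of the form $\dot v = \Delta^+ v + q\, v$ with uniformly bounded coefficient $q$. If $v$ attains the value zero at some $(i_0,j_0,t_0) \in \Z^2 \times \R$, the strong maximum principle for such LDEs --- essentially, at an infimum one has $\dot v_{i_0,j_0}(t_0) = 0$ forcing $(\Delta^+ v)_{i_0,j_0}(t_0) = 0$, which propagates vanishing to the four lattice neighbours at time $t_0$ and iterates to yield $v(\cdot,\cdot,t_0) \equiv 0$; backward/forward uniqueness for the linear Cauchy problem then gives $v \equiv 0$ --- which is precisely the desired conclusion.

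The technical heart of the argument is the non-attainment case, in which $v > 0$ strictly yet $\inf v = 0$ by the minimality of $\theta^+$. Here I plan to extract sequences $(i_n, j_n, t_n)$ along which $v_{i_n, j_n}(t_n) \to 0$ and apply the $\omega$-limit construction of Proposition \ref{lemma:omega_limit_points:construction of limit points} (adapted to entire solutions via a two-sided Ascoli--Arzela diagonalisation on the translates $\omega_{i+i_n, j+j_n}(t+t_n)$) to obtain an entire limit $\tilde\omega$ that remains trapped between shifted copies of $\Phi$. One must first rule out that $\xi_n := i_n - c t_n$ drifts to $\pm\infty$: in such regions both $\omega$ and $\Phi(\cdot + \theta^+)$ approach the stable equilibria $0$ or $1$, and the linear stability $g'(0)<0, g'(1)<0$ yields exponential decay of $\Phi$ to its asymptotes, giving the quantitative far-field estimates needed to exclude escape sequences. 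After confining $\xi_n$ and passing to a subsequence $\xi_n \to \xi^*$, the limit $\tilde v = \Phi(\cdot + \xi^* + \theta^+) - \tilde\omega$ attains zero at $(0,0,0)$, and the strong maximum principle above pins $\tilde\omega \equiv \Phi(i - ct + \xi^* + \theta^+)$.

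The remaining step --- transferring this rigidity back to $\omega$ itself --- is the principal obstacle, since the limit $\tilde\omega$ carries information only along the chosen subsequence. I would close the argument by a weighted barrier comparison: set $\tilde v = v / \Phi'(i - ct + \theta^+)$. Exponential decay of $\Phi'$ at both infinities, matched by the exponential rates of $\omega$ dictated by (H$g$), ensures that $\tilde v$ is uniformly bounded on $\Z^2 \times \R$. If $\inf \tilde v > 0$ one has $v \geq c\,\Phi'(\cdot+\theta^+)$ uniformly, so replacing $\theta^+$ by $\theta^+ - \epsilon$ for small $\epsilon > 0$ still preserves $\omega \leq \Phi(\cdot+\theta^+-\epsilon)$ (using $\Phi(\cdot+\theta^+-\epsilon) = \Phi(\cdot+\theta^+) - \epsilon\Phi'(\cdot+\theta^+) + O(\epsilon^2)$), contradicting the minimality of $\theta^+$. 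If instead $\inf \tilde v = 0$, the boundedness of $\tilde v$ localises the minimising sequence in $\xi$, and the extraction-and-strong-max-principle scheme above, now executed for $\tilde v$, forces $v \equiv 0$ and hence $\omega \equiv \Phi(\cdot+\theta^+)$. The construction of these weighted barriers, leaning on (H$g$) and (H$\Phi$) through the spectral characterisation of the linearisations at $u = 0$ and $u = 1$, is the most delicate technical ingredient.
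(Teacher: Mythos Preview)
Your strategy --- sliding $\omega$ against the wave $\Phi$ itself --- is a natural first instinct and differs genuinely from the paper's route, which slides $\omega$ against its \emph{own} spatial-temporal translates $v^\sigma_{i,j}(t)=\omega_{i+I,j+J}(t+I/c+\sigma/c)$ for arbitrary $(I,J)\in\Z^2$. That difference matters for how the argument closes. In the paper, once one assumes the optimal shift $\sigma_*>0$ and passes to a limit $\omega^\infty$ along a minimising sequence, the strong maximum principle forces $\omega^\infty_{i,j}(t)=\omega^\infty_{i+I,j+J}(t+I/c+\sigma_*/c)$; iterating this identity in $k\in\Z$ and using the trapping bounds on $\omega^\infty$ drives $\omega^\infty_{0,0}(0)$ simultaneously to $0$ and $1$, a direct contradiction \emph{at the limit level}. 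No transfer back to $\omega$ is needed.

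Your scheme does not have such a self-contained contradiction. When you extract a limit $\tilde\omega$ along a sequence where $v/\Phi'\to 0$ and deduce via the strong maximum principle that $\tilde\omega$ is a shifted wave, this is perfectly consistent with the trapping hypothesis and yields no contradiction; as you yourself note, it says nothing about $\omega$ away from the subsequence. Your proposed fix via the weighted quotient $\tilde v=v/\Phi'$ does not close this: boundedness of $\tilde v$ does \emph{not} localise the minimising sequence in $\xi=i-ct$. Indeed, using only $\Phi(\xi-\theta)\le\omega\le\Phi(\xi+\theta^+)$ and the exponential asymptotics of $\Phi$, one finds that $\tilde v$ remains bounded between $0$ and a fixed positive constant as $|\xi|\to\infty$, so $\tilde v$ can approach zero along sequences with $\xi_n$ unbounded. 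Extraction along such sequences yields only the trivial limits $0$ or $1$. Hence the dichotomy ``$\inf\tilde v>0$ or $\inf\tilde v=0$'' does not resolve the problem: the second branch loops back to the obstacle you already identified.

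What is missing is a mechanism to confine the analysis to a bounded $\xi$-strip. The paper supplies this via Lemma~\ref{lemma:trapped_entire_sol:first_lemma}, a half-space comparison principle: once the desired inequality holds on a unit strip $\{\kappa\le i-ct\le\kappa+1\}$ and one of the two functions lies in the region where $g$ is monotone (near $0$ or $1$), the inequality propagates to the full half-space $\{i-ct\ge\kappa\}$ or $\{i-ct\le\kappa+1\}$. This is the discrete substitute for the far-field control you attempt with $\tilde v$, and it is what lets the paper reduce to a compact strip, localise the minimising sequence, and reach its contradiction. Without such a tool --- or without switching to self-translates so that the limit itself furnishes a contradiction --- your argument does not close.
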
 
This result is a 
generalization
of \cite[Thm. 3.1]{berestycki2007generalized}
 to the current spatially discrete setting. The main complication lies in the fact that the LDE~\eqref{eqn:main_results:discrete AC} is a nonlocal equation, as opposed to the PDE~\eqref{eqn:intro:PDE}. For example, if a smooth  function $f:E\subset \R^2\to\R$ attains a local minimum at some point $x_0$, then 
 we automatically have
 $\Delta f (x_0)\geq 0$. This is an important ingredient for the arguments in \cite{berestycki2007generalized},
 but fails to hold in our spatially discrete setting.
 
 Indeed, if 
 $v\in \ell^\infty (\Z^2)$  attains a minimum in $E\subset \Z^2 $ at some point $(i,j) \in E$,
 it does not automatically follow that the discrete Laplacian satisfies $(\Delta^+ v)_{i,j}\geq 0$.
 This conclusion can only be obtained
 if one can verify that
 the nearest neighbours of $(i,j)$
 are also contained in $E$.
  This is the key purpose of our first technical result.

 \begin{lemma}\label{lemma:trapped_entire_sol:first_lemma}
Consider the setting of Proposition~\ref{thm:trapped_entire_sol:every_trapped_entire_solution_is_a_traveling_wave} and pick a  sufficiently small $\delta>0$. 
Choose a pair $(I,J)\in \Z^2$ together with a constant $\sigma\in \R$. Suppose for some $\kappa \in \Z$ that the function 
\begin{equation}\label{eqn:trapped_entire_sols:v_sigma}
    v^\sigma_{i,j}(t) = \omega_{i+I, j+J}\left(t+\dfrac{I}{c} + \dfrac{\sigma}{c}\right)
\end{equation}
satisfies the inequality
\begin{equation}
\label{eq:trp:ineq:v:sig:omega}
v^\sigma_{i,j}(t) \leq \omega_{i,j}(t)
\end{equation}
whenever  $i-ct\in [\kappa , \kappa +1]$. 
Then the  
following claims holds true. 
\begin{enumerate}[(i)]
    \item If  $\omega_{i,j}(t) \geq 1-\delta$ 
    whenever $i-ct\geq \kappa $, then 
    in fact \eqref{eq:trp:ineq:v:sig:omega}
    holds for all $i-ct\geq \kappa $.
    %
    \item If $v^\sigma_{i,j}(t) \leq \delta $ whenever $i-ct \leq \kappa+1 $, then 
    in fact \eqref{eq:trp:ineq:v:sig:omega}
    holds for all  $i-ct \leq \kappa +1$.
\end{enumerate}
\end{lemma}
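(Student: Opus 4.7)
My approach is a sliding argument paired with a discrete strong maximum principle, adapting \cite{berestycki2007generalized} to the lattice setting. Focusing first on claim (i), I set $R := \{(i,j,t) \in \Z^2 \times \R : i - ct \geq \kappa\}$ and define
$$\epsilon^* := \inf\{\epsilon \geq 0 : v^\sigma_{i,j}(t) \leq \omega_{i,j}(t) + \epsilon \text{ on } R\},$$
which is finite because $|\omega|, |v^\sigma| \leq 1$ by the trapping bound \eqref{eqn:trapped_entire_sol:function_trapped_between_two_tw}. The claim reduces to $\epsilon^* = 0$. Suppose $\epsilon^* > 0$ and pick a minimizing sequence $(i_n, j_n, t_n) \in R$ for $\omega + \epsilon^* - v^\sigma$. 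On the strip $\{i - ct \in [\kappa, \kappa+1]\}$ the hypothesis \eqref{eq:trp:ineq:v:sig:omega} gives $\omega + \epsilon^* - v^\sigma \geq \epsilon^*$, so eventually $i_n - ct_n > \kappa + 1$. Meanwhile $\Phi(\xi) \to 1$ as $\xi \to \infty$, so the trapping bounds force $\omega - v^\sigma \to 0$ uniformly as $i - ct \to \infty$; hence the minimizing sequence must also remain bounded above in $i - ct$.

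To extract a limit I must work around the absence of a continuous co-moving frame. Setting $\lambda_n := \lceil c t_n \rceil$, I pass to a subsequence on which $c t_n - \lambda_n \to \theta^* \in [-1, 0]$ and on which the bounded integer sequence $i_n - \lambda_n$ is eventually constant with value $i_* \in \Z$. Because $\lambda_n - c t_n \in [0, 1)$, I compute $i_n - \lambda_n = (i_n - c t_n) + (c t_n - \lambda_n) > \kappa$, hence $i_* \geq \kappa + 1$. The discretely shifted functions $\hat\omega^n_{i,j}(t) := \omega_{i + \lambda_n, j + j_n}(t + t_n)$ and $\hat v^n_{i,j}(t) := v^\sigma_{i + \lambda_n, j + j_n}(t + t_n)$ both solve the LDE and have uniformly bounded values and time derivatives, so an Ascoli-Arzel\`a/diagonal argument (as in Proposition \ref{lemma:omega_limit_points:construction of limit points}) yields $C_{\mathrm{loc}}$-limits $\hat\omega, \hat v$ that again solve the LDE. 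Under the shift the region $R$ becomes $\hat R = \{\hat i - c \hat t \geq \kappa + \theta^*\}$ and the strip becomes $\{\hat i - c\hat t \in [\kappa + \theta^*, \kappa + 1 + \theta^*]\}$; the hypothesis $\omega \geq 1 - \delta$ on $R$ passes to $\hat\omega \geq 1 - \delta$ on $\hat R$. The difference $\hat W := \hat\omega + \epsilon^* - \hat v$ satisfies $\hat W \geq 0$ on $\hat R$, $\hat W \geq \epsilon^*$ on the strip, and $\hat W(i_*, 0, 0) = 0$; since $\theta^* \leq 0$ and $i_* \geq \kappa + 1$, all four lattice neighbors of $(i_*, 0)$ also lie in $\hat R$.

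Subtracting the two LDEs yields $\dot{\hat W} = \Delta^+ \hat W + g(\hat\omega) - g(\hat v)$. At the minimum point $(i_*, 0, 0)$, which is interior in time, $\dot{\hat W} = 0$, and because $\hat W \geq 0$ at every nearest neighbor, $(\Delta^+ \hat W)(i_*, 0, 0) \geq 0$; hence $g(\hat\omega) \leq g(\hat v)$ there. But $\hat v = \hat\omega + \epsilon^*$ with $\hat\omega \in [1-\delta, 1-\epsilon^*]$ and $\hat v \in [1 - \delta + \epsilon^*, 1]$, so both values lie in $[1-\delta, 1]$. Choosing $\delta > 0$ small enough that $g$ is strictly decreasing on $[1-\delta, 1]$ (possible since $g'(1) < 0$), the strict inequality $\hat\omega < \hat v$ forces $g(\hat\omega) > g(\hat v)$, the desired contradiction. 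Claim (ii) follows from the mirror-image argument on $R = \{i - ct \leq \kappa + 1\}$, using the hypothesis $v^\sigma \leq \delta$ together with the trapping bound $\omega \geq 0$ and the sign condition $g'(0) < 0$.

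The main obstacle is the limit extraction in the middle step: since integer translations cannot absorb the real drift $c t_n$, the shifted region $\hat R$ is displaced from $R$ by an arbitrary residual $\theta^* \in [-1, 0]$, and one must verify that the limit minimum point sits far enough inside $\hat R$ for the discrete Laplacian to only probe cells where $\hat W \geq 0$. The one-unit-wide strip hypothesis is precisely what provides this margin, regardless of $\theta^*$.
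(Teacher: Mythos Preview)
Your proof is correct and follows the same overall sliding/compactness scheme as the paper: define $\epsilon^*$, take a minimizing sequence, show it stays a bounded distance from the strip, pass to a $C_{\mathrm{loc}}$-limit via integer shifts $\lceil ct_n\rceil$, and verify that the limiting minimum point is interior enough that all four lattice neighbours lie in the shifted region. Your handling of the residual offset $\theta^*\in[-1,0]$ and the resulting bound $i_*\ge\kappa+1$ matches the paper's reasoning.

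The genuine difference is in the endgame. The paper uses only that $g$ is \emph{non-increasing} on $[1-\delta,1]$ to derive the differential inequality $\dot z-\Delta^+z\ge -Bz$; at the zero minimum this forces $\Delta^+z=0$, hence $z_{L-1,0}(0)=0$, and the paper then \emph{iterates} this propagation in the $i$-direction until the zero set hits the boundary strip $\{i-ct=\kappa\}$, where $z\ge\epsilon^*>0$ gives the contradiction. You instead use that $g$ is \emph{strictly} decreasing on $[1-\delta,1]$ (available since $g'(1)<0$): from $\dot{\hat W}=0$ and $\Delta^+\hat W\ge0$ you read off $g(\hat\omega)\le g(\hat v)$ directly, and since $\hat\omega<\hat v$ with both values in $[1-\delta,1]$ (the trapping bound $\hat v\le1$ combined with $\hat\omega\ge1-\delta$ automatically forces $\epsilon^*\le\delta$), strict monotonicity yields an immediate contradiction without any iteration. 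Your route is shorter; the paper's route is closer in spirit to a discrete strong-maximum-principle propagation and would still work if one only knew $g'\le0$ near $1$.
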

\begin{proof}
Starting with (i), we define the set 
$$
    E:=\left\{(i,j,t) \in \Z^2\times\R: i-ct \geq \kappa  \right\}.
$$
Since both functions $\omega$ and $v^\sigma$ are globally bounded, the quantity
$$\epsilon^* = \inf\left\{\epsilon>0: 
v^\sigma \leq \omega + \epsilon   \text{ in } E \right\}$$
is finite. In addition, by continuity we have 
\begin{equation}\label{eqn:trapped_entire_sols:vsigma<omega+eps}
    v^{\sigma} \leq \omega + \epsilon^* \quad \text{in }E. 
\end{equation}
To prove the claim, it  suffices to show that
$\epsilon^* = 0$. Assuming to the contrary
that $\epsilon^*>0$, we can
find  sequences $\epsilon_n \nearrow \epsilon^*$ and $(i_n, j_n, t_n) $ in $E$ with the property 
that
\begin{equation}
    \omega_{i_n, j_n}(t_n) + \epsilon_n < v^{\sigma}_{i_n, j_n}(t_n)\leq \omega_{i_n, j_n}(t_n) + \epsilon^* \quad \text{for each } n\in \N. 
\end{equation}
Sending $n\to\infty$ we conclude that
\begin{equation} \label{eqn:trapped_entire_sol_:limit in trapped wave}
    \lim_{n\to\infty} \omega_{i_n, j_n}(t_n) -  v^{\sigma}_{i_n, j_n}(t_n) + \epsilon^* = 0.
\end{equation}
Now, notice that the assumption \eqref{eqn:trapped_entire_sol:function_trapped_between_two_tw} and the inequality $\epsilon^* > 0$ imply  
that the sequence $l_n:=i_n-c t_n $ is bounded.
In addition, our assumption
\eqref{eq:trp:ineq:v:sig:omega} implies
that $l_n > \kappa + 1$. In particular, we can assume that the bounded sequence $i_n-\lceil c t_n \rceil$ is equal to an integer $L\geq \kappa$.

Applying Proposition 3.1 to the function $\omega$ and
the sequence $(j_n, t_n)$, we obtain a 
limiting function $\omega^\infty$ for which we have
\begin{equation}\label{eqn:trapped_entire_sols:limit}
    \lim_{n\to\infty} \omega_{i+\lceil ct_n \rceil, j+j_n}(t+ t_n) = \omega^{\infty}_{i, j}(t), 
\end{equation}
for each $(i,j,t)\in\Z^2\times\R$.
By construction it also holds that
\begin{equation}
    \lim_{n\to\infty} v^\sigma_{i+\lceil ct_n \rceil, j+j_n}(t+ t_n) = \omega^{\infty}_{i+I, j+J}(t+\dfrac{I}{c} + \dfrac{\sigma}{c}). 
\end{equation}
 Next we define the function $ z= z_{i,j}(t)$ as
\begin{equation}
    z_{i,j}(t) = \omega^{\infty}_{i, j}(t) - \omega^{\infty}_{i+I, j+J}(t+\dfrac{I}{c} + \dfrac{\sigma}{c}) + \epsilon^*.
\end{equation}
    For $(i,j,t) \in E$ we have $(i+\lceil ct_n\rceil, j+j_n, t+t_n)\in E$. Combining this with the fact that the inequality~\eqref{eqn:trapped_entire_sols:vsigma<omega+eps} survives  the limit~\eqref{eqn:trapped_entire_sols:limit}, we have  $z_{i,j}(t)\geq 0$ in $E$. By \eqref{eqn:trapped_entire_sol_:limit in trapped wave} we obtain $z_{L, 0}(0) = 0$.
    Also, for $i-ct=\kappa$, we have $(i+\lceil ct_n\rceil, j+j_n, t+t_n) \in [\kappa, \kappa+1]$. In particular, we find
    \begin{equation}\label{eqn:trapped_entire_sols:z>eps}z_{i,j}(t) \geq \epsilon^*>0, \quad \text{for } i-ct=\kappa.
    \end{equation}
    Therefore, it must hold that  $L\geq \kappa+1$. 
    
    We pick $\delta$ to be small enough so that $g$ is non-increasing on $[1-\delta, 1]$. Since $ \omega^\infty \in [ 1-\delta, 1] $ and $ g $ is locally Lipschitz continuous on $ E $, there exists $B>0$ so that
	\begin{equation}
	\begin{aligned}
	\dot{z}_{i,j}(t) - (\Delta^+ z)_{i,j}(t) &= g\big(\omega^\infty_{i,j}(t) \big) - g\big(\omega^\infty_{i+I,j+J}(t+\dfrac{I}{c}  +\dfrac{\sigma}{c})\big) \\
	&\geq g(\omega^\infty_{i,j}(t) + \epsilon^*) - g\big(\omega^\infty_{i+I,j+J}(t
	+\dfrac{I}{c} + \dfrac{\sigma}{c})\big) \\
	&\geq -B z_{i,j}(t) 
	\end{aligned}
	\end{equation}
	for all $(i,j,t) \in E$.
Since $z$ attains its minimum at the point $(L, 0, 0) \in E$ with $L\geq \kappa +1$, we have $\dot{z}_{L,0}(0) = 0$. In addition,
the inequality  $(\Delta^+z)_{L, 0}(0) \geq 0 $ holds since all the nearest neighbours of $(L, 0, 0)$ are contained in $E$. In particular, we compute
     \begin{equation}
     0 \leq \dot{z}_{L,0}(0) - ( \Delta^+z)_{L, 0}(0)  + B z_{L,0} (0) = - ( \Delta^+z)_{L, 0} (0)\leq 0.
     \end{equation}
Therefore, $ (\Delta^+z)(0)_{L, 0}= 0 $ must hold, which implies that $ z_{0, L-1 }(0) = 0 $. 

If $ L=\kappa  + 1 $ then we are done,  since $ z\geq \epsilon^* > 0 $ for $ i-ct=\kappa $
which contradicts \eqref{eqn:trapped_entire_sols:z>eps}.
%
On the other hand, if $ L - 1 \geq \kappa +1  $ we can  iteratively decrease $L$ using this procedure until we reach the desired contradiction. 
Statement (ii) can be obtained in a similar fashion using $\lfloor ct_n \rfloor$ instead of $\lceil ct_n \rceil$. 
\end{proof}

\smallskip
	
\begin{lemma}\label{lemma:trapped_entire_sol:sigma leq 0}
 Consider the setting of Propostion~\ref{thm:trapped_entire_sol:every_trapped_entire_solution_is_a_traveling_wave},
 fix an arbitrary pair $(I,J)\in \Z^2$
 and recall the functions $v^\sigma$ defined in \eqref{eqn:trapped_entire_sols:v_sigma}
Then the quantity 
   \begin{equation}
       \sigma_*:= \inf \left\{\sigma\in \R: v^{\tilde{\sigma}}\leq \omega  \text{ in } \Z^2\times\R  \text{ for all }  \tilde{\sigma} \geq \sigma    \right\}
   \end{equation}
satisfies 
$\sigma_*\leq 0$. 
\end{lemma}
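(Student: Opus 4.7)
The plan is to first show that $\sigma_*$ is finite and that $v^{\sigma_*} \leq \omega$ holds on $\Z^2\times\R$, and then to derive a contradiction from the assumption $\sigma_* > 0$ via a sliding argument whose key ingredient is a strong-maximum-principle obstruction. For the first step, I would pick $\delta > 0$ small enough that Lemma~\ref{lemma:trapped_entire_sol:first_lemma} applies and choose $\kappa \in \Z$ large so that the lower trapping bound~\eqref{eqn:trapped_entire_sol:function_trapped_between_two_tw} forces $\omega_{i,j}(t) \geq 1-\delta$ whenever $i-ct \geq \kappa$. For $\sigma$ sufficiently large, the upper trapping bound $v^\sigma_{i,j}(t) \leq \Phi(i-ct-\sigma+\theta)$ yields $v^\sigma \leq \delta$ on $\{i-ct\leq \kappa+1\}$, and in particular $v^\sigma \leq \delta \leq 1-\delta \leq \omega$ on the strip $\{i-ct\in [\kappa,\kappa+1]\}$. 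Both parts of Lemma~\ref{lemma:trapped_entire_sol:first_lemma} then extend this inequality to $\Z^2\times\R$, so the defining set of $\sigma_*$ is nonempty. Continuity of $\sigma \mapsto v^\sigma$ in the uniform topology yields $v^{\sigma_*}\leq\omega$ on $\Z^2\times\R$.

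Assume now for contradiction that $\sigma_* > 0$ and set $\phi = \omega - v^{\sigma_*} \geq 0$. Boundedness of $\omega$ together with the identity $\dot{\omega} = \Delta^+ \omega + g(\omega)$ show that $\dot\omega$ is uniformly bounded, so $\sigma \mapsto v^\sigma$ is globally Lipschitz in the uniform topology with some constant $L$. I would fix $\kappa' \in \Z$ small enough that the trapping gives $v^\sigma \leq \delta$ on $\{i-ct\leq \kappa'+1\}$ uniformly for $\sigma$ in a small neighbourhood of $\sigma_*$. The sliding plan is then to show that $\phi \geq \rho > 0$ uniformly on the middle strip $\mathcal{S} = \{i-ct \in [\kappa', \kappa+1]\}$; once this is done, for $\sigma \in (\sigma_*-\varepsilon, \sigma_*]$ with $\varepsilon < \rho/(2L)$ we obtain $v^\sigma \leq v^{\sigma_*} + L\varepsilon \leq \omega - \rho/2$ on $\mathcal{S}$, and Lemma~\ref{lemma:trapped_entire_sol:first_lemma}(i)-(ii) extend this to all of $\Z^2\times\R$, contradicting the infimum definition of $\sigma_*$.

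The main obstacle is establishing this uniform gap on $\mathcal{S}$. I would argue by contradiction: suppose there is a sequence $(i_n, j_n, t_n) \in \mathcal{S}$ with $\phi(i_n, j_n, t_n) \to 0$. Since $i_n - ct_n$ is bounded, a subsequence can be extracted along which $i_n - \lceil c t_n \rceil$ equals a fixed integer $L_0$ and $\lceil ct_n \rceil - ct_n$ converges to some $\theta_0 \in [0,1]$. An adaptation of Proposition~\ref{lemma:omega_limit_points:construction of limit points} to entire solutions produces a limit $\omega^\infty$, entire and solving~\eqref{eqn:main_results:discrete AC}, and a corresponding $v^{\sigma_*,\infty}_{i,j}(t) = \omega^\infty_{i+I, j+J}(t + I/c + \sigma_*/c)$; both are trapped between shifted translates of $\Phi$, and by construction $\omega^\infty = v^{\sigma_*,\infty}$ at $(L_0, 0, 0)$. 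The non-negative difference solves a linear LDE with bounded coefficient; at its zero both $\dot\phi = 0$ and $(\Delta^+\phi)\geq 0$ hold, and substitution into the LDE forces $(\Delta^+\phi) = 0$, so all four nearest-neighbour values vanish. Propagating across the lattice and then forward in time through the linear LDE yields $\omega^\infty \equiv v^{\sigma_*,\infty}$ on $\Z^2\times [t_*, \infty)$ for some $t_*$. Iterating the identity $\omega^\infty_{i,j}(t) = \omega^\infty_{i + kI, j + kJ}(t + k(I+\sigma_*)/c)$ and applying the upper trapping bound on $\omega^\infty$ gives $\omega^\infty_{i,j}(t) \leq \Phi(i - ct - k\sigma_* + \mathrm{const}) \to 0$ as $k \to +\infty$, where the decay uses $\sigma_* > 0$. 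This contradicts the strictly positive lower trapping bound on $\omega^\infty$, yielding the required gap and closing the argument.
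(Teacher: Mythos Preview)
Your proof follows essentially the same approach as the paper: show $\sigma_*$ is finite via Lemma~\ref{lemma:trapped_entire_sol:first_lemma}, argue by sliding that $\inf(\omega-v^{\sigma_*})=0$ on a middle strip, pass to an $\omega$-limit $\omega^\infty$ along a touching sequence, use the strong maximum principle to force $\omega^\infty\equiv v^{\sigma_*,\infty}$, and derive a contradiction from the trapping bounds via iteration. The only slip is your restriction to $\Z^2\times[t_*,\infty)$: when $(I+\sigma_*)/c<0$ the iterates $t+k(I+\sigma_*)/c$ leave this half-line, but since both $\omega^\infty$ and $v^{\sigma_*,\infty}$ are bounded entire solutions of the same autonomous LDE, agreement on one time slice forces $\omega^\infty\equiv v^{\sigma_*,\infty}$ on all of $\Z^2\times\R$ (backward uniqueness), after which your contradiction goes through exactly as in the paper.
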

   
\begin{proof} First we show that $\sigma_*<\infty.$  
    Without loss we may assume that $0<\delta<1/2 $ holds for the constant defined in Lemma~\ref{lemma:trapped_entire_sol:first_lemma}. The inequalities \eqref{eqn:trapped_entire_sol:function_trapped_between_two_tw} allow $\kappa \in \N$ such that 
    \begin{equation}
      \label{eqn:trapped_entire_sol:ineqaulity for w in -infty}
	   \begin{array}{lcll}
	    \omega_{i,j}(t) &\geq &1-\delta, \quad &i-ct\geq \kappa,\\[0.2cm]
	    \omega_{i,j}(t) &\leq &\delta,\quad  &i-ct\leq -\kappa.
	   \end{array}
	 \end{equation}

    For $\sigma \geq 2\kappa  + 1$ and $i-ct \in \le \kappa +1$ one has $i-ct-\sigma \leq -\kappa $. It follows from \eqref{eqn:trapped_entire_sol:ineqaulity for w in -infty} that $v^\sigma \leq \delta $ on $i-ct \le \kappa +1$. Using  $\delta\leq 1-\delta$ we have $v^\sigma \leq \omega$ on $i-ct\in[\kappa, \kappa+1]$. Hence, both items~\textit{(i)} and \textit{(ii)}  of  Lemma~\ref{lemma:trapped_entire_sol:first_lemma} are satisfied and the bound $v^\sigma\leq \omega$ on $\R$ follows immediately. Since $\sigma\geq2\kappa  + 1 $ was arbitrary, we conclude that  $\sigma_*\leq 2\kappa+1 $. 
    
    Arguing by contradiction, let us assume that $\sigma_* > 0$. Defining the set 
   \begin{equation}
       S=\left\{ -\kappa-1\leq i-ct\leq \kappa+1 \right\},
   \end{equation}
 we now claim that  \begin{equation}\label{eqn:trapped_entire_sol:inf(w-v_sigma)=0}
       \inf_S \ (\omega-v^{\sigma_*}) = 0. 
   \end{equation}
 Assume to the contrary that  $\inf_S(w-{v^\sigma}^*) = K>0$. Then, using the global Lipschitz continuity of $\omega$, there exists a constant $M>0$ such that
  \begin{equation}
        \begin{aligned}
            \omega_{i,j}(t) - v^{\sigma_*-\mu}_{i,j}(t)  =  \omega_{i,j}(t) -  v^{\sigma_*}_{i,j}(t) +
            v^{\sigma_*}_{i,j}(t) -
            v^{\sigma_*-\mu}_{i,j}(t)  \geq K-M\mu
        \end{aligned}
   \end{equation}
 holds for every $\mu \geq 0$ and $(i,j,t) \in S$. Hence, there exists $\mu_0 \in (0, \sigma_*)$ such that $v^{\sigma_*-\mu}\leq \omega$ on $S$, for all $\mu \in [0, \mu_0] $.  Item~\textit{(i)} in Lemma~\ref{lemma:trapped_entire_sol:first_lemma} implies that $v^{\sigma_* - \mu}\leq \omega$ for  $i-ct \geq \kappa $ and for all $\mu \in [0, \mu_0]$. Furthermore, since $\sigma_*-\mu \geq 0$, we have $v^{\sigma_* - \mu}\leq \delta $ for $i-ct\leq -\kappa$. Since also $v^{\sigma_* - \mu}\leq \omega$ for  $-\kappa-1 \leq i-ct \leq -\kappa$, item   \textit{(ii)} of Lemma~\ref{lemma:trapped_entire_sol:first_lemma} implies that $v^{\sigma_* - \mu}\leq \omega$ also holds on $i-ct \leq -\kappa$. All together, we have $v^{\sigma_* - \mu}\leq \omega$ on $\Z^2\times \R$, which contradicts the minimality of $\sigma_*$ and yields \eqref{eqn:trapped_entire_sol:inf(w-v_sigma)=0}. 
   
 We can hence find a sequence  $(i_n, j_n, t_n)$  in $S$ such that 
    \begin{equation}
    \omega_{i_n, j_n}(t_n) - v^{\sigma_*}_{i_n, j_n}(t_n) \to 0 \qquad \text{ as } n\to\infty .    
    \end{equation}
Since $i_n-ct_n$ is bounded, we can assume that $ i_n - \lceil ct_n\rceil$ is equal to a constant, which we denote by $L$. As before, we obtain the convergence
    \begin{equation}
    \lim_{n\to\infty} \omega_{i+\lceil ct_n \rceil, j+j_n}(t+ t_n) = \omega^{\infty}_{i, j}(t), 
    \end{equation}
    where $\omega^\infty$ is also an entire solution of the LDE~\eqref{eqn:main_results:discrete AC}.
    Hence, the function $z=z_{i,j}(t)$ defined as
    \begin{equation}
        z_{i,j}(t):= \omega^\infty_{i,j}(t)   - \omega^\infty_{i+I,j+J}(t+\dfrac{I}{c} + \dfrac{\sigma_*}{c})
    \end{equation}
    satisfies 
    \begin{equation}
    z_{i,j}\geq 0 \text{ for all } (i,j, t) \in \Z^2\times\R
    \end{equation}
and $ z_{ L,0}(0) = 0 $. Using an argument similar to the one in the proof of Lemma~\ref{lemma:trapped_entire_sol:first_lemma}, it follows that $ z_{i,j}(0) = 0  $ for all $ (i,j)\in \Z^2 $. We then obtain $ z\equiv 0 $ by the uniqueness of bounded solutions for  \eqref{eqn:main_results:discrete AC}.

In particular, we have $ \omega^\infty_{0,0}(0) = \omega^\infty_{kI, kJ}(kI/c+ k\sigma_*/c) $ for all $ k\in \Z $. However, 
we also have the limits
\begin{equation}
\lim_{k \to - \infty} \omega^\infty_{kI, kJ}(kI/c+ k\sigma_*/c) =  1,
\qquad \qquad 
\lim_{k \to \infty}
\omega^\infty_{kI, kJ}(kI/c+ k\sigma_*/c) =  0,
\end{equation}
since $  \omega^\infty $ is trapped between two traveling waves as well.  We have hence reached a contradiction and conclude $ \sigma_* \leq 0 $.
\end{proof}

\begin{proof}[Proof of Proposition~\ref{thm:trapped_entire_sol:every_trapped_entire_solution_is_a_traveling_wave}]
From Lemma~\ref{lemma:trapped_entire_sol:sigma leq 0}, we know that 
    \begin{equation}
        \omega_{i,j}(t)\geq \omega_{i+I,j+J}(t+\dfrac{I}{c}) \quad \text{on }\Z^2\times\R, 
    \end{equation}
for arbitrary $(I,J)\in \Z^2$. Hence, the function $\omega$ depends only on the value of $i-ct$. 
More precisely, there exists a function $\psi$ such that $\omega_{i,j}(t) = \psi(i-ct)$. The result now follows directly from the fact that solutions to the travelling wave problem \eqref{eqn:main_results:boundary:cond:for:phi}-\eqref{eqn:main_results:MFDE} for $\theta=0$ and $c \neq 0$ 
are unique up to translation.   
\end{proof}

\section{Large time behaviour of $u$}
\label{sec:large_time_behaviour}       

The main goal of this section is to study the qualitative
large time behaviour of the solution $u$ to our main initial value problem.
In particular, we 
connect this behaviour to the dynamics
of the phase $\gamma$
defined in \eqref{eqn:main_results:def_of_gamma}
and thereby establish Theorem~\ref{thm:main results:gamma_approximates_u}.
In addition, we provide an asymptotic flatness result
for this phase.

Our first main result concerns the
large-time behaviour
of the interfacial region 
\begin{equation}
    I_t =\left\{(i,j) \in \Z^2: \Phi(-2) \leq u_{i,j}(t) \leq  \Phi(2) \right\} 
\end{equation}
where $u$ takes values close to $1/2$.
For fixed $j$ and $t$, we establish that the horizontal coordinate $i$
can not jump in and out from the interface region, which is non-empty.
In particular,
once the map $i \mapsto u_{ij}(t)$ enters the interval $[\Phi(-2), \Phi(2)]$
from below, it cannot exit throughout the lower boundary. 
In addition, it is strictly increasing in $i$
and cannot reenter the interval 
once
it has left through the upper boundary.
\begin{prop}\label{lemma:phase gamma:monotonicity_in_the_bounded_region} Suppose that the assumptions (Hg), (H$\Phi$)  and  (H0) are satisfied and let $u$ be a solution of the discrete Allen-Cahn equation \eqref{eqn:main_results:discrete AC} with the initial condition~\eqref{eqn:main_results:initial condition}. Then there exists a constant $T>0$ so that the following statements are satisfied.
\begin{enumerate}[(i)]
    \item For each $t\geq T$ and $j\in \Z$ there exists $i\in \Z$ for which      
 \begin{equation}\label{eqn:phase gamma:nonempty set around 1/2}
      \Phi(-2) < u_{i,j}(t) \leq \frac{1}{2}.  
  \end{equation}
\item We have the inequality	\begin{equation}\label{eqn:phase gamma:derivative bounded from below}
	\begin{aligned} 
		&\inf_{t \ge T, \,(i, j)\in I_t} u_{i+1, j}(t) - u_{i,j} (t) > 0. \\	
	\end{aligned}
	\end{equation}
\item Consider any $t\geq T$ and $(i,j) \in \Z^2$ for which
$u_{i,j}(t) \leq \Phi(-2)$ holds.
Then we also have $u_{i-1,j}(t) \leq \Phi(-2)$.

\item  Consider any $t\geq T$ and $(i,j) \in \Z^2$
for which $u_{i,j}(t) \geq \Phi(2)$
holds. Then we also have
$u_{i+1,j}(t) \geq \Phi(2)$.

\end{enumerate}
\end{prop}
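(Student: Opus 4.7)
The plan is to establish all four statements simultaneously by a single contradiction argument that combines the $\omega$-limit construction of Proposition~\ref{lemma:omega_limit_points:construction of limit points} with the rigidity result of Proposition~\ref{thm:trapped_entire_sol:every_trapped_entire_solution_is_a_traveling_wave}. If no single $T > 0$ witnesses (i)--(iv), then upon sending $T \to \infty$ we extract a divergent sequence $t_k \to \infty$ together with integers $(i_k, j_k) \in \Z^2$ realizing the failure at time $t_k$. In every scenario the relevant value $u_{i_k, j_k}(t_k)$ is confined to the compact band $[\Phi(-3), \Phi(3)]$, so that the two-sided trapping
\begin{equation*}
    \Phi\bigl(i - c(t - T_*) - \theta_*\bigr) \,\le\, u_{i,j}(t) \,\le\, \Phi\bigl(i - c(t - T_*) + \theta_*\bigr)
\end{equation*}
supplied by Lemma~\ref{lemma:super_and_sub_solutions_for_the_discrete_AC:upper and lower bounds} forces $i_k - c t_k$ into a bounded interval. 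After passing to a subsequence we may therefore assume $i_k - \lceil c t_k \rceil = L$ for a fixed integer $L$; Proposition~\ref{lemma:omega_limit_points:construction of limit points} applied with shifts $(j_k, t_k)$ then produces an $\omega$-limit point $\omega$, which by Proposition~\ref{thm:trapped_entire_sol:every_trapped_entire_solution_is_a_traveling_wave} must take the form $\omega_{i,j}(t) = \Phi(i - ct - \theta_0)$ for some $\theta_0 \in \R$.

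With this limiting profile in hand, each individual contradiction is immediate from the strict monotonicity and intermediate-value properties of $\Phi$. For (i), the failure means that for every $i \in \Z$ the value $u_{i, j_k}(t_k)$ avoids $(\Phi(-2), \tfrac{1}{2}]$; continuity and strict monotonicity of $\Phi(\,\cdot\, - \theta_0)$ produce an integer $i^*$ with $\Phi(i^* - \theta_0) \in (\Phi(-2), \tfrac{1}{2})$, and the convergence $u_{i^* + \lceil c t_k \rceil, j_k}(t_k) \to \Phi(i^* - \theta_0)$ contradicts the failure for $k$ large. For (ii), the failure gives $u_{i_k+1, j_k}(t_k) - u_{i_k, j_k}(t_k) \to 0$, which passes to the limit as $\Phi(L+1-\theta_0) = \Phi(L-\theta_0)$, incompatible with $\Phi' > 0$. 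For (iii), the conditions $u_{i_k, j_k}(t_k) \leq \Phi(-2) < u_{i_k-1, j_k}(t_k)$ yield in the limit the simultaneous bounds $L - \theta_0 \leq -2$ and $L - 1 - \theta_0 \geq -2$, which are mutually inconsistent. Statement (iv) follows by the symmetric inequalities $L - \theta_0 \geq 2$ and $L + 1 - \theta_0 \leq 2$.

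The principal obstacle is arranging that the compactness step genuinely produces a limit of the desired form. Two ingredients are needed: the time sequence $t_k$ must be taken to diverge, which we handle by enlarging $T$ so as to absorb any bounded-time anomalies, and the integer horizontal shift $i_k - \lceil c t_k \rceil$ must remain bounded, which is where the wave trapping bound of Lemma~\ref{lemma:super_and_sub_solutions_for_the_discrete_AC:upper and lower bounds} is essential. Once these compactness pieces are in place, the four assertions reduce to properties that are manifest for the traveling-wave profile $\Phi$, and a common time $T$ may be chosen as the maximum of the four individually obtained thresholds.
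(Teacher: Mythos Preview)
Your proposal is correct and follows essentially the same strategy as the paper: argue by contradiction along a sequence $t_k \to \infty$, invoke the trapping bounds of Lemma~\ref{lemma:super_and_sub_solutions_for_the_discrete_AC:upper and lower bounds} to force $i_k - ct_k$ bounded, apply Proposition~\ref{lemma:omega_limit_points:construction of limit points} and Proposition~\ref{thm:trapped_entire_sol:every_trapped_entire_solution_is_a_traveling_wave} to identify the $\omega$-limit as a translate of $\Phi$, and read off each contradiction from the strict monotonicity of $\Phi$. The paper treats the four items separately (isolating the argument for (ii) as a standalone Lemma~\ref{lemma:phase gamma:derivative bounded from below}) and handles (i) by first locating a ``jump'' index $i_k$ with $u_{i_k,j_k}(t_k)\le\Phi(-2)$ and $u_{i_k+1,j_k}(t_k)>\tfrac12$, whereas you bypass this by applying the $\omega$-limit construction directly to $(j_k,t_k)$ and finding the witnessing integer $i^*$ afterward; both routes are valid and the underlying ideas are identical.
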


Our second main result shows that
the discrete derivative of the phase with respect to $j$
tends to zero. This will turn out to be crucial in order to keep the mean curvature flow under control. We emphasize that this does not necessarily mean that the phase tends to a constant; see \eqref{eq:int:kappa:pm:limits}.
\begin{prop}\label{prp:zero_level_surface:smallnes of derivative of gamma}
Consider the setting of Proposition~\ref{lemma:phase gamma:monotonicity_in_the_bounded_region}
and recall the phase $ \gamma: [T, \infty) \to \ell^\infty(\Z) $
defined in \eqref{eqn:main_results:def_of_gamma}. Then 
we have the limit
	$$\lim_{t\to\infty} \sup _{j\in\Z} \big|\gamma_{j+1}(t) - \gamma_j (t)\big| = 0.$$
\end{prop}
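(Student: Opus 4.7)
\textbf{Proof proposal for Proposition~\ref{prp:zero_level_surface:smallnes of derivative of gamma}.} The plan is to argue by contradiction using the $\omega$-limit construction of Proposition~\ref{lemma:omega_limit_points:construction of limit points} together with the rigidity result of Proposition~\ref{thm:trapped_entire_sol:every_trapped_entire_solution_is_a_traveling_wave}. Suppose the conclusion fails. Then there exist $\epsilon_0 > 0$ together with sequences $t_n \to \infty$ and $j_n \in \Z$ such that
\begin{equation}
\bigl|\gamma_{j_n+1}(t_n) - \gamma_{j_n}(t_n)\bigr| \geq \epsilon_0 \qquad \text{for all } n \in \N.
\end{equation}
Invoke Proposition~\ref{lemma:omega_limit_points:construction of limit points} with the pairs $(j_n, t_n)$, passing to a subsequence (still indexed by $n$) to obtain a limit $\omega \in C^1(\R;\ell^\infty(\Z^2))$ with
\begin{equation}
u_{i + \lceil c t_n \rceil,\, j + j_n}(t + t_n) \longrightarrow \omega_{i,j}(t) \qquad \text{in } C_{\mathrm{loc}}(\Z^2 \times \R).
\end{equation}
By \eqref{eqn:omega_limit_points:omega_limit_between_two_waves}, $\omega$ is trapped between two horizontal travelling waves, so Proposition~\ref{thm:trapped_entire_sol:every_trapped_entire_solution_is_a_traveling_wave} forces $\omega_{i,j}(t) = \Phi(i - ct - \theta_0)$ for some constant $\theta_0 \in \R$, independent of $j$.

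The second step is to transport the phase definition through this limit. For $k \in \{0,1\}$ and $n$ large enough that $t_n \geq T$, write $u^n_{i,j}(t) := u_{i + \lceil c t_n\rceil,\, j + j_n}(t + t_n)$ and set
\begin{equation}
i^n_*(k) := i_*(j_n + k,\, t_n) - \lceil c t_n \rceil,
\end{equation}
so that the definition \eqref{eqn:main_results:def_of_gamma} of $\gamma$ yields
\begin{equation}
\gamma_{j_n + k}(t_n) - \lceil c t_n \rceil = i^n_*(k) - \Phi^{-1}\!\bigl(u^n_{i^n_*(k),\, k}(0)\bigr).
\end{equation}
For any integer $M > \theta_0$ we have $\Phi(M - \theta_0) > \tfrac{1}{2}$, so local convergence of $u^n$ to $\omega$ gives $u^n_{M, k}(0) > \tfrac{1}{2}$ for all large $n$, and the defining characterisation of $i_*$ together with Proposition~\ref{lemma:phase gamma:monotonicity_in_the_bounded_region}(iv) forces $i^n_*(k) < M$. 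A symmetric lower bound then shows that the integer sequences $\{i^n_*(0)\}$ and $\{i^n_*(1)\}$ are bounded. Pass once more to a subsequence along which each stabilises to a constant integer $i^*_k$.

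The final step is a small case analysis on whether $\theta_0$ lies in $\Z$. Local convergence yields
\begin{equation}
u^n_{i^*_k,\, k}(0) \to \Phi(i^*_k - \theta_0), \qquad u^n_{i^*_k + 1,\, k}(0) \to \Phi(i^*_k + 1 - \theta_0),
\end{equation}
and the defining inequalities $u^n_{i^n_*(k), k}(0) \le \tfrac{1}{2} < u^n_{i^n_*(k)+1, k}(0)$ survive the limit to give $\Phi(i^*_k - \theta_0) \le \tfrac{1}{2} \le \Phi(i^*_k + 1 - \theta_0)$. By strict monotonicity of $\Phi$, this forces $i^*_k = \lfloor \theta_0 \rfloor$ when $\theta_0 \notin \Z$, and $i^*_k \in \{\theta_0 - 1,\, \theta_0\}$ when $\theta_0 \in \Z$. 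Using continuity of $\Phi^{-1}$ on $(0,1)$, a direct computation in each case shows
\begin{equation}
\gamma_{j_n + k}(t_n) - \lceil c t_n \rceil \longrightarrow \theta_0, \qquad k \in \{0,1\}.
\end{equation}
Subtracting the two limits gives $\gamma_{j_n+1}(t_n) - \gamma_{j_n}(t_n) \to 0$, contradicting $\epsilon_0 > 0$.

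The main subtlety is the degenerate case $\theta_0 \in \Z$, in which $u^n_{\theta_0,k}(0)$ approaches the threshold $\Phi(0) = \tfrac{1}{2}$ and $i^n_*(k)$ may oscillate between $\theta_0 - 1$ and $\theta_0$; this is precisely the kind of integer-rounding ambiguity that was flagged in the introduction as the obstacle to a clean continuous-style argument. The spatial interpolation built into \eqref{eqn:main_results:def_of_gamma} neutralises it, since the term $\Phi^{-1}(u^n_{i^n_*(k), k}(0))$ absorbs the unit jump in $i^n_*(k)$ and forces the shifted phase to converge to the same value $\theta_0$ in either case.
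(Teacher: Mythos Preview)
Your argument is correct and in fact a bit more streamlined than the paper's own proof. The paper first establishes two preparatory lemmas: Lemma~\ref{lemma:phase_gamma:decay of y-derivatives} (the vertical differences $u_{i,j+1}(t)-u_{i,j}(t)$ vanish uniformly on co-moving strips) and Lemma~\ref{lemma:phase_gamma:i(j+1)-i(j)} (the integer indices satisfy $|i_*(j+1,t)-i_*(j,t)|\le 1$). It then splits the contradiction argument into the case $i_*(j_k+1,t_k)=i_*(j_k,t_k)$, handled via Lemma~\ref{lemma:phase_gamma:decay of y-derivatives}, and the case $i_*(j_k+1,t_k)=i_*(j_k,t_k)+1$, handled by an $\omega$-limit computation that forces the limiting phase to equal $L+1$.

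You bypass this case distinction entirely. By showing directly that the shifted integers $i^n_*(k)$ are bounded and passing to a subsequence on which they stabilise, you reduce everything to the single computation $i^*_k-\Phi^{-1}\!\bigl(\Phi(i^*_k-\theta_0)\bigr)=\theta_0$, which is insensitive to whether $i^*_0=i^*_1$ or not. This is cleaner: it makes the auxiliary Lemmas~\ref{lemma:phase_gamma:decay of y-derivatives} and~\ref{lemma:phase_gamma:i(j+1)-i(j)} unnecessary for this particular statement (though Lemma~\ref{lemma:phase_gamma:decay of y-derivatives} is not used elsewhere, so your route genuinely shortens the section). One small remark: your appeal to item~(iv) of Proposition~\ref{lemma:phase gamma:monotonicity_in_the_bounded_region} alone for the upper bound on $i^n_*(k)$ is slightly loose; the clean argument combines~(ii) and~(iv) to see that once $u^n_{M,k}(0)>\tfrac12$ the slice stays above $\tfrac12$ for all $i\ge M$, whence $i^n_*(k)<M$.
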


\begin{proof}[Proof of Proposition~\ref{prp:main_results_existence of gamma}]
 The statement follows directly from Proposition~\ref{lemma:phase gamma:monotonicity_in_the_bounded_region}.
\end{proof}

\subsection{Proof of Proposition~\ref{lemma:phase gamma:monotonicity_in_the_bounded_region} and Theorem~\ref{thm:main results:gamma_approximates_u}}
The key towards establishing
Proposition~\ref{lemma:phase gamma:monotonicity_in_the_bounded_region} is to obtain strict monotonicity properties in
compact regions that move with the wavespeed $c$. This is achieved
in the following result, which leverages the travelling wave identification obtained
in Proposition~\ref{thm:trapped_entire_sol:every_trapped_entire_solution_is_a_traveling_wave}.
 \begin{lemma}\label{lemma:phase gamma:derivative bounded from below}
 Consider the setting of Proposition \ref{lemma:phase gamma:monotonicity_in_the_bounded_region}
 and pick a constant $R > 0$.
 Then there exists a constant $T>0$ such that
\begin{equation}
   \inf_{j\in \mathbb{Z}, \,  |i-ct|\leq R,\, t\geq T} u_{i+1, j}(t) - u_{i,j}(t) > 0.
\end{equation} 
 \end{lemma}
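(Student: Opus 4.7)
\medskip

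\noindent\textbf{Proof plan.} The plan is to argue by contradiction and leverage the two main structural results already established: the $\omega$-limit point construction of Proposition~\ref{lemma:omega_limit_points:construction of limit points} and the rigidity result of Proposition~\ref{thm:trapped_entire_sol:every_trapped_entire_solution_is_a_traveling_wave}. Suppose no such $T$ exists. Then for each $n \in \N$ one can find $t_n \geq n$, $j_n \in \Z$ and $i_n \in \Z$ with $|i_n - c t_n| \leq R$ such that
\[
u_{i_n+1, j_n}(t_n) - u_{i_n, j_n}(t_n) < \frac{1}{n}.
\]
In particular $t_n \to \infty$, and the integer sequence $i_n - \lceil c t_n \rceil$ is bounded by $R+1$, so after passing to a subsequence I may assume that $i_n - \lceil c t_n \rceil$ equals a fixed integer $I_*$.

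Next, I would apply Proposition~\ref{lemma:omega_limit_points:construction of limit points} to the shift sequence $(j_n, t_n)$. After a further subsequence, this yields an entire solution $\omega$ of the Allen-Cahn LDE, a constant $\theta \in \R$ with $\Phi(i - ct - \theta) \leq \omega_{i,j}(t) \leq \Phi(i - ct + \theta)$, and the local convergence
\[
u_{i + \lceil c t_n \rceil,\, j + j_n}(t + t_n) \to \omega_{i,j}(t) \quad \text{in } C_{\mathrm{loc}}(\Z^2 \times \R).
\]
Because $\omega$ is trapped between two translates of the planar travelling wave, Proposition~\ref{thm:trapped_entire_sol:every_trapped_entire_solution_is_a_traveling_wave} forces $\omega_{i,j}(t) = \Phi(i - c t - \theta_0)$ for some $\theta_0 \in [-\theta, \theta]$.

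Evaluating the convergence at the specific points $(i, j, t) = (I_*, 0, 0)$ and $(I_* + 1, 0, 0)$, which is legitimate since $i_n - \lceil c t_n \rceil = I_*$ exactly, I obtain
\[
u_{i_n+1, j_n}(t_n) - u_{i_n, j_n}(t_n) \;\to\; \Phi(I_* + 1 - \theta_0) - \Phi(I_* - \theta_0).
\]
Assumption (H$\Phi$) includes $c \neq 0$, so by the results of \cite{Mallet-Paret1999} recalled in \S\ref{sec:mr} the profile $\Phi$ is strictly increasing, making the right-hand side strictly positive. This contradicts the bound $< 1/n$ and completes the argument.

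The only delicate point is bookkeeping the horizontal shift: Proposition~\ref{lemma:omega_limit_points:construction of limit points} shifts by $\lceil c t_n \rceil$ rather than by $i_n$ (a consequence of the fact that $i - ct$ is generically not an integer, as discussed in \S\ref{sec:omega}), so one must reduce to a subsequence on which $i_n - \lceil c t_n \rceil$ is eventually constant in order to identify a single limiting discrete difference of $\omega$. The hypothesis $|i_n - c t_n| \leq R$ makes this reduction immediate, and beyond it the argument is a clean combination of the two earlier propositions.
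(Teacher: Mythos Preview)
Your proof is correct and follows essentially the same route as the paper: argue by contradiction to obtain a sequence with $u_{i_n+1,j_n}(t_n)-u_{i_n,j_n}(t_n)\le 1/n$ and bounded $i_n-\lceil ct_n\rceil$, pass to a subsequence on which this integer is constant, apply Proposition~\ref{lemma:omega_limit_points:construction of limit points} and then Proposition~\ref{thm:trapped_entire_sol:every_trapped_entire_solution_is_a_traveling_wave} to identify the $\omega$-limit as a travelling wave, and derive a contradiction with $\Phi'>0$. The paper's argument differs only in notation (it writes $L$ for your $I_*$) and in the phrasing of the contradiction hypothesis.
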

\begin{proof}
    Arguing by contradiction, let us assume that there exists a constant $ R>0 $
    so that
    $$\inf_{j\in \mathbb{Z}, \, |i-ct|\leq R, \, t\geq T} u_{i+1, j}(t) - u_{i,j}(t) \leq 0$$
    holds for every $ T>0 $. We can then find a sequence  $(t_n, i_n, j_n)\in (0, \infty)\times \Z^2$ with $ 0<t_1<t_2<\dots \to\infty$
    for which we have the inequalities 
    \begin{equation}
        |i_n-ct_n|\leq R , \qquad \qquad  u_{i_n+1, j_n}(t_n) - u_{i_n, j_n}(t_n) \leq 1/n .
    \end{equation} 
    In particular, we may assume that the bounded
    sequence of integers $ i_n - \lceil c t_n\rceil $ is identically equal
    to some constant $L \in \mathbb{Z}$.
    Applying Proposition~\ref{lemma:omega_limit_points:construction of limit points} we obtain the convergence
    \begin{equation}
    u_{i+\lceil c t_n\rceil, j+j_n} (t+t_n) \to \omega_{i j} (t)
    \end{equation}
    as $n \to \infty$, in which $ \omega $ is an $ \omega $-limit point of the function $ u $.
    In view of Proposition \ref{thm:trapped_entire_sol:every_trapped_entire_solution_is_a_traveling_wave}
    we have   $\omega_{i,j}(t)=\Phi(i-ct - \theta_0) $ for some $\theta_0 \in \R$,
    which allow us to write
$$\begin{aligned}
1/n & \geq u_{i_n+1, j_n}(t_n) - u_{i_n, j_n}(t_n) \\[0.2cm]
 &= u_{L + \lceil c t_n \rceil +1, j_n }(t_n) - u_{L + \lceil c t_n \rceil, j_n }(t_n) \\[0.2cm] & \to \omega_{L +1,0}(0) -  \omega_{ L,0}(0) 
 \\[0.2cm]
 & = \Phi(L + 1 - \theta_0) - \Phi(L - \theta_0)
\end{aligned} $$
 for $n \to \infty$. This violates the strict monotonicity $\Phi' > 0$
 and hence yields the desired contradiction.
\end{proof}

 
\begin{proof}[Proof of Proposition \ref{lemma:phase gamma:monotonicity_in_the_bounded_region}]
We first prove item \textit{(iii)}. Assuming
that this statement fails, 
we can find a sequence $(t_k, i_k, j_k)$ for which we have $0<t_1 < t_2 < \ldots \to \infty$ together with the inequalities
\begin{equation}\label{eqn:phase_gamma:assume_contrary inequality}
u_{i_k, j_k}(t_k) \leq \Phi(-2), 
\qquad \qquad 
u_{i_k-1, j_k}(t_k) >  \Phi(-2).
\end{equation}
It follows from Lemma~\ref{lemma:super_and_sub_solutions_for_the_discrete_AC:upper and lower bounds} that the sequence $i_k-ct_k$ is bounded. 
Arguing as in the proof of Lemma \ref{lemma:phase gamma:derivative bounded from below}, we can hence again assume that there exists $L \in \mathbb{Z}$
for which we have $L = i_k - \lceil ct_k \rceil $. In addition,
we obtain the limits
\begin{equation}
    u_{i_k, j_k}(t_k) \to \omega_{L, 0}(0) \leq \Phi(-2) ,
    \qquad \qquad
    u_{i_k-1, j_k}(t_k) \to \omega_{L-1, 0}(0) \geq \Phi(-2). 
\end{equation}
Here $\omega$ is an $\omega$-limit point for $u$, which must be a
travelling wave by Proposition \ref{thm:trapped_entire_sol:every_trapped_entire_solution_is_a_traveling_wave}.
This again violates the strict monotonicity of $\Phi$.
Item \textit{(iv)} follows analogously.

Turning to (i), we assume that there exists
a sequence  $(t_k, i_k, j_k)$ with $T\leq t_1 < t_2 < ...\to \infty$
together with 
\begin{equation}\label{eqn:phase_gamma:assume_contrary_2}
    u_{i_k, j_k}(t_k) \leq  \Phi(-2), 
    \qquad \qquad 
    u_{i_k + 1, j_k}(t_k) > \dfrac{1}{2} = \Phi(0)  
\end{equation}
and seek a contradiction. Arguing as above, we can find $L \in \mathbb{Z}$
together with an $\omega$-limit point $\omega$ for $u$ with
\begin{equation}
    \omega_{L,0} \le \Phi(-2), \qquad \qquad \omega_{L+1, 0}(0)\geq \Phi(0),
\end{equation}
which violates
Proposition \ref{thm:trapped_entire_sol:every_trapped_entire_solution_is_a_traveling_wave}.

It remains to establish (ii). Picking
$t \ge T$ and $(i,j) \in I_t$,
it follows from
Lemma~\ref{lemma:super_and_sub_solutions_for_the_discrete_AC:upper and lower bounds} that $i-ct$ is bounded by some constant $R$ that depends 
only on $T$. Increasing $T$ if necessary, we can apply
Lemma~\ref{lemma:phase gamma:derivative bounded from below}
to obtain the desired bound
\eqref{eqn:phase gamma:derivative bounded from below}.
%
\end{proof}
\begin{lemma}\label{lemma:phase_gamma:gamma-ct_bounded}
   Consider the setting of Proposition~\ref{lemma:phase gamma:monotonicity_in_the_bounded_region}
and recall the phase $ \gamma: [T, \infty) \to \ell^\infty(\Z) $
defined in \eqref{eqn:main_results:def_of_gamma}. Then there exists $T_*\geq T$ and $M>0$ such that for every $t\geq T_*$ we have 
\begin{equation}\label{eqn:phase_gamma:gamma-ct_bounded}
    \norm{\gamma(t) - ct}_{\ell^\infty} \leq M .
\end{equation}
\end{lemma}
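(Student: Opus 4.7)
The plan is to decompose
\begin{equation*}
\gamma_j(t) - ct = \bigl(i_*(j,t) - ct\bigr) - \Phi^{-1}\bigl(u_{i_*(j,t),j}(t)\bigr)
\end{equation*}
and bound each summand uniformly in $j\in\Z$ for $t\geq T_*$. Both bounds will be extracted from the super- and sub-solution sandwich of Lemma~\ref{lemma:super_and_sub_solutions_for_the_discrete_AC:upper and lower bounds}, which controls $u_{i,j}(t)$ by phase-shifted copies of $\Phi(i-ct)$ with additive errors decaying like $e^{-\mu(t-T)}$.

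First I would apply this sandwich to the two halves of the defining inequality $u_{i_*,j}(t) \leq \tfrac{1}{2} < u_{i_*+1,j}(t)$ provided by Proposition~\ref{prp:main_results_existence of gamma}. The super-solution estimate evaluated at $(i_*+1,j)$, combined with $u_{i_*+1,j}(t)>\tfrac{1}{2}$, forces
\begin{equation*}
\Phi\bigl(i_*+1 + \theta_0 - c(t-T) + Cq_0(1-e^{-\mu(t-T)})\bigr) > \tfrac{1}{2} - q_0 e^{-\mu(t-T)}.
\end{equation*}
Choosing $T_*\geq T$ so large that $q_0 e^{-\mu(T_*-T)}<\tfrac{1}{4}$ and inverting $\Phi$ yields a uniform lower bound $i_*(j,t) - ct \geq -M_1$ for all $j\in\Z$ and $t\geq T_*$. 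The mirror argument, using the sub-solution estimate at $(i_*,j)$ together with $u_{i_*,j}(t)\leq \tfrac{1}{2}$, produces an upper bound $i_*(j,t) - ct \leq M_2$, where both $M_1,M_2$ depend only on the super/sub-solution parameters $(T,C,q_0,q_1,\theta_0,\theta_1,\mu,c)$.

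The second summand demands more care because $\Phi^{-1}(s)\to -\infty$ as $s\to 0^+$, so I also need to know that $u_{i_*(j,t),j}(t)$ stays bounded away from zero. I would achieve this by recycling the lower bound $i_*(j,t) \geq ct - M_1$ just obtained: plugging it back into the sub-solution estimate and using monotonicity of $\Phi$ gives
\begin{equation*}
u_{i_*(j,t),j}(t) \;\geq\; \Phi\bigl(cT - M_1 - \theta_1 - Cq_1\bigr) - q_1 e^{-\mu(t-T)},
\end{equation*}
whose right-hand side exceeds a fixed $m>0$ for all $t\geq T_*$ after possibly enlarging $T_*$. Combined with $u_{i_*,j}(t)\leq \tfrac{1}{2}$, this confines the argument of $\Phi^{-1}$ to the compact interval $[m,\tfrac{1}{2}]$, on which $\Phi^{-1}$ is bounded. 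Summing the two contributions then yields the claim with $M := \max(M_1,M_2) + |\Phi^{-1}(m)|$.

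The only conceptual subtlety is precisely this two-step structure: without first bounding $i_*(j,t)-ct$, there is no direct way to prevent $u_{i_*(j,t),j}(t)$ from becoming arbitrarily small, which would destroy the finiteness of $\Phi^{-1}(u_{i_*(j,t),j}(t))$. Once this mild circularity is resolved by recycling the horizontal estimate into the sub-solution bound, the proof is a straightforward unpacking of Lemma~\ref{lemma:super_and_sub_solutions_for_the_discrete_AC:upper and lower bounds}.
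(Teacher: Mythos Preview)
Your proof is correct and follows the same core route as the paper: both arguments plug the defining inequalities $u_{i_*,j}(t)\le \tfrac12 < u_{i_*+1,j}(t)$ into the super/sub-solution sandwich of Lemma~\ref{lemma:super_and_sub_solutions_for_the_discrete_AC:upper and lower bounds} to pin down $i_*(j,t)-ct$ uniformly. The only difference lies in how the $\Phi^{-1}$ term is controlled. The paper simply asserts that it suffices to bound $i_*-ct$, implicitly using Proposition~\ref{lemma:phase gamma:monotonicity_in_the_bounded_region}(i)--(ii) to conclude $u_{i_*,j}(t)\in(\Phi(-2),\tfrac12]$ and hence $\Phi^{-1}(u_{i_*,j}(t))\in(-2,0]$. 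Your recycling of the horizontal estimate into the sub-solution bound achieves the same conclusion by a slightly more self-contained route that does not invoke the interfacial monotonicity; either method is fine, and yours makes the dependence on Lemma~\ref{lemma:super_and_sub_solutions_for_the_discrete_AC:upper and lower bounds} alone fully explicit.
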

\begin{proof}
In view of the definition~\eqref{eqn:main_results:def_of_gamma}
it suffices to show that $i_* - ct $ is bounded.
Combining Lemma~\ref{lemma:super_and_sub_solutions_for_the_discrete_AC:upper and lower bounds} and \eqref{eqn:main_results:i(j,t))} and possibly
increasing $T > 0$, we see that
\begin{equation}
 \Phi(0) \le u_{i_*(j,t) +1, j}(t) \le
    \Phi\Big(i_*(j,t) +1 + \theta_0 - ct + cT + Cq_0(1-e^{-\mu(t-T)}) \Big) + q_0 e^{-\mu(t-T)} 
\end{equation}
for all $t\geq T$. Choosing $T_* \ge T$ in such a way that 
$$\Phi(0) - q_0 e^{-\mu(T_*-T)}\geq \Phi(-1), $$
we conclude that
\begin{equation}
i_*(j,t) + 1 + \theta_0 - ct + cT + Cq_0(1-e^{-\mu(t-T)}) > -1,
\qquad \qquad 
t \ge T_*.
\end{equation}
Hence, $i_* - ct$ is bounded from below. An upper bound can be obtained in a similar way. 
\end{proof}

\begin{proof}[Proof of Theorem \ref{thm:main results:gamma_approximates_u}] 
    Arguing by contradiction once more, let us assume
    that there exist $\delta>0  $ together with sequences
    $(i_k, j_k) \in \Z^2$ and $ T\leq t_1 < t_2 < \dots \to \infty $
    for which
	\begin{equation}\label{eqn:phase_gamma:assume_contrary_in_proof_that_gamma_approx_u}
	 | \Delta_k | := |u_{i_k,j_k}(t_k) - \Phi\big(i_k - \gamma_{j_k}(t_k)\big)|\geq \delta.
	\end{equation}
	We first claim that the sequence $ i_k - ct_k  $ is bounded.
	To see this, we first use Lemma~\ref{lemma:phase_gamma:gamma-ct_bounded} to conclude that $ \gamma_{j_k}(t_k) - ct_k $ is bounded. 
	Using  \eqref{eqn:super_and_sub_solutions_for_the_discrete_AC:lower_bound}
	we subsequently find
	$$\begin{aligned}
	\Delta_k 
	&\geq  
	\Phi\big(i_k-ct_k + \alpha_k \big) - q_1 e^{-\mu (t_k-T)} 
	-\Phi\big(i_k - ct_k + \beta_k \big), \\
	\end{aligned}$$
	in which 
	\begin{equation}
	    \alpha_k =  cT - \theta_1 - C q_1(1-e^{-\mu(t_k-T)}),
	    \qquad \qquad
	    \beta_k = ct_k - \gamma_{j_k}(t_k) 
	\end{equation}
	are two bounded sequences. In particular, if 
	 $ i_k - ct_k  $ is unbounded we can use the exponential decay
	 of $\Phi$ to achieve $\Delta_k \ge -\delta$ for all large $k$.
	 A similar argument using \eqref{eqn:super_and_sub_solutions_for_the_discrete_AC:upper_bound}
	 yields $\Delta_k \le \delta$, which contradicts
	 \eqref{eqn:phase_gamma:assume_contrary_in_proof_that_gamma_approx_u}
	 and hence establishes our claim.
	 
	 In particular, we can extract a constant subsequence
	 $ i_k- \lceil ct_k\rceil=:L\in \Z$. Passing to a further subsequence,
	 we may also assume that $ i_*(j_k, t_k) - \lceil ct_k \rceil =:\tilde{L}\in \Z$.
	 The definition~\eqref{eqn:main_results:def_of_gamma} allows us to write
	 $$\begin{aligned}
	\Phi\big(i_k - \gamma_{j_k}(t_k)\big)  &= \Phi\Big(i_k - i_*(j_k, t_k) + \Phi^{-1}\left(u_{ i_*(j_k, t_k), j_k}(t_k)\right)\Big) \\
	&=\Phi\Big(i_k - \lceil ct_k \rceil - i_*(j_k, t_k)+ \lceil c t_k\rceil + \Phi^{-1}\left(u_{i_*(j_k, t_k), j_k}(t_k)\right)\Big) \\
	&=\Phi\left(L  - \tilde{L}  + \Phi^{-1}\big(u_{ \tilde{L} + \lceil c t_k \rceil , j_k}(t_k)\big)\right). \\
	\end{aligned}$$
	Applying  Proposition~\ref{lemma:omega_limit_points:construction of limit points}, we see that there exists an $\omega$-limit point $\omega$
	for $u$ for which the limits
	\begin{equation}
	    u_{i_k, j_k} (t_k)  \to \omega_{L, 0} (0),
	    \qquad \qquad  u_{\tilde{L} + \lceil c t_k \rceil, j_k} (t_k)  \to \omega_{\tilde{L}, 0} (0)
	\end{equation}
	hold as $k \to \infty$. Writing $\omega_{i,j}(t) = \Phi( i - ct - x_0)$
	in view of Proposition~\ref{thm:trapped_entire_sol:every_trapped_entire_solution_is_a_traveling_wave}, we hence find
	\begin{equation}
	    \Delta_k \to \Phi(L - x_0) - \Phi\Big( L - \tilde{L} + \Phi^{-1}\big( \Phi(\tilde{L} -x_0) \big) \Big)
	    = 0
	\end{equation}
	as $k \to \infty$, which clearly contradicts \eqref{eqn:phase_gamma:assume_contrary_in_proof_that_gamma_approx_u}.
	\end{proof}

\subsection{Phase asymptotics}

In this subsection we shift 
our attention to vertical differences of the phase $\gamma$,
in order to establish Proposition~\ref{prp:zero_level_surface:smallnes of derivative of gamma}.
Our first result resembles
Lemma \ref{lemma:phase gamma:derivative bounded from below}
in the sense that we study the
interfacial region of the wave,
but in this case we get a flatness
result. This can subsequently
be used to obtain 
a bound on the vertical differences
of the function $i_*$
defined in \eqref{eqn:main_results:i(j,t))},
which in view of \eqref{eqn:main_results:def_of_gamma} allows us to analyze
the phase $\gamma$.

\begin{lemma} \label{lemma:phase_gamma:decay of y-derivatives}
  Consider the setting of Proposition~\ref{lemma:phase gamma:monotonicity_in_the_bounded_region} and pick a constant
  $R > 0$. Then we have the limit 
	$$\lim_{t\to\infty} \sup_{j\in \mathbb{Z}, \, |i-ct|\leq R} |u_{i,j+1}(t) - u_{i,j}(t)| = 0.$$
\end{lemma}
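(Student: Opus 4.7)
The plan is to argue by contradiction, following the same template used in Lemma~\ref{lemma:phase gamma:derivative bounded from below} and in the proof of Theorem~\ref{thm:main results:gamma_approximates_u}. Suppose the conclusion fails. Then there exist $\delta > 0$ together with sequences $(i_k, j_k) \in \Z^2$ and $T \le t_1 < t_2 < \dots \to \infty$ satisfying $|i_k - c t_k| \le R$ for all $k$ and
\begin{equation}
    \big| u_{i_k, j_k+1}(t_k) - u_{i_k, j_k}(t_k) \big| \geq \delta.
\end{equation}

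Since the sequence $i_k - c t_k$ is bounded, so is the sequence of integers $i_k - \lceil c t_k \rceil$, and after passing to a subsequence we may assume that $i_k - \lceil c t_k \rceil = L$ for some fixed $L \in \Z$. Applying Proposition~\ref{lemma:omega_limit_points:construction of limit points} to the sequence $(j_k, t_k)$ (possibly after extracting a further subsequence) produces an $\omega$-limit point $\omega$ of the solution $u$ so that
\begin{equation}
    u_{i + \lceil c t_k \rceil,\, j + j_k}(t + t_k) \to \omega_{i,j}(t)
    \qquad \text{in } C_{\mathrm{loc}}(\Z^2 \times \R).
\end{equation}
In particular, evaluating at the relevant lattice points gives
\begin{equation}
    u_{i_k, j_k}(t_k) \to \omega_{L, 0}(0),
    \qquad
    u_{i_k, j_k+1}(t_k) \to \omega_{L, 1}(0).
\end{equation}

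By Proposition~\ref{thm:trapped_entire_sol:every_trapped_entire_solution_is_a_traveling_wave}, the $\omega$-limit point $\omega$ is itself a planar travelling wave, so there exists $\theta_0 \in \R$ for which $\omega_{i,j}(t) = \Phi(i - c t - \theta_0)$ for all $(i,j,t) \in \Z^2 \times \R$. This expression is independent of $j$, so $\omega_{L, 0}(0) = \omega_{L, 1}(0) = \Phi(L - \theta_0)$, and therefore
\begin{equation}
    u_{i_k, j_k+1}(t_k) - u_{i_k, j_k}(t_k) \to 0
\end{equation}
as $k \to \infty$. This contradicts the assumed lower bound $\delta > 0$ and completes the proof.

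The main conceptual step is the application of Proposition~\ref{thm:trapped_entire_sol:every_trapped_entire_solution_is_a_traveling_wave}, which rigidifies every $\omega$-limit point into a horizontal planar wave and thereby kills all $j$-dependence in the limit; no other step should be an obstacle beyond the standard bookkeeping of extracting subsequences and rounding $c t_k$ to $\lceil c t_k \rceil$ in order to obtain an integer shift.
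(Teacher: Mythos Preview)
Your proof is correct and follows essentially the same approach as the paper: argue by contradiction, pass to a subsequence with $i_k - \lceil c t_k \rceil$ constant, apply Proposition~\ref{lemma:omega_limit_points:construction of limit points} to obtain an $\omega$-limit point, and then invoke Proposition~\ref{thm:trapped_entire_sol:every_trapped_entire_solution_is_a_traveling_wave} to conclude that this limit is a horizontal planar wave, whose $j$-independence yields the contradiction.
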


\begin{proof}\normalfont
	Assume to the contrary that there exist constants $ R> 0 $ and $ \delta > 0$
	together with sequences $ (i_k, j_k) \in \Z^2 $ and $ 0<t_1<t_2<...\to\infty $
	that satisfy the inequalities
	\begin{equation}\label{eq:ph:contr:perp}
	|i_k - ct_k| \leq R,
	\qquad \qquad
	   	|u_{i_k,j_k+1}(t_k) - u_{i_k, j_k}(t_k)|\geq \delta.
	\end{equation}
	 As in the proof of the Lemma~\ref{lemma:phase gamma:derivative bounded from below}, we may assume that  $ i_k - \lceil c t_k \rceil = L \in \Z$
	 and use Proposition~\ref{lemma:omega_limit_points:construction of limit points}
    to conclude the convergence
	 \begin{equation}
	   \begin{array}{lcl}
	     u_{i_k, j_k+1} (t+t_k) - u_{i_k, j_n} (t_k) 
	      &= &u_{L+\lceil ct_k\rceil, j_k+1} (t_k) - u_{L+\lceil ct_k\rceil, j_k}
	      (t_k)
	     \\[0.2cm]
	     & \to & \omega_{L, 1}(0) - \omega_{L, 0}(0)
	     \\[0.2cm]
	     & = & 0 ,
	   \end{array}
	 \end{equation}
	 in which $\omega$ is an $\omega$-limit point of the function $u$.
	 The last identity follows from Proposition~\ref{thm:trapped_entire_sol:every_trapped_entire_solution_is_a_traveling_wave}, which states that $\omega$ is a planar wave travelling
	 in the horizontal direction. This obviously contradicts
	 \eqref{eq:ph:contr:perp} and hence concludes the proof.
%
	 \end{proof}


\begin{lemma} \label{lemma:phase_gamma:i(j+1)-i(j)}
Consider the setting of Proposition~\ref{lemma:phase gamma:monotonicity_in_the_bounded_region}  
and recall the function $i_*$ defined by \eqref{eqn:main_results:i(j,t))}.
	Then there exists $ \tilde{T} > T $ so that 
	\begin{equation}
	   | i_*(j+1,t) - i_*(j,t)| \leq 1 
	\end{equation}
	holds for all $ j\in \Z $ and all $ t\geq \tilde{T} $.
\end{lemma}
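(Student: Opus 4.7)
The plan is to argue by contradiction in the spirit of Lemmas \ref{lemma:phase gamma:derivative bounded from below} and \ref{lemma:phase_gamma:decay of y-derivatives}: extract an $\omega$-limit point, invoke Proposition \ref{thm:trapped_entire_sol:every_trapped_entire_solution_is_a_traveling_wave} to identify it as a horizontal translate of $\Phi$, and then read off an impossible chain of inequalities from the strict monotonicity $\Phi' > 0$.

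Suppose the conclusion fails. Then there exist sequences $(j_k, t_k)$ with $0 < t_1 < t_2 < \cdots \to \infty$ and $|i_*(j_k+1, t_k) - i_*(j_k, t_k)| \ge 2$. Passing to a subsequence we may assume the sign is fixed; for concreteness take $i_*(j_k+1, t_k) \ge i_*(j_k, t_k) + 2$ (the other sign is symmetric, with the roles of $j_k$ and $j_k+1$ interchanged). Set $i_k := i_*(j_k, t_k)$. Lemma \ref{lemma:phase_gamma:gamma-ct_bounded} together with the definition \eqref{eqn:main_results:def_of_gamma} shows that $i_k - c t_k$ is bounded, so after a further subsequence we may assume $i_k - \lceil c t_k \rceil = L$ for some fixed $L \in \Z$. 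Applying Propositions \ref{lemma:omega_limit_points:construction of limit points} and \ref{thm:trapped_entire_sol:every_trapped_entire_solution_is_a_traveling_wave} to $(j_k, t_k)$ then yields an $\omega$-limit point of the form $\omega_{i,j}(t) = \Phi(i - ct - \theta_0)$ for some $\theta_0 \in \R$.

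The engine of the contradiction will be the three inequalities
\begin{equation}
u_{i_k, j_k}(t_k) \le \tfrac{1}{2}, \qquad u_{i_k+1, j_k}(t_k) > \tfrac{1}{2}, \qquad u_{i_k+2, j_k+1}(t_k) \le \tfrac{1}{2},
\end{equation}
where the first two follow directly from the definition of $i_k$ and the third from the assumption $i_k + 2 \le i_*(j_k+1, t_k)$ combined with the auxiliary observation that $u_{i,j}(t) \le \tfrac{1}{2}$ for every $i \le i_*(j,t)$. This auxiliary observation follows from Proposition \ref{lemma:phase gamma:monotonicity_in_the_bounded_region}: starting from $i_*$ and decrementing $i$, the horizontal slice either remains in $I_t$ --- where the strict monotonicity of item (ii) keeps its values strictly under $u_{i_*, j}(t) \le \tfrac{1}{2}$ --- or has already dropped below $\Phi(-2) < \tfrac{1}{2}$, in which case item (iii) locks it there. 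Passing to the limit $k \to \infty$ via \eqref{eqn:omega_limit_points:existence}, the three inequalities become
\begin{equation}
\Phi(L - \theta_0) \le \tfrac{1}{2}, \qquad \Phi(L + 1 - \theta_0) \ge \tfrac{1}{2}, \qquad \Phi(L + 2 - \theta_0) \le \tfrac{1}{2}.
\end{equation}
Since $\Phi(0) = \tfrac{1}{2}$ and $\Phi$ is strictly increasing, these force $L \le \theta_0$, $\theta_0 \le L + 1$, and $L + 2 \le \theta_0$ simultaneously, which is impossible.

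The main obstacle is really only pinning down the auxiliary observation on the global structure of the slice $i \mapsto u_{i,j}(t)$; once this is in hand, the remainder is a direct application of the $\omega$-limit and trapped-entire-solution machinery already developed in Sections \ref{sec:omega}--\ref{sec:Trapped entire solutions}.
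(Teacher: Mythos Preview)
Your proof is correct and follows the same contradiction/$\omega$-limit/trapped-wave strategy as the paper. The paper's version is marginally cleaner: it tracks both $i_k = i_*(j_k,t_k)$ and $\tilde{i}_k = i_*(j_k+1,t_k)$, passes each to constants $L,\tilde L$ after shifting by $\lceil ct_k\rceil$, and reads off $L\le\theta_0\le L{+}1$ and $\tilde L\le\theta_0\le\tilde L{+}1$ directly from the defining inequalities of $i_*$ in both rows, thereby obtaining $|L-\tilde L|\le 1$ without needing your auxiliary observation about the slice to the left of $i_*$.
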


\begin{proof} \normalfont
	If the above claim does not hold, we can find sequences $ (i_k, \tilde{i}_k, j_k) \in \Z^3$ and $ T< t_1 <t_2 < \dots \to \infty $
	for which the inequality $ |i_k-\tilde{i}_k|> 1 $
	holds, together with 
	\begin{equation}\label{eqn:phase_gamma:cases1}
	\begin{cases}
	u_{i_k, j_k}(t_k) \leq 1/2,\\
	u_{i_k+1, j_k}(t_k) > 1/2, 
	\end{cases}	\qquad \qquad \qquad \qquad \qquad
	\begin{cases}
	u_{\tilde{i}_k, j_k+1}(t_k)  \leq 1/2,\\
	u_{\tilde{i}_k+1, j_k+1}(t_k) >   1/2 .
	\end{cases}    
	\end{equation}
	
As before, we can assume that $ i_k  + \lceil c t_k\rceil = L \in \Z $ and $ \tilde{i}_k  + \lceil c t_k\rceil = \tilde{L} \in \Z$.
In addition, we can use Proposition~\ref{lemma:omega_limit_points:construction of limit points} 
to construct an $\omega$-limit point $\omega$ for $u$ that satisfies
the inequalities 
\begin{equation}
	\begin{cases}
	\omega_{L, 0}(0) \leq 1/2,\\
	\omega_{L+1, 0}(0) > 1/2,
	\end{cases}	\qquad \qquad \qquad \qquad \qquad \quad
	\begin{cases}
	\omega_{\tilde{L}, 1}(0) \leq 1/2,  \\
	\omega_{\tilde{L}+1, 1}(0) > 1/2 
	\end{cases}
\end{equation}
on account of
\eqref{eqn:phase_gamma:cases1}. 
Therefore,  Proposition~\ref{thm:trapped_entire_sol:every_trapped_entire_solution_is_a_traveling_wave}  
shows that the bounds
$$\begin{aligned}
L \leq \theta_0 \leq L + 1,
\qquad \qquad
\tilde{L} \leq \theta_0 \leq \tilde{L}+1
\end{aligned}$$
hold for some $\theta_0\in \R $.
This allows us to conclude that $ |L- \tilde{L}|\leq 1$ and
obtain the contradiction $ |i_k - \tilde{i}_k|\leq 1 $.
\end{proof}
\begin{proof}[Proof of Proposition~\ref{prp:zero_level_surface:smallnes of derivative of gamma}]
	Assume to the contrary that there exists $ \delta > 0 $ 
	together with subsequences $(j_k) \in \Z$ and 
	$ T\leq t_1<t_2<\dots \to \infty $
	for which
	\begin{equation}\label{eqn:phase_gamma:assumption_gamma_not_flat}
    	|\gamma_{j_k+1}(t_k) - \gamma_{j_k}(t_k)| \geq \delta.	    
	\end{equation}
	We now claim that it is possible to pass to a subsequence that has $ i(j_k+1,t_k) \neq i(j_k, t_k)  $. Indeed, if actually
	$ i(j_k+1,t_k) = i(j_k, t_k) = i_k$ holds for all large $ k $,
	then we can use  Lemma~\ref{lemma:phase_gamma:decay of y-derivatives}
	to obtain the contradiction
	$$\begin{aligned}
	\delta \leq |\gamma_{j_k+1}(t_k) - \gamma_{j_k}(t_k)| &=  |\Phi^{-1}(u_{i_k, j_k}) - \Phi^{-1}(u_{i_k, j_k+1})| \\ & \leq C|u_{i_k, j_k+1} - u_{i_k,j_k}| \to 0   \quad \text{ as } k\to \infty.
	\end{aligned}$$
     In particular, Lemma \ref{lemma:phase_gamma:i(j+1)-i(j)} allows us to assume that $ i(j_k+1, t_k) = i(j_k, t_k) +1 $ without loss of generality. 
     Using the shorthand $ i_k =i(j_k, t_k)  $, we find
	$$|\gamma_{j_k+1}(t) - \gamma_{j_k}(t)| = |1 + \Phi^{-1}(u_{i_k, j_k}) - \Phi^{-1}(u_{i_k+1, j_k+1})|, $$
	together with the inequalities
	\vspace{-\baselineskip}
	\begin{multicols}{2}
		\begin{equation*}
		\begin{cases}
		u_{i_k,j_k}(t_k) \leq 1/2, \\
		u_{i_k+1,j_k}(t_k) > 1/2, \\ 
		\end{cases}	
		\end{equation*} \
		\begin{equation*} 
		\begin{cases}
		u_{i_k+1,j_k+1}(t) \leq  1/2,\\
		u_{i_k+2,j_k+1}(t) > 1/2.
		\end{cases}\end{equation*}
	\end{multicols}
	
	We now proceed in a similar fashion as in the proof of Lemma \ref{lemma:phase_gamma:i(j+1)-i(j)}. In particular, we may 
	assume that $i_k  + \lceil c t_k\rceil = L \in \Z$ 
	and use Proposition~\ref{lemma:omega_limit_points:construction of limit points}
	to construct an $\omega$-limit point $\omega$ for $u$ that satisfies
    the inequalities 
	\vspace{-\baselineskip}
	\begin{multicols}{2}
		\begin{equation*}
		\begin{cases}
	    \omega_{L,0}(0)\leq 1/2, \\
		\omega_{ L+1,0}(0) \geq 1/2, \\ 
		\end{cases}	
		\end{equation*} \
		\begin{equation*} 
		\begin{cases}
		\omega_{L+1,1}(0) \leq  1/2,\\
		\omega_{L+2,1}(0) \geq 1/2.
		\end{cases}\end{equation*}
	\end{multicols} 
	\noindent 
	Again, Proposition~\ref{thm:trapped_entire_sol:every_trapped_entire_solution_is_a_traveling_wave} implies that
	$ \omega_{i,j}(t) = \Phi(i-ct - x_0 ) $, for some $ x_0\in \R $.
	The independence with respect to $j$ implies that 
	$\omega_{L+1,0}(0) = \omega_{L+1, 1}(0) = \frac{1}{2}$ and consequently
	$ x_0=L+1 $. In particular, we find
	\begin{equation}
	    u_{i_k, j_k}(t_k) \to \omega_{L,0}(0) = \Phi(-1),
	    \qquad
	    u_{i_k+1, j_k+1}(t_k) \to \omega_{L+1,1}(0) = \Phi(0),
	\end{equation}
	and hence
	$$|\gamma_{j_k+1}(t) - \gamma_{j_k}(t)| \to |1 + \Phi^{-1}\big(\Phi(-1)\big) - \Phi^{-1}\big(\Phi(0)\big)| = 0 $$
	as $k \to \infty$, which leads to the desired contradiction
	with \eqref{eqn:phase_gamma:assumption_gamma_not_flat}.

\end{proof}
\section{Discrete heat equation}
\label{sec:dht}

In this section we obtain
several preliminary estimates for the Cauchy problem
\begin{numcases}{}
\dot{h}_j(t)=h_{j+1}(t)  + h_{j-1}(t) - 2h_j(t) , \label{eqn:discrete_heat_eq:DHK} \\
h_j(0)=h_j^0\label{eqn:discrete_heat_eq:initial_cond_for_dhk}
\end{numcases}
associated to the discrete heat equation. These estimates will underpin our analysis of the discrete curvature flow, using a nonlinear Cole-Hopf transformation to pass to 
a suitable intermediate system.

To set the stage,
we recall the well-known fact that the one-dimensional continuous heat equation
\begin{equation}
\begin{cases}
    H_t = H_{yy}, \quad &y\in \R, \ t>0,\\
    H(y,0) = H_0(y),  &y\in \R,     
\end{cases}
\end{equation}
admits the explicit solution
\begin{equation}\label{sol of heat eq}
    H(y,t) = \dfrac{1}{\sqrt{4\pi t}} \int_\R e^{-\frac{(y-x)^2}{4t}} H_0(x) dx. 
\end{equation}
Taking derivatives,
one readily obtains the estimates
\begin{align}
\sup_{y\in \R} |H_y(y,t)| &\leq \min\{C\norm{H_0}_{L^\infty} t^{-\frac{1}{2}}, \norm{H_{0, y}}_{L^\infty}\},\label{derivative of cont sol} \\
\sup_{y\in \R} |H_{yy}(y,t)| &\leq \min\{C\norm{H_0}_{L^\infty} t^{-1}, \norm{H_{0, yy}}_{L^\infty}\}. \label{eqn:discrete_heat_eq:laplace of cont sol}
\end{align}  

The main result of this section  transfers these estimates
to the discrete setting
\eqref{eqn:discrete_heat_eq:DHK}.
This generalization is actually surprisingly delicate, caused by the fact that supremum norms cannot be readily transferred to Fourier space.

\begin{prop}\label{prp:decay_estimates:decay_estimates_of_discerete_heat}
There exists a constant $K > 0$ so that for any $h^0 \in \ell^\infty(\Z)$,
the solution $h \in C^1\big([0,\infty);\ell^\infty(\Z) \big)$ to the initial value problem \eqref{eqn:discrete_heat_eq:DHK} satisfies
the first-difference bound
\begin{equation}\label{delta+ v}
		\norm{\partial^+h(t)}_{\ell^\infty} \leq \min\left\{\norm{\partial^+ h^0}_{\ell^\infty}, K \norm{h^0}_{\ell^\infty} \dfrac{1}{\sqrt{t}} \right\},
		\end{equation}
together with the second-difference estimate
\begin{equation}\label{Laplace v}
		\norm{\partial^{(2)} h(t)}_{\ell^\infty} \leq \min\left\{\norm{\partial^{(2)} h^0}_{\ell^\infty}, K \norm{h^0}_{\ell^\infty} \dfrac{1}{t} \right\}
		\end{equation}
for all $t > 0$.
%
%
%
\end{prop}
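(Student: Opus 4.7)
My strategy is to exploit the explicit fundamental solution of \eqref{eqn:discrete_heat_eq:DHK} in terms of modified Bessel functions of the first kind. Concretely, one has the convolution representation
\begin{equation}
h_j(t) = \sum_{k\in\Z} G_{j-k}(t)\, h^0_k, \qquad G_k(t) = e^{-2t} I_k(2t),
\end{equation}
as is verified by differentiating in $t$ and using $I_n'(z) = \tfrac{1}{2}(I_{n-1}(z)+I_{n+1}(z))$. Three structural properties of the kernel $G$ drive the argument: nonnegativity $G_k(t)\ge 0$, normalization $\sum_{k\in\Z} G_k(t)=1$ (so that the solution semigroup $P_t : h^0\mapsto h(t)$ is an $\ell^\infty$-contraction), and symmetric unimodality $G_0(t) \ge G_{\pm 1}(t) \ge G_{\pm 2}(t) \ge \dots$, which follows from evenness $I_{-n}=I_n$, the elementary bound $I_0(z) > I_1(z)$ (clear from the integral representation $I_n(z)=\tfrac{1}{\pi}\int_0^\pi e^{z\cos\theta}\cos(n\theta)\,d\theta$), and the classical Tur\'an-type inequality $I_n(z)^2 \ge I_{n-1}(z) I_{n+1}(z)$ which makes $n\mapsto I_{n+1}/I_n$ monotone.

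\textbf{The left-hand sides of the two minima.} Since $P_t$ is a convolution on $\Z$, it commutes with both $\partial^+$ and $\partial^{(2)}$, giving $\partial^+ h(t) = P_t \partial^+ h^0$ and $\partial^{(2)} h(t) = P_t \partial^{(2)} h^0$; the $\ell^\infty$-contractivity of $P_t$ immediately yields $\|\partial^+ h(t)\|_{\ell^\infty}\le \|\partial^+ h^0\|_{\ell^\infty}$ and $\|\partial^{(2)} h(t)\|_{\ell^\infty}\le\|\partial^{(2)} h^0\|_{\ell^\infty}$.

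\textbf{Time decay for the first difference.} I would next push the difference onto the kernel and write
\begin{equation}
|[\partial^+ h(t)]_j| \;\le\; \|h^0\|_{\ell^\infty}\sum_{k\in\Z}\bigl|G_{k+1}(t)-G_k(t)\bigr|.
\end{equation}
The symmetric unimodality makes $k\mapsto G_k(t)$ nondecreasing for $k\le 0$ and nonincreasing for $k\ge 0$, so the sum telescopes to
\begin{equation}
\sum_{k\in\Z}\bigl|G_{k+1}(t)-G_k(t)\bigr| \;=\; 2 G_0(t) \;=\; 2 e^{-2t} I_0(2t).
\end{equation}
The classical asymptotic $e^{-2t}I_0(2t)\sim (4\pi t)^{-1/2}$ as $t\to\infty$, combined with the trivial bound $e^{-2t}I_0(2t)\le 1$ for every $t\ge 0$, then yields $e^{-2t}I_0(2t)\le K t^{-1/2}$ for some $K>0$ uniformly in $t>0$.

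\textbf{Time decay for the second difference.} Finally I would bootstrap to $\partial^{(2)}$ by factoring $\partial^{(2)} = \partial^+ \partial^-$ (with $[\partial^- h]_j = h_j-h_{j-1}$) and using the semigroup property $P_t = P_{t/2} P_{t/2}$ together with the commutativity of $\partial^\pm$ and $P_{t/2}$:
\begin{equation}
\partial^{(2)} h(t) \;=\; \partial^+ P_{t/2}\bigl(\partial^- P_{t/2} h^0\bigr).
\end{equation}
Applying the previous step to the initial datum $\partial^- P_{t/2} h^0$ on the time interval $t/2$ gives a factor $K(t/2)^{-1/2}$. Since $\|\partial^- v\|_{\ell^\infty} = \|\partial^+ v\|_{\ell^\infty}$ (as $\partial^-$ is a shift of $\partial^+$), a second application of the first-difference estimate to $P_{t/2} h^0$ produces another $K(t/2)^{-1/2}\|h^0\|_{\ell^\infty}$ factor, and the product delivers the $\tilde K\|h^0\|_{\ell^\infty}/t$ bound. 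The only genuinely nontrivial ingredient is the symmetric unimodality of $k\mapsto I_k(2t)$, which is a classical fact; everything else is a short computation.
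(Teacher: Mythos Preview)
Your proof is correct. For the first-difference estimate and the left members of both minima your argument coincides with the paper's: both push $\partial^+$ onto the kernel, invoke the symmetric unimodality $I_0(t)>I_1(t)>I_2(t)>\dots$ (you justify it via the Tur\'an inequality, the paper cites the equivalent Soni result $I_k(t)-I_{k+1}(t)>0$), telescope to $2e^{-2t}I_0(2t)$, and use the $t^{-1/2}$ asymptotics of $I_0$.

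Your treatment of the second-difference decay is genuinely different and more economical. The paper proves it by a direct kernel analysis: it shows that for each fixed $t>0$ the sequence $k\mapsto I_{k+1}(t)-2I_k(t)+I_{k-1}(t)$ changes sign exactly once for $k\ge 0$, using the monotonicity in $k$ of $a_k(t)=tI_k'(t)/I_k(t)$ together with the identity $I_{k+1}+I_{k-1}=2I_k'$; a telescoping argument then reduces $\sum_k|\partial^{(2)}G_k(t)|$ to a few terms bounded via the pointwise estimate $0<I_k(t)-I_{k+1}(t)\le Ce^t/t$. You instead bootstrap via the semigroup factorisation $P_t=P_{t/2}P_{t/2}$ and $\partial^{(2)}=\partial^+\partial^-$, applying the already-established first-difference bound twice to pick up two factors of $(t/2)^{-1/2}$. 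Your route avoids the sign-change lemma and the upper bound on $I_k-I_{k+1}$ entirely; the paper's route, on the other hand, yields the sharper $\ell^1$-estimate $\|\partial^{(2)}G(t)\|_{\ell^1}\le C/t$ on the kernel itself, which is of independent interest but not needed for the proposition as stated.
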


Using a suitable Cole-Hopf transformation
the linear heat equation
\eqref{eqn:discrete_heat_eq:DHK}
can be transformed
to the nonlinear initial 
value problem 
\begin{equation}\label{eq:ht:nonl:exp}
 \left\{
    \begin{array}{rl}
\dot {V} =& \dfrac{1}{d}\left(e^{d\partial^+ V} - 2 + e^{-d\partial^- V} \right) + c, \quad t> 0 \\[0.2cm]  
	V(0) =& V^0,\\  
\end{array} \right.
\end{equation}
which will serve as a useful proxy for the discrete curvature flow.
In order to exploit the fact that this equation is invariant under spatially homogeneous perturbations, we introduce the deviation seminorm
\begin{equation}\label{eqn:discrete_heat_eq:seminorm_dev}
    [V]_{\mathrm{dev}}:= \norm{V - V_0}_{\ell^\infty} 
\end{equation}
for sequences $V \in \ell^\infty(\Z)$.

\begin{cor}\label{cor:interface_evolution:cole hopf}
Fix two constants $c$, $d \in \R$ with $d\neq 0$. Then there exist  positive constants $M_{\mathrm{ht}}$ and $\kappa$  so that 
for any $V^0 \in \ell^\infty(\Z^2)$, the solution $V: \left[0, \infty \right) \to \ell^\infty(\Z^2)$ to the initial value problem
\eqref{eq:ht:nonl:exp}
satisfies the estimates
\begin{align}
 	\norm{\partial^+ V(t)}_{\infty} &\leq M_{\mathrm{ht}} e^{\kappa[V^0]_{\mathrm{dev}}} \min\left\{\norm{\partial^+ V^0}_{\ell^\infty}, \dfrac{1}{\sqrt{t}}\right\}, \label{eqn:interface_evolution:cole hopf:eqn1} \\
 	\norm{\partial^{(2)} V(t)}_{\infty} &\leq M_{\mathrm{ht}} e^{\kappa[V^0]_{\mathrm{dev}}} \min \left\{\norm{\partial^+ V^0}_{\ell^\infty} , \dfrac{1}{t} \right\} \label{eqn:interface_evolution:cole hopf:eqn2}.
\end{align}
\end{cor}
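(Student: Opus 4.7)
The plan is to reduce \eqref{eq:ht:nonl:exp} to the discrete heat equation by a Cole--Hopf transformation and then invoke Proposition~\ref{prp:decay_estimates:decay_estimates_of_discerete_heat}. Setting $h_j(t) = e^{d(V_j(t) - ct)}$, a short computation that recognises $e^{d\partial^+ V_j}$ and $e^{-d\partial^- V_j}$ as the ratios $h_{j+1}/h_j$ and $h_{j-1}/h_j$ shows that $h$ satisfies \eqref{eqn:discrete_heat_eq:DHK} with initial data $h^0_j = e^{d V^0_j}$. Since both \eqref{eq:ht:nonl:exp} and the seminorm $[\cdot]_{\mathrm{dev}}$ are invariant under addition of a spatially homogeneous constant to $V$, I may normalise $V^0_0 = 0$; the comparison principle for the linear problem \eqref{eqn:discrete_heat_eq:DHK} then pins $h_j(t)$ between the positive constants $A = e^{-|d|[V^0]_{\mathrm{dev}}}$ and $B = e^{|d|[V^0]_{\mathrm{dev}}}$ uniformly in $t \ge 0$.

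To establish \eqref{eqn:interface_evolution:cole hopf:eqn1} I invert via $V_j = \tfrac{1}{d}\log h_j + c t$. Lipschitz bounds for $\log$ on $[A,B]$ yield $|\partial^+ V_j(t)| \le |\partial^+ h_j(t)|/(|d|A)$, while a Taylor bound for $x \mapsto e^{dx}$ on $[-[V^0]_{\mathrm{dev}},[V^0]_{\mathrm{dev}}]$ yields $\|\partial^+ h^0\|_{\ell^\infty} \le |d|\,B\,\|\partial^+ V^0\|_{\ell^\infty}$. Feeding these into the two sides of the minimum in \eqref{delta+ v} and absorbing the factor $B/A = e^{2|d|[V^0]_{\mathrm{dev}}}$ into the constant produces \eqref{eqn:interface_evolution:cole hopf:eqn1}.

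For \eqref{eqn:interface_evolution:cole hopf:eqn2} the plan is to apply the mean value theorem to each first difference of $\log h$ and obtain an identity of the form
\begin{equation*}
d\,\partial^{(2)} V_j \;=\; \frac{\partial^{(2)} h_j}{\xi_+} \;+\; \frac{\partial^- h_j\,(\xi_- - \xi_+)}{\xi_+\,\xi_-},
\end{equation*}
for some intermediates $\xi_\pm \in [A,B]$. The first term is $O(t^{-1})$ via \eqref{Laplace v}, while the second is bounded by $2\|\partial^+ h(t)\|_{\ell^\infty}^2 / A^2$ and is likewise $O(t^{-1})$ on squaring \eqref{delta+ v}. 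Separately, the trivial inequality $\|\partial^{(2)} V(t)\|_{\ell^\infty} \le 2\|\partial^+ V(t)\|_{\ell^\infty}$ together with the already-established \eqref{eqn:interface_evolution:cole hopf:eqn1} supplies the $\|\partial^+ V^0\|_{\ell^\infty}$-side of the minimum, and combining the two yields the full bound.

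The main obstacle will be this quadratic correction in the second-difference identity: because $\log$ is nonlinear, $\partial^{(2)} \log h$ is not simply $\partial^{(2)} h / h$, and forcing the extra piece to decay at the full rate $t^{-1}$ requires both the strict positivity $h \ge A > 0$ and the $t^{-1/2}$ first-difference bound \emph{squared}. Had the comparison principle not produced a uniform positive lower bound on $h$, the passage from the linear heat-equation estimates back through the inverse Cole--Hopf map would collapse, and one would recover at best $t^{-1/2}$ decay for $\partial^{(2)} V$.
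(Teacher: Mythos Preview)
Your proposal is correct and follows essentially the same Cole--Hopf strategy as the paper: normalise $V^0_0=0$, set $h=e^{d(V-ct)}$ to reduce to the linear heat LDE, use the comparison principle to trap $h$ in $[A,B]$, and then transport the estimates of Proposition~\ref{prp:decay_estimates:decay_estimates_of_discerete_heat} back through the logarithm. The only cosmetic differences are that the paper expands $\log h$ to second order (obtaining $\partial^{(2)}V_j = \tfrac{1}{d}\bigl(\tfrac{\partial^{(2)}h_j}{h_j} - \tfrac{(\partial^+ h_j)^2}{2(h_2^*)^2} - \tfrac{(\partial^- h_j)^2}{2(h_3^*)^2}\bigr)$) rather than combining two first-order mean-value applications as you do, and the paper extracts the $\|\partial^+ V^0\|$-side of the second-difference bound via $\|\partial^{(2)}h^0\|\le 2\|\partial^+ h^0\|$ rather than your $\|\partial^{(2)}V(t)\|\le 2\|\partial^+ V(t)\|$; both routes are equivalent.
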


\subsection{Discrete heat kernel}

The discrete heat kernel $G: [0, \infty) \to \ell^\infty(\Z)$ is the fundamental solution  of the discrete heat equation, in the sense that 
the function $h=G$ satisfies~\eqref{eqn:discrete_heat_eq:DHK}-\eqref{eqn:discrete_heat_eq:initial_cond_for_dhk} with the initial condition $$h^0_0 = 1 \quad \text{and} \quad h_j^0 = 0 \text{ for } j\neq 0.$$
We now recall the characterization
\begin{equation}\label{eqn:decay_estimates:integral_characterization}
    I_k (t) = \dfrac{1}{\pi} \int_0^{\pi} e^{t \cos \omega} \cos (k \omega) d\omega,
    \qquad
    \qquad
    k \in \Z,
\end{equation}
for the family of modified Bessel 
functions of the first kind;
see e.g. the classical work
by Watson  \cite{watson}.
By passing to the Fourier domain,
one can readily confirm the 
well-known identity
\begin{equation}\label{eqn:discrete_heat_eqn:discrete_heat_kernel}
    G_j(t) = e^{-2t}I_j(2t).
\end{equation}

We may now formally write
\begin{equation}\label{eqn:DHK:formula_for_solution_of_DHK}
    h_j(t) =
    \sum_{k \in \Z} G_k(t) h^0_{j-k}
    =
    e^{-2t} \sum_{k \in \Z}  I_{k}(2t)  h^0_{j-k}
\end{equation}
for the solution
to the general initial value problem
\eqref{eqn:discrete_heat_eq:DHK}-\eqref{eqn:discrete_heat_eq:initial_cond_for_dhk}. In order to see
that this is well-defined 
for $h^0 \in \ell^\infty(\Z)$,
one can use the generating function
\begin{equation}\label{eqn:bessel_functions:generating_function}
    e^{\frac{t}{2}(x+ x^{-1})} = \sum_{k=-\infty}^{\infty} I_k(t) x^n
\end{equation}
together with the bound $I_k(t) \ge 0$ to conclude that $G(t) \in \ell^1(\Z)$. Further useful
properties of the functions $I_k$
can be found in the result below.

\begin{lemma}\label{lemma:Bessel_functions:bounds}
There exists a constant $C > 0$
so that for any integer $k \ge 0$
we have the bound
\begin{equation}\label{eqn:Bessel_functions:asymptotics_of_I_n}
I_k(t) \leq C \dfrac{e^t}{\sqrt{t}}, \qquad t>0 ,
\end{equation}
together with 
\begin{equation}\label{eqn:Bessel_functions:asymptotics of delta I_n}
0 < I_{k}(t) - I_{k+1}(t) \leq  C \dfrac{e^{t}}{t}\qquad t>0.
\end{equation}
\end{lemma}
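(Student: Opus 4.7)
The plan is to extract both estimates from the integral representation~\eqref{eqn:decay_estimates:integral_characterization} by a Laplace-type analysis localized near $\omega=0$, where the weight $e^{t\cos\omega}$ is maximal. The basic tool is the elementary inequality $\cos\omega \le 1 - \frac{2}{\pi^2}\omega^2$ on $[0,\pi]$, which converts the peaked integrand into a Gaussian and allows the moments $\int_0^\infty \omega^p e^{-\alpha\omega^2}\,d\omega$ to be read off explicitly. For the first bound~\eqref{eqn:Bessel_functions:asymptotics_of_I_n}, I use $|\cos(k\omega)|\le 1$ to write
\[
I_k(t) \le \frac{1}{\pi}\int_0^\pi e^{t\cos\omega}\,d\omega \le \frac{e^t}{\pi}\int_0^\infty e^{-2t\omega^2/\pi^2}\,d\omega = \frac{\sqrt{\pi}}{2\sqrt{2}}\,\frac{e^t}{\sqrt{t}},
\]
which is uniform in $k\ge 0$. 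For $t$ close to $0$ the bound holds automatically since the right-hand side diverges while $I_k(t)$ stays bounded.

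For the upper bound in~\eqref{eqn:Bessel_functions:asymptotics of delta I_n} I apply the product-to-sum identity
\[
\cos(k\omega) - \cos\bigl((k+1)\omega\bigr) = 2\sin\bigl((k+\tfrac12)\omega\bigr)\sin(\omega/2)
\]
to rewrite
\[
I_k(t) - I_{k+1}(t) = \frac{2}{\pi}\int_0^\pi e^{t\cos\omega}\sin\bigl((k+\tfrac12)\omega\bigr)\sin(\omega/2)\,d\omega.
\]
The crude bounds $|\sin((k+\tfrac12)\omega)|\le 1$ and $\sin(\omega/2)\le \omega/2$ produce an extra factor of $\omega$ in the integrand, so that the Gaussian first moment $\int_0^\infty \omega\,e^{-2t\omega^2/\pi^2}\,d\omega = \pi^2/(4t)$ then yields the required $Ce^t/t$ decay.

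The main obstacle is the strict positivity $I_k(t) - I_{k+1}(t) > 0$, since the oscillating factor in the above representation changes sign on $(0,\pi)$ and the Laplace estimate only controls the absolute value. To resolve this I invoke Poisson's integral representation
\[
I_k(t) = \frac{(t/2)^k}{\sqrt{\pi}\,\Gamma(k+\tfrac12)}\int_{-1}^1 e^{tx}(1-x^2)^{k-1/2}\,dx,
\]
valid for all $k\ge 0$ (the $k=0$ case being handled by the integrable arcsine weight). Applying integration by parts to the analogous expression for $I_{k+1}$, the weight $(1-x^2)^{k+1/2}$ differentiates to $-(2k+1)x(1-x^2)^{k-1/2}$ with vanishing boundary contributions, and the identity $\Gamma(k+\tfrac32) = (k+\tfrac12)\Gamma(k+\tfrac12)$ collapses the result to
\[
I_k(t) - I_{k+1}(t) = \frac{(t/2)^k}{\sqrt{\pi}\,\Gamma(k+\tfrac12)}\int_{-1}^1 e^{tx}(1-x)(1-x^2)^{k-1/2}\,dx.
\]
Since the integrand is strictly positive on $(-1,1)$, the claimed lower bound follows at once.
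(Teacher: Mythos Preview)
Your proof is correct, and it is in fact more self-contained than the paper's. The paper simply cites Watson for~\eqref{eqn:Bessel_functions:asymptotics_of_I_n} and Soni/Amos for the strict positivity in~\eqref{eqn:Bessel_functions:asymptotics of delta I_n}; only the upper bound in~\eqref{eqn:Bessel_functions:asymptotics of delta I_n} is argued directly, and there the paper uses the same product-to-sum identity you do but then performs the substitution $u=2\sqrt{t}\sin(\omega/2)$ on $[0,\pi/2]$ to extract the $e^t/t$ factor, treating the contribution from $[\pi/2,\pi]$ as an $O(1)$ error. Your route through the quadratic upper bound $\cos\omega \le 1 - \tfrac{2}{\pi^2}\omega^2$ and the Gaussian first moment is shorter and gives the bound for all $t>0$ in one stroke, without the need to handle small $t$ separately. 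Your positivity argument via Poisson's representation and integration by parts is a genuinely different and clean alternative to the cited monotonicity results; the computation reducing $I_k-I_{k+1}$ to an integral with manifestly positive integrand $(1-x)(1-x^2)^{k-1/2}e^{tx}$ checks out.
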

\begin{proof}
The proof of \eqref{eqn:Bessel_functions:asymptotics_of_I_n} can be found in \cite{watson}, while the lower bound in \eqref{eqn:Bessel_functions:asymptotics of delta I_n} is established in \cite{Soni};
see also \cite[Eq. (16)]{amos1974computation}. Turning
to the upper bound
in \eqref{eqn:Bessel_functions:asymptotics of delta I_n},
we remark that
$ \cos \omega$ is negative for $ \omega \in (\pi/2, \pi )  $, which allows us to write
\begin{align*}
	I_{k}(t) - I_{k+1}(t) &= \dfrac{2}{\pi}\int_{0}^{\pi}e^{t\cos \omega} \sin(\dfrac{2k+1}{2}\omega)\sin(\dfrac{ \omega}{2}) d\omega \\
	&= \dfrac{2}{\pi}\int_{0}^{\pi/2}e^{t\cos \omega} \sin(\dfrac{2k+1}{2}\omega)\sin(\dfrac{ \omega}{2}) d\omega + O(1)
\end{align*}
as $t \to \infty$.
Substituting $ u=2\sqrt{t} \sin(\omega/2) $ we find
\begin{align*}
		I_{k}(t) - I_{k+1}(t) = \dfrac{1}{t\pi}e^t \int_{0}^{\sqrt{2t}} e^{-u^2/2} \sin\left(\left(2k+1\right)\sin^{-1}\left(\dfrac{u}{2\sqrt{t}}\right)\right) \dfrac{u}{\sqrt{1-\dfrac{u^2}{4t}}} du + O(1)
\end{align*}
as $t \to \infty$. The desired bound now follows from the fact that 
the integral can be uniformly bounded
in $t$ and $k$.
\end{proof}

In order to obtain the bounds
in Proposition 
\ref{prp:decay_estimates:decay_estimates_of_discerete_heat},
the convolution \eqref{eqn:DHK:formula_for_solution_of_DHK}
indicates that we need to
control the $\ell^1$-norm of the
first and second differences of $G$.
The following two results
provide the crucial ingredients
to achieve this, exploiting
telescoping sums. To our surprise,
we were unable to find these
bounds directly in the literature.

\begin{lemma}\label{lemma:bessel_functions:series_of_discrete_derivatives}
There exists a constant $C>0$ so that the bound
        \begin{equation}\label{eqn:bessel_functions:discrete_derivative_series}
        \sum_{k\in \Z}\left|I_{k+1}(t) - I_k(t)\right|  \leq C\dfrac{e^t}{\sqrt{t}} 
        \end{equation}
holds for all $t > 0$.      
\end{lemma}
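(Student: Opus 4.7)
The plan is to reduce the series to telescoping sums by exploiting the symmetry $I_{-k}(t) = I_k(t)$ (immediate from the integral representation \eqref{eqn:decay_estimates:integral_characterization}, since $\cos(-k\omega) = \cos(k\omega)$) together with the monotonicity $I_k(t) - I_{k+1}(t) > 0$ for $k \geq 0$ recorded in \eqref{eqn:Bessel_functions:asymptotics of delta I_n}. This monotonicity allows us to drop the absolute values, after which the sum collapses.

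Concretely, I would split the series as
\begin{equation*}
\sum_{k \in \Z} \bigl| I_{k+1}(t) - I_k(t) \bigr|
= \sum_{k \geq 0} \bigl| I_{k+1}(t) - I_k(t) \bigr|
+ \sum_{k \leq -1} \bigl| I_{k+1}(t) - I_k(t) \bigr|.
\end{equation*}
For the first sum, the monotonicity gives $|I_{k+1}(t) - I_k(t)| = I_k(t) - I_{k+1}(t)$ for every $k \geq 0$, and since $I_k(t) \to 0$ as $k \to \infty$ (a consequence of the generating function identity \eqref{eqn:bessel_functions:generating_function}, which forces $G(t) \in \ell^1(\Z)$), the partial sums telescope to $I_0(t)$. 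For the second sum, the symmetry yields $|I_{k+1}(t) - I_k(t)| = I_{-k-1}(t) - I_{-k}(t)$ for $k \leq -1$, and reindexing with $m = -k - 1$ again produces a telescoping series equal to $I_0(t)$.

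Combining these two contributions gives the bound $2 I_0(t)$, and \eqref{eqn:Bessel_functions:asymptotics_of_I_n} then finishes the argument via
\begin{equation*}
\sum_{k \in \Z} \bigl| I_{k+1}(t) - I_k(t) \bigr| \; \leq \; 2 I_0(t) \; \leq \; 2 C \, \frac{e^t}{\sqrt{t}}.
\end{equation*}
There is no real obstacle: all of the input ingredients — symmetry, monotonicity in $k \geq 0$, the pointwise bound on $I_0$, and the summability needed to ensure $I_k(t) \to 0$ — are either standard or already established in the excerpt, and the combinatorics amount to two telescoping sums.
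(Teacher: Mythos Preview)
Your proof is correct and essentially identical to the paper's own argument: both exploit the symmetry $I_{-k}=I_k$ and the monotonicity $I_k(t)>I_{k+1}(t)$ for $k\ge 0$ to telescope the full series down to $2I_0(t)$, and then invoke \eqref{eqn:Bessel_functions:asymptotics_of_I_n}. The only cosmetic difference is that the paper writes the symmetry step as $\sum_{k\in\Z}=2\sum_{k\ge 0}$ in one line, whereas you split into $k\ge 0$ and $k\le -1$ and reindex the latter explicitly.
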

\begin{proof}
  We first note that the characterization \eqref{eqn:decay_estimates:integral_characterization} implies $I_k(z)=I_{-k}(z)$ for all $k\in \Z$. Using \eqref{eqn:Bessel_functions:asymptotics of delta I_n}, we can hence
  use a telescoping series to compute
  \begin{equation}
\begin{aligned}
\sum_{k\in \Z}\left|I_{k+1}(t) - I_k(t)\right| &= 2 \sum_{k\geq 0}I_{k}(t) - I_{k+1}(t) \\
&=2 I_0(t) - 2\lim_{N\to\infty} I_N(t).
\end{aligned}
\end{equation}
The result now follows 
from \eqref{eqn:Bessel_functions:asymptotics_of_I_n}
together with the limit
$I_N(t) \to 0$ as $N \to \infty$.
\end{proof}

\begin{lemma}\label{lemma:bessel_functions:series}
There exists a constant $C>0$ so that the bound
\begin{equation}\label{eqn:bessel_functions:discrete_laplace_series}
        \sum_{k\in \Z}\left|I_{k+1}(t) - 2I_k(t)+ I_{k-1}(t) \right| \leq C\dfrac{e^t}{t} 
    \end{equation}
holds for all $t > 0$.
\end{lemma}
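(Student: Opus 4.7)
The plan is to split the sum at $|k|\sim\sqrt{t}$ and combine a pointwise decay estimate (valid for the tail) with an $\ell^2$-based Cauchy--Schwarz estimate (valid on the bounded range). Writing $D_k(t) := I_{k+1}(t) - 2I_k(t) + I_{k-1}(t)$, the elementary identity $\cos((k{+}1)\omega) - 2\cos(k\omega) + \cos((k{-}1)\omega) = -4\cos(k\omega)\sin^2(\omega/2)$ combined with \eqref{eqn:decay_estimates:integral_characterization} yields
\[
D_k(t) = \frac{1}{\pi}\int_0^\pi F_t(\omega)\cos(k\omega)\, d\omega, \qquad F_t(\omega) := -4 e^{t\cos\omega}\sin^2(\omega/2).
\]
Differentiating produces $F_t'(\omega) = -2 e^{t\cos\omega}\sin\omega\,\bigl[1 - t(1-\cos\omega)\bigr]$, which vanishes at both endpoints of $[0,\pi]$. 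Integrating by parts twice (and using $\sin(k\pi) = 0$) therefore kills all boundary terms and produces the representation $D_k(t) = -(\pi k^2)^{-1}\int_0^\pi F_t''(\omega)\cos(k\omega)\, d\omega$ for $k \neq 0$, whence the pointwise bound $|D_k(t)| \leq \|F_t''\|_{L^1}/(\pi k^2)$.

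A Laplace/saddle-point calculation near $\omega = 0$, where $e^{t\cos\omega}$ concentrates its mass on a region of width $1/\sqrt{t}$, shows $\|F_t''\|_{L^1[0,\pi]} \leq C e^t/\sqrt{t}$ once $t$ is bounded away from zero. Summing the pointwise estimate over the tail yields $\sum_{|k|>\sqrt{t}}|D_k(t)| \leq C(e^t/\sqrt{t})\sum_{|k|>\sqrt{t}}k^{-2} \leq C e^t/t$. For the bounded range $|k|\leq\sqrt{t}$, I would evaluate the generating function \eqref{eqn:bessel_functions:generating_function} on the unit circle to identify the Fourier series $\sum_{k\in\Z} D_k(t) e^{ik\omega} = F_t(\omega)$, and Parseval's identity together with a further saddle-point estimate gives
\[
\sum_{k\in\Z} D_k(t)^2 \;=\; \frac{1}{2\pi}\|F_t\|_{L^2(-\pi,\pi)}^2 \;\leq\; \frac{C e^{2t}}{t^{5/2}}.
\]
Cauchy--Schwarz then produces $\sum_{|k|\leq\sqrt{t}}|D_k(t)| \leq \sqrt{2\sqrt{t}+1}\,\bigl(Ce^{2t}/t^{5/2}\bigr)^{1/2} \leq C e^t/t$, which combines with the tail bound to give the claim.

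The core technical content is the pair of saddle-point estimates on $\|F_t''\|_{L^1}$ and $\|F_t\|_{L^2}^2$, both routine under the substitution $u = \omega\sqrt{t}$. The choice of threshold $|k|\sim\sqrt{t}$ is essentially forced: neither the pointwise bound alone (which would yield only $Ce^t/\sqrt{t}$ when summed over all $k\neq 0$) nor Cauchy--Schwarz on all of $\Z$ against any fixed polynomially-weighted $\ell^2$ norm achieves the sharp rate $e^t/t$, and the two estimates balance precisely at the scale $\sqrt{t}$ dictated by the saddle point. A telescoping argument mimicking the proof of Lemma~\ref{lemma:bessel_functions:series_of_discrete_derivatives} would instead require unimodality of the sequence $I_k(t) - I_{k+1}(t)$ in $k$, which I do not see how to establish cleanly, motivating the Fourier-analytic route above.
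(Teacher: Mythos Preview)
Your Fourier-analytic argument is correct and genuinely different from the paper's. The paper takes precisely the telescoping route you dismissed in your last paragraph: it establishes the single sign change of $k \mapsto D_k(t)$ for $k \geq 0$ (equivalently, unimodality of $k \mapsto I_k(t) - I_{k+1}(t)$) by rewriting $D_k(t) = 2I_k'(t) - 2I_k(t) = \tfrac{2I_k(t)}{t}\bigl(a_k(t) - t\bigr)$ with $a_k(t) = tI_k'(t)/I_k(t)$, then invoking a known result (Neuman 1992) that $a_k(t)$ is strictly increasing in $k$, together with the easy checks $a_0(t) < t$ and $a_k(t) > t$ for $k > t$. Once the sign change is unique, the absolute-value sum telescopes down to a few first differences, each bounded by $Ce^t/t$ via \eqref{eqn:Bessel_functions:asymptotics of delta I_n}. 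The paper's approach is shorter and gives a cleaner implicit constant but leans on the special identity $I_{k+1}+I_{k-1}=2I_k'$ and a cited Bessel inequality; your route is self-contained real-variable analysis that would transfer to other kernels lacking such exact structure, at the cost of the two saddle-point estimates on $\|F_t''\|_{L^1}$ and $\|F_t\|_{L^2}^2$ (which you should still write out in full) and an explicit remark that the small-$t$ regime is trivial since $e^t/t \to \infty$ while the sum stays bounded.
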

\begin{proof}
We claim that for every $t>0$ the function 
\begin{equation}
\Z_{\ge 0} \ni k\mapsto \nu^{(2)}_k(t) :=  I_{k+1}(t) - 2I_k(t) + I_{k-1}(t)
\end{equation}
changes sign exactly once. Note that this allows us to obtain
the desired bound \eqref{eqn:bessel_functions:discrete_laplace_series}
from~\eqref{eqn:Bessel_functions:asymptotics of delta I_n} by applying a telescoping argument 
similar to the one used in the proof
of Lemma \ref{lemma:bessel_functions:series_of_discrete_derivatives}.

Turning to the claim,
we recall the notation
\begin{equation}
    a_k(t) = \dfrac{tI_k'(t)}{I_k(t)}
     = k + t\dfrac{I_{k+1}(t)}{I_k(t)}
\end{equation}
from~\cite{Paltsev1999}
and
use the identity $I_{k+1}(t) + I_{k-1}(t) = 2I_k'(t) $
to compute
\begin{align*}
	\nu^{(2)}_k(t) 
	&= 2I_k'(t) - 2I_k(t)
	=\dfrac{2I_k(t)}{t}\left(a_k(t) - t\right).
	\end{align*}	
The inequality (15) in \cite{neuman1992inequalities} directly implies that $a_{k}(t) < a_{k+1}(t)$, for every $t>0$ and $k\geq 0$. In addition, 
the lower bound in \eqref{eqn:Bessel_functions:asymptotics of delta I_n}
implies that
\begin{equation}
    a_0(t) - t =  t \big(\frac{I_1(t)}{I_0(t)} - 1 \big) < 0,
\end{equation}
while for $k > t$ we easily conclude 
$a_k(t) - t \ge k - t > 0$.
In particular, $k \mapsto a_k(t) -t$  changes sign precisely once. The claim now follows from
the strict positivity $I_k(t) > 0$
for $t >0$ and $k \ge 0$.
\end{proof}

\subsection{Gradient bounds}
Using the representation
\eqref{eqn:DHK:formula_for_solution_of_DHK}
and the 
bounds for the discrete heat kernel obtained above,
we are now ready 
to establish
Proposition \ref{prp:decay_estimates:decay_estimates_of_discerete_heat}
and Corollary
\ref{cor:interface_evolution:cole hopf}.

\begin{proof}[Proof of Proposition~\ref{prp:decay_estimates:decay_estimates_of_discerete_heat}]
In order to establish \eqref{delta+ v}, we apply a discrete
derivative 
to
\eqref{eqn:DHK:formula_for_solution_of_DHK}, which yields 
\begin{equation*}
\begin{aligned}
(\partial^+h)_j(t) 
&=e^{-2t} \sum_{k\in\Z}  \big(I_{k+1}(2t) - I_{k}(2t)\big) h_{j-k}^0 .
\end{aligned}
\end{equation*}
Applying \eqref{eqn:bessel_functions:discrete_derivative_series},
we hence find
\begin{equation}\label{delta}
    \left|(\partial^+h)_j(t)\right| \leq e^{-2t}\norm{h^0}_{\ell^\infty}\sum_{k\in\Z} |I_{k+1}(2t) - I_k(2t)|
    \leq C \norm{h^0}_{\ell^\infty}\dfrac{1}{\sqrt{t}}.
\end{equation}
On the other hand,
the inequality
$\norm{\partial^+ h(t)}_{\ell^\infty} \le 
\norm{ \partial^+ h^0}_{\ell^\infty}$
follows directly from the comparison principle, since
$\partial^+ h$ satisfies the
discrete heat equation
with initial value $\partial^+ h^0$.
The second-order bound
\eqref{Laplace v} can be obtained
in a similar fashion by exploiting
the estimate \eqref{eqn:bessel_functions:discrete_laplace_series}.
\end{proof}

\begin{proof}[Proof of Corollary \ref{cor:interface_evolution:cole hopf}] \normalfont
   Since the function $\tilde{V} = V - V_0^0$ also satisfies
   the first line of \eqref{eq:ht:nonl:exp},
   we may assume without loss
   that $V^0_0 = 0$ and hence $[V^0]_{\mathrm{dev}} = \norm{V^0}_{\ell^\infty} $.
   Upon writing
   \begin{equation}
       h_j(t)=e^{d\left(V_j(t) - c t\right)},
   \end{equation}
   straightforward calculations
   show that $h$ satisfies
   \eqref{eqn:discrete_heat_eq:DHK}
   with the initial condition
   \begin{equation}
       h_j(0) = e^{d V_{j}^0},
   \end{equation}
   which using the comparison principle
   implies that
   \begin{equation}
     \label{eq:dh:lw:bnd:hj}
       h_j(t) \ge e^{- |d| \norm{V^0}_{\infty}}, \qquad \qquad t \ge 0.
   \end{equation}
   For any $j \in \Z$,
   the intermediate value theorem
   allows us to find
   $ h^*_1, h_2^*\in [h_j, h_{j+1}]$ and  $ h_3^* \in [h_{j-1}, h_{j}]$
   for which we have
 	 \begin{align}\label{eqn:discrete_heat_eq:d+V_in_terms_of_d+h}
 	    &\partial^+ V_j = \dfrac{1}{d }\dfrac{ \partial^+ h_j}{h^*_1}, 
 	    \quad \qquad
 	    \partial^{(2)} V_j =  \dfrac{1}{d} \left(\dfrac{\partial^{(2)} h_j}{h_j}  - \dfrac{\left( \partial^+ h_j\right)^2}{2(h_2^*)^2}- \dfrac{\left( \partial^- h_j\right)^2}{2(h_3^*)^2}\right).
 	 \end{align}
 	 In particular, \eqref{eq:dh:lw:bnd:hj}
 	 yields the bounds
         \begin{align}
    \norm{\partial^+ V}_{\ell^\infty} &\leq\dfrac{1}{|d|}e^{|d|\norm{V^0}_{\ell^\infty}} \norm{\partial^+ h}_{\ell^\infty},\\
    \norm{\partial^{(2)} V}_{\ell^\infty} &\leq\dfrac{1}{|d|}\left(e^{|d|\norm{V^0}_{\ell^\infty}} \norm{\partial^{(2)} h}_{\ell^\infty} + e^{2|d|\norm{V^0}_{\ell^\infty}}\norm{\partial^+ h}_{\ell^\infty}^2\right). 
     \end{align}
     In a similar fashion, we obtain
     \begin{equation}\label{eqn:discrete_heat_eq:d+h_in_terms_of_d+V}
 	    \norm{\partial^+ h^0}_{\ell^\infty} \leq |d| e^{|d|\norm{V^0}_{\ell^\infty}}\norm{ \partial^+ V^0}_{\ell^\infty}.
 \end{equation}
 Using $\norm{\partial^{(2)} h^0}_{\ell^\infty} \le 2 \norm{ \partial^+ h^0}_{\ell^\infty}$, the desired estimates
 \eqref{eqn:interface_evolution:cole hopf:eqn1}-\eqref{eqn:interface_evolution:cole hopf:eqn2}
 can now be established by 
 applying Proposition \ref{prp:decay_estimates:decay_estimates_of_discerete_heat}.
  \end{proof}

\section{Construction of super- and sub-solutions}
\label{sec:sub:sup}
In this section we construct 
refined sub- and super-solutions of \eqref{eqn:main_results:discrete AC} 
that use the solution $V$
of the nonlinear system
\eqref{eq:ht:nonl:exp} as a type
of phase. In particular, we add
a transverse $j$-dependence
to the planar sub- and super-solutions
\eqref{eqn:analysis_of_u:supersolution}-\eqref{eqn:analysis_of_u:subsolution},
which requires some substantial modifications
to account for the slowly-decaying resonances
that arise in the residuals.

As a preparation, we introduce
the linear operator
$\mathcal{L}_{\mathrm{tw}}: H^1 \to L^2$
associated to the linearization
of the travelling wave MFDE
\eqref{eqn:main_results:MFDE}, which acts 
as
\begin{equation}
\left(\mathcal{L}_{\mathrm{tw}}v\right)(\xi) = cv'(\xi) + v(\xi+1) -2 v(\xi) + v(\xi-1) + g'\big(\Phi(\xi)\big)v(\xi).
\end{equation}
In addition, we introduce the formal adjoint
$\mathcal{L}^{\mathrm{adj}}_{\mathrm{tw}}: H^1 \to L^2$ that acts as
\begin{equation}
    \left(\mathcal{L}^{\mathrm{adj}}_{\mathrm{tw}} w \right)(\xi) = - c w'(\xi) + w(\xi+1) -2 w(\xi) + w(\xi-1) + g'\big(\Phi(\xi)\big)w(\xi).
\end{equation}
In view of the requirement $c \neq 0$ in
$(H\Phi)$, the results in \cite{Mallet-Paret1999}
show that there exists a strictly positive function $\psi \in C^1(\R,\R)$ for which we have
\begin{equation}
\label{eq:subsup:char:ker:range}
    \mathrm{Ker}\, \mathcal{L}^{\mathrm{adj}}_{\mathrm{tw}}
    = \mathrm{span} \{ \psi \},
    \qquad \qquad
    \mathrm{Range}\, \mathcal{L}_{\mathrm{tw}}
    = \{ f \in L^2: \langle \psi, f \rangle = 0 \},
\end{equation}
together with the normalization
$\langle \psi, \Phi_*' \rangle = 1$.

We now fix the parameter $d$
in the LDE \eqref{eq:ht:nonl:exp}
by writing
\begin{equation}\label{eqn:super_and_sub_sols2:d}
	d=-\langle \Phi'', \psi\rangle.
\end{equation}
The characterization \eqref{eq:subsup:char:ker:range}
implies that
we can find a 
solution $r \in H^1$
to the MFDE
\begin{equation}
\label{eq:sub:def:r}
    \mathcal{L}_{\mathrm{tw}} r  + d \Phi' = - \Phi''
\end{equation}
that becomes unique upon imposing
the normalization $\langle \psi, r \rangle = 0$.
Multiplying this residual function by the square gradients 
\begin{equation}
[\alpha_V]_j = [\beta_V]^2_j -1  = \dfrac{(V_{j+1} - V_j)^2}{2} + \dfrac{(V_{j-1} - V_j)^2}{2}
\end{equation}
gives us the  correction terms we need
to control the resonances discussed above.
In order to account for the possibility that $d = 0$, the actual LDE that we use here 
is given by
\begin{equation}
\label{eq:sub:sup:lde:V}
\dot {V} =
\left\{
    \begin{array}{lcl}
    \dfrac{1}{d}\left(e^{d\partial^+ V} - 2 + e^{-d\partial^- V} \right) + c
      & & d \neq 0, \\[0.3cm]
  \partial^{(2)} V + c, & & d = 0 .
\end{array} \right.
\end{equation}

\begin{prop}\label{prp:construction_of_super_and_subsolutions:supersolution} 
Fix $R > 0$ and suppose that
the assumptions (H$g$) and (H$\Phi$)
both hold. Then for any $\epsilon > 0$, there exist constants $\delta > 0$, $\nu > 0$ and $C^1$-smooth functions
\begin{equation}
    p: [0, \infty) \to \R,
    \qquad
    q: [0, \infty) \to \R
\end{equation}
so that for any $V^0 \in \ell^\infty(\Z)$ with
\begin{equation}
[V^0]_{\mathrm{dev}} < R,
\qquad \qquad
\norm{\partial^+ V^0 }_{\ell^\infty}  < \delta
\end{equation}
the following holds true.
\begin{enumerate}[(i)]
\item Writing  $V: [0, \infty) \to \ell^\infty(\Z)$ for the solution to
\eqref{eq:sub:sup:lde:V}
with the initial condition $V(0) = V^0$,
the functions $u^+$ and $u^-$ defined by
	\begin{equation}
	  \label{eq:sps:def:u:plus:minus}
	  \begin{array}{lcl}
		u^+_{i,j}(t) & := &\Phi\big(i-V_j(t)+ q(t)\big) + r\big(i-V_j(t) + q(t)\big)[\alpha_V]_j + p(t),
	\\[0.2cm]
		u^-_{i,j}(t) &:=&\Phi\big(i-V_j(t)- q(t)\big) + r\big(i-V_j(t) - q(t)\big)[\alpha_V]_j - p(t) 
	\end{array}
	\end{equation}
are a super- respectively sub-solution of \eqref{eqn:main_results:discrete AC}. %
	\item We have $q(0) = 0$ together with the bound $0\leq q(t) \leq \epsilon$ for all $t \ge 0$.
	\item We have the bound $0\leq p(t) \leq \epsilon $ for all $t \ge 0$,
	together with the initial inequality
		\begin{equation}
	  \label{eq:int:sub:sup:bnd:init:p:zero}
	    p(0) - \norm{r}_{L^\infty} \delta^2  > \nu > 0 .
	\end{equation}
	\item The asymptotic behaviour
	$p (t) = O ( t^{-\frac{3}{2}} ) $
	holds for $t \to \infty$.
\end{enumerate}
In addition, the constants $\nu = \nu(\epsilon)$
satisfy $\lim_{\epsilon\downarrow 0} \nu(\epsilon) = 0$.
\end{prop}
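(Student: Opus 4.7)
The plan is to compute the residual $\mathcal{J}[u^+]_{i,j}(t) = \dot u^+_{i,j} - (\Delta^+ u^+)_{i,j} - g(u^+_{i,j})$ explicitly, exploit two structural cancellations (the horizontal MFDE for $\Phi$ and the defining identity \eqref{eq:sub:def:r} for $r$), and then pick the scalar functions $p(t)$, $q(t)$ to dominate the remaining lower-order errors via a Fife--McLeod dichotomy in the spirit of Lemma~\ref{lemma:analysis_of_u:super_and_sub_solution}. The cubic correction $r(\xi)[\alpha_V]_j$ in the ansatz is specifically engineered to neutralize the $[\alpha_V]_j$-resonance, and the bounds of Corollary~\ref{cor:interface_evolution:cole hopf} supply the decay needed to control everything that is left.

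To set up the residual, write $\xi_j(t) = i - V_j(t) + q(t)$ and, with the shorthand $\Delta_\pm = V_{j\pm 1} - V_j$, Taylor-expand the vertical pieces of $(\Delta^+ u^+)_{i,j}$:
\begin{equation*}
\Phi(\xi_j - \Delta_+) + \Phi(\xi_j - \Delta_-) - 2\Phi(\xi_j) = -(\partial^{(2)}V)_j\,\Phi'(\xi_j) + [\alpha_V]_j\,\Phi''(\xi_j) + O(|\partial V|^3),
\end{equation*}
with analogous expansions for the $r$-terms of the vertical Laplacian that leave a leading-order contribution of $r(\xi_j)\,\partial^{(2)}[\alpha_V]_j + O(|\partial V|^3)$. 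Taylor-expanding the right-hand side of \eqref{eq:sub:sup:lde:V} gives $\dot V_j = c + (\partial^{(2)}V)_j + d[\alpha_V]_j + O(|\partial V|^3)$ (which is exact when $d=0$). Combining these expansions and using the horizontal MFDE $\Phi(\xi+1)+\Phi(\xi-1)-2\Phi(\xi)+g(\Phi(\xi)) = -c\Phi'(\xi)$ to cancel the $O(1)$ $\Phi$-contributions, the coefficient of $\Phi'(\xi_j)$ collapses to $\dot q - d[\alpha_V]_j + O(|\partial V|^3)$, while the complete coefficient of $[\alpha_V]_j$ becomes
\begin{equation*}
-\Phi''(\xi_j) - d\,\Phi'(\xi_j) - \big(\mathcal{L}_{\mathrm{tw}} r\big)(\xi_j) + r'(\xi_j)\,\dot q,
\end{equation*}
which by \eqref{eq:sub:def:r} reduces further to $r'(\xi_j)\,\dot q$. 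Linearizing $g(u^+)$ around $\Phi(\xi_j)$, the residual then takes the clean form
\begin{equation*}
\mathcal{J}[u^+]_{i,j}(t) = \Phi'(\xi_j)\,\dot q(t) + \dot p(t) - g'(\Phi(\xi_j))\,p(t) + R_{i,j}(t),
\end{equation*}
where $R_{i,j}$ collects all $O(|\partial V|^3)$ Taylor tails together with $r'(\xi)\dot q[\alpha_V]_j$, $r(\xi)[\dot\alpha_V]_j$, $r(\xi)\partial^{(2)}[\alpha_V]_j$ and the $O(p^2 + p\,|\partial V|^2)$ remainders from $g$. Using Corollary~\ref{cor:interface_evolution:cole hopf} together with a short time-splitting bootstrap (applying Proposition~\ref{prp:decay_estimates:decay_estimates_of_discerete_heat} to $\partial^+ V$ on $[t/2,t]$ to bound third-order discrete differences of $V$) one then obtains the uniform control $|R_{i,j}(t)| \le C_R\,\min\{\delta^3,(1+t)^{-3/2}\} + O(p^2)$.

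The final step is the Fife--McLeod dichotomy. Pick $s_0>0$ so that $g'(\Phi(\xi)) \le -\nu_0 < 0$ for $|\xi| \ge s_0$, and set $m = \inf_{|\xi|\le s_0+1} \Phi'(\xi) > 0$, which is strictly positive thanks to $c\neq 0$ in (H$\Phi$). Define $p$ and $q$ by
\begin{equation*}
\dot p + \nu_0\, p = 2 C_R\,\min\{\delta^3,(1+t)^{-3/2}\}, \qquad \dot q = K p, \qquad p(0) = p_0, \quad q(0) = 0,
\end{equation*}
with $K = \big(\nu_0 + \sup_\xi |g'(\Phi(\xi))|\big)/m$. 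On $\{|\xi|\ge s_0\}$ we get $\dot p - g'(\Phi)p \ge \dot p + \nu_0 p \ge |R|$, so $\Phi'(\xi)\dot q \ge 0$ closes the estimate. On $\{|\xi|\le s_0\}$ the choice of $K$ gives $m\,\dot q \ge (\nu_0 + \sup|g'(\Phi)|)p$ which, combined with $\dot p + \nu_0 p \ge |R|$, again yields $\mathcal{J}[u^+] \ge 0$; the $O(p^2)$ piece is absorbed by taking $p_0$ small. A standard convolution estimate on $p(t) = p_0 e^{-\nu_0 t} + 2C_R\int_0^t e^{-\nu_0(t-s)}\min\{\delta^3,(1+s)^{-3/2}\}\,ds$ gives $p(t) = O(t^{-3/2})$, proving (iv), and integrability of $p$ forces $q(t) \le K\int_0^\infty p(s)\,ds < \infty$. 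Taking $p_0 = \|r\|_{L^\infty}\delta^2 + \nu$ produces \eqref{eq:int:sub:sup:bnd:init:p:zero}, and tuning $\delta$ and $\nu$ jointly down with $\epsilon$ yields $p_0, K\int_0^\infty p \le \epsilon$, from which (ii)--(iii) and the asymptotic $\nu(\epsilon)\to 0$ follow. The sub-solution $u^-$ is treated by an entirely symmetric argument.

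The main obstacle is the careful bookkeeping for $R_{i,j}(t)$: the terms $r(\xi)[\dot\alpha_V]_j$ and $r(\xi)\partial^{(2)}[\alpha_V]_j$ involve \emph{third}-order discrete differences of $V$, which Proposition~\ref{prp:decay_estimates:decay_estimates_of_discerete_heat} does not directly provide. The time-splitting bootstrap mentioned above is the cleanest workaround, but one must then propagate the exponential factor $e^{\kappa[V^0]_{\mathrm{dev}}}$ from Corollary~\ref{cor:interface_evolution:cole hopf} through both half-intervals. This is harmless because $[V^0]_{\mathrm{dev}} \le R$ is fixed a priori, but it does mean the constant $C_R$ in the remainder bound depends non-trivially on the a priori deviation bound $R$ — a dependence that has to be tracked consistently through the choices of $\nu_0$, $\delta$ and $p_0$.
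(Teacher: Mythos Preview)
Your overall strategy is correct and matches the paper: compute the residual, use the horizontal MFDE and the defining equation~\eqref{eq:sub:def:r} to cancel the dangerous $[\alpha_V]_j$-term, bound the remainder by $C\min\{\text{small},\,t^{-3/2}\}$, and then run a Fife--McLeod dichotomy to choose $p$ and $q$. Your residual computation and the identification of the post-cancellation $\alpha_V$-coefficient as $r'(\xi_j)\dot q$ are right. The paper prescribes $p(t)$ directly (roughly $p\sim m^{-1}K_\epsilon(t)$ with $K_\epsilon(t)=M\min\{\delta_\epsilon,t^{-3/2}\}$), whereas you solve a linear ODE driven by the error; both choices work and give the required properties (ii)--(iv).

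The one point where you overcomplicate things is your ``main obstacle'' paragraph. The terms $r(\xi)\,[\dot\alpha_V]_j$ and $r(\xi)\,\partial^{(2)}[\alpha_V]_j$ do \emph{not} require third-order discrete differences of $V$. The paper uses the elementary factorisation
\[
[\alpha_V]_{j+1}-[\alpha_V]_j \;=\; \tfrac{1}{2}\big(V_{j+2}-V_{j+1}+V_j-V_{j-1}\big)\big(\partial^{(2)}V_{j+1}+\partial^{(2)}V_j\big),
\]
which expresses $\partial^{\pm}\alpha_V$ (and hence $\partial^{(2)}\alpha_V$, by taking one more difference of two such products) as a product of first and second differences of $V$. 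Likewise $\dot\alpha_V = \partial^+V\,\partial^+\dot V + \partial^-V\,\partial^-\dot V$, and differencing the right-hand side of \eqref{eq:sub:sup:lde:V} shows $\partial^\pm\dot V=O(\partial^{(2)}V)$. Thus all of these terms are $O\big(\|\partial^+V\|\cdot\|\partial^{(2)}V\|\big)$, which Corollary~\ref{cor:interface_evolution:cole hopf} already bounds by $C\min\{\delta^2,t^{-3/2}\}$ --- no time-splitting bootstrap is needed. (Incidentally, this also shows your claimed $\delta^3$ in the remainder bound should be $\delta^2$ for these particular terms; the paper simply writes $\min\{\|\partial^+V(0)\|,t^{-3/2}\}$, which is coarser but sufficient.)
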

In the remainder of this section
we set out to establish this result
for $u^+$,
which requires us to understand
the residual $\mathcal{J}[u^+]$
introduced in
\eqref{eqn:omega_limit_points:residual}.
Upon introducing the notation
\begin{equation}
    \xi_{i,j}(t) = i - V_j(t),
\end{equation}
a short computation allows
us to obtain the splitting
\begin{equation}
	\mathcal{J}[u^+] = \mathcal{J}_{\text{glb}} + \mathcal{J}_\Phi + \mathcal{J}_r,
\end{equation}
in which the two expressions
\begin{equation}
   \label{eq:sps:def:j:phi:j:r}
    \begin{array}{lcl}
    [\mathcal{J}_{\Phi}]_{i,j} &=& -\Phi'\left(\xi_{i,j}  + q\right) \dot{V}_j  - \Phi(\xi_{i, j+1} + q)  - \Phi(\xi_{i+1, j} + q)  \\[0.2cm]
	& &\qquad - \Phi(\xi_{i+1, j} + q)  - \Phi(\xi_{i-1, j} + q) + 4\Phi(\xi_{i, j} + q) - g\big(\Phi(\xi_{i,j}+q)\big), \\[0.2cm]
	[\mathcal{J}_{r}]_{i,j} &= & -r'\left(\xi_{i,j} + q\right) \dot{V}_j [\alpha_V]_j \\[0.2cm]
	& & \qquad - r\left(\xi_{i, j+1} + q\right)[\alpha_V]_{j+1} - r\left(\xi_{i, j-1} + q\right)[\alpha_V]_{j-1} 
	\\[0.2cm]
	& & \qquad 
	  - r\left(\xi_{i+1, j} +q\right)[\alpha_V]_j -  r\left(\xi_{i-1, j} +q\right)[\alpha_V]_j  + 4r\left(\xi_{i, j} +q\right)[\alpha_V]_j
	\\[0.2cm]
	& & \qquad +r\left(\xi_{i,j} + q \right)\big(\partial^+ V_j \partial^+ \dot{V}_j +\partial^- V_j \partial^- \dot{V}_j \big)
    \end{array}
\end{equation}
are naturally related
to the defining equations
for $\Phi$, $r$ and $V$,
while 
\begin{equation}
	\mathcal{J}_{\text{glb}} 
	=  \dot{q}\big(\Phi'(\xi + q) + r'(\xi + q)\alpha_V \big)  - g(u^+) + g\big(\Phi(\xi+q)\big)  + \dot{p} 
\end{equation}
reflects the contributions
associated to the dynamics of $p$ and $q$.

In order to control the
quantities \eqref{eq:sps:def:j:phi:j:r},
we introduce the two simplified expressions \begin{equation}
\begin{array}{lcl}
 \mathcal{J}_{\Phi;\mathrm{apx}}
 & = & - \big(d\Phi'(\xi +q) + \Phi''(\xi + q)\big)\alpha_V ,
\\[0.2cm]
\mathcal{J}_{r;\mathrm{apx}}
  & = & \Big(d\Phi'(\xi + q) +  \Phi''(\xi + q) + g'\big(\Phi(\xi+q)\big)r(\xi+q)  \Big)\alpha_V
\end{array}
\end{equation}
which will turn out to be useful
approximations. Indeed,
the two results below
provide bounds for the associated remainder terms
\begin{equation}
    \mathcal{J}_{\Phi} 
     = \mathcal{J}_{\Phi;\mathrm{apx}} + \mathcal{R}_{\Phi},
\qquad
\qquad
\mathcal{J}_{r} 
     = \mathcal{J}_{r;\mathrm{apx}} + \mathcal{R}_{r}.
\end{equation}

\begin{lemma}\label{lemma:super_and_sub_solutions_for_the_discrete_AC2:RPhi}
Fix $R >0$ and suppose that (H$g$) and (H$\Phi$)
are satisfied.
Then there exists a constant $M > 0$
so that for any $V \in C^1([0,\infty);\ell^\infty)$
that satisfies the LDE~\eqref{eq:sub:sup:lde:V}
with 
$[V(0)]_{\mathrm{dev}} < R$
and any pair of  functions 
$p,q \in C([0,\infty);\R)$,
we have the estimate
\begin{equation}
    \norm{\mathcal{R}_\Phi(t)}_{\ell^\infty} \le M \min\left\{\norm{\partial^+ V(0)}_{\ell^\infty}, t^{-\frac{3}{2}}\right\},
    \qquad 
    \qquad
    t > 0.
\end{equation}
\end{lemma}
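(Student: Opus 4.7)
The plan is to rewrite $\mathcal{J}_\Phi$ in a form where the planar-wave MFDE for $\Phi$ and the defining equation for $V$ can be substituted, then isolate the mismatch in a purely cubic remainder. Setting $\eta = \eta_{i,j}(t) = i - V_j(t) + q(t)$ and noting that $\xi_{i\pm 1,j} + q = \eta \pm 1$ and $\xi_{i,j\pm 1} + q = \eta \mp \partial^\pm V_j$ (with a sign convention I will keep track of), the horizontal wave MFDE $-c\Phi'(\eta) = \Phi(\eta+1)+\Phi(\eta-1)-2\Phi(\eta)+g(\Phi(\eta))$ lets me cancel the $\Phi(\eta\pm 1)$ and $g(\Phi(\eta))$ contributions in $\mathcal{J}_\Phi$, leaving
\begin{equation*}
[\mathcal{J}_\Phi]_{i,j} \;=\; \Phi'(\eta)\bigl(c-\dot V_j + \partial^{(2)}V_j\bigr) \;+\; 2\Phi(\eta) - \Phi(\eta - \partial^+V_j) - \Phi(\eta + \partial^-V_j).
\end{equation*}

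Next I would Taylor expand the two remaining $\Phi$-evaluations to second order. Since (H$\Phi$) together with the MFDE gives $\Phi \in C^3$ with $\Phi', \Phi'', \Phi'''$ bounded (bootstrapping smoothness from $c \neq 0$), the standard Lagrange remainder yields
\begin{equation*}
2\Phi(\eta) - \Phi(\eta-\partial^+V_j) - \Phi(\eta+\partial^-V_j) \;=\; \Phi'(\eta)\,\partial^{(2)}V_j \;-\; \Phi''(\eta)\,\alpha_V \;+\; \mathcal{E}_1,
\end{equation*}
with $|\mathcal{E}_1| \le \tfrac{1}{6}\|\Phi'''\|_\infty\bigl(|\partial^+ V_j|^3 + |\partial^- V_j|^3\bigr)$. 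Separately, Taylor expanding the two exponentials in the defining LDE~\eqref{eq:sub:sup:lde:V} (or directly using $\dot V_j = \partial^{(2)}V_j + c$ when $d=0$) produces
\begin{equation*}
c - \dot V_j + \partial^{(2)}V_j \;=\; -d\,\alpha_V \;+\; \mathcal{E}_2,
\end{equation*}
with $|\mathcal{E}_2|$ controlled by a constant (depending on $d$) times $|\partial^+ V_j|^3 + |\partial^- V_j|^3$. Substituting both expansions and reading off $\mathcal{J}_{\Phi;\mathrm{apx}} = -(d\Phi'(\eta)+\Phi''(\eta))\alpha_V$ gives
\begin{equation*}
\mathcal{R}_\Phi \;=\; \Phi'(\eta)\,\mathcal{E}_2 \;+\; \mathcal{E}_1,
\end{equation*}
so that $\|\mathcal{R}_\Phi(t)\|_{\ell^\infty} \le C\bigl(\|\Phi'\|_\infty, \|\Phi'''\|_\infty, d\bigr)\,\|\partial^+ V(t)\|_{\ell^\infty}^{\,3}$, where I also use that $|\partial^- V_j(t)| = |\partial^+ V_{j-1}(t)|$ so both can be replaced by $\|\partial^+ V(t)\|_{\ell^\infty}$.

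The final step invokes Corollary~\ref{cor:interface_evolution:cole hopf} (valid for the $d\neq 0$ case, with a direct analogue for the linear $d=0$ case provided by Proposition~\ref{prp:decay_estimates:decay_estimates_of_discerete_heat}). Under $[V(0)]_{\mathrm{dev}} < R$ this gives a uniform bound $\|\partial^+ V(t)\|_{\ell^\infty} \le M_{\mathrm{ht}} e^{\kappa R}\min\{\|\partial^+ V(0)\|_{\ell^\infty},\, t^{-1/2}\}$. Writing $\|\partial^+ V\|^3 = \|\partial^+ V\| \cdot \|\partial^+ V\|^2$ and estimating the first factor by the constant bound $M_{\mathrm{ht}} e^{\kappa R}\|\partial^+ V(0)\|_{\ell^\infty}$ and the second by $(M_{\mathrm{ht}} e^{\kappa R})^2 t^{-1}$ yields the decay $t^{-3/2}$; alternatively, bounding all three factors by $\|\partial^+ V(0)\|_{\ell^\infty}$ yields the second term in the minimum. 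Combining the two gives the stated estimate with $M = M(R)$.

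The only delicate point I anticipate is the bookkeeping of the cubic remainder in $\mathcal{E}_2$: the exponential Taylor remainder depends on $d$ and on the size of $\partial^\pm V_j$, so one must first fix $\delta$ (uniformly small, depending on $d$ and $R$) to ensure that the bound $\|\partial^+ V(t)\|_{\ell^\infty} \le M_{\mathrm{ht}} e^{\kappa R}\delta$ keeps the exponentials near $1$ and the cubic Taylor remainder genuinely cubic with a universal constant. The rest is routine.
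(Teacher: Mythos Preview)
Your approach is essentially the paper's: use the horizontal MFDE to remove the $\Phi(\eta\pm 1)$ and $g(\Phi(\eta))$ terms, Taylor expand the transverse shifts $\Phi(\eta\mp\partial^\pm V_j)$ to third order, substitute the $V$-equation with the exponentials expanded to third order, then invoke Corollary~\ref{cor:interface_evolution:cole hopf}. Two minor slips are worth fixing. First, after applying only the MFDE the coefficient of $\Phi'(\eta)$ should be $c-\dot V_j$, not $c-\dot V_j+\partial^{(2)}V_j$; the $\partial^{(2)}V_j$ contribution enters only through the subsequent Taylor expansion of $2\Phi(\eta)-\Phi(\eta-\partial^+V_j)-\Phi(\eta+\partial^-V_j)$, and your final identity $\mathcal{R}_\Phi=\Phi'(\eta)\mathcal{E}_2+\mathcal{E}_1$ is correct precisely because that term appears once, not twice. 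Second, your extraction of $t^{-3/2}$ is misphrased: splitting $\|\partial^+V\|^3$ as one factor times two gives $\|\partial^+V(0)\|\cdot t^{-1}$, not $t^{-3/2}$. Simply cube the bound $\|\partial^+V(t)\|\le C t^{-1/2}$ for the $t^{-3/2}$ half, and for the $\|\partial^+V(0)\|$ half use $\|\partial^+V(t)\|^3\le C^3\|\partial^+V(0)\|^3\le (2CR)^2\cdot\|\partial^+V(0)\|$, noting that $[V(0)]_{\mathrm{dev}}<R$ forces $\|\partial^+V(0)\|\le 2R$.
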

\begin{proof}
Expanding $\Phi(\xi_{i, j\pm1} + q)$
to third order
around $\xi_{i, j} + q$
and evaluating
the travelling wave MFDE \eqref{eqn:main_results:MFDE}
at this point, we find
%
\begin{align*}
[\mathcal{J}_{\Phi}]_{i,j} &=  \Phi'\left(\xi_{i,j}  + q\right) \left(- \dot{V}_j + \partial^{(2)}V_j  + c\right)  - \Phi''(\xi_{i, j} + q)[\alpha_V]_j\\  
&\quad  -\dfrac{1}{2}\int_{\xi_{i, j} + q}^{\xi_{i, j+1} + q} \Phi'''(s)(\xi_{i, j+1}+q-s)^2 ds  - \dfrac{1}{2}\int_{\xi_{i, j}+q}^{\xi_{i, j-1}+q} \Phi'''(s)(\xi_{i, j-1}+q-s)^2 ds. 
\end{align*}
Substituting the LDE~\eqref{eq:sub:sup:lde:V}
and expanding $e^{d\partial^+ V}$ 
and $e^{-d\partial^- V}$ 
to third order, we compute
\begin{align*}
[\mathcal{J}_{\Phi}]_{i,j} &=  -d\Phi'\left(\xi_{i,j}  + q\right)[\alpha_V]_j  - \Phi''(\xi_{i, j} + q)[\alpha_V]_j\\  
&\quad  -\dfrac{1}{2}\int_{\xi_{i, j}-q}^{\xi_{i, j+1}-q} \Phi'''(s)(\xi_{i, j+1}+q-s)^2 ds  - \dfrac{1}{2}\int_{\xi_{i, j}+q}^{\xi_{i, j-1}+q} \Phi'''(s)(\xi_{i, j-1}+q-s)^2 ds \\
&\quad - \dfrac{1}{2d}\int_{0}^{d\partial^+V}e^s\left(d\partial^+V - s \right)^2 ds + \dfrac{1}{2d}\int_{-d\partial^-V}^{0}e^s\left(d\partial^-V + s \right)^2ds.
\end{align*}
Since the first line
of this expression
corresponds with 
$\mathcal{J}_{\Phi;\mathrm{apx}}$,
the desired estimate
follows from
Corollary~\ref{cor:interface_evolution:cole hopf}.
\end{proof}

\begin{lemma}\label{lemma:super_and_sub_solutions_for_the_discrete_AC2:Rr}
Fix $R >0$ and suppose that (H$g$) and (H$\Phi$)
are satisfied.
Then there exists a constant $M > 0$
so that for any $V \in C^1([0,\infty);\ell^\infty)$
that satisfies the LDE~\eqref{eq:sub:sup:lde:V}
with 
$[V(0)]_{\mathrm{dev}} < R$
and any pair of  functions 
$p,q \in C([0,\infty);\R)$,
we have the estimate
\begin{equation}\label{eqn:super_and_sub_solutions_for_the_discrete_AC2:residual_r}
    \norm{\mathcal{R}_r(t)}_{\ell^\infty} \le M \min\left\{\norm{\partial^+ V(0)}_{\ell^\infty}, t^{-\frac{3}{2}}\right\},
    \qquad 
    \qquad
    t > 0.
\end{equation}
\end{lemma}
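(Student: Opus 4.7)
The structure mirrors that of Lemma~\ref{lemma:super_and_sub_solutions_for_the_discrete_AC2:RPhi}: the term $\mathcal{J}_{r;\mathrm{apx}}$ has been engineered via the defining equation \eqref{eq:sub:def:r} for $r$ to cancel precisely those contributions of $\mathcal{J}_r$ that are merely quadratic in the discrete $j$-gradients of $V$. The residual $\mathcal{R}_r$ should therefore be cubic in these gradients, so that Corollary~\ref{cor:interface_evolution:cole hopf} will supply the $t^{-3/2}$ bound together with the uniform bound in terms of $\|\partial^+V^0\|_{\ell^\infty}$.

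Writing $\eta_j = i - V_j(t) + q(t)$ and using \eqref{eq:sub:def:r} to rewrite
$$\mathcal{J}_{r;\mathrm{apx}} = -\bigl(cr'(\eta_j) + r(\eta_j+1) + r(\eta_j-1) - 2r(\eta_j)\bigr)[\alpha_V]_j,$$
a direct subtraction from the formula \eqref{eq:sps:def:j:phi:j:r} for $\mathcal{J}_r$ causes the $r(\eta_j\pm 1)[\alpha_V]_j$ terms to cancel and leaves, after collecting,
\begin{equation*}
[\mathcal{R}_r]_{i,j} = -r'(\eta_j)(\dot V_j - c)[\alpha_V]_j + r(\eta_j)\bigl(\partial^+V_j\,\partial^+\dot V_j + \partial^-V_j\,\partial^-\dot V_j\bigr) - \bigl(r(\eta_{j+1})[\alpha_V]_{j+1} + r(\eta_{j-1})[\alpha_V]_{j-1} - 2r(\eta_j)[\alpha_V]_j\bigr).
\end{equation*}

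The next task is to show that each of these three pieces is bounded by a product of the form $\|\partial^+V(t)\|_{\ell^\infty}\cdot(\|\partial^+V(t)\|_{\ell^\infty}^2 + \|\partial^{(2)}V(t)\|_{\ell^\infty})$. Taylor-expanding the right-hand side of \eqref{eq:sub:sup:lde:V} in both cases $d = 0$ and $d \neq 0$ gives $\dot V_j - c = \partial^{(2)}V_j + O(\|\partial^+V\|_{\ell^\infty}^2)$, which handles the first piece after multiplication by $[\alpha_V]_j$. For the second piece, a mean-value argument applied to \eqref{eq:sub:sup:lde:V} yields $|\partial^\pm \dot V_j| \le C\|\partial^{(2)}V\|_{\ell^\infty}$. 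For the third, I would use the identities $\eta_{j+1} - \eta_j = -\partial^+V_j$, $\eta_{j-1} - \eta_j = \partial^-V_j$, together with
$(\partial^+V_{k+1})^2 - (\partial^+V_k)^2 = (\partial^+V_{k+1} + \partial^+V_k)\partial^{(2)}V_{k+1}$ and a first-order Taylor expansion of $r$ around $\eta_j$, to reduce the bracket to a sum of terms of the form $\partial^\pm V \cdot [\alpha_V]$ and $\partial^\pm V \cdot \partial^{(2)}V$. The global boundedness of $r, r', r''$ needed throughout follows from the fact that $r \in H^1(\R)$ solves \eqref{eq:sub:def:r} with exponentially decaying right-hand side.

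Applying Corollary~\ref{cor:interface_evolution:cole hopf} to the hypothesis $[V^0]_{\mathrm{dev}} < R$, both $\|\partial^+V(t)\|_{\ell^\infty}$ and $\|\partial^{(2)}V(t)\|_{\ell^\infty}$ are controlled by a constant multiple of $\|\partial^+V^0\|_{\ell^\infty}$ uniformly in $t$, and by $t^{-1/2}$ respectively $t^{-1}$ for large $t$; combining these with the product structure above yields the min-type estimate \eqref{eqn:super_and_sub_solutions_for_the_discrete_AC2:residual_r}. The main obstacle is the algebraic bookkeeping required to verify that after invoking \eqref{eq:sub:def:r} every surviving term in $\mathcal{R}_r$ genuinely carries one extra power of a discrete $j$-gradient beyond the $[\alpha_V]_j$ that already sits there quadratically; this is ultimately the payoff of the resonance-eliminating choice \eqref{eqn:super_and_sub_sols2:d} of $d$, which is precisely what makes \eqref{eq:sub:def:r} solvable and hence allows $\mathcal{J}_{r;\mathrm{apx}}$ to be well-defined.
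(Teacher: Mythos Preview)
Your proposal is correct and follows essentially the same route as the paper. Your three-term decomposition of $\mathcal{R}_r$ is algebraically equivalent to the paper's: the paper simply splits your third bracket further by writing $r(\eta_{j\pm 1}) = r(\eta_j) + \int_{\eta_j}^{\eta_{j\pm 1}} r'(s)\,ds$, which separates it into the integral terms $[\alpha_V]_{j\pm 1}\int r'$ and the discrete Laplacian $r(\eta_j)\big([\alpha_V]_{j+1}+[\alpha_V]_{j-1}-2[\alpha_V]_j\big)$, then applies the same difference identity for $\alpha_V$ that you quote. One small remark: only $r, r' \in L^\infty$ are actually needed (no $r''$), and these follow directly from $r\in H^1$ together with the MFDE \eqref{eq:sub:def:r} and $c\neq 0$.
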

\begin{proof}
Expanding $r(\xi_{i,j+1} + q)$ and $r(\xi_{i,j-1} + q)$ around $\xi_{i,j} + q$
and evaluating \eqref{eq:sub:def:r}
at this point,
we find
\begin{equation}
    \begin{array}{lcl}
[\mathcal{J}_{r}]_{i,j} &= &
r'\left(\xi_{i,j} + q\right)\big(- \dot{V}_j + c \big)[\alpha_V]_j
- [\alpha_V]_{j+1} \int_{\xi_{i,j} +q}^{\xi_{i,j+1} + q} r'(s)ds 
-  [\alpha_V]_{j-1}\int_{\xi_{i,j} +q}^{\xi_{i,j-1} + q} r'(s)ds
\\[0.2cm]
& & \qquad  - r\left(\xi_{i,j} + q \right)
\Big( [\alpha_V]_{j+1} + [\alpha_V]_{j-1} - 2 [\alpha_V]_j \Big) \\[0.2cm]
& &\qquad
+r\left(\xi_{i,j} + q \right)\big(\partial^+ V_j \partial^+ \dot{V}_j +\partial^- V_j \partial^- \dot{V}_j 
 \big)  
 \\[0.2cm]
 & & \qquad 
 + \Big(d\Phi'(\xi_{i,j} + q) + \Phi''(\xi_{i,j} + q) +  g'\big(\Phi(\xi_{i,j}+q)\big)r(\xi_{i,j}+q) \Big)[\alpha_V]_j .
 \end{array}
\end{equation}
 In order to estimate the terms in the second line above, 
 we compute
\begin{equation}\label{eqn:super_and_sub_solutions2:dy(dy2)}
	\begin{aligned}
	{[}\alpha_V]_{j+1} - [\alpha_V]_{j} &= \dfrac{1}{2}\left(V_{j+2} - V_{j+1} + V_{j} - V_{j-1}\right)\left(V_{j+2} - V_{j+1} - V_{j} + V_{j-1}\right) \\
	&= \dfrac{1}{2}\left(V_{j+2} - V_{j+1} + V_{j} - V_{j-1}\right)\left(\partial^{(2)}V_{j+1} + \partial^{(2)}V_{j} \right),
	\end{aligned}
\end{equation}
which can be thought of as a
discrete analogue of the identity
$\partial_y(\partial_{y}^2) = 2\partial_y\partial_{yy} $. 
Substituting~\eqref{eq:sub:sup:lde:V}
and expanding $e^{d\partial^+ V}$ and $e^{-d\partial^- V}$ up to second order,
we can again apply
Corollary~\ref{cor:interface_evolution:cole hopf}
to obtain the desired estimate.
%
%
\end{proof}

We are now ready to introduce
our final approximation
\begin{equation}
    \label{eq:sub:splt:j}
\mathcal{J} = \mathcal{J}_{\mathrm{apx}} + \mathcal{R}
\end{equation}
by writing
\begin{equation}\label{eqn:super_and_sub_solutions_for_the_discrete_AC2:Japx}
\begin{array}{lcl}
    \mathcal{J}_{\mathrm{apx}} 
     & = & 	  \dot{q}\big(\Phi'(\xi + q)  + r'(\xi + q) \alpha_V \big) + \dot{p}	\\[0.2cm]
			& & \qquad
			-p \int_0^1 g'\Big(\Phi(\xi + q) + \tau\big(p + r(\xi +q)\alpha_V\big)\Big) d\tau 
	\\[0.2cm]
	& & \qquad
	  -p r(\xi+q) \alpha_V \int_0^1 \int_0^\tau g''\Big(\Phi(\xi + q)
	   + s\big(p + r(\xi + q)\alpha_V\big) \Big)
	   d s \, d\tau.
\end{array}
\end{equation}
We show below that
the residual $\mathcal{R}$ satisfies
the same bound as $\mathcal{R}_{\Phi}$
and $\mathcal{R}_{r}$.
This will allow us to construct
appropriate functions $p$ and $q$
and establish
Proposition \ref{prp:construction_of_super_and_subsolutions:supersolution}.

\begin{lemma}\label{lemma:super_and_sub_solutions_for_the_discrete_AC2:R}
Fix $R >0$ and suppose that (H$g$) and (H$\Phi$)
are satisfied.
Then there exists a constant $M > 0$
so that for any $V \in C^1([0,\infty);\ell^\infty)$
that satisfies the LDE~\eqref{eq:sub:sup:lde:V}
with 
$[V(0)]_{\mathrm{dev}} < R$
and any pair of  functions 
$p,q \in C([0,\infty);\R)$,
we have the estimate
\begin{equation}\label{eqn:super_and_sub_solutions_for_the_discrete_AC2:residual}
    \norm{\mathcal{R}(t)}_{\ell^\infty} \le M \min\left\{\norm{\partial^+ V(0)}_{\ell^\infty}, t^{-\frac{3}{2}}\right\},
    \qquad \qquad
    t > 0.
\end{equation}
\end{lemma}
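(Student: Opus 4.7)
The plan is to start from the definition $\mathcal{R} = \mathcal{J}[u^+] - \mathcal{J}_{\mathrm{apx}}$ and expand $\mathcal{J}[u^+] = \mathcal{J}_{\mathrm{glb}} + \mathcal{J}_{\Phi} + \mathcal{J}_{r}$ using the splittings $\mathcal{J}_{\Phi} = \mathcal{J}_{\Phi;\mathrm{apx}} + \mathcal{R}_{\Phi}$ and $\mathcal{J}_{r} = \mathcal{J}_{r;\mathrm{apx}} + \mathcal{R}_{r}$ that we already control by Lemmas~\ref{lemma:super_and_sub_solutions_for_the_discrete_AC2:RPhi} and~\ref{lemma:super_and_sub_solutions_for_the_discrete_AC2:Rr}. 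The $\dot q(\Phi' + r'\alpha_V)$ and $\dot p$ contributions in $\mathcal{J}_{\mathrm{glb}}$ cancel exactly against the first two terms of $\mathcal{J}_{\mathrm{apx}}$, and the key observation is that $\mathcal{J}_{\Phi;\mathrm{apx}} + \mathcal{J}_{r;\mathrm{apx}} = g'(\Phi(\xi+q)) r(\xi+q) \alpha_V$ since the $d\Phi' + \Phi''$ pieces cancel. This leaves me with a residual of the form
\begin{equation*}
\mathcal{R} = -g(u^+) + g(\Phi(\xi+q)) + g'(\Phi(\xi+q)) r(\xi+q)\alpha_V + p\!\int_0^1 \!g'(\cdots)\,d\tau + p r\alpha_V \!\int_0^1\!\!\int_0^\tau \!g''(\cdots)\,ds\,d\tau + \mathcal{R}_{\Phi} + \mathcal{R}_{r} .
\end{equation*}

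Next, I would use the first-order integral Taylor expansion $g(u^+) - g(\Phi(\xi+q)) = (p + r(\xi+q)\alpha_V)\int_0^1 g'(\Phi(\xi+q) + \tau(p+r\alpha_V))\,d\tau$ to collapse the first three terms to $-r(\xi+q)\alpha_V\int_0^1 [g'(\Phi+\tau(p+r\alpha_V)) - g'(\Phi)]\,d\tau$, and then apply a second integral Taylor expansion on this difference to rewrite it as $-r\alpha_V(p+r\alpha_V)\int_0^1\!\int_0^\tau g''(\cdots)\,ds\,d\tau$. Adding the remaining $p r \alpha_V$ term from $\mathcal{J}_{\mathrm{apx}}$ produces an exact cancellation of the part linear in $p$, leaving
\begin{equation*}
\mathcal{R} = -\,r(\xi+q)^2\, \alpha_V^2 \int_0^1\!\!\int_0^\tau g''\bigl(\Phi(\xi+q) + s(p + r(\xi+q)\alpha_V)\bigr) \, ds\,d\tau  + \mathcal{R}_\Phi + \mathcal{R}_r.
\end{equation*}
Thus the entire content of $\mathcal{J}_{\mathrm{apx}}$ is to neutralize both the linear-in-$(p+r\alpha_V)$ and the mixed $p r \alpha_V$ contributions in a Taylor expansion of $-g(u^+)+g(\Phi)$, and the only non-trivial new term left is quadratic in $\alpha_V$.

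To close, I would bound the new term in the obvious way: $\|r\|_{L^\infty(\R)} < \infty$ by Sobolev embedding $H^1(\R)\hookrightarrow L^\infty(\R)$ (together with the fact that $r$ solves \eqref{eq:sub:def:r} with an exponentially localized right-hand side), $g''$ is bounded on the fixed compact range where its arguments live (using that $\Phi$, $p$, $q$ and $r\alpha_V$ are uniformly bounded for $\delta,\epsilon$ small), and $|\alpha_V| \le \|\partial^+ V\|_{\ell^\infty}^2$ since $\partial^- V_j = (\partial^+ V)_{j-1}$. Corollary~\ref{cor:interface_evolution:cole hopf} then gives $\|\partial^+ V(t)\|_{\ell^\infty} \le C\min\{\|\partial^+ V(0)\|_{\ell^\infty},\, t^{-1/2}\}$ with $C$ depending only on $R$ through $\kappa[V^0]_{\mathrm{dev}} \le \kappa R$, so $\alpha_V^2 \le C'\min\{\|\partial^+ V(0)\|_{\ell^\infty}^4,\, t^{-2}\}$. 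Using $\|\partial^+ V(0)\|_{\ell^\infty} < \delta \le 1$ yields $\|\partial^+ V(0)\|_{\ell^\infty}^4 \le \|\partial^+ V(0)\|_{\ell^\infty}$, and $t^{-2} \le t^{-3/2}$ is automatic once $t \ge 1$; checking the two regimes $t<1$ and $t\ge 1$ separately confirms $\alpha_V^2 \le C''\min\{\|\partial^+V(0)\|_{\ell^\infty},\, t^{-3/2}\}$. Adding the bounds for $\mathcal{R}_\Phi$ and $\mathcal{R}_r$ from Lemmas~\ref{lemma:super_and_sub_solutions_for_the_discrete_AC2:RPhi}--\ref{lemma:super_and_sub_solutions_for_the_discrete_AC2:Rr} completes the estimate. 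The main subtlety is purely algebraic, namely the double cancellation that reduces the $g$-expansion to the clean quadratic $r^2\alpha_V^2$ remainder; once that is seen, the analytic estimate is straightforward and in fact gives a stronger $t^{-2}$ decay for the new term, which we only need to record as $t^{-3/2}$ to match the earlier lemmas.
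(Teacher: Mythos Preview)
Your proposal is correct and follows essentially the same route as the paper: both derive the identity
\[
\mathcal{R} \;=\; \mathcal{R}_\Phi + \mathcal{R}_r \;-\; r(\xi+q)^2\,\alpha_V^2 \int_0^1\!\!\int_0^\tau g''\big(\Phi(\xi+q)+s(p+r(\xi+q)\alpha_V)\big)\,ds\,d\tau
\]
via the same two-step Taylor expansion of $g(u^+)-g(\Phi(\xi+q))$, and then bound the new $\alpha_V^2$ term using Corollary~\ref{cor:interface_evolution:cole hopf}. One small remark: you justify $\|\partial^+V(0)\|^4 \le \|\partial^+V(0)\|$ by invoking $\|\partial^+V(0)\| < \delta \le 1$, but the lemma as stated carries no such smallness hypothesis; the correct observation is that $[V(0)]_{\mathrm{dev}}<R$ already forces $\|\partial^+V(0)\|_{\ell^\infty}\le 2R$, which is all that is needed to absorb the extra powers into the constant $M=M(R)$.
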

\begin{proof}
Writing
\begin{equation}
    \mathcal{J}_{\mathrm{apx};I}
    = \mathcal{J}_{\mathrm{glb}}  + \mathcal{J}_{\Phi;\mathrm{apx}} 
 + \mathcal{J}_{r;\mathrm{apx}},
\end{equation}
together with
\begin{equation}
    \mathcal{I}_g = g\big(\Phi(\xi+q)\big)- g(u^+) + g'\big(\Phi(\xi+q)\big)r(\xi+q)\alpha_V  ,
\end{equation}
we have 
\begin{equation}
\mathcal{J}_{\mathrm{apx};I} 
=\dot{q}\big(\Phi'(\xi + q) + r'(\xi + q)\alpha_V \big) + \dot{p} + \mathcal{I}_g .
\end{equation}
Upon rewriting $\mathcal{I}_g $ in the form
\begin{equation}
    \begin{array}{lcl}
        \mathcal{I}_g   & = & -\big(p + r(\xi + q)\alpha_V\big)\int_0^1 g'\Big(\Phi(\xi + q) + \tau\big(p + r(\xi+q)\alpha_V\big)\Big)d\tau
        \\[0.2cm]
        & & \qquad 
          + g'\big(\Phi(\xi + q)\big)r(\xi+q)\alpha_V \\ [0.2cm] 
         & = & -p \int_0^1 g'\Big(\Phi(\xi + q) + \tau\big(p + r(\xi + q)\alpha_V\big)\Big) d\tau   \\ [0.2cm]
         &  & \quad -\alpha_V r(\xi+q)\big(p+r(\xi+q)\alpha_V\big)\int_0^1 \int_0^\tau g''\Big(\Phi + s\big(p+r(\xi+q)\alpha_V\big)\Big)ds \, d\tau ,
    \end{array}
\end{equation}
we obtain
the splitting \eqref{eq:sub:splt:j}
with the residual
\begin{equation}
\begin{array}{lcl}
\mathcal{R} &= &\mathcal{R}_\Phi + \mathcal{R}_r - r(\xi+q)^2\alpha_V^2\int_0^1 \int_0^\tau g''\Big(\Phi + s\big(p+r(\xi + q)\alpha_V\big)\Big)ds \, d\tau . \\[0.2cm]
\end{array}
\end{equation}
As before, the desired bound now
follows from Corollary
\ref{cor:interface_evolution:cole hopf}.
\end{proof}

\begin{proof}[Proof of Proposition \ref{prp:construction_of_super_and_subsolutions:supersolution}] 
Without loss, we assume
that the constant $M$
from Lemma~\ref{lemma:super_and_sub_solutions_for_the_discrete_AC2:R}
satisfies 
\begin{equation}
    M\geq\max\{1,  \norm{r}_{L^\infty}, \norm{r'}_{L^\infty},  \sup_{-1 \le s \le 2} |g'(s)|, \sup_{-1 \le s \le 2} |g''(s)|,
    M_{\mathrm{ht}} e^{\kappa R}\}.
\end{equation}
We first pick a constant $m \in ( 0, 1] $ in such a way that
$$-g'(s) \geq 2m > 0, \text{ for } s\in [-\epsilon, 3\epsilon] \cup [1-2\epsilon, 1+2\epsilon],$$
reducing $\epsilon$ if needed. Next, we define the positive constants
$$C_{\epsilon} := \max\{1, \dfrac{2m + 
M
}{\min_{\Phi\in [\epsilon, 1-\epsilon]}\Phi'}\}, \quad \qquad \delta_{\epsilon} := \dfrac{\epsilon^3m^3}{6^3 M^3 C_{\epsilon}^3},
\qquad \qquad
\nu_{\epsilon} :=
\dfrac{\epsilon^3 m^2 }{2 \cdot 6^3 M^2C_{\epsilon}^3} = \dfrac{M\delta_{\epsilon}}{2m}
$$
together with the positive function
\begin{equation}
    \begin{array}{lcl}
     K_{\epsilon}:[0,\infty)\to \R,
& & t \mapsto  M \min\left\{\delta_{\epsilon}, t^{-\frac{3}{2}} \right\}.
    \end{array}
\end{equation}

We now choose functions $p, q\in C^\infty\left[0, \infty\right)$ that satisfy 
$$K_{\epsilon}(t)\leq m p(t) \leq 2K_{\epsilon}(t), \quad m|\dot{p}(t)|\leq 2\tilde{K}_{\epsilon}(t), \quad q(t) = C_{\epsilon}\int_0^t p(s) ds, $$ where $\tilde{K}_{\epsilon}$ is defined by
\begin{equation}
    \tilde{K}_{\epsilon}(t) = \begin{cases}
    0, \quad &t\leq \delta_{\epsilon}^{-\frac{2}{3}} \\
    \frac{3}{2}M t^{-\frac{5}{2}},  &t> \delta_{\epsilon}^{-\frac{2}{3}},
    \end{cases}
\end{equation}
which we recognize as the absolute value of weak derivative of the function $K_{\epsilon}$. 
The functions $p$ and $q$ are clearly nonnegative, with
$$p(0) - \norm{r}_{L^\infty} \delta_{\epsilon}^2\geq \dfrac{M\delta_{\epsilon}}{m} - M\delta_{\epsilon}^2 
= \dfrac{M\delta_{\epsilon}}{m}\left(1 -\delta_{\epsilon} m \right) \geq \dfrac{M\delta_{\epsilon}}{m}\left(1-\dfrac{1}{6^3} \right) > \nu_{\epsilon}.$$
Furthermore, we have $p(t) \leq \dfrac{2M\delta_{\epsilon}}{m}\leq \epsilon$,
together with
$$q(t)\leq \dfrac{2C_{\epsilon}}{m}\int_0^\infty K_{\epsilon}(s) ds \leq \dfrac{6C_{\epsilon}}{m} M \delta_{\epsilon}^{\frac{1}{3}} = \epsilon.$$ In particular, items (ii)-(iv) are satisfied.
In addition,
using $|\alpha_V| \le M^2 \delta_{\epsilon}^2$
we obtain the a-priori bound
\begin{equation}
\label{eq:sub:sup:a:priori:bnd}
     |p(t) + r\big(\xi_{ij}(t) + q(t) \big) [\alpha_V]_j(t) |
    \le \frac{2M \delta_{\epsilon}}{m} + M^3 \delta_{\epsilon}^2
    \le \epsilon
\end{equation}
for all $t \ge 0$ and $(i,j) \in \Z^2$.

Turning to (i),
Lemma~\ref{lemma:super_and_sub_solutions_for_the_discrete_AC2:R}
implies that it suffices
to show that the approximate
residual \eqref{eqn:super_and_sub_solutions_for_the_discrete_AC2:Japx}
satisfies
$\mathcal{J}_{\mathrm{apx}} \geq K_{\epsilon}(t)$.
Introducing the notation
\begin{equation}
   \mathcal{I}_A = \frac{\dot{q}}{p} \Phi'(\xi + q),
   \qquad \qquad
   \mathcal{I}_B = \frac{\dot{q}}{p} r'(\xi + q) \alpha_V,
   \qquad \qquad
    \mathcal{I}_C = \frac{\dot{p}}{p},
\end{equation}
together with the integral expressions
\begin{equation}
    \begin{array}{lcl}
     \mathcal{I}_D & = & 
     -\int_0^1 g'\Big(\Phi(\xi + q) + \tau\big(p + r(\xi +q)\alpha_V\big)\Big) d\tau 
     \\[0.2cm]
     \mathcal{I}_E & = & 
     -r(\xi +q) \alpha_V
         \int_0^1 \int_0^\tau g''\Big(\Phi(\xi + q)
	   + s\big(p + r(\xi + q)\alpha_V\big) \Big)
	   d s \, d\tau,
	\end{array}
\end{equation}
we see that
\begin{equation}
    \mathcal{J}_{\mathrm{apx}}
      =  
     p \big( \mathcal{I}_A 
      + \mathcal{I}_B + \mathcal{I}_C + \mathcal{I}_D + \mathcal{I}_E \big).
\end{equation}

Using the observation 
\begin{equation}
    \frac{|\dot{p}(t)|}{p(t)} 
    \le \begin{cases} 0, \quad &t\leq \delta_{\epsilon}^{-\frac{2}{3}} \\
    3t^{-1}, &t>\delta_{\epsilon}^{-\frac{2}{3}},
    \end{cases}
  \end{equation}
we obtain the global bounds
\begin{equation}
     \begin{array}{lclcl}
   |\mathcal{I}_B|
       & \leq & C_{\epsilon}  M^3 \delta_{\epsilon}^2
       & \leq & \dfrac{m}{3} ,
     \\[0.2cm]
     |\mathcal{I}_C|  &\leq& 3\delta_{\epsilon}^{\frac{2}{3}} &\leq& \dfrac{m}{3} ,
     \\[0.2cm]
     \big|\mathcal{I}_E \big|
	    & \le & M^2 \delta_{\epsilon} ^2
       & \le & \dfrac{m}{3}.
     \end{array}
 \end{equation}

 When  $\Phi(\xi +q) \in (0,\epsilon] \cup[1-\epsilon, 1) $, we
 may use \eqref{eq:sub:sup:a:priori:bnd} to obtain the lower bound
 \begin{equation}
    \mathcal{I}_D
     \ge  2m.
 \end{equation}
 Together with $\mathcal{I}_A \ge 0$,
 this allows us
 to conclude
 \begin{equation}
 \label{eq:sub:sup:fin:bnd:j:apx}
     \mathcal{J}_{\mathrm{apx}}  \ge
    mp(t) \geq K_{\epsilon}(t).
 \end{equation}
 On the other hand,
 when 
 $\Phi(\xi + q) \in [ \epsilon, 1-\epsilon]$, we 
 have 
 \begin{equation}
     | \mathcal{I}_A | 
     \ge
     C_{\epsilon} \dfrac{2m + M}{C_{\epsilon}}
     \ge 2m + M,
     \qquad \qquad
     | \mathcal{I}_D | 
     \le M,
 \end{equation}
 which again yields \eqref{eq:sub:sup:fin:bnd:j:apx}.
 \end{proof}

\section{Phase approximation}
\label{sec:asymp}

In this section we discuss the relation between the interface $\gamma$ defined in \eqref{eqn:main_results:def_of_gamma}, solutions of the
discrete mean curvature flow
\begin{equation}\label{eqn:approximation_of_MCF:discrete_mean_curvature_eqn}
\dot{\Gamma} = \dfrac{\partial^{(2)}\Gamma}{\beta_\Gamma^2} + 2d \beta_\Gamma + c-2d, \end{equation}
and solutions of the 
(nonlinear) heat LDE
\eqref{eq:sub:sup:lde:V},
both with $d = - \langle \Phi'', \psi \rangle$.
In particular, we establish 
Theorem \ref{thm:main_results:approx_of_gamma} in two main steps. 

The first step is to show that $\gamma$ can be well-approximated by $V$ after allowing sufficient time
for the interface to 'flatten'.
This is achieved using
the sub- and super-solutions
constructed in {\S}\ref{sec:sub:sup}.

\begin{prop}\label{prop:interface_evolution:approx_of_gamma_by_V}
Assume that (H$g$), (H$\Phi$) and (H0) all hold and let $u$ be a solution of \eqref{eqn:main_results:discrete AC} with the initial condition \eqref{eqn:main_results:initial condition}. Then for every $\epsilon >0$, there exists a constant $\tau_\epsilon > 0$ so that 
for any $\tau \ge \tau_{\epsilon}$, the solution $V$ of the
LDE \eqref{eq:sub:sup:lde:V}
with the initial value 
$V(0) = \gamma(\tau)$
satisfies 
\begin{equation}
\label{eq:phase:gamma:apx:by:v}
    \norm{\gamma(t) - V(t-\tau)}_{\ell^\infty} \leq \epsilon, 
    \qquad \qquad 
    t\geq \tau. 
\end{equation}
\end{prop}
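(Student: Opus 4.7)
The plan is to apply the super- and sub-solutions constructed in Proposition~\ref{prp:construction_of_super_and_subsolutions:supersolution} to sandwich the solution $u(\cdot,t)$ between two profiles centered near $V(t-\tau)$, and then read off the phase estimate~\eqref{eq:phase:gamma:apx:by:v} from this sandwich.

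First I set up the parameters. Lemma~\ref{lemma:phase_gamma:gamma-ct_bounded} yields the uniform bound $\|\gamma(\tau)-c\tau\|_{\ell^\infty}\le M$ for $\tau\ge T_*$, which gives the deviation estimate $[\gamma(\tau)]_{\mathrm{dev}}\le 2M$. Given $\epsilon>0$, I fix an auxiliary $\tilde\epsilon\in(0,\epsilon)$ (to be calibrated at the end of the argument) and invoke Proposition~\ref{prp:construction_of_super_and_subsolutions:supersolution} with $R=2M+1$ and this $\tilde\epsilon$ to obtain constants $\delta,\nu>0$ together with functions $p,q$. Combining Theorem~\ref{thm:main results:gamma_approximates_u} with Proposition~\ref{prp:zero_level_surface:smallnes of derivative of gamma} then lets me pick $\tau_\epsilon\ge T_*$ so large that
\begin{equation*}
\|\partial^+\gamma(\tau)\|_{\ell^\infty}<\delta \quad\text{and}\quad \sup_{(i,j)\in\Z^2}|u_{i,j}(\tau)-\Phi(i-\gamma_j(\tau))|<\nu
\end{equation*}
hold for all $\tau\ge\tau_\epsilon$.

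Given such a $\tau$, I let $V$ denote the solution of~\eqref{eq:sub:sup:lde:V} with initial condition $V(0)=\gamma(\tau)$ and form $u^\pm$ via~\eqref{eq:sps:def:u:plus:minus}. Since $q(0)=0$ and $[\alpha_V]_j(0)=[\alpha_{\gamma(\tau)}]_j\le\delta^2$, I find
\begin{equation*}
u^+_{i,j}(0)-u_{i,j}(\tau)\ge p(0)-\|r\|_{L^\infty}\delta^2-\nu>0,
\end{equation*}
where strict positivity comes from item (iii) of Proposition~\ref{prp:construction_of_super_and_subsolutions:supersolution}. An analogous lower bound for $u^-$ holds by symmetry. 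The comparison principle for~\eqref{eqn:main_results:discrete AC} then propagates these inequalities forward in time, yielding
\begin{equation*}
u^-_{i,j}(t-\tau)\le u_{i,j}(t)\le u^+_{i,j}(t-\tau),\qquad t\ge\tau,\; (i,j)\in\Z^2.
\end{equation*}

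To conclude, I evaluate this sandwich at $i=i_*(j,t)$ for each fixed $j$ and $t\ge\tau$. Using the lower bound and the identity $\gamma_j(t)=i_*-\Phi^{-1}(u_{i_*,j}(t))$, together with the fact that $u_{i_*,j}(t)\in(\Phi(-2),1/2]$ lies in a compact subset of $(0,1)$, I obtain
\begin{equation*}
\Phi(i_*-V_j(t-\tau)-q(t-\tau))\le u_{i_*,j}(t)+\rho^-(t-\tau),
\end{equation*}
where $\rho^-(t-\tau)=p(t-\tau)+\|r\|_{L^\infty}[\alpha_V]_j(t-\tau)$ is uniformly small by Corollary~\ref{cor:interface_evolution:cole hopf}. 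Since $\Phi^{-1}$ is Lipschitz on a neighbourhood of $[\Phi(-2),1/2]$, applying it to both sides and rearranging produces
\begin{equation*}
\gamma_j(t)-V_j(t-\tau)\le q(t-\tau)+L\rho^-(t-\tau)\le \tilde\epsilon+L\rho^-.
\end{equation*}
A mirror argument starting from the upper sandwich $u_{i_*,j}(t)\le u^+_{i_*,j}(t-\tau)$ gives the matching lower bound for $\gamma_j(t)-V_j(t-\tau)$. The main obstacle is calibrating $\tilde\epsilon$ and $\delta$ so that the residual $\rho^\pm$ stays small enough for $\Phi^{-1}$ to remain uniformly Lipschitz throughout; this is precisely what motivates including the correction term $r\,[\alpha_V]_j$ inside $u^\pm$, which neutralizes the slowly decaying contributions of the discrete Laplacian in the associated residual.
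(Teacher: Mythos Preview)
Your proposal is correct and follows the same overall strategy as the paper: invoke Proposition~\ref{prp:construction_of_super_and_subsolutions:supersolution} with $R$ coming from Lemma~\ref{lemma:phase_gamma:gamma-ct_bounded}, use Theorem~\ref{thm:main results:gamma_approximates_u} and Proposition~\ref{prp:zero_level_surface:smallnes of derivative of gamma} to verify the initial ordering $u^-(0)\le u(\tau)\le u^+(0)$, propagate the sandwich by the comparison principle, and then read off the phase estimate. The only real difference lies in the last step. The paper evaluates the resulting inequality $\Phi(i-\gamma_j(t))\le \Phi(i-V_j(t-\tau))+C\epsilon$ at $i=\lceil V_j(t-\tau)\rceil$ and argues by contradiction, forcing both arguments of $\Phi$ into the window $[0,3]$ where $\Phi'$ is bounded below by a fixed $\sigma>0$. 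You instead evaluate at $i=i_*(j,t)$ and use Lipschitz continuity of $\Phi^{-1}$ on a neighbourhood of $[\Phi(-2),1/2]$. Both extractions are valid; the paper's avoids tracking a Lipschitz constant, yours is more direct.

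One remark: your closing sentence slightly mislocates the issue. The Lipschitz constant $L$ of $\Phi^{-1}$ on, say, $[\Phi(-3),\Phi(1)]$ is a fixed number independent of $\tilde\epsilon$; once $\rho^\pm\le \tilde\epsilon+\|r\|_{L^\infty}M_{\mathrm{ht}}^2e^{2\kappa R}\delta^2$ is small enough to keep $u_{i_*,j}(t)\pm\rho^\pm$ inside that interval, the Lipschitz bound is automatic. The actual calibration is simply to choose $\tilde\epsilon$ so small that $(1+L)\tilde\epsilon + L\|r\|_{L^\infty}M_{\mathrm{ht}}^2e^{2\kappa R}\delta(\tilde\epsilon)^2<\epsilon$, which is possible since $\delta(\tilde\epsilon)\to 0$. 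Stating this explicitly would close the argument cleanly.
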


The second step compares the 
dynamics of \eqref{eqn:approximation_of_MCF:discrete_mean_curvature_eqn}
and \eqref{eq:sub:sup:lde:V}
 and shows
that the solutions $V$ and $\Gamma$
closely track each other.
This is achieved by
developing a local comparison principle for \eqref{eqn:approximation_of_MCF:discrete_mean_curvature_eqn} that is valid
as long as $\Gamma$ is sufficiently flat.

\begin{prop}\label{thm:analysis of mean curvature eqn:Approximation of a mean curvature flow}
    Fix $R>0$. Then for any $\epsilon>0$
    there exists  $\delta>0$ so that 
    any pair $\Gamma$, $V\in C^1\big([0, \infty), \ell^\infty(\Z, \R)\big)$ that satisfies the assumptions
    \begin{enumerate}[(a)]
        \item $\Gamma$ satisfies the mean curvature LDE 
        \eqref{eqn:approximation_of_MCF:discrete_mean_curvature_eqn} on $(0, \infty)\times \Z$;
        \item $V$ satisfies the heat LDE \eqref{eq:sub:sup:lde:V} on $(0, \infty)\times \Z$;
        \item $\Gamma(0) = V(0)$,
         with $\norm{\partial^+ V(0)}_{\ell^\infty} \leq \delta$
         and $[V(0)]_{\mathrm{dev}} \le R$,
        \end{enumerate}
        must in fact have
         \begin{equation}
            \norm{\Gamma(t) - V(t)}_{\ell^\infty} \leq \epsilon, \quad\text{for all }t\geq 0. 
        \end{equation}
\end{prop}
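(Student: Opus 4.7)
The approach is to expand both \eqref{eqn:approximation_of_MCF:discrete_mean_curvature_eqn} and \eqref{eq:sub:sup:lde:V} around the flat state and recognize them as two perturbations of a common `reduced' dynamics, then use a comparison argument to bound the difference $W := V - \Gamma$. Taylor expanding the exponentials in \eqref{eq:sub:sup:lde:V} to third order and expanding $\beta_\Gamma$ and $1/\beta_\Gamma^2$ around $1$ in \eqref{eqn:approximation_of_MCF:discrete_mean_curvature_eqn}, both equations take the common form $\dot X_j = \partial^{(2)} X_j + d\,[\alpha_X]_j + c + \mathcal{E}[X]_j$, with pointwise residuals $|\mathcal{E}[X]_j| \le C(|\partial^+ X|^3 + |\partial^- X|^3 + |\alpha_X|\cdot|\partial^{(2)} X|)$. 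Corollary~\ref{cor:interface_evolution:cole hopf} immediately gives $\|\partial^+ V(t)\|_{\ell^\infty} + \sqrt{t}\,\|\partial^{(2)} V(t)\|_{\ell^\infty} \le M_\star\min\{\delta,\,t^{-1/2}\}$, hence $\|\mathcal{E}[V](t)\|_{\ell^\infty} \le C\min\{\delta^3,\,t^{-3/2}\}$, which is integrable in $t$ with $L^1$-norm of order $\delta$.

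Next I subtract the two reduced equations. Using the factorization
\begin{equation*}
[\alpha_V]_j - [\alpha_\Gamma]_j = \tfrac{1}{2}(\partial^+V + \partial^+\Gamma)_j\,[\partial^+ W]_j + \tfrac{1}{2}(\partial^-V + \partial^-\Gamma)_j\,[\partial^- W]_j,
\end{equation*}
the difference satisfies
\begin{equation*}
\dot W_j = (1+A_j)\,W_{j+1} - (2 + A_j + B_j)\,W_j + (1+B_j)\,W_{j-1} + \mathcal{R}_j,
\end{equation*}
with $\mathcal{R} = \mathcal{E}[V] - \mathcal{E}[\Gamma]$ and coefficients $|A_j|,|B_j| \le C(\|\partial^+V\|_{\ell^\infty} + \|\partial^+\Gamma\|_{\ell^\infty})$. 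As long as these are below $1/2$ the off-diagonal entries are positive, and the comparison principle applied against the spatially constant super-solution $\bar W(t) := \int_0^t \|\mathcal{R}(s)\|_{\ell^\infty}\,ds$ yields $\|W(t)\|_{\ell^\infty} \le \int_0^t \|\mathcal{R}(s)\|_{\ell^\infty}\,ds$.

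It remains to control $\mathcal{E}[\Gamma]$, which requires that $\partial^+\Gamma$ also decay like $t^{-1/2}$. I would establish this via a bootstrap: set $T^* := \sup\{T \ge 0 : \|\partial^+\Gamma(t)\|_{\ell^\infty} \le 2M_\star\min\{\delta,\,t^{-1/2}\} \text{ for all } t \in [0,T]\}$. On $[0,T^*]$ the residual $\mathcal{R}$ is dominated pointwise by $C\min\{\delta^3,\,s^{-3/2}\}$, so $\|W(t)\|_{\ell^\infty} \le C'\delta$. To upgrade this to first-difference control I apply Proposition~\ref{prp:decay_estimates:decay_estimates_of_discerete_heat} to the slightly perturbed linear heat LDE above, obtaining $\|\partial^+W(t)\|_{\ell^\infty} \le M_\star\min\{\delta,\,t^{-1/2}\}$ for $\delta$ small. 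Since $\partial^+\Gamma = \partial^+V - \partial^+W$, this closes the bootstrap with strict inequality, proving $T^* = \infty$; choosing $\delta$ so that $C'\delta < \epsilon$ completes the proof.

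The main obstacle is precisely this final bootstrap. The mean-curvature LDE admits no Cole--Hopf transformation, so the $t^{-1/2}$ decay of $\partial^+\Gamma$ cannot be obtained directly by adapting {\S}\ref{sec:dht}; it must instead be deduced from the linearized equation for $W$, whose coefficients in turn depend on $\partial^+\Gamma$ itself. Breaking this circularity is what forces the smallness condition on $\delta$ in the statement, and tracking the heat-kernel estimates through the variable-coefficient equation for $W$ while keeping explicit quantitative dependence on $\|\partial^+\Gamma\|_{\ell^\infty}$ will be the most technically delicate part of the argument.
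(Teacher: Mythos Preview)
Your approach differs substantially from the paper's, and the obstacle you identify at the end is precisely the reason the paper takes a different route.

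The paper does \emph{not} subtract the two evolutions. Instead it computes the residual $\mathcal{J}_{\mathrm{dc}}[V]$ of $V$ when substituted into the mean-curvature LDE~\eqref{eqn:approximation_of_MCF:discrete_mean_curvature_eqn}. This residual involves only $V$, so Corollary~\ref{cor:interface_evolution:cole hopf} immediately yields $\|\mathcal{J}_{\mathrm{dc}}[V](t)\|_{\ell^\infty}\le M\min\{\delta,\,t^{-3/2}\}$. Choosing a smooth $q(t)$ dominating this and setting $p(t)=\int_0^t q(s)\,ds\le\epsilon$, spatial homogeneity gives $\mathcal{J}_{\mathrm{dc}}[V+p]=\mathcal{J}_{\mathrm{dc}}[V]+\dot p\ge 0$, so $V+p$ is a supersolution of \eqref{eqn:approximation_of_MCF:discrete_mean_curvature_eqn}. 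The comparison principle of Lemma~\ref{lemma:analysis of mean curvature eqn: comparison principle} then yields $\Gamma\le V+p\le V+\epsilon$ directly. The point is that this comparison principle requires only a \emph{uniform} bound $\|\partial^+\Gamma(t)\|_{\ell^\infty}\le\delta$, which the paper obtains independently via a separate comparison argument for $\Upsilon=\partial^+\Gamma$ (Lemma~\ref{lemma:approximation_of_MCF:comp_principle_for_gradient_equation} and Corollary~\ref{lemma:approximation_of_MCF:ex_and_uq_of_MCF}): since constants solve the gradient LDE~\eqref{eqn:analysis_of_discrete_mean_curvature_flow:delta of mean curvatre flow}, one gets $\|\partial^+\Gamma(t)\|_{\ell^\infty}\le\|\partial^+\Gamma(0)\|_{\ell^\infty}$ for all $t$.

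Your subtraction strategy forces you to control $\mathcal{E}[\Gamma]$, and since a uniform bound $\|\partial^+\Gamma\|\le\delta$ only gives $\|\mathcal{E}[\Gamma]\|\le C\delta^3$ (non-integrable on $[0,\infty)$), you genuinely need $t^{-1/2}$ decay of $\partial^+\Gamma$. Your bootstrap to obtain this relies on applying Proposition~\ref{prp:decay_estimates:decay_estimates_of_discerete_heat} to the variable-coefficient equation for $W$, but that proposition is proved via explicit Bessel-function identities for the \emph{constant-coefficient} discrete heat kernel and does not transfer automatically. Establishing $\|\partial^+ S(t,s)\|_{\ell^\infty\to\ell^\infty}\le C(t-s)^{-1/2}$ for the variable-coefficient semigroup would require a separate argument not available in the paper. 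This is a real gap, and the paper's one-sided comparison avoids it entirely by never needing $\partial^+\Gamma$ to decay.
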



\subsection{Approximating $\gamma$ by $V$}

The main idea for our proof of
Proposition \ref{prop:interface_evolution:approx_of_gamma_by_V} is to
compare the information on $\gamma$
resulting from the asymptotic description \eqref{eqn:main_results:front}
with the phase information that can be derived
from \eqref{eq:sps:def:u:plus:minus}. In particular,
we capture the solution $u$
between the sub- and super-solutions
constructed in {\S}\ref{sec:sub:sup}
and exploit the monotonicity properties of $\Phi$.

\begin{lemma}
\label{lem:asymp:comp:V:gamma:prlm}
Assume that (H$g$), (H$\Phi$) and (H0) all hold and let $u$ be a solution of \eqref{eqn:main_results:discrete AC} with the initial condition \eqref{eqn:main_results:initial condition}. Then for every $\epsilon >0$, there exists a constant $\tau_\epsilon > 0$ so that 
for any $\tau \ge \tau_{\epsilon}$ the solution $V$ of the
LDE \eqref{eq:sub:sup:lde:V}
with the initial value 
$V(0) = \gamma(\tau)$
satisfies 
\begin{equation}
\label{eqn:asymp:phi:bnds}
    \Phi\big(i-\gamma_j(t) \big)  
    \le \Phi\big(i -V_j(t-\tau)
    \big) + \epsilon \\
\end{equation}    
for all $(i,j) \in \Z^2$ and $t \ge \tau$.
\end{lemma}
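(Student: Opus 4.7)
The plan is to sandwich $u$ between the super-solution $u^+$ of Proposition~\ref{prp:construction_of_super_and_subsolutions:supersolution} (initialised with $V(0)=\gamma(\tau)$) and to invoke the asymptotic phase identification \eqref{eqn:main_results:front} of Theorem~\ref{thm:main results:gamma_approximates_u} both at the starting time $\tau$ and at the evaluation time $t$. Only the upper bound \eqref{eqn:asymp:phi:bnds} is needed, so only the super-solution will enter the argument; a parallel argument with $u^-$ would eventually supply the matching lower bound.

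First, Lemma~\ref{lemma:phase_gamma:gamma-ct_bounded} yields a constant $M$ with $[\gamma(t)]_{\mathrm{dev}}\le 2M$ for all $t\ge T_*$, so I fix $R=2M+1$. For an auxiliary accuracy $\epsilon'>0$ (to be tuned in terms of $\epsilon$ and $\|\Phi'\|_{L^\infty}$), Proposition~\ref{prp:construction_of_super_and_subsolutions:supersolution} supplies the threshold $\delta$, the constant $\nu$ from \eqref{eq:int:sub:sup:bnd:init:p:zero}, and the correction functions $p,q$. Using Proposition~\ref{prp:zero_level_surface:smallnes of derivative of gamma} (flatness of $\gamma$) together with Theorem~\ref{thm:main results:gamma_approximates_u}, I choose $\tau_\epsilon\ge T_*$ large enough that
\[
\|\partial^+\gamma(\tau)\|_{\ell^\infty}<\delta,
\qquad
\sup_{(i,j)\in\Z^2}\bigl|u_{i,j}(\tau)-\Phi\bigl(i-\gamma_j(\tau)\bigr)\bigr|<\nu
\]
for every $\tau\ge \tau_\epsilon$; the hypotheses of Proposition~\ref{prp:construction_of_super_and_subsolutions:supersolution} are thereby met.

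The initial comparison $u_{i,j}(\tau)\le u^+_{i,j}(0)$ follows from the observation that $q(0)=0$ and $[\alpha_V]_j(0)\le \delta^2$, so that
\[
u^+_{i,j}(0)\,=\,\Phi\bigl(i-\gamma_j(\tau)\bigr)+r\bigl(i-\gamma_j(\tau)\bigr)[\alpha_V]_j(0)+p(0)\,\ge\,\Phi\bigl(i-\gamma_j(\tau)\bigr)+\nu
\]
by \eqref{eq:int:sub:sup:bnd:init:p:zero}, which dominates the $\nu$-deficit of $u(\tau)$ supplied by Theorem~\ref{thm:main results:gamma_approximates_u}. Propagating this initial ordering via the comparison principle for \eqref{eqn:main_results:discrete AC} and writing $s=t-\tau$ gives
\[
u_{i,j}(t)\,\le\,\Phi\bigl(i-V_j(s)+q(s)\bigr)+r\bigl(i-V_j(s)+q(s)\bigr)[\alpha_V]_j(s)+p(s).
\]
Pairing this with the lower bound $\Phi(i-\gamma_j(t))\le u_{i,j}(t)+\nu$ from a second application of Theorem~\ref{thm:main results:gamma_approximates_u} (valid since $t\ge\tau_\epsilon$), invoking the Lipschitz estimate $\Phi(x+q(s))\le \Phi(x)+\|\Phi'\|_{L^\infty}\epsilon'$ (permitted because $0\le q(s)\le\epsilon'$ by item (ii) of Proposition~\ref{prp:construction_of_super_and_subsolutions:supersolution}), and absorbing the residual via $\|r\|_{L^\infty}\delta^2+p(s)\le\|r\|_{L^\infty}\delta^2+\epsilon'$, one arrives at a bound of the form
\[
\Phi\bigl(i-\gamma_j(t)\bigr)\,\le\,\Phi\bigl(i-V_j(t-\tau)\bigr)+\|\Phi'\|_{L^\infty}\epsilon'+\|r\|_{L^\infty}\delta^2+\epsilon'+\nu.
\]
Shrinking $\epsilon'$ (and hence $\delta=\delta(\epsilon')$ and $\nu=\nu(\epsilon')\downarrow 0$) forces the perturbation on the right to drop below $\epsilon$, which establishes \eqref{eqn:asymp:phi:bnds}.

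The main obstacle is the bookkeeping of the constants: the baseline amplitude $p(0)$ of the super-solution must simultaneously exceed the Theorem~\ref{thm:main results:gamma_approximates_u} defect at time $\tau$ and remain small enough, together with $q$ and the residual $r[\alpha_V]$, not to inflate the right-hand side of \eqref{eqn:asymp:phi:bnds} beyond $\epsilon$. It is precisely the joint smallness $p(t),q(t)\le\epsilon'$ together with the crucial scaling $\nu(\epsilon')\to 0$ as $\epsilon'\to 0$ built into Proposition~\ref{prp:construction_of_super_and_subsolutions:supersolution} that allows this needle to be threaded.
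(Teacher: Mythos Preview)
Your proof is correct and follows essentially the same approach as the paper: fix $R$ via Lemma~\ref{lemma:phase_gamma:gamma-ct_bounded}, apply Proposition~\ref{prp:construction_of_super_and_subsolutions:supersolution} with an auxiliary accuracy parameter, use Theorem~\ref{thm:main results:gamma_approximates_u} and Proposition~\ref{prp:zero_level_surface:smallnes of derivative of gamma} to initialise the comparison $u(\tau)\le u^+(0)$, propagate by the comparison principle, and then absorb the $q$-, $r\alpha_V$-, $p$- and $\nu$-terms into $\epsilon$. The only cosmetic gap is that your bound $\|r\|_{L^\infty}\delta^2$ for the residual at times $s>0$ tacitly uses Corollary~\ref{cor:interface_evolution:cole hopf} (which introduces an $R$-dependent multiplicative constant in front of $\delta^2$), but this is harmless since you shrink $\epsilon'$ and hence $\delta$ at the end.
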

\begin{proof}
Without loss, we assume that $0 < \epsilon < 1$. Recalling the constant $\nu_{\epsilon}$
from Proposition~\ref{prp:construction_of_super_and_subsolutions:supersolution}, 
 Theorem~\ref{thm:main results:gamma_approximates_u}
 and Lemma 
 \ref{lemma:phase_gamma:gamma-ct_bounded}
 allow us to find 
 $\tau_\epsilon>0$ and $R >0$ for which
 the bounds
\begin{equation}\label{eqn:asymp:comp:init:bnd:u:i:j}
    \left| u_{i,j}(t) - \Phi\big(i-\gamma_j(t)\big) \right| \leq \frac{1}{2}\nu_{\epsilon}, 
    \qquad \qquad
    [\gamma(t)]_{\mathrm{dev}} \le R
\end{equation}
hold for all $(i,j)\in \Z^2$
and $t \ge \tau_{\epsilon}$. 
We now recall the constant $\delta>0$ and the functions $p$ and $q$ 
that arise by
applying Proposition~\ref{prp:construction_of_super_and_subsolutions:supersolution} with our pair $(\epsilon, R)$. Decreasing $\delta$ if necessary, we may assume that $\epsilon > \delta$. After possibly increasing
$\tau_{\epsilon}$,
we may use Proposition~\ref{prp:zero_level_surface:smallnes of derivative of gamma} to obtain 
\begin{equation}
\norm{\partial^+ \gamma (\tau)}_{\ell^\infty}\leq \delta.
\end{equation}

We now recall the super-solution
$u^+$ defined in \eqref{eq:sps:def:u:plus:minus}.
Our choice for $V$ together with 
the
bounds \eqref{eq:int:sub:sup:bnd:init:p:zero}
and \eqref{eqn:asymp:comp:init:bnd:u:i:j}
imply that
\begin{equation}
    u_{i,j}(\tau) \leq
     \Phi\big(i-\gamma_j(\tau)\big) + r\big(i-\gamma_j(\tau)\big)
      [\alpha_{\gamma}]_j(\tau) + p(0)
     =u^+_{i,j}(0) 
     .
\end{equation}    
In particular,
the comparison principle
for
LDE~\eqref{eqn:main_results:discrete AC} together with
the bound \eqref{eqn:asymp:comp:init:bnd:u:i:j}
implies that
\begin{equation}
 \Phi\big(i - \gamma_j(t) \big)
 \le 
 u_{i,j}(t) + \frac{1}{2} \nu(\epsilon) \leq
    u^+_{i,j}(t - \tau)
    + \frac{1}{2} \nu_{\epsilon},
    \qquad 
    \qquad
    t \ge \tau.
\end{equation}
Corollary
\ref{cor:interface_evolution:cole hopf} allows us to obtain the uniform bound
$\norm{\alpha_{V}}_{\ell^\infty} \le C_1 \delta^2
\le C_1 \epsilon^2$ for some $C_1 > 0$. Recalling
items (ii) and (iii)
of Proposition \ref{prp:construction_of_super_and_subsolutions:supersolution},
we obtain the bound
\begin{equation}
    u^+_{i,j}(t)
    - \Phi\big( i - V_j(t) \big)
    \le C_2 \epsilon,
    \qquad
    \qquad
    t \ge 0
\end{equation}
for some $C_2 > 0$. In particular, we see that
\begin{equation}
    \Phi\big( i  - \gamma_j(t) \big)
     \le \Phi\big( i - V_j(t - \tau) \big)
     + \frac{1}{2} \nu_{\epsilon}
     + C_2 \epsilon,
     \qquad \qquad
     t \ge \tau,
\end{equation}
from which the statement can
readily be obtained.
\end{proof}

\begin{proof}[Proof of Proposition \ref{prop:interface_evolution:approx_of_gamma_by_V}]
For convenience, we write
\begin{equation}
    \sigma = \min_{\xi \in [0,3]} \Phi'(\xi) > 0.
\end{equation}
Recalling the constant $\tau_{\epsilon} > 0$ defined
in Lemma \ref{lem:asymp:comp:V:gamma:prlm}
and picking $\tau \ge \tau_{\epsilon}$, we set out to show that
\begin{equation}
    V_j(t - \tau)  - \gamma_j(t) \leq \sigma^{-1} \epsilon,
    \qquad \qquad
    t \ge \tau.
\end{equation}
Arguing by contradiction,
we plug $i= \lceil V_j(t - \tau)  \rceil$ into
\eqref{eqn:asymp:phi:bnds}
and obtain
\begin{equation}
\label{eq:asymp:ineqs:phi:phi:one}
    \Phi\big( \lceil V_j(t - \tau) \rceil - \gamma_j(t)\big) 
    \leq 
    \Phi\big( \lceil V_j(t - \tau) \rceil - V_j(t - \tau)\big) + \epsilon
    \leq \Phi(1) + \epsilon \leq \Phi(2),
\end{equation}
possibly after restricting the size of $\epsilon > 0$.
This implies $\sigma^{-1} \epsilon < V_j(t - \tau) - \gamma_j(t) \le 2$. The first inequality
in \eqref{eq:asymp:ineqs:phi:phi:one} now yields the contradiction
\begin{equation}
\begin{array}{lcl}
   \epsilon
& < & 
    \sigma  
      \big( V_j(t - \tau) - \gamma_j(t) \big)
\\[0.2cm]
    & \le &
      \Phi\big( \lceil V_j(t - \tau) \rceil - \gamma_j(t)\big) 
      -\Phi\big( \lceil V_j(t - \tau) \rceil - V_j(t - \tau)\big)
\\[0.2cm]
    & \le & \epsilon,
\end{array}
\end{equation}
since both arguments of $\Phi$
are contained in the interval $[0,3]$.
An $O(\epsilon)$ lower bound
for $V_j(t - \tau) - \gamma_j(t)$
can be obtained in a similar fashion, which allows the proof to be completed.
\end{proof}

\subsection{Tracking $V$ with $\Gamma$}

In this subsection we set out to establish Proposition \ref{thm:analysis of mean curvature eqn:Approximation of a mean curvature flow}. The main idea
to establish this approximation result is to apply a local comparison principle
to the discrete curvature  LDE~\eqref{eqn:approximation_of_MCF:discrete_mean_curvature_eqn}.
To this end, 
we define the residual
\begin{equation}\label{eqn:approximation_of_MCF:residual_J}
\mathcal{J}_{\mathrm{dc}}[\Gamma] = \dot{\Gamma} - \dfrac{\partial^{(2)}\Gamma}{\beta_\Gamma^2} - 2d \beta_\Gamma - c + 2d
\end{equation}
for any $\Gamma \in C^1\big([0,\infty);\ell^\infty(\Z) \big)$. As usual, we
say that $\Gamma$ is a super- or sub-solution 
for \eqref{eqn:approximation_of_MCF:discrete_mean_curvature_eqn} if the inequality
$\mathcal{J}_{\mathrm{dc}}[\Gamma]_j(t) \geq 0$
respectively 
$\mathcal{J}_{\mathrm{dc}}[\Gamma]_j(t) \leq 0$
holds for all $j \in \Z$
and $t \ge 0$.

\begin{lemma}[Comparison principle]\label{lemma:analysis of mean curvature eqn: comparison principle}
Pick a sufficiently small $\delta>0$ and consider a pair of functions  $\Gamma^-$, $\Gamma^+\in C^1\big([0, \infty), \ell^\infty(\Z, \R)\big)$ 
that satisfy the  following assumptions: 
\begin{enumerate} [(a)]
    \item $\Gamma^-$ is a subsolution of the LDE \eqref{eqn:approximation_of_MCF:discrete_mean_curvature_eqn};
    \item $\Gamma^+$ is a supersolution of the LDE \eqref{eqn:approximation_of_MCF:discrete_mean_curvature_eqn};
    \item The inequalities $\norm{\partial^+ \Gamma^- (t)}_{\ell^\infty}\leq \delta,$
    and $\norm{\partial^+ \Gamma^+ (t)}_{\ell^\infty}\leq \delta$
    hold for every $t\geq 0$;
    \item $\Gamma^-_{j}(0) \leq \Gamma^+_{j}(0)$ holds for every $j\in \Z$.
\end{enumerate}
 Then for every $j \in \Z$ and $t \ge 0$ we have the bound 
 $$ \Gamma^-_{j}(t)\leq \Gamma^+_{j}(t) .
 $$
\end{lemma}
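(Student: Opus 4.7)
The strategy is to apply a discrete maximum principle to a suitable perturbation of $w := \Gamma^+ - \Gamma^-$. Subtracting the residual inequalities $\mathcal{J}_{\mathrm{dc}}[\Gamma^+]_j \ge 0 \ge \mathcal{J}_{\mathrm{dc}}[\Gamma^-]_j$ and using the algebraic identity
\begin{equation*}
    \beta_{\Gamma^+,j}^{2} - \beta_{\Gamma^-,j}^{2}
    = \tfrac{1}{2}(\partial^+\Gamma^+ + \partial^+\Gamma^-)_j\,\partial^+ w_j
      + \tfrac{1}{2}(\partial^-\Gamma^+ + \partial^-\Gamma^-)_j\,\partial^- w_j,
\end{equation*}
together with $\beta_{\Gamma^+} - \beta_{\Gamma^-} = (\beta_{\Gamma^+}^2 - \beta_{\Gamma^-}^2)/(\beta_{\Gamma^+} + \beta_{\Gamma^-})$, the difference of the nonlinear terms rewrites linearly in $\partial^{(2)} w$, $\partial^+ w$ and $\partial^- w$. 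This produces a pointwise inequality
\begin{equation*}
    \dot w_j(t) \;\ge\; A_j(t)\,\partial^{(2)} w_j(t) + B^+_j(t)\,\partial^+ w_j(t) + B^-_j(t)\,\partial^- w_j(t),
\end{equation*}
with coefficient $A_j = 1/\beta_{\Gamma^+,j}^{2} \in [(1+\delta^2)^{-1}, 1]$ and, using $|\partial^\pm \Gamma^\pm| \le \delta$ together with the induced bound $|\partial^{(2)}\Gamma^-| \le 2\delta$, coefficients $|B^\pm_j| \le C\delta$ for a constant $C$ depending only on $d$. Taking $\delta$ sufficiently small arranges that $A_j > \max\{|B^+_j|,|B^-_j|\}$ uniformly in $(j,t)$.

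For each $\epsilon > 0$ and $T > 0$, I then introduce the cushion $w^\epsilon_j(t) := w_j(t) + \epsilon e^{Kt}$ for a fixed $K > 0$. Since $\epsilon e^{Kt}$ is $j$-independent, it is annihilated by all spatial differences, so $w^\epsilon$ satisfies the same inequality with an extra $+\epsilon K e^{Kt}$ on the right. The aim is to show $\inf_{j\in\Z} w^\epsilon_j(t) > 0$ for every $t \in [0,T]$; sending $\epsilon \downarrow 0$ and then $T \to \infty$ completes the proof.

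Suppose for contradiction that $t_0 \in (0,T]$ is the infimum of times at which $\inf_j w^\epsilon_j \le 0$. Uniform Lipschitz-in-$t$ bounds on $\Gamma^\pm$ (coming from~\eqref{eqn:approximation_of_MCF:discrete_mean_curvature_eqn} combined with the gradient hypothesis) make $t \mapsto \inf_j w^\epsilon_j(t)$ continuous, so $\inf_j w^\epsilon_j(t_0) = 0$ while $w^\epsilon_j(t) \ge 0$ on $\Z \times [0,t_0]$. Choose a sequence $j_k \in \Z$ with $w^\epsilon_{j_k}(t_0) \to 0$ and consider the centred shifts $\Gamma^{\pm,k}_j(t) := \Gamma^\pm_{j+j_k}(t) - \Gamma^+_{j_k}(0)$. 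The gradient bound forces $|\Gamma^{\pm,k}_j(t) - \Gamma^{\pm,k}_0(t)| \le \delta|j|$, while the $\ell^\infty$-continuity of $\dot\Gamma^\pm$ furnishes a uniform time-Lipschitz bound on $[0,T]$. Arzela--Ascoli together with a diagonal extraction yields limits $\tilde\Gamma^\pm \in C^1([0,T];\ell^\infty(\Z))$ that, by passing the integrated inequalities to the limit, remain sub- and supersolutions of~\eqref{eqn:approximation_of_MCF:discrete_mean_curvature_eqn} with $\|\partial^+ \tilde\Gamma^\pm\|_{\ell^\infty} \le \delta$. The difference $\tilde w^\epsilon := \tilde\Gamma^+ - \tilde\Gamma^- + \epsilon e^{Kt}$ therefore satisfies $\tilde w^\epsilon_j(t) \ge 0$ on $\Z \times [0,t_0]$ with $\tilde w^\epsilon_0(t_0) = 0$.

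At this space-time minimum one has $\dot{\tilde w}^\epsilon_0(t_0) \le 0$, $\partial^+ \tilde w^\epsilon_0(t_0) \ge 0$, $\partial^- \tilde w^\epsilon_0(t_0) \le 0$, and $\partial^{(2)} \tilde w^\epsilon_0(t_0) \ge 0$. Substituting into the inequality and rewriting $A_0 \partial^{(2)} \tilde w^\epsilon_0 = A_0 \partial^+ \tilde w^\epsilon_0 - A_0 \partial^- \tilde w^\epsilon_0$ yields
\begin{equation*}
    0 \;\ge\; \dot{\tilde w}^\epsilon_0(t_0) \;\ge\; (A_0 + B^+_0)\,\partial^+ \tilde w^\epsilon_0(t_0) + (B^-_0 - A_0)\,\partial^- \tilde w^\epsilon_0(t_0) + \epsilon K e^{K t_0}.
\end{equation*}
The smallness condition $A_0 > |B^\pm_0|$ renders the first two terms on the right nonnegative, producing the contradiction $0 \ge \epsilon K e^{K t_0} > 0$. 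The main technical hurdle is the algebraic manipulation in the first step, which must deliver a coefficient for $\partial^{(2)} w$ that is bounded away from zero while keeping the coefficients of $\partial^\pm w$ quantitatively small; the uniform gradient bound (c) is precisely what enables this, and it is also what allows the shifted family $\Gamma^{\pm,k}$ to be extracted to a limit on an infinite lattice.
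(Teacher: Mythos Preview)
Your argument is correct and offers a genuinely different route from the paper's proof. Both proofs begin identically by subtracting the residuals to obtain a differential inequality
\[
\dot W_j \;\ge\; F_j\,(W_{j+1}-W_j) \;+\; G_j\,(W_{j-1}-W_j)
\]
with $F_j,G_j > \tfrac12$ once $\delta$ is small; your decomposition into $A_j\partial^{(2)}w + B^+_j\partial^+w + B^-_j\partial^-w$ with $A_j>|B^\pm_j|$ is just a rearrangement of this.

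The divergence is in how one handles the fact that the infimum of $w^\epsilon$ need not be attained on the unbounded lattice. The paper avoids compactness entirely: it introduces a \emph{spatially varying} barrier $W^-_j(t;\alpha) = -\epsilon\bigl(\tfrac34+\alpha z_j\bigr)e^{2Kt}$, where $z_j$ rises from $1$ at $j=0$ to $3$ at infinity, and tunes $\alpha$ to the smallest value for which $W\ge W^-$. Because $W^-_j \to -\epsilon(\tfrac34+3\alpha^*)e^{2Kt}$ as $|j|\to\infty$ (strictly below the infimum of $W$), the touching must occur at some finite $(j_0,t_0)$, and the contradiction follows by plugging into the inequality. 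You instead translate by the near-minimising indices $j_k$ and extract an Arzel\`a--Ascoli limit $\tilde\Gamma^\pm$ in which the minimum \emph{is} attained at $j=0$; the maximum principle then applies directly. Your method is the standard ``translate and pass to the limit'' device from PDE theory, while the paper's barrier is more self-contained and uses no compactness.

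One point worth tightening: your claim that the diagonal extraction delivers $\tilde\Gamma^\pm \in C^1\big([0,T];\ell^\infty(\Z)\big)$ requires not just boundedness of $\dot\Gamma^\pm$ but equicontinuity of $t\mapsto\dot\Gamma^\pm_j(t)$ uniformly in $j$. This does follow, but from the hypothesis $\Gamma^\pm\in C^1\big([0,\infty);\ell^\infty(\Z)\big)$ rather than from the equation (which for mere sub/supersolutions gives only a one-sided bound on $\dot\Gamma^\pm$). You allude to this with ``the $\ell^\infty$-continuity of $\dot\Gamma^\pm$'', which is the right ingredient; just make explicit that uniform continuity of $\dot\Gamma^\pm$ in $\ell^\infty$ is what provides equicontinuity for the derivative family and hence $C^1$ convergence at each $j$.
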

\begin{proof}
    Define the function $W:[0, \infty)\to \ell^\infty(\Z)$ by $W(t)= \Gamma^+(t) - \Gamma^-(t)$. Then $W$ satisfies the differential inequality
    $$\dot{W}_j \geq  (W_{j+1} - W_j) F(\Gamma^+, \Gamma^-)_j + (W_{j-1}-W_j) G(\Gamma^+,\Gamma^-)_j,
    $$
    in which the functions $F$ and $G$ are defined by
    \begin{align*}
        F(\Gamma^-,\Gamma^+) &= \dfrac{1}{\beta_{\Gamma^+}^2} + \left(\partial^+ \Gamma^- + \partial^+ \Gamma^+\right)\left(\dfrac{d}{ \beta_{\Gamma^-} + \beta_{\Gamma^+}} - \dfrac{\partial^{(2)} \Gamma^-}{2\beta_{\Gamma^-}^2\beta_{\Gamma^+}^2 }\right), \\
        G(\Gamma^-,\Gamma^+) &= \dfrac{1}{\beta_{\Gamma^+}^2} - \left(\partial^- \Gamma^- + \partial^- \Gamma^+\right)\left(\dfrac{d}{\beta_{\Gamma^-} + \beta_{\Gamma^+}} - \dfrac{\partial^{(2)} \Gamma^-}{2\beta_{\Gamma^-}^2 \beta_{\Gamma^+}^2 }\right) .
    \end{align*}
Pick $\delta > 0$ in such a way that $\dfrac{1}{1+\delta^2} >  \delta\left(|d| +2\delta\right) + \frac{1}{2} $. Notice that 
this choice and assumption \textit{(c)} imply that both $\beta_{\Gamma^-}$ and   $\beta_{\Gamma^+}$  are bounded by $\sqrt{1+\delta^2}$, which in turn  implies
\begin{equation}
F(\Gamma^-,\Gamma^+)> \frac{1}{2},
\qquad \qquad  G(\Gamma^-,\Gamma^+)>\frac{1}{2}.
\end{equation}

In order to prove that $W\geq 0$, we assume to the contrary that there exist  $j_*\in Z$ and $t_*$ such that $W_{j_*}(t_*) = -\vartheta < 0$. Picking $\epsilon > 0$ and $K>0$ in such a way that $\vartheta = \epsilon e^{2Kt_*}$, we can define
\begin{equation}
    T:=\sup\left\{t\geq 0: W_{j}(t) > -\epsilon e^{2Kt} \ \text{for all}\ j\in \Z\right\}.
\end{equation}
Since $W\in C^1\big(\left[0, \infty\right); \ell^\infty(\Z)\big)$ we must have $T\leq t^*$ and 
\begin{equation}
\inf_{j\in \Z} W_j(T) = -\epsilon e^{2KT}.
\end{equation}
Without loss of generality we may assume that $W_{0}(T) < -\frac{7}{8}\epsilon e^{2KT}$. 

We now choose a sequence $z\in \ell^\infty(\Z, \R)$ with the properties \begin{equation}
    z_0 = 1, \quad \lim_{|j|\to\infty} z_j = 3, \quad  1\leq z_j\leq 3 \quad \text {and} \quad   \norm{\partial ^+ z}_{\ell^\infty}\leq 1.
\end{equation} 
This allows us to define the function \begin{equation}
    W^{-}_j(t;\alpha) = -\epsilon(\dfrac{3}{4} +\alpha z_{j})e^{2Kt},
\end{equation}
in which $\alpha > 0$ is a parameter. We denote by $\alpha^* \in (\frac{1}{8}, \frac{1}{4}]$ the minimal value of $\alpha$ for which $W_j(t) \geq W_j^-(t;\alpha)$ for all $(j,t)\in \Z\times [0,T]$. 
In view of the limiting behaviour
\begin{equation}
    \lim_{|j|\to\infty} W_j^-(t;\alpha^*) = -\epsilon [\dfrac{3}{4} + 3\alpha^*]e^{2Kt} < -\dfrac{9}{8}\epsilon e^{2Kt},
\end{equation}
the minimality of $\alpha^*$
allows us to
conclude that there exist $j_0\in \Z$ and $0<t_0\leq T$ such that $W_{j_0}(t_0) = W^-_{j_0}(t_0;\alpha^*)$. 
As a consequence, we must have
\begin{equation}
    \dot{W}_{j_0}(t_0) \leq \dot{W}^-_{j_0}(t_0;\alpha^*). 
\end{equation}

In addition, the definitions of $ W^-$ and $\alpha^*$ directly yield
the inequalities
\begin{align}
    W_{j_0+1}(t_0) - W_{j_0}(t_0)  \geq  W^-_{j_0+1}(t_0;\alpha^*) - W^-_{j_0}(t_0;\alpha^*), \\
    W_{j_0-1}(t_0) - W_{j_0}(t_0)  \geq  W^-_{j_0-1}(t_0;\alpha^*) - W^-_{j_0}(t_0;\alpha^*).
\end{align}
Together with the bounds
\begin{equation}
    \dot{W}^-_{j_0}(t_0;\alpha^*)
    \le -\frac{7}{4}\epsilon K e^{2K t_0},
    \qquad \qquad
    \norm{ \partial^\pm 
    W^-(t_0;\alpha^*) }_{\ell^\infty}
    \le  \epsilon e^{2 K t_0},
\end{equation}
this allows us to compute
\begin{equation}
\begin{aligned}
-\frac{7}{4}\epsilon K e^{2Kt_0} 
&\geq (\partial^+ W)_{j_0}(t_0) F(U, V)_{j_0} - (\partial^- W)_{j_0}(t_0) G(U,V)_{j_0} \\
&\geq \frac{1}{2} (\partial^+ W^-)_{j_0}(t_0;\alpha^*)  - \frac{1}{2} (\partial^- W^-)_{j_0}(t_0;\alpha^*) \\ 
&\geq -\epsilon e^{2Kt_0}.
\end{aligned}
\end{equation}
This leads to the desired contradiction upon choosing $K>1$ to be sufficiently large.
\end{proof}

In order to use the comparison principle above to compare
$V$ and $\Gamma$,
we need to obtain uniform bounds on the discrete derivatives
$\partial^+V$ and $\partial^+\Gamma$.
Corollary~\ref{cor:interface_evolution:cole hopf} provides such bounds
for $\partial^+V$, but
the corresponding estimates
for $\partial^+ \Gamma$ require
some additional technical work.

We pursue this in the results below,
establishing a second comparison principle directly
for the function
$\Upsilon:=\partial^+ \Gamma$.
Indeed, upon introducing the shorthand
 $$\Pi[\Upsilon]_j = \sqrt{1+ \left(\Upsilon_{j+1}^2 + \Upsilon_{j}^2\right)/2} $$
 and differentiating
 \eqref{eqn:approximation_of_MCF:discrete_mean_curvature_eqn},
 a short computation shows that $\Upsilon$ satisfies the LDE
 \begin{equation} \label{eqn:analysis_of_discrete_mean_curvature_flow:delta of mean curvatre flow}
\dot{\Upsilon}_j =\dfrac{\partial^+ \Upsilon_j}{\Pi[\Upsilon]_{j}^2} - \dfrac{\partial^- \Upsilon_j}{\Pi[\Upsilon]_{j-1}^2} + 2d \big(\Pi[\Upsilon]_j - \Pi[\Upsilon]_{j-1}\big). \end{equation}

\begin{lemma}\label{lemma:approximation_of_MCF:comp_principle_for_gradient_equation}
Pick a sufficiently small $\delta>0$ and consider a pair of functions $\Upsilon^-$, $\Upsilon^+\in C^1\big([0, \infty),\ell^\infty(\Z)\big)$ that satisfy the following assumptions:
\begin{enumerate}[(a)]
    \item $\Upsilon^-$ is a subsolution of the LDE \eqref{eqn:analysis_of_discrete_mean_curvature_flow:delta of mean curvatre flow};
    \item $\Upsilon^+$ is a supersolution of the  LDE \eqref{eqn:analysis_of_discrete_mean_curvature_flow:delta of mean curvatre flow};
    \item The inequalities $ \norm{\Upsilon^-(t)}_{\ell^\infty} \leq \delta$ and $ \norm{\Upsilon^+ (t)}_{\ell^\infty} \leq \delta$ hold for every $t\geq 0$;
    \item $\Upsilon^-_j(0) \leq \Upsilon^+_j(0)$ holds for every $j\in\Z$.
    \end{enumerate}
%
Then for every $j \in \Z$ and $t \ge 0$ we have the inequality
\begin{equation}
    \Upsilon^-_{j}(t)\leq \Upsilon^+_{j}(t).
\end{equation}
\end{lemma}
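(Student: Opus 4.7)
The plan is to mirror the structure of Lemma~\ref{lemma:analysis of mean curvature eqn: comparison principle}: set $W_j(t) = \Upsilon^+_j(t) - \Upsilon^-_j(t)$, derive a tridiagonal differential inequality for $W$, and then run a barrier/contradiction argument against a small auxiliary function of the form $-\epsilon(\tfrac{3}{4}+\alpha z_j)e^{2Kt}$.

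First I would rewrite the nonlinear operator on the right-hand side of \eqref{eqn:analysis_of_discrete_mean_curvature_flow:delta of mean curvatre flow} as a linear perturbation of the discrete Laplacian. The key algebraic identities are
\begin{equation}
\Pi[\Upsilon^+]_j^2 - \Pi[\Upsilon^-]_j^2 = \tfrac{1}{2}(\Upsilon^+_{j+1}+\Upsilon^-_{j+1})W_{j+1} + \tfrac{1}{2}(\Upsilon^+_j+\Upsilon^-_j)W_j,
\end{equation}
together with $\Pi[\Upsilon^+]_j-\Pi[\Upsilon^-]_j = (\Pi[\Upsilon^+]_j^2-\Pi[\Upsilon^-]_j^2)/(\Pi[\Upsilon^+]_j+\Pi[\Upsilon^-]_j)$ and the analogous expressions with index $j-1$. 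Using these, and writing $\partial^+\Upsilon^+_j = \partial^+\Upsilon^-_j+\partial^+ W_j$, every term appearing in the difference of the two inequalities can be expressed as a linear combination of $W_{j-1}$, $W_j$, $W_{j+1}$ with bounded coefficients. After collecting, I would arrive at
\begin{equation}
\dot{W}_j \ge A_j W_{j+1} + B_j W_{j-1} + C_j W_j,
\end{equation}
where the leading contribution to $A_j$ and $B_j$ comes from $1/\Pi[\Upsilon^+]_j^2$ respectively $1/\Pi[\Upsilon^+]_{j-1}^2$, while the $C_j$-term and all remaining pieces carry prefactors of order $\delta$ (coming from the assumption $\|\Upsilon^\pm\|_{\ell^\infty}\le\delta$ and hence $|\partial^\pm\Upsilon^\pm|\le 2\delta$ and $1\le\Pi\le\sqrt{1+\delta^2}$).

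Choosing $\delta$ small enough we can enforce $A_j, B_j \ge \tfrac{1}{2}$ uniformly in $j$ and $t$. From here the argument becomes essentially the same as in the proof of Lemma~\ref{lemma:analysis of mean curvature eqn: comparison principle}: assuming for contradiction that $W_{j_*}(t_*) = -\vartheta<0$, one picks $\epsilon,K>0$ with $\vartheta = \epsilon e^{2Kt_*}$, defines $T = \sup\{t\ge0 : W_j(t)>-\epsilon e^{2Kt}\text{ for all }j\}$, and at $T$ compares $W$ with the barrier $W^-_j(t;\alpha) = -\epsilon(\tfrac{3}{4}+\alpha z_j)e^{2Kt}$ for a fixed sequence $z_j\to 3$ with $z_0=1$. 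Minimality of $\alpha^*\in(\tfrac{1}{8},\tfrac{1}{4}]$ yields a touching point $(j_0,t_0)$ where $\dot{W}_{j_0}(t_0)\le\dot{W}^-_{j_0}(t_0;\alpha^*)$ and $\partial^\pm W_{j_0}(t_0)\ge \partial^\pm W^-_{j_0}(t_0;\alpha^*)$; substituting into the tridiagonal inequality gives
\begin{equation}
-\tfrac{7}{4}\epsilon K e^{2Kt_0} \ge \tfrac{1}{2}\partial^+W^-_{j_0}(t_0;\alpha^*) - \tfrac{1}{2}\partial^-W^-_{j_0}(t_0;\alpha^*) + C_{j_0}W^-_{j_0}(t_0;\alpha^*) \ge -C'\epsilon e^{2Kt_0},
\end{equation}
which is impossible for $K$ chosen sufficiently large depending only on the uniform bounds on $A_j$, $B_j$, $C_j$.

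The main technical obstacle is the bookkeeping in the first step: because the drift term $2d(\Pi[\Upsilon]_j-\Pi[\Upsilon]_{j-1})$ couples both $j$ and $j-1$, the coefficient of $W_j$ in the linearised inequality collects contributions from four different places, and one must verify that for $\delta$ small the dominant positive terms $1/\Pi^2$ on the off-diagonals are not swamped by the $d$-dependent cross-terms. A condition of the form $\tfrac{1}{1+\delta^2} > \delta(|d|+C\delta)+\tfrac12$, analogous to the one used in Lemma~\ref{lemma:analysis of mean curvature eqn: comparison principle}, should suffice.
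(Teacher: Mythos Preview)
Your approach is essentially the same as the paper's: define the difference $Z=\Upsilon^+-\Upsilon^-$, derive a tridiagonal differential inequality with positive off-diagonal coefficients and a bounded diagonal term, then rerun the barrier argument from Lemma~\ref{lemma:analysis of mean curvature eqn: comparison principle}. The paper writes the inequality in the form $\dot Z_j\ge F_j(Z_{j+1}-Z_j)+G_{j-1}(Z_{j-1}-Z_j)+H_jZ_j$ with explicit formulas for $F,G,H$ and the same smallness condition $\tfrac{1}{1+\delta^2}>\delta(|d|+2\delta)+\tfrac12$ you anticipate; it then simply invokes the procedure of Lemma~\ref{lemma:analysis of mean curvature eqn: comparison principle} without repeating the barrier construction. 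Your sketch is slightly more explicit about the barrier step and correctly flags the new diagonal term $H_jZ_j$ (your $C_jW_j$) as the only structural difference from the earlier lemma; since $|H_j|\le 4\delta(2\delta+|d|)$ is uniformly bounded, it is absorbed by choosing $K$ large, exactly as you indicate.

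One minor imprecision: in your displayed contradiction inequality you replace $A_j,B_j$ by $\tfrac12$ in front of $\partial^\pm W^-$, but the signs of $\partial^\pm W^-$ are not controlled, so you cannot simply pass from $A_j\ge\tfrac12$ to the coefficient $\tfrac12$. What you actually need (and what your final clause correctly states) is that $A_j,B_j$ are uniformly \emph{bounded above} as well, so that $|A_j\partial^+W^-|+|B_j\partial^-W^-|+|C_j||W^-|\le C'\epsilon e^{2Kt_0}$; this holds since $A_j,B_j\le 1+O(\delta)$.
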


\begin{proof}
Defining $Z_j(t)=\Upsilon^+_j(t)-\Upsilon^-_j(t)$, we see that $Z_j(0)\geq 0$ for every $j\in \Z$. Moreover,  $Z$ satisfies the differential inequality
\begin{equation}
    \dot{Z}_j \geq F(\Upsilon^-, \Upsilon^+)_j(Z_{j+1} - Z_j) + G(\Upsilon^-, \Upsilon^+)_{j-1}(Z_{j-1} - Z_j) + H(\Upsilon^-, \Upsilon^+)_j Z_j,
\end{equation}
in which the functions $F$, $G$ and $H$ are defined by 
\begin{align*}
    F(\Upsilon^-, \Upsilon^+)_j &=  \dfrac{1}{\Pi[\Upsilon^+]_j^2} +\left( \dfrac{d}{\Pi[\Upsilon^+]_j + \Pi[\Upsilon^-]_j} - \dfrac{\partial^+ \Upsilon^-_j}{2\Pi[\Upsilon^+]_j^2\Pi[\Upsilon^-]_j^2} \right)\left(\Upsilon^+_{j+1} + \Upsilon^-_{j+1}\right), \\
    G(\Upsilon^-, \Upsilon^+)_j &=  \dfrac{1}{\Pi[\Upsilon^+]_{j-1}^2} +\left(\dfrac{\partial^- \Upsilon^-_j}{2\Pi[\Upsilon^+]_{j-1}^2\Pi[\Upsilon^-]_{j-1}^2} - \dfrac{d}{\Pi[\Upsilon^+]_{j-1} + \Pi[\Upsilon^-]_{j-1}}\right)\left(\Upsilon_{j-1}^+ + \Upsilon^-_{j-1}\right), \\
    H(\Upsilon^-, \Upsilon^+)_j &= \left( \dfrac{d}{\Pi[\Upsilon^+]_j + \Pi[\Upsilon^-]_j} - \dfrac{\partial^+ \Upsilon^-_j}{2\Pi[\Upsilon^+]_j^2\Pi[\Upsilon^-]_j^2} \right)\left(\Upsilon^+_{j+1}  + \Upsilon^-_{j+1} + \Upsilon^+_{j}  + \Upsilon^-_{j}\right) - \\
    &\quad \quad + \left(\dfrac{\partial^- \Upsilon^-_{j}}{2\Pi[\Upsilon^+]_{j-1}^2\Pi[\Upsilon^-]_{j-1}^2} - \dfrac{d}{\Pi[\Upsilon^+]_{j-1} + \Pi[\Upsilon^-]_{j-1}}\right)\left(\Upsilon^+_{j}  + \Upsilon^-_{j} + \Upsilon^+_{j-1}  + \Upsilon^-_{j-1}\right).
\end{align*}
We again pick $\delta > 0$ in such a way that $\dfrac{1}{1+\delta^2} >  \delta\left(|d| +2\delta\right) + \frac{1}{2} $. Notice that 
this choice and assumption \textit{(c)} imply that both $\Pi[\Upsilon^-]$ and   $\Pi[\Upsilon^+]$  are bounded by $\sqrt{1+\delta^2}$. This in turn  yields the bounds 
\begin{equation}
    F(\Upsilon^-, \Upsilon^+) > 1/2 ,
    \qquad
    G(\Upsilon^-, \Upsilon^+)> 1/2,
    \qquad
    |H(\Upsilon^-, \Upsilon^+)|\leq 4\delta(2\delta + |d|).  
\end{equation} 

Applying a similar procedure as in the proof of Lemma~\ref{lemma:analysis of mean curvature eqn: comparison principle} allows us to conclude that $Z_j(t)\geq 0$ for every $j\in \Z$.   
\end{proof}
\begin{lemma}\label{lemma:approximation_of_MCF:uniform_bound_on_gradient_of_MCF_T}
Fix $T > 0$ and pick a sufficiently small 
$\delta_0>0$. Then for any
$\Gamma^0 \in \ell^\infty(\Z)$
with $\norm{\partial^+ \Gamma^0}_{\ell^\infty} \le \delta_0$, the solution
$\Gamma \in C^1\big([0,T],\ell^\infty(\Z)\big)$
to the mean curvature LDE
\eqref{eqn:approximation_of_MCF:discrete_mean_curvature_eqn}
with $\Gamma(0) = \Gamma^0$
satisfies
\begin{equation}\label{eqn:approximation_of_MCF:uniform_bound_on_gradient_of_MCF}
    \norm{\partial^+ \Gamma(t)}_{\ell^\infty}\leq \delta_0, \text{ for all } t\in [0, T]. 
\end{equation}
%
\end{lemma}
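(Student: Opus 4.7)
My plan is to apply the gradient-equation comparison principle from Lemma~\ref{lemma:approximation_of_MCF:comp_principle_for_gradient_equation} to the function $\Upsilon := \partial^+ \Gamma$, exploiting the key observation that any constant sequence is an exact solution of the gradient LDE \eqref{eqn:analysis_of_discrete_mean_curvature_flow:delta of mean curvatre flow}. Indeed, if $\Upsilon_j \equiv \Upsilon^* \in \R$ is independent of $j$, then $\partial^\pm \Upsilon = 0$ and $\Pi[\Upsilon]_j = \sqrt{1+(\Upsilon^*)^2}$ is also independent of $j$, so both sides of \eqref{eqn:analysis_of_discrete_mean_curvature_flow:delta of mean curvatre flow} vanish identically. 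In particular, the constant sequences $\pm \delta_0$ furnish natural super- and sub-solution barriers for free.

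To turn this into a proof I will set up a standard bootstrap. Let $\delta > 0$ be the threshold appearing in Lemma~\ref{lemma:approximation_of_MCF:comp_principle_for_gradient_equation}, choose $\delta_0 \in (0, \delta]$, and define
\[
\tau^* := \sup\{t \in [0, T] : \norm{\partial^+ \Gamma(s)}_{\ell^\infty} \leq \delta \text{ for all } s \in [0, t]\}.
\]
Continuity of $\Gamma$ in $t$ together with the initial bound $\norm{\partial^+ \Gamma^0}_{\ell^\infty} \leq \delta_0 < \delta$ ensures $\tau^* > 0$. On $[0, \tau^*]$, the sequence $\Upsilon(t) = \partial^+ \Gamma(t)$ solves the gradient LDE \eqref{eqn:analysis_of_discrete_mean_curvature_flow:delta of mean curvatre flow} (obtained by applying $\partial^+$ to \eqref{eqn:approximation_of_MCF:discrete_mean_curvature_eqn}) and satisfies the smallness condition $\norm{\Upsilon(t)}_{\ell^\infty} \leq \delta$ required by the comparison principle. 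Comparing $\Upsilon$ from above against $\Upsilon^+ \equiv \delta_0$ and from below against $\Upsilon^- \equiv -\delta_0$ then yields $\norm{\partial^+ \Gamma(t)}_{\ell^\infty} \leq \delta_0$ throughout $[0, \tau^*]$.

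Finally, I close the bootstrap: if $\tau^* < T$, then $\norm{\partial^+ \Gamma(\tau^*)}_{\ell^\infty} \leq \delta_0 < \delta$ together with the continuity of $t \mapsto \partial^+ \Gamma(t) \in \ell^\infty(\Z)$ allows the bound to be extended slightly past $\tau^*$, contradicting maximality. Hence $\tau^* = T$, proving \eqref{eqn:approximation_of_MCF:uniform_bound_on_gradient_of_MCF}. The only technical wrinkle I anticipate is that Lemma~\ref{lemma:approximation_of_MCF:comp_principle_for_gradient_equation} is phrased with barriers defined on $[0, \infty)$, whereas I need to invoke it on the finite interval $[0, \tau^*]$; however, the argument in its proof is local in time (the contradiction is derived at a specific point $(j_0, t_0)$) and adapts verbatim to any finite interval, so this is not a real obstruction. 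The genuine conceptual content of the lemma lies in recognising that constant gradient profiles are exact stationary solutions of \eqref{eqn:analysis_of_discrete_mean_curvature_flow:delta of mean curvatre flow}, after which the proof reduces to a routine continuity argument.
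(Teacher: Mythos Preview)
Your proof is correct and shares the same core idea as the paper's: constant sequences are exact solutions of the gradient LDE \eqref{eqn:analysis_of_discrete_mean_curvature_flow:delta of mean curvatre flow}, so the comparison principle of Lemma~\ref{lemma:approximation_of_MCF:comp_principle_for_gradient_equation} with barriers $\pm\delta_0$ (equivalently $\pm\norm{\Upsilon(0)}_{\ell^\infty}$) immediately gives the bound once hypothesis~(c) is secured.

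The difference lies in how hypothesis~(c) is verified. The paper first applies Gr\"onwall's inequality to \eqref{eqn:analysis_of_discrete_mean_curvature_flow:delta of mean curvatre flow} to obtain an a~priori bound $\norm{\Upsilon(t)}_{\ell^\infty}\le K\norm{\Upsilon(0)}_{\ell^\infty}e^{bt}$ on $[0,T]$, and then chooses $\delta_0$ so that $\delta_0 K e^{bT}\le\delta$; this makes $\delta_0$ genuinely depend on $T$, which is why the subsequent Corollary~\ref{lemma:approximation_of_MCF:ex_and_uq_of_MCF} needs an additional iteration argument to pass to $t\in[0,\infty)$. Your bootstrap instead requires only $\delta_0<\delta$, independent of $T$, so you actually prove a slightly stronger statement from which the corollary follows at once. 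One cosmetic point: you write ``$\delta_0\in(0,\delta]$'' but then use $\delta_0<\delta$ in the continuation step; the strict inequality is what you need, so just take $\delta_0\in(0,\delta)$.
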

\begin{proof}
Writing $\Upsilon = \partial^+ \Gamma$, we can apply Grönwall's inequality to~\eqref{eqn:analysis_of_discrete_mean_curvature_flow:delta of mean curvatre flow} to find
\begin{equation}\label{eqn:analysis of mean curvature eqn:Gronwall}
    \norm{\Upsilon(t)}_{\ell^\infty} \leq K\norm{\Upsilon(0)}_{\ell^\infty} e^{bt},
\end{equation}
for some constants $K\geq 1$ and $b>0$ that are independent of $T$.
Recalling the constant $\delta > 0$ from Lemma~\ref{lemma:approximation_of_MCF:comp_principle_for_gradient_equation}, we now choose $\delta_0>0$ in such a way 
that $\delta_0 Ke^{bT}\leq \delta$. 
Applying the comparison principle from Lemma~\ref{lemma:approximation_of_MCF:comp_principle_for_gradient_equation}, we conclude that
$\norm{\Upsilon(t)}_{\ell^\infty}\leq\norm{\Upsilon(0)}_{\ell^\infty}$
holds for $t\in [0,T]$.
Indeed, the constant function $\norm{\Upsilon(0)}_{\ell^\infty} $ also satisfies LDE~\eqref{eqn:analysis_of_discrete_mean_curvature_flow:delta of mean curvatre flow}.
\end{proof}

\begin{cor}\label{lemma:approximation_of_MCF:ex_and_uq_of_MCF}
Pick $\Gamma^0\in\ell^\infty(\Z)$. Then there exists an unique solution $\Gamma\in C^1\big([0, \infty),\ell^\infty(\Z)\big)$ of the mean curvature LDE~\eqref{eqn:approximation_of_MCF:discrete_mean_curvature_eqn}. Moreover, there exists $\delta>0$ such that the initial bound $\norm{\partial^+\Gamma^0}_{\ell^\infty} \leq \delta$, 
implies that also \begin{equation}\label{lemma:approximation_of_MCF:uniform_bound_on_gradient_of_MCF_infty}
    \norm{\partial^+\Gamma(t)}_{\ell^\infty}\leq \delta, \quad \text{for all } t\geq 0.
\end{equation}
\end{cor}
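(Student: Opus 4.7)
I would first treat local existence and uniqueness via the Picard--Lindel\"of theorem in the Banach space $\ell^\infty(\Z)$. Writing
\begin{equation}
 F(\Gamma)_j = \dfrac{\partial^{(2)}\Gamma_j}{[\beta_{\Gamma}]_j^2} + 2d [\beta_{\Gamma}]_j + c - 2d,
\end{equation}
the inequality $[\beta_{\Gamma}]_j \geq 1$ rules out singularities, so $F(\Gamma)_j$ is a smooth function of the three entries $\Gamma_{j-1}, \Gamma_j, \Gamma_{j+1}$ with partial derivatives uniformly bounded by some $L(R)$ on any ball $\{\norm{\Gamma}_{\ell^\infty} \leq R\}$. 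Taking suprema in $j$ yields the global Lipschitz estimate $\norm{F(\Gamma) - F(\tilde\Gamma)}_{\ell^\infty} \leq L(R)\norm{\Gamma - \tilde\Gamma}_{\ell^\infty}$ on that ball, and Picard--Lindel\"of then produces a unique maximal solution on some interval $[0, T_{\max})$.

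To extend the solution to $[0, \infty)$, I would combine the elementary estimates $|\partial^{(2)}\Gamma_j| \leq 4\norm{\Gamma}_{\ell^\infty}$ and $[\beta_\Gamma]_j \leq 1 + 2\norm{\Gamma}_{\ell^\infty}$ into the linear growth bound $\norm{F(\Gamma)}_{\ell^\infty} \leq C_1\bigl(1 + \norm{\Gamma}_{\ell^\infty}\bigr)$. Integrating the ODE componentwise, taking suprema in $j$, and applying Gr\"onwall's inequality then give $\norm{\Gamma(t)}_{\ell^\infty} \leq (\norm{\Gamma^0}_{\ell^\infty} + 1)e^{C_1 t} - 1$ on $[0, T_{\max})$, which precludes finite-time blowup and forces $T_{\max} = \infty$.

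For the gradient bound I would take $\delta$ to be the small constant furnished by Lemma~\ref{lemma:approximation_of_MCF:comp_principle_for_gradient_equation} and assume $\norm{\partial^+ \Gamma^0}_{\ell^\infty} \leq \delta$. A direct computation shows that the constant sequences $\Upsilon^\pm_j(t) \equiv \pm \delta$ are stationary solutions of the gradient LDE~\eqref{eqn:analysis_of_discrete_mean_curvature_flow:delta of mean curvatre flow}, since all three terms on its right-hand side vanish identically; in particular they are both sub- and supersolutions. Naively comparing $\Upsilon := \partial^+ \Gamma$ against these barriers runs into a circularity, because hypothesis~(c) of Lemma~\ref{lemma:approximation_of_MCF:comp_principle_for_gradient_equation} demands exactly the bound one is trying to prove. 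I would break this circularity by introducing
\begin{equation}
 T^{*} = \sup\bigl\{ T \geq 0 : \norm{\partial^+ \Gamma(t)}_{\ell^\infty} \leq \delta \text{ for all } t \in [0, T] \bigr\},
\end{equation}
which is strictly positive by continuity of $t \mapsto \partial^+ \Gamma(t)$ together with the initial bound. For each $T < T^{*}$ the hypotheses of the comparison principle are met on $[0, T]$ by the triple $(\Upsilon^-, \Upsilon, \Upsilon^+)$, which yields $-\delta \leq \partial^+ \Gamma_j(t) \leq \delta$ throughout $[0, T^{*})$; passing to the closed endpoint by continuity and then applying the short-time continuation from local existence at $t = T^{*}$ forces $T^{*} = \infty$.

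The main obstacle is precisely this final bootstrap. A direct appeal to Lemma~\ref{lemma:approximation_of_MCF:uniform_bound_on_gradient_of_MCF_T} would only produce a threshold $\delta_0(T)$ that degenerates as $T \to \infty$, on account of the exponential Gr\"onwall factor in its proof. What makes a time-uniform bound available is the observation that $\pm \delta$ are \emph{stationary} barriers for the gradient equation; this decouples the admissible initial gradient from the time horizon and allows the continuation argument to close.
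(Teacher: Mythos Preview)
Your existence-and-uniqueness argument and the idea of using the constant sequences $\pm\delta$ as stationary barriers for the gradient LDE are both sound, and this is the same mechanism underlying the paper's approach. The gap is in the closing step of your bootstrap. After extending the bound $\norm{\partial^+\Gamma(t)}_{\ell^\infty}\leq\delta$ to the closed endpoint $T^*$ by continuity, you invoke ``short-time continuation from local existence''; but local existence only tells you that $\Gamma(t)$ is defined for $t>T^*$, not that $\norm{\partial^+\Gamma(t)}_{\ell^\infty}\leq\delta$ there. If the supremum is attained with $\norm{\partial^+\Gamma(T^*)}_{\ell^\infty}=\delta$ exactly, continuity past $T^*$ only yields $\norm{\partial^+\Gamma(t)}_{\ell^\infty}\leq\delta+\epsilon$, which is not enough to re-enter Lemma~\ref{lemma:approximation_of_MCF:comp_principle_for_gradient_equation} and contradict maximality. (The same issue already affects your claim that $T^*>0$: if $\norm{\partial^+\Gamma^0}_{\ell^\infty}=\delta$ on the nose, bare continuity does not give $\leq\delta$ for small $t$.) The fix is cheap: take the Corollary's $\delta$ strictly smaller than the threshold of Lemma~\ref{lemma:approximation_of_MCF:comp_principle_for_gradient_equation}. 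Then continuity at $T^*$ keeps $\norm{\partial^+\Gamma(t)}_{\ell^\infty}$ below that larger threshold on some $[T^*,T^*+\eta]$, so hypothesis~(c) is met there, and comparing against the barriers $\pm\delta$ pushes the bound past $T^*$.

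Your dismissal of Lemma~\ref{lemma:approximation_of_MCF:uniform_bound_on_gradient_of_MCF_T} is misdirected. The paper does not apply it once with a large horizon; it fixes any $T>0$, takes the associated $\delta_0$, and \emph{iterates}: since that lemma returns the same bound $\norm{\partial^+\Gamma(T)}_{\ell^\infty}\leq\delta_0$ at the endpoint, one can restart at $t=T$ with the same $\delta_0$ and march forward indefinitely. The Gr\"onwall estimate inside its proof is precisely what breaks the circularity you identified---it verifies hypothesis~(c) of the comparison principle a priori, without assuming the conclusion---and the iteration then globalises the bound. Your continuation argument and the paper's iteration are really two packagings of the same idea; the paper's version just makes the a-priori step explicit.
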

\begin{proof}
 Existence and uniqueness follows from standard arguments. Applying an iterative argument involving Lemma~\ref{lemma:approximation_of_MCF:uniform_bound_on_gradient_of_MCF_T} leads to the uniform bound~\eqref{lemma:approximation_of_MCF:uniform_bound_on_gradient_of_MCF_infty}. 
\end{proof}

\begin{proof}[Proof of Proposition \ref{thm:analysis of mean curvature eqn:Approximation of a mean curvature flow}]
Using the fact that $V$ satisfies the LDE~\eqref{eq:sub:sup:lde:V}, we compute
\begin{align}
    \mathcal{J}_{\mathrm{dc}}[V] &= \dot{V} - \dfrac{\partial^{2}V}{\beta_V^2} - 2d\beta_V  - c + 2d \\
    &=\partial^{(2)}V \big(1-\dfrac{1}{\beta_V^2}\big) + 2d\big(1+\dfrac{1}{2}\alpha_V - \beta_V \big) \\
    & \ + \dfrac{1}{2d}\int_0^{d\partial^+V} e^s(d\partial^+ V - s)^2 ds + \dfrac{1}{2d}\int_0^{-d\partial^-V} e^s (d\partial^- V + s)^2 ds.
\end{align}
Expanding $\beta_V$ around $0$ and using Corollary~\ref{cor:interface_evolution:cole hopf}, we find a constant $M>0$ for which
\begin{equation}
    \norm{\mathcal{J}_{\mathrm{dc}}[V]}_{\ell^\infty} \leq M\min\left\{\norm{\partial^+V(0)}_{\ell^\infty}, t^{-\frac{3}{2}} \right\}.
\end{equation}
We define the constant $\delta>0$ and the function $K:[0, \infty) \to \R$ by
\begin{equation}
    \delta = \dfrac{\epsilon^3}{M^36^3}, \qquad \qquad K(t) = M\min\left\{\delta, t^{-\frac{3}{2}}\right\}.
\end{equation}
Possibly reducing $\epsilon > 0$,
we many assume that $\delta > 0$ is sufficiently small to satisfy the requirements of
Lemma~\ref{lemma:analysis of mean curvature eqn: comparison principle} and Corollary~\ref{lemma:approximation_of_MCF:ex_and_uq_of_MCF}.

 Next, we pick a smooth function $q:[0, \infty)$ that satisfies
\begin{equation}
   K(t) \leq q(t) \leq 2K(t)
\end{equation}
 and introduce the integral  $p(t) = \int_0^t q(s) ds$. It is straightforward to check that $0\leq p(t) \leq \epsilon$ for every $t\geq 0$.
By spatial homogeneity, we have
$\mathcal{J}_{\mathrm{dc}}[V + p] = \mathcal{J}_{\mathrm{dc}}[V] + \dot{p}$ and hence 
\begin{equation}
    \norm{\mathcal{J}_{\mathrm{dc}}[V+p]}_{\ell^\infty} \geq 0. 
\end{equation}
In particular, the function
$V+p$ is a supersolution of the LDE~\eqref{eq:sub:sup:lde:V}.  Lemma~\ref{lemma:analysis of mean curvature eqn: comparison principle} hence implies
\begin{equation}
    \Gamma(t) \leq V(t) + p(t) \leq V(t) + \epsilon .
\end{equation}
The inequality $V(t) - \epsilon \leq \Gamma(t)$ follows similarly by constructing an appropriate sub-solution for the LDE~\eqref{eqn:approximation_of_MCF:discrete_mean_curvature_eqn}.
\end{proof}

\subsection{Proof of Theorem~ \ref{thm:main_results:approx_of_gamma}}

As a final step, we need
to link the parameter $d = - \langle \Psi'', \psi \rangle$ used 
here and in {\S}\ref{sec:sub:sup}
to the expressions
in \eqref{eq:mr:discrete:MCF} that involve the wavespeed
$c$ and its angular derivatives.
To this end, we recall the identity
\begin{equation}(\partial^2_\theta c_\theta)_{|_{\theta=0}} = \langle \Phi'(\cdot +1) - \Phi'(\cdot-1)  - 2\Phi'', \psi \rangle
\end{equation}
that was obtained in \cite{hupkes2019travelling}.
As expected, this expression
vanishes in the continuum limit
since
$$\lim_{h\to 0} \dfrac{\Phi'(\cdot + h)- \Phi'(\cdot - h)}{h} - 2\Phi'' = 0.$$

\begin{lemma}\label{lemma:proof_of_thm2.3:d}
Suppose that $(Hg)$ and $(H\Phi)$
both hold. Then the parameter
$d$ defined in
\eqref{eqn:super_and_sub_sols2:d} 
satisfies the identity
\begin{equation}
\label{eq:asymp:id:for:d:with:c}
    d = 
    \dfrac{c}{2} + \dfrac{(\partial^2_\theta c_\theta)|_{\theta=0}}{2}.
\end{equation}
\end{lemma}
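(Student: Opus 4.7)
The plan is to compute $d$ by reparametrizing the wave family $(\Phi_\theta, c_\theta)$ through the directional dispersion $\mathcal{D}(\theta) = c_\theta / \cos \theta$. Specifically, I introduce the reparametrized profile
\begin{equation*}
W_\theta(\eta) := \Phi_\theta(\eta \cos \theta),
\end{equation*}
so that the planar wave $u_{i,j}(t) = \Phi_\theta(i \cos \theta + j \sin \theta - c_\theta t)$ can also be written as $u_{i,j}(t) = W_\theta\big( i + j \tan \theta - \mathcal{D}(\theta) \, t \big)$. Plugging this ansatz into the Allen--Cahn LDE \eqref{eqn:main_results:discrete AC} yields the modified MFDE
\begin{equation}
\label{eq:plan:W:mfde}
- \mathcal{D}(\theta) W_\theta'(\eta) = W_\theta(\eta + 1) + W_\theta(\eta - 1) + W_\theta(\eta + \tan \theta) + W_\theta(\eta - \tan \theta) - 4 W_\theta(\eta) + g(W_\theta(\eta)),
\end{equation}
which at $\theta = 0$ reduces to the horizontal MFDE \eqref{eqn:main_results:MFDE} with $W_0 = \Phi$ and $\mathcal{D}(0) = c$.

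The strategy is to differentiate \eqref{eq:plan:W:mfde} twice in $\theta$, evaluate at $\theta = 0$, and apply the Fredholm solvability condition arising from $\mathrm{Range}\,\mathcal{L}_{\mathrm{tw}} = \{f : \langle \psi, f\rangle = 0\}$. As a preparation, I use $[\partial_\theta c_\theta]_{\theta = 0} = 0$ (see \cite[Lem.~2.2]{hupkes2019travelling}) together with the normalization $\Phi_\theta(0) = \tfrac{1}{2}$ to conclude that $\partial_\theta W_\theta|_{\theta = 0} \equiv 0$ and $\mathcal{D}'(0) = 0$. Indeed, the first $\theta$-derivative of \eqref{eq:plan:W:mfde} at $\theta = 0$ gives $\mathcal{L}_{\mathrm{tw}}(\partial_\theta W_\theta|_{\theta = 0}) = 0$, so $\partial_\theta W_\theta|_{\theta = 0} \in \mathrm{span}(\Phi')$; the normalization then forces the corresponding coefficient to vanish via $\Phi'(0) > 0$.

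Carrying out the second $\theta$-derivative at $\theta = 0$ and writing $\ddot W := \partial_\theta^2 W_\theta|_{\theta = 0}$, the key observation is that the two $\tan\theta$-shifted terms each contribute a chain-rule factor $\Phi''(\eta)$ (because $\partial_\theta \tan \theta|_{\theta = 0} = 1$), whereas the $\pm 1$-shifted terms contribute only $\ddot W(\eta \pm 1)$. After using $\partial_\theta W_\theta|_{\theta = 0} = 0$ and $\mathcal{D}'(0) = 0$ to kill the mixed terms, a careful bookkeeping produces the identity
\begin{equation}
\label{eq:plan:L:ddotW}
\mathcal{L}_{\mathrm{tw}} \ddot W = -\mathcal{D}''(0) \, \Phi' - 2 \Phi''.
\end{equation}
Pairing \eqref{eq:plan:L:ddotW} with $\psi$ and using $\langle \Phi', \psi \rangle = 1$ together with $\langle \mathcal{L}_{\mathrm{tw}} \ddot W, \psi \rangle = \langle \ddot W, \mathcal{L}_{\mathrm{tw}}^{\mathrm{adj}} \psi \rangle = 0$ yields
\begin{equation*}
\mathcal{D}''(0) = -2 \langle \Phi'', \psi \rangle = 2 d.
\end{equation*}
A direct calculus computation from $\mathcal{D}(\theta) = c_\theta / \cos\theta$ using $[\partial_\theta c_\theta]_{\theta=0} = 0$ gives $\mathcal{D}''(0) = c + [\partial_\theta^2 c_\theta]_{\theta = 0}$, and combining the two identities produces \eqref{eq:asymp:id:for:d:with:c}.

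The main obstacle is the bookkeeping for the second derivatives of the composite terms $W_\theta(\eta \pm \tan \theta)$: each must be expanded using both the explicit $\theta$-dependence of $W_\theta$ and the chain rule applied to the shifted argument, and one must track the cancellations that occur between the $+\tan\theta$ and $-\tan\theta$ contributions. It is precisely the non-cancelling $\Phi''(\eta)$ pieces that produce the source $-2 \Phi''$ in \eqref{eq:plan:L:ddotW}, and hence the discrepancy between $d$ and $\mathcal{D}''(0)/2$ would have been hidden without this reparametrization.
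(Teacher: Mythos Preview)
Your argument is correct and takes a genuinely different route from the paper's. The paper proceeds by quoting the identity $(\partial^2_\theta c_\theta)|_{\theta=0} = \langle \Phi'(\cdot +1) - \Phi'(\cdot-1) - 2\Phi'', \psi \rangle$ from \cite{hupkes2019travelling} as a black box, reduces \eqref{eq:asymp:id:for:d:with:c} to the statement that $h(\xi) = c\Phi'(\xi) + \Phi'(\xi+1) - \Phi'(\xi-1)$ lies in $\mathrm{Range}\,\mathcal{L}_{\mathrm{tw}}$, and then verifies this by an explicit computation showing $\mathcal{L}_{\mathrm{tw}}[\xi\Phi'(\xi)] = h(\xi)$. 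Your approach instead differentiates the reparametrized wave MFDE twice in $\theta$ and reads off $\mathcal{D}''(0) = 2d$ directly from the Fredholm solvability condition, which is more conceptual and effectively rederives the cited identity from scratch in the horizontal-speed coordinates. The trade-off is that your argument implicitly relies on $C^2$-smoothness of $\theta \mapsto (\Phi_\theta, c_\theta)$ in a function space where the adjoint pairing $\langle \mathcal{L}_{\mathrm{tw}}\ddot W, \psi\rangle = 0$ is justified (i.e.\ $\ddot W \in H^1$), whereas the paper's explicit preimage $\xi\Phi'(\xi)$ sidesteps this entirely. Since you already invoke \cite{hupkes2019travelling} for $[\partial_\theta c_\theta]_{\theta=0}=0$, it is reasonable to also import the required smoothness from there, but you should flag this dependence explicitly.
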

\begin{proof}
Comparing \eqref{eq:asymp:id:for:d:with:c}
with \eqref{eqn:super_and_sub_sols2:d} and recalling the characterization
\eqref{eq:subsup:char:ker:range}
together with
the normalization $\langle \Phi', \psi \rangle = 1$,
it suffices to show that
the function
\begin{equation}
h(\xi) = \Phi'(\xi +1) - \Phi'(\xi-1)  + c\Phi'(\xi)
\end{equation}
satisfies $h \in \mathrm{Range} \big( \mathcal{L}_{\mathrm{tw}} \big)$.
To achieve this, we write
$\varphi(\xi) = \xi\Phi'(\xi)$
and recall the travelling wave
MFDE
\eqref{eqn:main_results:MFDE}
to compute
\begin{equation}
    \begin{array}{lcl}
\mathcal{L}_{\mathrm{tw}}\varphi(\xi) &= &c \varphi'(\xi) + \varphi(\xi+1) + \varphi(\xi-1) -2\varphi(\xi)
+ g'\big(\Phi(\xi)\big)\varphi(\xi) \\[0.2cm]
&=& c\Phi'(\xi) +  c\xi\Phi'' (\xi) + \xi\Phi'(\xi+1) + \Phi'(\xi+1) + \xi\Phi'(\xi-1) - \Phi'(\xi-1)  -2\xi\Phi'(\xi) 
\\[0.2cm]
& & \qquad 
+ \xi g'\big(\Phi(\xi)\big)\Phi'(\xi) \\[0.2cm]
&=& h(\xi) + \xi\dfrac{d}{d\xi}\Big(c \Phi'(\xi) + \Phi(\xi+1) + \Phi(\xi-1) -2\Phi(\xi) + g\big(\Phi(\xi)\big)\Big) \\[0.2cm]
&= & h(\xi), \\
\end{array}
\end{equation}
as desired.
\end{proof}

\begin{proof}[Proof of Theorem~\ref{thm:main_results:approx_of_gamma}]
The statements follow directly from Propositions~\ref{prop:interface_evolution:approx_of_gamma_by_V}-\ref{thm:analysis of mean curvature eqn:Approximation of a mean curvature flow} 
and Lemma~\ref{lemma:proof_of_thm2.3:d}. 
\end{proof}

\section{Stability results}
\label{sec:per}

Our goal here is to establish 
Theorem \ref{thm:mr:periodicity+decay:stability},
our final main result. In particular, we consider
the two solutions 
\begin{equation}
\label{eq:per:sol:u:u:per}
    u: [0 , \infty) \to \ell^\infty(\Z^2),
    \qquad \qquad
    u^{\mathrm{per}}: [0, \infty) \to \ell^\infty(\Z^2)
\end{equation}
to the Allen-Cahn LDE \eqref{eqn:main_results:discrete AC}
with the respective  initial conditions
\begin{equation}
    u(0) = u^0, \qquad \qquad u^{\mathrm{per}}(0) = u^{0;\mathrm{per}},
\end{equation}
together with their associated phases
\begin{equation}
\label{eq:per:def:phases}
    \gamma: [T , \infty) \to \ell^\infty(\Z),
    \qquad \qquad
    \gamma^{\mathrm{per}}: [T, \infty) \to \ell^\infty(\Z)
\end{equation}
that are defined by \eqref{eqn:main_results:def_of_gamma}
for some sufficiently large $T \gg 1$.
Since the LDE \eqref{eqn:main_results:discrete AC} is
autonomous, the uniqueness of solutions imply  that $u^{\mathrm{per}}$ and hence the phase $\gamma^{\mathrm{per}}$ inherit the $j$-periodicity
\begin{equation}
    u^{\mathrm{per}}_{i,j+P}(t) = 
    u^{\mathrm{per}}_{i,j}(t),
    \qquad \qquad
    \gamma^{\mathrm{per}}_{j+P}(t) = 
    \gamma^{\mathrm{per}}_{ij}(t)
\end{equation}
for $ t\ge 0$ respectively $t \ge T$.

It is natural to expect that
$u_{\cdot, j }(t)$ converges to $u^{\mathrm{per}}_{\cdot, j}(t)$ as $|j| \to \infty$,
which we confirm below in  {\S}\ref{sec:per:spt:asymp}.
However, one cannot expect the corresponding result to hold for the phases \eqref{eq:per:def:phases},
on account of the discontinuities that occur. In fact,
we obtain the following asymptotic 'almost-convergence' result.

\begin{prop}
\label{prp:per:phase:cmp}
Consider the setting of Theorem \ref{thm:mr:periodicity+decay:stability} and recall
the two phase functions \eqref{eq:per:def:phases}.
Then for every $\epsilon > 0$ there exists
a constant $T_{\epsilon}$ together with a function
\begin{equation}
    J_{\epsilon}: [T_\epsilon , \infty) \to \Z_{\ge 0}
\end{equation}
so that we have the bound
\begin{equation}
    |\gamma_j(t) - \gamma^{\mathrm{per}}_j(t)| \leq \epsilon
\end{equation}
for every $t \ge T_{\epsilon}$
and $|j| \ge J_{\epsilon}(t)$.
\end{prop}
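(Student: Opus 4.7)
The plan is to combine a spatial asymptotic comparison of $u$ and $u^{\mathrm{per}}$ with the phase-approximation result from Theorem \ref{thm:main results:gamma_approximates_u} and the strict monotonicity of $\Phi$ in the interfacial region.

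First, as a preliminary, I would establish (elsewhere in {\S}\ref{sec:per:spt:asymp}) the spatial decay
\begin{equation}
\label{eq:plan:spatial:decay}
\lim_{|j| \to \infty} \sup_{i \in \Z} \bigl| u_{i,j}(t) - u^{\mathrm{per}}_{i,j}(t) \bigr| = 0
\qquad \hbox{for each fixed } t \ge 0.
\end{equation}
The difference $w = u - u^{\mathrm{per}}$ satisfies a linear LDE $\dot w = \Delta^+ w + b(t) w$ with uniformly bounded coefficient $b_{i,j}(t) = g'(\tilde{u}_{i,j}(t))$, since both $u$ and $u^{\mathrm{per}}$ are globally bounded by Lemma \ref{lemma:analysis_of_u:u_leq_1}. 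By splitting the initial data according to assumption (a) into a piece supported on $\{|i|+|j|\le R\}$ together with a uniformly small remainder, a Duhamel-type argument combined with the Bessel-function decay estimates obtained in {\S}\ref{sec:dht} yields \eqref{eq:plan:spatial:decay}.

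Given $\epsilon > 0$, I would next pick a threshold $\eta = \eta(\epsilon) > 0$ (to be fixed at the end) and apply Theorem \ref{thm:main results:gamma_approximates_u} to both solutions to find $T_\epsilon \ge T$ with
\begin{equation}
\label{eq:plan:two:phase:apx}
\sup_{(i,j) \in \Z^2} \bigl|u_{i,j}(t) - \Phi(i - \gamma_j(t))\bigr| \le \eta,
\qquad
\sup_{(i,j) \in \Z^2} \bigl|u^{\mathrm{per}}_{i,j}(t) - \Phi(i - \gamma^{\mathrm{per}}_j(t))\bigr| \le \eta
\end{equation}
for all $t \ge T_\epsilon$. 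Fix $t \ge T_\epsilon$ and $j \in \Z$, and choose $i = i^{\mathrm{per}}_*(j,t)$ from \eqref{eqn:main_results:i(j,t))} applied to $u^{\mathrm{per}}$; by construction $i - \gamma^{\mathrm{per}}_j(t) \in [-1,0]$. Using \eqref{eq:plan:spatial:decay}, select $J_\epsilon(t) \in \Z_{\ge 0}$ large enough to ensure $\sup_i |w_{i,j}(t)| \le \eta$ whenever $|j| \ge J_\epsilon(t)$. The triangle inequality then produces
\begin{equation}
\label{eq:plan:phi:phi:bnd}
\bigl|\Phi\big(i - \gamma_j(t)\big) - \Phi\big(i - \gamma^{\mathrm{per}}_j(t)\big)\bigr| \le 3\eta.
\end{equation}

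It remains to invert this into a bound on the phases themselves. Since $i - \gamma^{\mathrm{per}}_j(t) \in [-1,0]$, the second estimate in \eqref{eq:plan:two:phase:apx} combined with \eqref{eq:plan:phi:phi:bnd} keeps $i - \gamma_j(t)$ inside a fixed compact window (say $[-2,2]$, provided $\eta$ is small enough). On that window the strict monotonicity of $\Phi$ guarantees a uniform lower bound $\Phi' \ge \sigma > 0$, so \eqref{eq:plan:phi:phi:bnd} yields $|\gamma_j(t) - \gamma^{\mathrm{per}}_j(t)| \le 3\sigma^{-1}\eta$, and we close the argument by setting $\eta = \epsilon\sigma/3$. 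The main obstacle is the preliminary step \eqref{eq:plan:spatial:decay}: although the difference $w$ satisfies a linear equation, propagating the two-dimensional decay in the initial data uniformly in $i$ requires some care, and is the step where the discrete heat-kernel estimates from {\S}\ref{sec:dht} do the real work. Once that is available, the passage to the phases is essentially an exercise in monotonicity.
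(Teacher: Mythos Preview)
Your plan is correct and follows the same three-step skeleton as the paper: establish spatial decay of $u-u^{\mathrm{per}}$ at each fixed time, feed this together with Theorem~\ref{thm:main results:gamma_approximates_u} into a triangle inequality to compare $\Phi(i-\gamma_j)$ with $\Phi(i-\gamma_j^{\mathrm{per}})$, and invert using the strict positivity of $\Phi'$ on a compact window.

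The one substantive difference is in the preliminary step. You propose a Duhamel argument for $w=u-u^{\mathrm{per}}$ together with the Bessel estimates of {\S}\ref{sec:dht}. Note however that {\S}\ref{sec:dht} only treats the \emph{one-dimensional} discrete heat kernel, whereas $w$ lives on $\Z^2$; the two-dimensional kernel factorises as a product of one-dimensional kernels, so the adaptation is routine, but it is extra work you do not mention, and the nonconstant zeroth-order term $b_{i,j}(t)w_{i,j}$ must be carried through the iteration. The paper instead stays with the comparison principle: it perturbs $u^{\mathrm{per}}$ by exponential barriers of the form $M e^{|c|(i+j+2|c|t+\alpha t)}$ (in the spirit of Lemma~\ref{lemma:analysis_of_u:liminfu_leq_q0}) to build explicit super- and sub-solutions that trap $u$, obtaining the decay as $|i|+|j|\to\infty$ directly without any kernel analysis. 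This is shorter, avoids the 2D kernel detour, and reuses machinery already developed in {\S}\ref{sec:omega}.

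A minor variation: you evaluate at $i=i_*^{\mathrm{per}}(j,t)$ and argue directly from \eqref{eq:plan:phi:phi:bnd} that $i-\gamma_j(t)$ stays in a compact window; the paper evaluates at $i=\lceil ct\rceil$ and appeals to Lemma~\ref{lemma:phase_gamma:gamma-ct_bounded} to confine both phases simultaneously. Both choices work.
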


In order to explore the consequences
of the approximation result Proposition \ref{prop:interface_evolution:approx_of_gamma_by_V},
we hence need to understand the evolution
of asymptotically almost-periodic initial conditions
under \eqref{eq:sub:sup:lde:V}. This is achieved
in our second main result here. We emphasize that in the special case $P=1$, the asymptotic phase $\mu$ is equal
to the value taken by the constant sequence $V^{0;\mathrm{per}}$.

\begin{prop}
\label{prp:per:beh:alm:per:nl:heat}
Suppose that the assumptions  (Hg)
and (H$\Phi$) both hold, fix two constants
$R > 0$ and $P \in \Z_{>0}$ and pick a sufficiently large $K > 0$. Then for any $\epsilon > 0$ and $J \in \Z_{\ge 0}$, there exists a time $T_{\epsilon,J} > 0$
so that the following holds true.

Consider any pair $(V^0, V^{0;\mathrm{per}}) \in \ell^\infty(\Z)^2$ that satisfies the conditions
\begin{itemize}
    \item[(a)] For all $|j| \ge J$ we have
    $|V^0_{j} - V^{0;\mathrm{per}}_j| \le \epsilon $.
    \item[(b)] The periodicity 
    $V^{0;\mathrm{per}}_{j+P} = V^{0;\mathrm{per}}_{j}$
    holds for all $j \in \Z$.
    \item[(c)] We have the deviation bounds
    \begin{equation}
    [V^{0;\mathrm{per}}]_{\mathrm{dev}}\leq R, \qquad
    \qquad [V^0]_{\mathrm{dev}}\leq R.
    \end{equation}
\end{itemize}
Then there exists an asymptotic phase $\mu \in \R$
so that the solution $V: [0, \infty) \to \ell^\infty(\Z)$
to the LDE \eqref{eq:sub:sup:lde:V} with
the initial condition $V(0) = V^0$
satisfies the bound
\begin{equation}
\label{eq:per:asymp:phase:mu:bnd}
 \norm{V(t) - ct - \mu}_{\ell^\infty(\Z)}\leq K\epsilon,
 \qquad \qquad
 t \ge T_{\epsilon, J}.
\end{equation}
\end{prop}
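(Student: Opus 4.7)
The natural strategy is to linearize \eqref{eq:sub:sup:lde:V} through the Cole--Hopf substitution $h_j(t) := e^{d(V_j(t) - ct)}$ when $d \neq 0$ (or simply $h := V - ct$ when $d = 0$), which, as noted just above \eqref{eq:sub:sup:lde:V} in {\S}\ref{sec:sub:sup}, turns \eqref{eq:sub:sup:lde:V} into the linear discrete heat equation \eqref{eqn:discrete_heat_eq:DHK} already analyzed in {\S}\ref{sec:dht}. I will also introduce the companion $V^{\mathrm{per}}$, namely the solution of \eqref{eq:sub:sup:lde:V} with data $V^{\mathrm{per}}(0) = V^{0;\mathrm{per}}$; shift-invariance of the LDE in $j$ together with uniqueness of bounded solutions forces $V^{\mathrm{per}}(t)$ to inherit the $P$-periodicity of $V^{0;\mathrm{per}}$, and the same then holds for $h^{\mathrm{per}}_j(t) := e^{d(V^{\mathrm{per}}_j(t) - ct)}$. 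The argument splits naturally into studying $V^{\mathrm{per}}(t) - ct$ and controlling $V(t) - V^{\mathrm{per}}(t)$ separately.

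For the first piece I use that $h^{\mathrm{per}}$, being a $P$-periodic solution of the discrete heat equation on $\Z$, corresponds to a solution on the cyclic group $\Z/P\Z$, whose Laplacian has a one-dimensional kernel of constants and a strict spectral gap. Hence $h^{\mathrm{per}}_j(t)$ converges exponentially to its spatial mean $\bar{h} := \frac{1}{P}\sum_{k=0}^{P-1} h^{\mathrm{per}}_k(0)$; defining $\mu := \frac{1}{d}\log \bar{h}$ (or $\mu := \bar{h}$ when $d = 0$) produces the target phase and yields $\|V^{\mathrm{per}}(t) - ct - \mu\|_{\ell^\infty} \to 0$ at an exponential rate depending only on $P$ and on fixed upper and lower bounds for $h^{\mathrm{per}}$.

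For the second piece I exploit the fact that \eqref{eq:sub:sup:lde:V} is invariant under uniform shifts to translate both data so that $V^0_0 = 0$, in which case (a)--(c) place $V^0 \in [-R, R]$ and $V^{0;\mathrm{per}} \in [-3R - \epsilon, 3R + \epsilon]$. The maximum principle for \eqref{eqn:discrete_heat_eq:DHK} then pins $h(t)$ and $h^{\mathrm{per}}(t)$ between two fixed positive constants for every $t \ge 0$. The difference $H := h - h^{\mathrm{per}}$ again solves \eqref{eqn:discrete_heat_eq:DHK}, with $|H_j(0)| \leq C_R \epsilon$ on $\{|j| \ge J\}$ and $|H_j(0)| \leq C_R$ otherwise. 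I split $H(0) = H^{(\epsilon)} + H^{(c)}$ into a globally $O(\epsilon)$ part and a part supported in $\{|k| \leq J\}$; the maximum principle preserves the first bound in time, while the explicit representation \eqref{eqn:DHK:formula_for_solution_of_DHK} together with $\|G(t)\|_{\ell^\infty} \leq C/\sqrt{t}$ from Lemma~\ref{lemma:Bessel_functions:bounds} gives $\|H^{(c)}(t)\|_{\ell^\infty} \leq C_R(2J+1)/\sqrt{t}$. Choosing $T_{\epsilon,J}$ larger than a constant multiple of $(J+1)^2/\epsilon^2$ (and than the time needed for the exponential step above to reach precision $\epsilon$) therefore yields $\|H(t)\|_{\ell^\infty} \leq C\epsilon$, and Lipschitz continuity of $\log$ on the relevant interval transfers this to $\|V(t) - V^{\mathrm{per}}(t)\|_{\ell^\infty} \leq K_1 \epsilon$. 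The triangle inequality then delivers \eqref{eq:per:asymp:phase:mu:bnd}. The main obstacle is precisely the competition between these two decay mechanisms: the compactly supported remainder only disperses at the parabolic rate $1/\sqrt{t}$, which is what forces $T_{\epsilon,J}$ to grow like $J^2/\epsilon^2$ and prevents a $J$-independent threshold.
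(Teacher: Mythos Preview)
Your proposal is correct and follows essentially the same route as the paper: Cole--Hopf linearization, convergence of the $P$-periodic heat solution to its spatial mean (the paper's Lemma~\ref{lem:per:heat:eqn}), and a tail/compact splitting of $h-h^{\mathrm{per}}$ controlled respectively by $\|G(t)\|_{\ell^1}=1$ (equivalently, the maximum principle you invoke) and $\|G(t)\|_{\ell^\infty}\lesssim t^{-1/2}$. The only cosmetic differences are that the paper writes $h(0)=h^{\mathrm{per}}(0)\bigl(1+q(0)\bigr)$ and splits the convolution sum directly rather than the initial data, and appeals to the softer convergence of Lemma~\ref{lem:per:heat:eqn} instead of a spectral-gap rate.
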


\begin{proof}[Proof of Theorem~\ref{thm:mr:periodicity+decay:stability}]
Pick $\epsilon>0$. Recalling
the terminology of
of 
Propositions 
\ref{prop:interface_evolution:approx_of_gamma_by_V} and \ref{prp:per:phase:cmp},
we introduce the constants $\overline{\tau}_{\epsilon} = \max\{ \tau_{\epsilon}, T_{\epsilon} \}$ and $\overline{J}_{\epsilon} = J_{\epsilon}(\overline{\tau}_{\epsilon})$
and write
$V^{(\epsilon)}$ for the solution to the LDE
\eqref{eq:sub:sup:lde:V} with the
initial condition $V^{(\epsilon)}(0) = \gamma(\overline{\tau}_{\epsilon})$.
Writing $\mu_\epsilon$ for the phase
defined in Proposition~\ref{prp:per:beh:alm:per:nl:heat},
we combine
\eqref{eq:phase:gamma:apx:by:v}
with
\eqref{eq:per:asymp:phase:mu:bnd}
to obtain
\begin{equation}
\label{eq:per:lim:gamma:phase}
\begin{array}{lcl}
    \norm{\gamma(t + \overline{\tau}_{\epsilon}) - ct - \mu_{\epsilon}}_{\ell^\infty}
     & \leq &
    \norm{\gamma(t+  \overline{\tau}_{\epsilon}) -  V^{(\epsilon)}(t) }_{\ell^\infty}
    + \norm{ V^{(\epsilon)}(t) 
      - ct  - \mu_{\epsilon} )
    }_{\ell^\infty}
\\[0.2cm]
& \le & (K + 1) \epsilon,  
\end{array}
\end{equation}
for all $t \ge T_{\epsilon, \overline{J}_{\epsilon}}$.

We now claim that there exists $\mu \in \R$
for which we have the limit
\begin{equation}
\lim_{\epsilon \downarrow 0} \big( \mu_{\epsilon} - c \overline{\tau}_\epsilon \big)  = \mu.
\end{equation}
Indeed, the uniform bound on $\gamma(t) -c t$ obtained
in Lemma \ref{lemma:phase_gamma:gamma-ct_bounded} allows us to find a convergent subsequence, which using
\eqref{eq:per:lim:gamma:phase}
can be transferred to the full set.
Sending $\epsilon\downarrow 0$ we hence 
obtain
\begin{equation}
    \lim_{t\to\infty}\norm{\gamma(t) - ct -\mu}_{\ell^\infty(\Z)} 
    = 0,
\end{equation}
which leads to the desired convergence in view of Theorem~\ref{thm:main results:gamma_approximates_u}.
\end{proof}

\subsection{Spatial asymptotics}
\label{sec:per:spt:asymp}

In this subsection we establish Proposition \ref{prp:per:phase:cmp}. As a preparation,
we compare the $j$-asymptotic behaviour of the two solutions \eqref{eq:per:sol:u:u:per}. 
We remark that the arguments in Lemma \ref{lemma:periodicity+decay:u_convg_to_uper}
below
remain valid upon replacing the limits
in \eqref{eq:mr:lim:per:j:pm:infty}
and \eqref{eq:per:lim:per:j:pm:infty}
by their two counterparts $|i| \pm j \to \infty$,
which are one-sided in $j$. This validates the comments
in {\S}\ref{sec:int} concerning the limit \eqref{eq:int:kappa:pm:limits}.

\begin{lemma}\label{lemma:stability:continuity_wtr_initial_cond}
Assume that (Hg) is satisfied and consider any $u^0_A \in \ell^\infty(\Z^2)$.
Then for any $\epsilon  > 0$ and time $T > 0$, there exists $\delta > 0$ so that for any $u^0_B \in \ell^\infty(\Z^2)$
that satisfies
\begin{equation}
    \norm{u^0_A - u^0_B}_{\ell^\infty(\Z^2)} \leq \delta,
\end{equation}
the solutions $u_A$ and $u_B$ of the Allen-Cahn LDE
\eqref{eqn:main_results:discrete AC} 
with the initial conditions $u_A(0) = u^0_A$
and $u_B(0) = u^0_B$ satisfy
\begin{equation}
    \norm{u_A(t) - u_B(t)}_{\ell^\infty(\Z^2)} \leq \epsilon,
    \qquad \qquad t \in [0, T].
\end{equation}
\end{lemma}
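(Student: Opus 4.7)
The plan is to reduce the claim to a standard Gronwall argument, with the bistable structure of $g$ providing uniform a-priori bounds and the comparison principle from \cite[Prop. 3.1]{hoffman2017entire} closing the loop. First, I would establish that $u_A$ and $u_B$ stay in a common bounded interval on $[0,T]$. Writing $M = \|u^0_A\|_{\ell^\infty} + \delta + 1$ for some $\delta \le 1$, the scalar ODE $\dot{\tilde u} = g(\tilde u)$ with initial data $\pm M$ gives spatially homogeneous solutions of \eqref{eqn:main_results:discrete AC}; since $g$ is bistable, these solutions remain in $[-M, M]$ for all $t \ge 0$. By the comparison principle, both $u_A$ and $u_B$ then take values in $[-M,M]$ throughout $[0,T]$, provided $\delta \le 1$ so that $\|u^0_B\|_{\ell^\infty} \le M$ as well.

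With the a-priori bound in hand, pick any Lipschitz constant $L \ge \sup_{|s| \le M} |g'(s)|$ for $g$ on $[-M,M]$, and fix $\mu = L$. The plan is then to show that the spatially homogeneous perturbation
\begin{equation}
    u^+_{i,j}(t) := u_{A;i,j}(t) + \delta e^{\mu t}
\end{equation}
is a super-solution of \eqref{eqn:main_results:discrete AC} as long as it takes values in $[-M,M]$. Indeed, a direct computation using $\dot u_A = \Delta^+ u_A + g(u_A)$ yields
\begin{equation}
    \mathcal{J}[u^+]_{i,j}(t) = \mu \delta e^{\mu t} + g(u_{A;i,j}(t)) - g(u^+_{i,j}(t)) \ge (\mu - L)\,\delta e^{\mu t} = 0,
\end{equation}
where the inequality uses the Lipschitz bound on $g$ restricted to $[-M,M]$. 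Constructing $u^- := u_A - \delta e^{\mu t}$ as a sub-solution in an analogous fashion and observing $u^-(0) \le u_B(0) \le u^+(0)$, the comparison principle delivers
\begin{equation}
    |u_{A;i,j}(t) - u_{B;i,j}(t)| \le \delta e^{\mu t} \le \delta e^{LT}, \qquad t \in [0,T].
\end{equation}

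Choosing $\delta = \min\{1, \epsilon e^{-LT}\}$ then yields the claim. The only subtlety is self-consistency: one must verify that the sub- and super-solutions remain inside $[-M, M]$ throughout $[0,T]$, which is automatic once $\delta e^{LT} \le 1$, so up to an additional reduction of $\delta$ this gives no real obstruction. No step here is genuinely hard; the only point requiring care is the choice of Lipschitz constant, which depends on the a-priori $\ell^\infty$-bound and hence must be fixed before selecting $\delta$.
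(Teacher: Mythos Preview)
Your argument is correct. The paper itself dismisses this lemma in one line as ``a standard consequence of the well-posedness of \eqref{eqn:main_results:discrete AC} in $\ell^\infty(\Z^2)$'', so your comparison-principle construction is simply one concrete way to cash out that remark; an equally direct alternative is a bare Gronwall estimate on $\|u_A - u_B\|_{\ell^\infty}$ once the common a-priori bound is in place. The self-consistency caveat you flag is handled most cleanly by fixing $M$ independently of $\delta$ (e.g.\ $M = \|u^0_A\|_{\ell^\infty} + 2$, enforcing $\delta \le 1$) and taking the Lipschitz constant for $g$ on the enlarged interval $[-M-1, M+1]$ from the outset, so that $u^\pm$ stay in that interval whenever $\delta e^{LT} \le 1$.
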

\begin{proof}
This is a standard consequence of the well-posedness
of \eqref{eqn:main_results:discrete AC} in $\ell^\infty(\Z^2)$.
\end{proof}

\begin{lemma}\label{lemma:periodicity+decay:u_convg_to_uper}
Consider the setting of Theorem \ref{thm:mr:periodicity+decay:stability} and recall the 
two solutions \eqref{eq:per:sol:u:u:per}.
Then for every $\tau > 0$ we have the spatial limit
\begin{equation}
\label{eq:per:lim:per:j:pm:infty}
     u_{i,j}(\tau)  - u^{\mathrm{per}}_{i,j}(\tau) \to 0, 
     \qquad \text{as}\quad  |i|+|j|\to\infty .
\end{equation}
\end{lemma}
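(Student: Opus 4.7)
The plan is to introduce the difference $w(t) = u(t) - u^{\mathrm{per}}(t)$ and show that, for every $\tau>0$, the sequence $w(\tau)$ still vanishes at infinity. By hypothesis (a), $w(0)=u^0-u^{0;\mathrm{per}}$ satisfies $w_{i,j}(0)\to 0$ as $|i|+|j|\to\infty$. Subtracting the two copies of \eqref{eqn:main_results:discrete AC} and invoking the mean value theorem applied to $g$, I obtain the linear equation
\begin{equation*}
    \dot w_{i,j}(t) = (\Delta^+ w)_{i,j}(t) + h_{i,j}(t)\, w_{i,j}(t),
\end{equation*}
where $h_{i,j}(t) = \int_0^1 g'\!\big(u^{\mathrm{per}}_{i,j}(t)+sw_{i,j}(t)\big)\,ds$ is uniformly bounded since $u$ and $u^{\mathrm{per}}$ are globally bounded by the comparison principle. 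The goal is then to show that $w(\tau)$ lies in the closed subspace $\mathcal{C}_0 \subset \ell^\infty(\Z^2)$ of sequences vanishing at infinity.

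Next, I would use the discrete heat semigroup $S(t) := e^{t\Delta^+}$ on $\Z^2$, which acts by convolution with the product kernel $G^{(2)}_{i,j}(t)=G_i(t)G_j(t)$, where $G_j(t)$ is the 1D discrete heat kernel from \eqref{eqn:discrete_heat_eqn:discrete_heat_kernel}. Since $G^{(2)}(t)\in\ell^1(\Z^2)$ with unit mass, $S(t)$ is a contraction on $\ell^\infty(\Z^2)$ and, crucially, preserves $\mathcal{C}_0$: for any $\phi\in\mathcal{C}_0$ and $\epsilon>0$, splitting the convolution into a central block of radius $R$ (on which $|\phi|<\epsilon$ once $|i|+|j|$ is large) and the $\ell^1$-tail (of mass $<\epsilon$ for $R$ large) gives $|(G^{(2)}(t)*\phi)_{i,j}|\lesssim \epsilon(1+\|\phi\|_{\ell^\infty})$ for large $|i|+|j|$. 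Duhamel's formula then reads
\begin{equation*}
    w(t) = S(t)w(0) + \int_0^t S(t-s)\big[h(s)w(s)\big]\,ds,
\end{equation*}
and $w$ is the unique fixed point in $C([0,T];\ell^\infty(\Z^2))$ of the associated affine map, for $T$ small depending only on $\sup|h|$.

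The bootstrap step is to observe that the constant term $S(\cdot)w(0)$ of this fixed-point equation lies in $C([0,T];\mathcal{C}_0)$ because $w(0)\in\mathcal{C}_0$ and $S(t)$ preserves $\mathcal{C}_0$, while multiplication by the bounded potential $h$ trivially preserves $\mathcal{C}_0$. Hence Picard iteration stays inside the closed subspace $C([0,T];\mathcal{C}_0)$, and by uniqueness of solutions in $\ell^\infty$ we conclude $w(t)\in\mathcal{C}_0$ for $t\in[0,T]$. Iterating this argument on successive intervals of length $T$ extends the conclusion to all $t\ge0$, which yields the desired limit.

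The only technical point that requires care is the invariance of $\mathcal{C}_0$ under $S(t)$; no finite-speed-of-propagation argument is available here since the discrete heat kernel has full support on $\Z^2$, so the replacement is precisely the $\ell^1$-in-space decay of the kernel combined with the Duhamel bootstrap above. Everything else is routine, and a direct attempt to combine Lemma~\ref{lemma:stability:continuity_wtr_initial_cond} with a truncation of initial data would be awkward precisely because the nonlinear flow does not commute with spatial truncation.
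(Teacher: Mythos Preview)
Your proof is correct and takes a genuinely different route from the paper.

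The paper proceeds by comparison: for each of the four diagonal limits (e.g.\ $i+j\to-\infty$) it constructs an explicit super-solution $\tilde u^{\mathrm{per}}(t)+Me^{|c|(i+j+2|c|t+\alpha t)}$, where $\tilde u^{\mathrm{per}}$ solves the LDE from the shifted datum $u^{0;\mathrm{per}}+\delta$ and Lemma~\ref{lemma:stability:continuity_wtr_initial_cond} absorbs the $\delta$-shift into an $\epsilon/2$ error at time $\tau$. The barrier check uses only a one-sided Lipschitz bound on $g$ and a choice of $\alpha$ large enough to dominate it; a matching sub-solution gives the reverse inequality. Your argument instead recognises that $\mathcal C_0=c_0(\Z^2)$ is a closed subspace of $\ell^\infty(\Z^2)$ invariant under both the discrete heat semigroup (via the $\ell^1$-tail splitting of the kernel) and multiplication by bounded potentials, and concludes by Duhamel--Picard that the linearised flow preserves $\mathcal C_0$. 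This uses nothing beyond $g\in C^1$ and the boundedness of $\Delta^+$ on $\ell^\infty$ (which makes $S(t)$ norm-continuous, unlike the heat semigroup on $\R^n$), so it would go through without the comparison principle or the bistable structure. The paper's approach, by contrast, is more quantitative --- it exhibits an explicit exponential spatial decay rate --- and stays within the sub-/super-solution machinery used throughout the paper.

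One minor point: you invoke the comparison principle for the global boundedness of $u$ and $u^{\mathrm{per}}$, but on the finite interval $[0,\tau]$ this already follows from well-posedness in $\ell^\infty$, which is all you need for $h$ to be bounded.
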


\begin{proof}
In view of symmetry considerations, it suffices
to establish the claim for the limit $i + j \to -\infty$.
To this end, we fix an arbitrary $\epsilon > 0$. 
We write $\tilde{u}^{\mathrm{per}}$ for the solution to the LDE \eqref{eq:sub:sup:lde:V} with the initial condition
$\tilde{u}^{\mathrm{per}}(0) = u^{0; \mathrm{per}} + \delta$,
using Lemma~\ref{lemma:stability:continuity_wtr_initial_cond} to pick $\delta > 0$ in such a way that
\begin{equation}
    \tilde{u}^{\mathrm{per}}(\tau) \le u^{\mathrm{per}}(\tau) + \frac{\epsilon}{2}.
\end{equation}
We subsequently pick $M > 0$ in such a way that
\begin{equation}
u^{0}_{i,j} \leq 
\tilde{u}^{0;\mathrm{per}}_{i,j}(0) + \delta + Me^{|c|(i + j)}
=
\tilde{u}^{\mathrm{per}}_{i,j}(0) + Me^{|c|(i + j)}
\end{equation}
holds for every $(i,j)\in \Z^2$.

On account of (H0) and the comparison principle,
we can pick $A \ge 1$ in such a way that
\begin{equation}
-A \le  \tilde{u}^{\mathrm{per}}(t) \le A
\end{equation}
holds for all $t \in [0, \tau]$. We now write 
\begin{equation}
    K = \max\{ g'( s) : -A \le s \le A \} > 0
\end{equation}
and observe that  (Hg) implies that
\begin{equation}
\label{eq:per:choice:K}
    g( s + \beta ) \le g(s) + K \beta
\end{equation}
for any $-A \le s \le A$ and $\beta \ge 0$.

We now pick  $\alpha > 0$ in such a way that
\begin{equation}
\label{eq:per:choice:alpha}
    \alpha|c| - \dfrac{c^4}{6} \cosh |c|  > K
\end{equation}
and claim that the function %
\begin{equation}
\label{eqn:periodocity:w}
    w_{i,j} (t) = \tilde{u}^{\text{per}}_{i,j}(t) + Me^{|c|(i+j+2|c|t + \alpha t)}
\end{equation}
is a super-solution to \eqref{eqn:main_results:discrete AC}. Indeed, recalling the residual
\eqref{eqn:omega_limit_points:residual},
a short computation yields
\begin{equation}
    \begin{array}{lcl}
\mathcal{J}[w]_{i,j}(t) &=& g\big(\tilde{u}^{\text{per}}_{i,j}(t) \big) 
- g\big(w_{i,j}(t)\big) + Me^{|c|(i+j+2|c|t + \alpha t)}\left(2c^2 + \alpha|c| - 2e^{|c|} - 2e^{-|c|} + 4\right) \\[0.2cm]
&= & g\big(\tilde{u}^{\text{per}}_{i,j}(t) \big) - g\big(w_{i,j}(t)\big) + \big(w_{i,j}(t)-\tilde{u}^{\text{per}}_{i,j}(t) \big)\big(\alpha |c| - \dfrac{c^4}{6} \cosh \tilde{c}\big)
\\[0.2cm]
\end{array}
\end{equation}
for some $\tilde{c}\in [0, |c|]$,
which using \eqref{eq:per:choice:K} and   \eqref{eq:per:choice:alpha}
implies
\begin{equation}
    \begin{array}{lcl}
\mathcal{J}[w]_{i,j} 
& \ge &
(w_{i,j}-\tilde{u}^{\text{per}}_{i,j})\big(\alpha |c| - \dfrac{c^4}{6} \cosh \tilde{c} - K \big)
\\[0.2cm]
& \ge & 0.
\end{array}
\end{equation}
In particular, the comparison principles allows us to conclude that
\begin{equation}
    u_{i,j}(\tau) \leq u^{\mathrm{per}}_{i,j}(\tau) + \dfrac{\epsilon}{2} + Me^{|c|(i + j+2 |c|\tau + \alpha \tau)}, 
\end{equation}
which implies that there exists $L_{\epsilon} \gg 1$ so that
\begin{equation}
    u_{i,j}(\tau) \le u^{\mathrm{per}}_{i,j}(\tau) +\epsilon
\end{equation}
for $i + j \le - L_{\epsilon}$.
An analogous lower bound can be obtained by exploiting similar sub-solutions, which completes the proof.
\end{proof}

\begin{proof}[Proof of Proposition \ref{prp:per:phase:cmp}]
For any sufficiently large $t \ge 1$ and $(i,j) \in \Z^2$ we may estimate
\begin{equation}
    \begin{array}{lcl}
      \Phi\big(i - \gamma_j^{\mathrm{per}}(t) \big)
       - \Phi\big(i - \gamma_j(t) \big)
       & \le &
         |\Phi\big(i - \gamma_j^{\mathrm{per}}(t) \big)
         - u^{\mathrm{per}}_{i,j}(t) |
         + | u_{i,j}(t) - \Phi\big(i - \gamma_j(t) \big) |
         \\[0.2cm]
         & & \qquad 
           + | u^{\mathrm{per}}_{i,j}(t) - 
             u_{i,j}(t) |.
    \end{array}
\end{equation}
Applying Theorem \ref{thm:main results:gamma_approximates_u} and Lemma 
\ref{lemma:periodicity+decay:u_convg_to_uper},
we find a constant  $T_{\epsilon} > 0$
and a function $J_{\epsilon}: [T_{\epsilon}, \infty) \to \Z_{\ge 0}$ 
for which
we have
\begin{equation}
\label{eq:per:phase:diff:phi}
    \begin{array}{lcl}
      \Phi\big(i - \gamma_j^{\mathrm{per}}(t) \big)
       - \Phi\big(i - \gamma_j(t) \big)
       & \le & 3 \epsilon
    \end{array}
\end{equation}
for all $t \ge T_{\epsilon}$ and $|j| \ge J_{\epsilon}(t)$.
Recalling the constant $M > 0$ from Lemma \ref{lemma:phase gamma:monotonicity_in_the_bounded_region} and writing
\begin{equation}
    \nu =\mathrm{min} \{ \Phi'(\xi) :  |\xi| \le M+1   \} >0,
\end{equation}
we may substitute $i = \lceil ct \rceil$ into
\eqref{eq:per:phase:diff:phi} to obtain
\begin{equation}
        \nu | \gamma_j^{\mathrm{per}}(t)
     - \gamma_j(t) |
     \le 
      \Phi\big( \lceil ct \rceil - \gamma_j^{\mathrm{per}}(t) \big)
       - \Phi\big(\lceil ct \rceil - \gamma_j(t) \big)
        \le  3 \epsilon
\end{equation}
for all $t \ge T_{\epsilon}$ and $|j| \ge J_{\epsilon}(t)$.
This yields the desired result after some minor relabelling.
\end{proof}

\subsection{Phase asymptotics}

It remains to establish Proposition \ref{prp:per:beh:alm:per:nl:heat}.
We accomplish this by using the Cole-Hopf transformation
discussed in {\S}\ref{sec:dht} to transform
\eqref{eq:sub:sup:lde:V} into the linear heat LDE \eqref{eqn:discrete_heat_eq:DHK}. The bounds
in {\S}\ref{sec:dht} readily allow us to analyze
solutions with initial conditions that are asymptotically
`almost-periodic'.

\begin{lemma}
\label{lem:per:heat:eqn}
Pick an integer $P \ge 1$ and let 
$h\in  C^1\big([0, \infty); \ell^\infty(\Z)\big) $ be  a solution to the discrete heat equation \eqref{eqn:discrete_heat_eq:DHK} 
with an initial condition $h^0 \in \ell^\infty(\Z)$ 
that satisfies $h^0_{j+P} = h^0_j$ for all $j \in \Z$.
Then upon introducing the average
\begin{equation}
    \overline{h} = \frac{1}{P} \sum_{j=0}^{P-1} h^0_j,
\end{equation}
we have the limit
\begin{equation}
		\lim_{t\to\infty}\norm{h(t)-\overline{h}}_{\ell^\infty} = 0. 
	\end{equation}
\end{lemma}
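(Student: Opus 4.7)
The plan is to exploit the fact that $P$-periodicity is preserved under the discrete heat flow, which reduces the problem to a finite-dimensional linear ODE. First I would observe that the translation invariance of \eqref{eqn:discrete_heat_eq:DHK} together with uniqueness of bounded solutions (which follows from the comparison principle applied to differences, or directly from the convolution representation \eqref{eqn:DHK:formula_for_solution_of_DHK} together with $G(t) \in \ell^1(\Z)$) implies that if $h^0$ is $P$-periodic then so is $h(t)$ for every $t \ge 0$.

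Next I would diagonalize the discrete Laplacian $(\Delta v)_j = v_{j+1} + v_{j-1} - 2 v_j$ on the $P$-dimensional space of $P$-periodic sequences. The eigenvectors are the discrete Fourier modes $v^{(n)}_j = e^{2 \pi i n j / P}$ for $n = 0, 1, \ldots, P-1$, with eigenvalues
\begin{equation}
\lambda_n = 2 \cos(2\pi n / P) - 2.
\end{equation}
The key observation is that $\lambda_0 = 0$ with constant eigenvector, while $\lambda_n < 0$ strictly for every $n \in \{1, \ldots, P-1\}$.

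Expanding the initial condition as $h^0_j = \sum_{n=0}^{P-1} c_n e^{2\pi i n j / P}$, where the orthogonality relation for discrete Fourier modes directly yields $c_0 = \overline{h}$, the diagonalization produces the explicit formula
\begin{equation}
h_j(t) = \overline{h} + \sum_{n=1}^{P-1} c_n e^{\lambda_n t} e^{2\pi i n j / P}.
\end{equation}
The desired uniform convergence is then immediate from the estimate
\begin{equation}
\norm{h(t) - \overline{h}}_{\ell^\infty} \le \sum_{n=1}^{P-1} |c_n| e^{\lambda_n t} \longrightarrow 0
\end{equation}
as $t \to \infty$, since each exponential factor tends to zero.

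There is no serious obstacle here; the only point requiring a little care is the uniqueness claim that bounded solutions of \eqref{eqn:discrete_heat_eq:DHK} are determined by their initial data, needed so that the $P$-periodic candidate we construct actually coincides with $h$. This is standard and is in any case subsumed by the results already invoked in {\S}\ref{sec:dht}.
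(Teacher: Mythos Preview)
Your proof is correct and takes a genuinely different route from the paper. The paper argues as follows: periodicity of $h(t)$ implies that the running average $H_j(t) = \frac{1}{P}\sum_{k=0}^{P-1} h_{j+k}(t)$ is independent of $j$; since $H$ also solves \eqref{eqn:discrete_heat_eq:DHK} it must be the constant $\overline{h}$; finally, the gradient decay $\norm{\partial^+ h(t)}_{\ell^\infty} \to 0$ from Proposition~\ref{prp:decay_estimates:decay_estimates_of_discerete_heat} forces each $h_j(t)$ to approach the common average. Your approach instead diagonalizes the problem explicitly via the discrete Fourier transform on $\Z/P\Z$. The paper's argument is shorter because it recycles the Bessel-function estimates already established in {\S}\ref{sec:dht}, but it only delivers the algebraic rate $O(t^{-1/2})$ inherited from \eqref{delta+ v}. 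Your argument is self-contained and yields the sharper exponential rate $e^{-2(1-\cos(2\pi/P))t}$, at the modest cost of setting up the Fourier basis. The uniqueness issue you flag is harmless: the discrete Laplacian is a bounded operator on $\ell^\infty(\Z)$, so \eqref{eqn:discrete_heat_eq:DHK} is a well-posed ODE there and uniqueness is automatic.
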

\begin{proof}
Since $h$ inherits the periodicity of $h^0$, the function
\begin{equation}
H_j(t) = \frac{1}{P} \sum_{k=0}^{P-1} h_{j+k}(t)
\end{equation}
is constant with respect to $j$. Since it also satisfies
\eqref{eqn:discrete_heat_eq:DHK}, we must have $H_j(t) = \overline{h}$.
The result now follows from the fact that $\norm{\partial^+ h(t)}_{\ell^\infty} \to 0$
as $t \to \infty$; see \eqref{delta+ v}.
\end{proof}

\begin{proof}[Proof of Proposition \ref{prp:per:beh:alm:per:nl:heat}]
We first treat the case $d\neq 0$
and write $V^{\mathrm{per}}$ for the
solution to the nonlinear LDE \eqref{eq:sub:sup:lde:V}
with initial condition $V^{\mathrm{per}}(0) = V^{0;\mathrm{per}}$.
Without loss, we may assume that $V^{0;\mathrm{per}}_0 = 0$.
Inspired by the proof of Corollary
\ref{cor:interface_evolution:cole hopf},
we introduce the functions
\begin{equation}
\label{eq:per:alm:per:def:h:h:per}
h^{\mathrm{per}}(t) = e^{d (V^{\mathrm{per}}(t) -c t)},
\qquad
h = e^{d (V(t) - ct)},
\qquad
q
(t) = e^{ d V(t) - d V^{\mathrm{per}}(t) } - 1
\end{equation}
and note that $h^{\mathrm{per}}$
and $h$ both satisfy the linear heat LDE 
\eqref{eqn:discrete_heat_eq:DHK}. By construction,
we have
\begin{equation}
    h(0) = h^{\mathrm{per}}(0) + h^{\mathrm{per}}(0) q(0), 
\end{equation}
which allows us to write
\begin{equation}
\label{eq:per:sol:h:vs:per}
    h_j(t) - h^{\mathrm{per}}_j(t)
     =  \sum_{k \in \Z} G_k(t) h^{\mathrm{per}}_{j-k}(0) q_{j-k}(0) . 
\end{equation}

Assuming $0 < \epsilon <1$ and $R \ge 1$, we see that
\begin{equation}
    |V^0_0| \le R + |V_J| \le R + \epsilon + V^0_J
    \le 2R + 1 \le 3R
\end{equation}
and hence $\norm{V^0}_{\ell^\infty}\le 4R$.
This allows us to obtain the global bounds
\begin{equation}
    \norm{h^{\mathrm{per}}(0)}_{\ell^\infty}
      \le e^{|d| R},
      \qquad \qquad
     \norm{q(0)}_{\ell^\infty} 
      \le e^{5 |d| R} + 1,
\end{equation}
together with the tail bound
\begin{equation}
    |q_j(0)| \le e^{|d| \epsilon} - 1, \qquad \qquad j \ge |J|. 
\end{equation}
Using \eqref{eq:per:sol:h:vs:per}, these bounds allow us to
obtain the estimate
\begin{equation}
\begin{array}{lcl}
    \norm{h(t) - h^{\mathrm{per}}(t)}_{\ell^\infty}
    & \le &
    \sum_{|j-k| \ge J} |G_k(t)| (e^{ |d| \epsilon} - 1) 
    + \sum_{|j-k| < J} |G_k(t)|  e^{|d| R} ( e^{5 |d| R} + 1 )
    \\[0.2cm]
    & \le & 
        (e^{ |d| \epsilon} - 1)  \norm{G(t)}_{\ell^1} 
        + (2 J -1 )  e^{ |d| R} ( e^{5 |d| R} + 1 ) \norm{G(t)}_{\ell^\infty}.
\end{array}
\end{equation}
Since $\norm{G(t)}_{\ell^1} =1$
on account of \eqref{eqn:bessel_functions:generating_function}
and $\norm{G(t)}_{\ell^\infty} \le C t^{-1/2}$ on account of \eqref{eqn:Bessel_functions:asymptotics_of_I_n}, we can find
a time $T = T(\epsilon,J, d, R)$ 
so that
\begin{equation}
    \norm{h(t) - h^{\mathrm{per}}(t)}_{\ell^\infty} \le 
    2 (e^{ |d| \epsilon} - 1)
\end{equation}
for all $t \ge T$. 

After possibly increasing $T$, we can use
Lemma \ref{lem:per:heat:eqn} to conclude
\begin{equation}
    \norm{h(t) - \overline{h}}_{\ell^\infty} \le 
    4 (e^{ |d| \epsilon} - 1), \qquad \qquad t \ge T,
\end{equation}
for some $\overline{h} \in [0, e^{ |d| R}]$. Inverting
the transformation \eqref{eq:per:alm:per:def:h:h:per} hence leads
to the desired bound on $V$
with $\mu = \frac{\ln \overline{h}}{d}$.
The remaining case $d = 0$ can be treated in the same fashion as above,
but now one does not need to use the nonlinear coordinate transformation.
\end{proof}

\bibliographystyle{klunumHJ}
\bibliography{refs}

\end{document}